\newcommand{\ve}{\boldsymbol{e}}
\newcommand{\vg}{\boldsymbol{g}}
\newcommand{\NN}{\mathbb{N}}
\newcommand{\ZZ}{\mathbb{Z}}
\newcommand{\RR}{\mathbb{R}}
\newcommand{\CC}{\mathbb{C}}
\newcommand{\EE}{\mathbb{E}}
\newcommand{\tr}{\mbox{tr}}
\newcommand{\ud}{\textup{d}}
\newcommand{\var}{\textup{var}}
\newtheorem{theorem}{Theorem}[section]
\newtheorem{defn}[theorem]{Definition}
\newtheorem{thm}{Theorem}[section]
\newtheorem{lem}[thm]{Lemma}
\title[Dynamical Seasonality and Trend]{Nonparametric  and adaptive modeling of dynamic seasonality and trend with heteroscedastic and dependent errors}
\author{Yu-Chun~Chen}
\address{Department of Medical Research and Education, National Yang-Ming University Hospital; Institute of Hospital and Health Care Administration, National Yang-Ming University, Taipei, Taiwan}
\email{yuchn.chen@gmail.com}
\author{Ming-Yen~Cheng}
\address{Department of Mathematics, National Taiwan University, Taipei, Taiwan}
\email{cheng@math.ntu.edu.tw}
\author[Chen {\it et al.}]{Hau-Tieng~Wu}
\address{Department of Statistics, University of California,  Berkeley, CA, USA.}
\email{hauwu@math.princeton.edu}
\begin{document}

\begin{abstract}
Seasonality (or periodicity) and trend are features describing an observed sequence, and extracting these features is an important issue in many scientific fields. 
However, it is not an easy task for existing methods to analyze simultaneously the trend and {\it dynamics} of the seasonality such as time-varying frequency and amplitude, and the {\it adaptivity} of the analysis to such dynamics and robustness to heteroscedastic, dependent errors is not guaranteed. These tasks become even more challenging when there exist multiple seasonal components.
We propose a nonparametric model to describe the dynamics of multi-component seasonality, and investigate the recently developed Synchrosqueezing transform (SST) in extracting these features in the presence of a trend and heteroscedastic, dependent errors. The identifiability problem of the nonparametric seasonality model is studied, and the adaptivity and robustness properties of the SST are theoretically justified in both discrete- and continuous-time settings. Consequently we have a new technique for de-coupling the trend, seasonality and heteroscedastic, dependent error process in a general nonparametric setup. Results of a series of simulations are provided, and the incidence time series of varicella and herpes zoster in Taiwan and respiratory signals observed from a sleep study are analyzed. 
\keywords{ARMA errors, Continuous-time ARMA processes, Cycles, Non-stationary processes, periodic functions, Synchrosqueezing transform, Instantaneous frequency, Time-frequency analysis}
\end{abstract}

\section{Introduction}

Seasonality (or periodicity) is a phenomenon commonly observed in a time series. 
For example, incidences of the following diseases are known to exert seasonality with high peaks in winter: cardiovascular disease \citep{Ishikawa:2012},  asthma \citep{Lin_Jones:2011}, varicella \citep{Chan:2011}, etc, which leads to higher mortality and high demand for medical resource in every winter season. Hence good understanding of the seasonality of a disease is important in both the clinical science and the public health \citep{Stone:2007}. %
Trend is another phenomenon commonly of interest in time series analysis; for instance, to determine if a general application of certain vaccine is effective in the society, we may like to investigate if overall trend of the disease incidence has changed. %
An explicit example which we will discuss in Section \ref{VHZ} is how the general application of varicella vaccine influences the seasonal behavior and the trend of the disease incidence, which is plotted in Figure \ref{varicella}. 
Seasonality and trend phenomena are not unique to disease incidence processes. For example, oscillatory patterns exist in different kinds of biomedical signals such as electrocardiogram signal, respiratory signal, blood pressure, circadian rhythm, etc., and it is well known that the period of the oscillation varies according to time and the time-varying period contains plentiful information about the underlying physiological {\it dynamics} \citep{hrv,Benchetrit:2000,Wysocki2006,golombek_rosenstein:2010,wang:2010,Lin_Hseu_Yien_Tsao:2011,wu:2011}. For example, in Section \ref{sleep} we demonstrate the correlation between the varying frequency of respiratory signal and sleep stage. Further examples in astronomy, climatology and econometrics have been extensively discussed in the literature \citep{Hall_Reimann_Rice:2000, Nott_Dunsmuir:2002, Oh_Nychka_Brown_Charbonneau:2004, Genton_Hall:2007, Park_Ahn_Hendry_Jang:2011, Rosen_Stoffer_Wood:2009, Pollock:2009,Bickel_Kleijn_Rice:2008}. 

There are abundant modern methods available to accommodate
both seasonality and trend in a time series, for example, seasonal autoregressive integrated moving average (SARIMA) \citep{Brockwell_Davis:2002}, and Trigonometric Box-Cox transform, ARMA errors, Trend and Seasonal components (TBATS) \citep{DeLivera_Alysha_Hyndman_Snyder:2011}, among others. Many of the existing models, including SARIMA and TBATS, focus on forecasting. Thus, although they are useful in many fields, they have some limitations when used to analyze historical data. First, it is hard for the methods to capture {\it the dynamical behavior} of the seasonality such as its diminishment or changes in the period and/or strength which, as mentioned above, is one of the main features in a time series in many fields. However, showing these features scientifically is not guaranteed by existing methods. Indeed, the global (parametric) model assumptions on the seasonality are often too restrictive for real world data. Violations of the parametric assumptions, in particular the fixed periods assumption, can cause not just large bias in the seasonality estimation but also spurious oscillations in the trend estimate,  as any unexplained seasonal dynamics would then have to be attributed to the trend. This has been an important issue in analysis of trend and seasonality. Another limitation follows immediately from the first one. In the conventional parametric methods, the seasonality analysis depends on the whole time series, rendering the methods sensitive to the length of the time series. For example, the result obtained from a $10$-year time series may be different from that obtained from a $5$-year sub-series.  Moreover, there may exist multiple seasonal components, which cannot be handled by most of the existing methods. Above all, the random errors are often dependent and may be heteroscedastic, and the innovations may be non-Gaussian, making the problem even more complicated and challenging. 

To tackle the above mentioned difficulties faced by existing methods and to understand more accurately about the dynamics of a system, we introduce a phenomenological nonparametric model which captures and offers a natural decomposition of the dynamical seasonal components, the changing trend, and the heteroscedastic, dependent errors. Each of the seasonal components has time-varying amplitude and time-varying frequency with bounded derivatives. We prove that functions in this  nonparametric seasonality class are identifiable up to a ``model bias'' of controlled order i.e. any two different representations of a member are the same up to the model bias. To the best of our knowledge, this identifiability result is so far the first theoretical justification for nonparametric modeling of multi-component dynamical seasonality. It is important in its own right. The trend is is characterized as a smooth function with ``very low-frequency,'' for example, any finite-degree polynomial belongs to this class. 
The random error term, or the noise, is modeled as a generalized stationary random process coupled with a heteroscedastic variance function. Notice that the nonparametric time series model we introduce is new; in particular, the functional classes for the seasonal component have not been considered before. 

To isolate the meaningful seasonal components based on noisy observations coming from the new model, we focus attention on a data-adaptive algorithm referred to as Synchrosqueezing transform (SST) \citep{daubechies_maes:1996, daubechies_lu_wu:2010}, originally designed to analyze dynamical seasonality without contamination of noise or coupling with trend.  
We prove that the SST method provides not only adaptive and robust estimators but also an easy visualization of the dynamical seasonal components. Hence, even in the presence of trend and (heteroscedastic) dependent errors, we can determine if any hazard occurs in disease incidence and obtain information about the underlying physiological dynamics based on the time-varying frequency in biomedical signals. In addition, since the seasonality is modeled nonparametrically and the SST algorithm is local in nature, it is insensitive to the length of the observed time series, in the sense that the estimate of a seasonal component does not change much as time goes even when it is dynamical. Furthermore, since our model allows multiple seasonal components, it can extract information about both the high- and low-frequency periodic components, without  even knowing the time-varying periods. The only requirements are the sampling interval between two successive observations is small enough for us to observe the high-frequency periodicity, and the length of the time interval is large enough for us to see the low-frequency ones.

After the oscillatory components are isolated from the time series, we can extract the trend and approximate accurately the heteroscedastic, dependent errors using the residuals obtained by subtracting from the time series the trend and seasonality estimates. Subsequently we can conduct further investigations on the error process, which are relevant in many directions including forecasting. Here we mention that the trend has not been taken into account in previous work on SST analysis of dynamical seasonality. In addition, \cite{brevdo_fuckar_thakur_wu:2012} limited the random error term to Gaussian white noise with the noise level being much smaller than the error in modeling the seasonality. By contrast, the assumptions we make on the error process are much milder, requiring only smoothness of the modulating variance function and boundedness of the power spectrum of the stationary component. Finally, while existing works investigated properties of SST in the continuous-time setup, we address the problems in both continuous- and discrete-time setups. The latter is equally relevant from the modeling viewpoint and is somehow more important from the practical viewpoint i.e. in reality we can only observe the process at discrete sampling time-points.  

This paper is organized as follows. In Section \ref{section:MandM}, both continuous- and discrete-time models are proposed to model processes that contain dynamical seasonal components, trend, and heteroscedastic, dependent errors. Functional classes used to model signals with dynamical seasonal components are introduced, and identifiability theory of the functional classes is provided. In Section \ref{method:theory} the Synchrosqueezing transform approach to separating the seasonal components, trend and dependent errors is introduced and theoretically studied. Also given is numerical implementation of the method. In Section \ref{simulation}, we demonstrate the efficacy of the proposed method  and compare it with the TBATS model by analyzing a series of simulations. In Section \ref{data} two medical examples are provided: incidence time series of varicella and herpes zoster extracted from the Taiwan's National Health Insurance Research Database (NHIRD) published by the National Health Research Institute of Taiwan, and respiratory signals of patients in a sleep stage study conducted in the Sleep Center of the Chang Gung Memorial Hospital in Taoyuan, Taiwan. 
Section \ref{discussion} contains discussions and  open problems. The Supplementary contains proofs of the theoretical results  and some further materials, including a formulae for the power spectrum of a general order continuous-time ARMA process and results of additional numerical studies.

\section{Model}\label{section:MandM}

Oscillatory signals are ubiquitous in many scientific fields. Seasonality, the term wildly used in the public health, economics, etc, describes the oscillatory behavior of a given time series $Y_t$. Here we list some interesting problems commonly raised in analyzing the oscillatory signals.
\begin{enumerate}
\item[Q1:] If there are multiple oscillatory components inside the signal, how to detect and estimate them? 
\item[Q2:] If there exists a trend in addition to the oscillatory components, how to extract it?
\item[Q3:] If the pattern of the oscillatory components is time-varying, how to quantify/identify it?
\item[Q4:] Since the length of the observed data elongates as time goes, how sensitive is the estimator to the length of the observed time series? 
\item[Q5:] If the errors across different time points are dependent, or if the variance of the error changes according to time, is the estimator robust to such dependent, heteroscedastic errors?    
\end{enumerate}

\subsection{Some related approaches}
In this subsection, we briefly review some existing  time series models that take into account seasonality, and discuss the need for a new model that can answer the above questions simultaneously.

\subsubsection{Trigonometric Seasonality and Trend Model}
A simple model for the seasonality reads:
\[
Y_t=f_t+T_t+\Phi_t,
\]
where $f_t$ is a deterministic, periodic function modeling the seasonality, $T_t$ is a deterministic function modeling the trend, and $\Phi_t$ is a stationary random process modeling the dependent errors. 
In the above model, $f_t$ is usually taken as a trigonometric function:
\begin{equation}\label{section:model:fourier}
f_t = \sum_{k=1}^KA_k\cos(2\pi \xi_kt),
\end{equation} 
where $K\in\NN$, and for each $k=1,\ldots, K$, $A_k>0$, $\xi_k>0$ and we call $A_k$ {\it the amplitude}, $\xi_kt$ {\it the phase function}, and $\xi_k$ {\it the frequency} of the $k$-th seasonal component. 
A special case, which consists of single-component seasonality i.e. $K=1$ and a linear trend, was considered in \cite{Pollock:2009}.

\subsubsection{BATS, TBATS}
To resolve the limitations of the Seasonal Autoregressive Integrated Moving Average (SARIMA) model \citep{Brockwell_Davis:2002}, and to improve the traditional single seasonal exponential smoothing methods, recently \cite{DeLivera_Alysha_Hyndman_Snyder:2011} introduced two algorithms BATS and TBATS. In particular, the BATS model includes a Box-Cox transformation, ARMA errors, and $n$ seasonal patterns as follows:
\begin{align*}
& y^{(\omega)}_t=\left\{\begin{array}{lc} (Y_t^\omega-1)/\omega &\mbox{when } \omega\neq 0\\ \log Y_t&\mbox{when }\omega=0 \end{array}\right.,\\
& y^{(\omega)}_t=\ell_{t-1}+\phi b_{t-1}+\sum_{i=1}^n s_{t-m_i}^{(i)}+d_t\\
& \ell_t=\ell_{t-1}+\phi b_{t-1}+\alpha d_t\\
&b_t = (1-\phi)b+\phi b_{t-1}+\beta d_t\\
&s^{(i)}_t=s_{t-m_i}^{(i)}+\gamma_i d_t
\end{align*}
where $\omega\in\RR$ is the Box-Cox transform parameter, $m_1,\ldots,m_n$ denote the {\it constant} seasonal periods, $b$ is the long-run trend, $\{d_t\}$ is an ARMA$(p,q)$ process with Gaussian white noise innovation process with zero mean and constant variance, and for $t=1,\ldots,T$, $\ell_t$ is the local stochastic level, $b_t$ is the short-term trend and $s^{(i)}_t$ is the stochastic level of the $i$-th seasonal component. 
Note that, while both the periodic component and trend in model (\ref{section:model:fourier}) are deterministic, in the BATS (and TBATS) model both $s^{(i)}_t$ and $\ell_t$ are coupled with the single-source error $d_t$.

Furthermore, \cite{DeLivera_Alysha_Hyndman_Snyder:2011}  introduced the trigonometric representation of the seasonal components based on Fourier series:
\begin{align}
&s^{(i)}_t=\sum_{j=1}^{k_i}s_{j,t}^{(i)}\label{model:TBATS:si}\\
&s_{j,t}^{(i)}=s^{(i)}_{j,t-1}\cos\lambda_j^{(i)}+s^*_{j,t-1}\sin\lambda_j^{(i)}+\gamma_1^{(i)}d_t\label{model:TBATS:sjt}\\
&s_{j,t}^{*(i)}=-s^{(i)}_{j,t-1}\sin\lambda_j^{(i)}+s^*_{j,t-1}\cos\lambda_j^{(i)}+\gamma_2^{(i)}d_t,\label{model:TBATS:sjtstar}
\end{align} 
where $\gamma_1^{(i)}$ and $\gamma_2^{(i)}$ are smoothing parameters, $\lambda_j^{(i)}=2\pi j/m_i$, 
$s_{j,t}^{*(i)}$  is the stochastic growth in the level of the $i$-th seasonal component, and $k_i$ is the number of harmonic components needed for the $i$-th seasonal component. The TBATS model is thus defined by replacing $s^{(i)}_t$ in the BATS model by (\ref{model:TBATS:si}). 
Note that we can rewrite (\ref{model:TBATS:sjt}) and (\ref{model:TBATS:sjtstar}) in the complex form:
\begin{align}\label{model:TBATS:sjtAM}
r^{(i)}_{j,t}e^{i\theta_{j,t}^{(i)}}=e^{i\lambda^{(i)}_j}r^{(i)}_{j,t-1}e^{i\theta_{j,t-1}^{(i)}}+c^{(i)}d_t,
\end{align}
where $r^{(i)}_{j,t}e^{i\theta_{j,t}^{(i)}}=s_{j,t}^{(i)}+is_{j,t}^{*(i)}$, $e^{i\lambda_j^{(i)}}$ is viewed as a rotational transformation with matrix form $\left[\begin{array}{cc}\cos \lambda_j^{(i)} & \sin \lambda_j^{(i)}\\-\sin \lambda_j^{(i)} & \cos \lambda_j^{(i)}\end{array}\right]$, and $c^{(i)}=\gamma_1^{(i)}+i\gamma_2^{(i)}$. Thus, (\ref{model:TBATS:si}) models the $i$-th seasonal component as the real part of $k_i$ complex harmonic functions. Note that the ``frequency'' $\lambda_j^{(i)}$ is fixed all the time. 

We can view BATS/TBATS as a model ``decoupling'' the seasonality and the trend which are modeled together in SARIMA. In TBATS the decoupled seasonality is modeled by introducing trigonometric functions. We refer to \cite{DeLivera_Alysha_Hyndman_Snyder:2011} for detailed discussion.

\subsubsection{Limitations}
As useful as the above models are, there are some limitations. In particular, questions Q3, Q4 and Q5 cannot be fully answered so far. First of all, the seasonal periods in the models are all fixed, which means that question Q3 cannot be answered fully (although TBATS allows stochastic seasonal components). 
Moreover, since the parameter estimation depends on the whole observed time series, the length of the observed time series plays a role in the result, thus it is not easy to answer Q4. As for Q5, although dependent errors can be handled, it is not guaranteed for heteroscedastic errors. In this paper we focus on relieving these limitations as well as providing an alternative approach in order to answer properly all of the questions Q1 -- Q5.

\subsection{New Models}

To answer questions Q1--Q5 properly, consider the following phenomenological model generalizing model (\ref{section:model:fourier}). 
First, consider a periodic function satisfying the following format:
\begin{equation}\label{section:model:new_model}
f(t) = \sum_{k=1}^KA_k(t)\cos(2\pi\phi_k(t)),
\end{equation}
where $A_k(t),\phi'_k(t)>0$ for all $t\in\RR$, $k=1,\ldots,K$.
We call $A_k(t)$ the {\it amplitude modulation}, $\phi_k(t)$ the {\it phase function} and $\phi'_k(t)$ the {\it instantaneous frequency} of the $k$-th component in $f$.
The mathematical rigor of the expression (\ref{section:model:new_model}) is discussed in Section \ref{identifiability}. 

The seasonality model (\ref{section:model:new_model}) is parallel to model (\ref{section:model:fourier}) -- the notion  ``instantaneous frequency'' and ``amplitude modulation'' in (\ref{section:model:new_model}) are respectively parallel to ``frequency'' and ``amplitude'' in (\ref{section:model:fourier}). The time-varying nature of $\phi'_k(t)$ and $A_k(t)$ allows us to capture the momentary behavior of the system. 
More precisely, in model (\ref{section:model:fourier}) the physical interpretation of frequency is how many oscillations are generated in a unit time period, while in model (\ref{section:model:new_model}) the physical interpretation of the time-varying function $\phi_k'(t)$ is how many oscillations are generated in an infinitesimal time period. Similar argument applies to the amplitude modulation function $A_k(t)$, which describes the ``amplitude modulation'' of the oscillation. Note that the frequency of the $k$-th harmonic function in (\ref{section:model:fourier}), $\xi_k$, is simply the derivative of $\xi_kt$, and $\xi_k>0$ can be interpreted as a constant function defined on $\RR$.

In general, the expression (\ref{section:model:new_model}) is not unique and the identifiability issue exists. In \cite{Genton_Hall:2007}, which studies single-compenoent seasonality without trend, this issue is noticed and both $A_1(t)$ and $\phi_1(t)$ are modeled by some parametric forms to avoid the identiÞability problem.  However, in general, parametric assumptions are restrictive and often need to be validated using nonparametric model-checking methods which are unfortunately unavailable so far. By contrast, we consider the following functional classes in which only nonparametric assumptions are imposed on the amplitude modulation functions and instantaneous frequency functions.
\begin{defn}[Intrinsic Mode Functions class $\mathcal{A}^{c_1,c_2}_{\epsilon}$]
For fixed choices of  $0<\epsilon\ll 1$ and $\epsilon\ll c_1<c_2<\infty$, the space $\mathcal{A}^{c_1,c_2}_{\epsilon}$ of  \textit{Intrinsic Mode
Functions (IMFs)} consists of functions $f:\RR\to\RR$, $f\in C^1(\RR)\cap L^\infty(\RR)$ having the
form \begin{equation}
f(t)=A(t)\cos(2\pi\phi(t)),
\end{equation}
where $A:\RR\to\RR$ and $\phi:\RR\to\RR$ satisfy the following conditions for all $t\in\RR$:
\begin{equation}\label{Aeps:cond:1}
A\in C^1(\RR)\cap L^\infty(\RR),~\inf_{t\in\RR}A(t)>c_1,~\sup_{t\in\RR}A(t)<c_2,
\end{equation}
\begin{equation}\label{Aeps:cond:2}
 \phi\in C^2(\RR),~\inf_{t\in\RR}\phi'(t)>c_1, ~\sup_{t\in\RR}\phi'(t)<c_2,
\end{equation}
\begin{equation}\label{Aeps:cond:3}
|A'(t)|\leq\epsilon \phi'(t),\quad |\phi''(t)|\leq\epsilon \phi'(t).
\end{equation}
\end{defn}

\begin{defn}[Superpositions of IMFs]\label{DefBClass}
Fix $0<d<1$. The space $\mathcal{A}^{c_1,c_2}_{\epsilon,d}$ of \textit{superpositions
of IMFs} consists of functions $f$ having the form
\begin{equation*}
f(t)=\sum_{k=1}^{K}f_{k}(t)
\end{equation*}
for some finite $K>0$ and for each $k=1,\ldots,K$, $f_{k}(t)=A_{k}(t)\cos(2\pi \phi_{k}(t))\in\mathcal{A}^{c_1,c_2}_{\epsilon}$ such that $\phi_{k}$ satisfies 
\begin{equation}\label{Aepsd:cond:1}
\phi'_k(t)>\phi'_{k-1}(t)\,\,\,\mbox{ and }\,\,\,\phi'_{k}(t)-\phi'_{k-1}(t)\geq d [\phi'_k(t)+\phi'_{k-1}(t)].
\end{equation}
\end{defn}
Intuitively, a signal in the $\mathcal{A}^{c_1,c_2}_{\epsilon}$ class is a single-component, periodic function having slowly varying amplitude modulation and instantaneous frequency. The $\mathcal{A}^{c_1,c_2}_{\epsilon,d}$ functional class models signals having multiple oscillatory components, with $A_k(t)$ and $\phi_k(t)$ characterizing together the dynamics of the $k$-th component. Here, condition (\ref{Aepsd:cond:1}) is needed because of the dyadic separation nature of the continuous wavelet transform. Note that both $\mathcal{A}^{c_1,c_2}_{\epsilon}$ and $\mathcal{A}^{c_1,c_2}_{\epsilon,d}$ are not vector spaces. 
The identifiability theory for functions belonging to $\mathcal{A}^{c_1,c_2}_{\epsilon}$ and $\mathcal{A}^{c_1,c_2}_{\epsilon,d}$ is given in Section \ref{identifiability}.

We then model a random process $Y(t)$ with multiple seasonal components and trend behaviors contaminated by heteroscedastic, dependent errors as below:
\begin{equation}\label{model:seasonality}
Y(t)=f(t)+T(t)+\sigma(t) \Phi(t),
\end{equation}
where the seasonality $f(t)=\sum_{k=1}^KA_{k}(t)\cos(2\pi \phi_{k}(t))$ is in $\mathcal{A}^{c_1,c_2}_{\epsilon}$ when $K=1$ and in $\mathcal{A}^{c_1,c_2}_{\epsilon,d}$ when $K>1$, the trend $T(t)$ is modeled as a $C^1$ real-valued function, $\Phi(t)$ is some stationary generalized random process (GRP) \citep{Gelfand:1964}, and $\sigma(t)>0$ so that $\sigma\in C^\infty\cap L^\infty$ is a real-valued smooth function used to model the heteroscedasticity of the error term. For example, $\Phi(t)$ can be taken as a continuous-time autoregressive moving average (CARMA) random process of order $(p,q)$, where $p,q\geq 0$, defined in Section \ref{section:CARMA} of the Supplementary. The heteroscedastic, dependent error process $\sigma(t) \Phi(t)$ specified here is a special case of the locally stationary processes introduced in \cite{Priestley:1965}, and fitting the time-varying spectra has been considered before \citep{Dahlhaus:1997,Hallin:1978,Hallin:1980,Rosen_Stoffer_Wood:2009}. Note that in our model the trend $T(t)$ is nonparametric in nature. Intuitively, it should be a very ``low-frequency'' function, and we specify the precise conditions in Section \ref{theory}. Combining these with the nonparametric models $\mathcal{A}^{c_1,c_2}_{\epsilon}$ and $\mathcal{A}^{c_1,c_2}_{\epsilon,d}$ for the dynamical seasonality, model (\ref{model:seasonality}) provides a general decomposition of the trend, the seasonality and the error process. It enables us to extract information for all of  $A_k(t)$, $\phi_k(t)$ and $\phi_k'(t)$, $k=1,\ldots,K$, and $T(t)$ based on observations on $Y(t)$ generated by model (\ref{model:seasonality}), which we will discuss in Section \ref{method:theory}.

 In practice, we can only access the continuous-time process $Y(t)$ given in model (\ref{model:seasonality}) on discrete sampling time-points $n\tau$, where $n\in\ZZ$ and $\tau>0$ is the sampling interval. So, we consider the following discrete-time model 
\begin{equation}\label{model:seasonality:discrete}
Y_n=f(n\tau)+T(n\tau)+\sigma(n\tau)\,\Phi_n, \, n\in \ZZ,
\end{equation}
where $f$, $T$ and $\sigma$ are as in model (\ref{model:seasonality}), and $\Phi_n$, $n\in\ZZ$, is a zero-mean stationary time series which can be taken as, for example, an ARMA time series discussed in Section \ref{section:ARMA} of the Supplementary. We can interpret (\ref{model:seasonality:discrete}) as a model for a discrete-time process $\{Y_n\}_{n\in\ZZ}$ in which the deterministic seasonality and trend are contaminated by the heteroscedastic errors $\sigma(n\tau)\,\Phi_n$, $n\in\ZZ$. 

Compared with the existing models like SARIMA and TBATS, our models (\ref{model:seasonality}) and (\ref{model:seasonality:discrete}) not only can take care of the problem of multiple seasonal components with non-integer periods, but also can cope with the system dynamics including the unknown time-varying frequencies and time-varying amplitude modulations. In the TBATS model, although the amplitude modulation (\ref{model:TBATS:sjtAM}) and the dynamics of the seasonality (\ref{model:TBATS:si}) may change according to time, the ``changes'' in the seasonality and trend are coupled with the ARMA error term, thus both the seasonality and the trend are stochastic. On the other hand, in our new models (\ref{model:seasonality}) and (\ref{model:seasonality:discrete}), both of the seasonality and the trend are modeled as deterministic terms which are independent of the error term. Hence the SARIMA and TBATS models are essentially different from ours.

\subsection{Identifiability of functions in $\mathcal{A}^{c_1,c_2}_{\epsilon}$, $\mathcal{A}^{c_1,c_2}_{\epsilon,d}$}\label{identifiability}
It is well known that for a given function there might be more than one representation. For example, a purely harmonic function can also be represented as a function having time-varying amplitude and time-varying phase:
$$
\cos(2\pi t)=(1+a(t))\cos(2\pi (t+b(t))),
$$
where $b'(t)$ and $a(t)$ might be ``large'' compared with $1$. Which of the two representations is ``good'' depends on the problem, and different representations lead to different interpretations. 
Thus, we start from asking the following question:
\newline\newline
\indent{\it Q: given a function $f(t)=A(t)\cos(2\pi\phi(t))\in\mathcal{A}^{c_1,c_2}_\epsilon$, how much can the different representations in $\mathcal{A}^{c_1,c_2}_\epsilon$ for $f$ differ from each other? }
\newline\newline
This is the {\it identifiability problem} we face when we introduce the $\mathcal{A}^{c_1,c_2}_\epsilon$ functional class. In the following theorem we claim that the amplitude modulation function $A(t)$, the instantaneous frequency function $\phi'(t)$ and the phase function $\phi(t)$ in {\it all} the different representations of a function in $\mathcal{A}^{c_1,c_2}_\epsilon$ can only differ from each other up to a smooth, small model bias of order $\epsilon$.  In this sense we say that a function in $\mathcal{A}^{c_1,c_2}_\epsilon$ is identifiable up to a model bias of order $\epsilon$. The proof of this theorem is postponed to the Supplementary.
\begin{thm}[Identifiability of single-component seasonality]\label{theorem:identifiability:single}
Suppose  that \\ $a(t)\cos\phi(t)\in \mathcal{A}^{c_1,c_2}_\epsilon$ can be represented in a different form which is also in $\mathcal{A}^{c_1,c_2}_\epsilon$, that is,
\begin{equation}\label{observation:identifiability:lemma:1}
a(t)\cos\phi(t)=A(t)\cos\varphi(t)\in \mathcal{A}^{c_1,c_2}_\epsilon.
\end{equation}
Define $t_m\in\RR$, $m\in\ZZ$, so that $\phi(t_m)=(m+1/2)\pi$, $s_m\in\RR$, $m\in\ZZ$, so that $\phi(s_m)=m\pi$, $\alpha(t)=A(t)-a(t)$, and $\beta(t)=\varphi(t)-\phi(t)$. Then $\alpha\in C^2(\RR)$, $\beta\in C^1(\RR)$, $\alpha(t_m)=0$ $\forall m$, $\beta(s_m)\geq 0$ $\forall m$ and $\beta(s_m)=0$ if and only if $\alpha(s_m)=0$. Moreover, we have $|\alpha'(t)|\leq 3\pi\epsilon$, $|\alpha(t)|\leq \frac{4\pi^2\epsilon}{c_1}$ and $|\beta(t)|< 3\pi \epsilon$ for all $t\in\RR$.
\end{thm}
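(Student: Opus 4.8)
The plan is to lift both representations to the complex plane and reduce the entire statement to the control of a single real ``discrepancy'' function. Writing $F(t)=a(t)e^{i\phi(t)}$ and $G(t)=A(t)e^{i\varphi(t)}$, the hypothesis $a\cos\phi=A\cos\varphi$ says exactly that $\Re F=\Re G=f$, so $F-G=iR$ for the real function
\[
R(t)=a(t)\sin\phi(t)-A(t)\sin\varphi(t).
\]
Since $|F|^2=a^2$, $|G|^2=A^2$, and $\arg F-\arg G=-\beta$, both $\alpha$ and $\beta$ are governed by $R$: expanding $a^2=|G+iR|^2$ and using the angle-subtraction formula gives
\[
a^2-A^2=2AR\sin\varphi+R^2, \qquad a\sin\beta=-R\cos\varphi .
\]
Thus once $R$ is shown to be of order $\epsilon$, the bounds $|\alpha|\le 4\pi^2\epsilon/c_1$ and $|\beta|<3\pi\epsilon$ follow from these two identities together with $a,A>c_1$ and $|\cos|,|\sin|\le 1$, while differentiating the phase identity and invoking $|a'|\le\epsilon\phi'$, $|A'|\le\epsilon\varphi'$, $\phi',\varphi'<c_2$ yields $|\alpha'|\le 3\pi\epsilon$. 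The regularity claims are then immediate: $\beta=\varphi-\phi$ inherits its smoothness from $\phi,\varphi\in C^2$, and $\alpha$ inherits its stated regularity from the amplitudes and phases through the identity $a^2-A^2=2AR\sin\varphi+R^2$.

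The crux, and the step I expect to be the principal analytic obstacle, is proving $R=O(\epsilon)$ with the sharp constants. The point is that for an IMF $g=a\cos\phi\in\mathcal A^{c_1,c_2}_\epsilon$ the quadrature $a\sin\phi$ is not arbitrary: it differs from the Hilbert transform $Hg$ only by a term controlled by the slow-variation budget (\ref{Aeps:cond:3}), that is $a\sin\phi=Hg+O(\epsilon)$, via a Bedrosian/Nuttall-type oscillatory-integral estimate. Applying this to both representations of the common signal $f$ gives $a\sin\phi=Hf+e_1$ and $A\sin\varphi=Hf+e_2$ with $|e_1|,|e_2|=O(\epsilon)$, whence $R=e_1-e_2=O(\epsilon)$; tracking the constants in this estimate is exactly what produces the numbers $3\pi$ and $4\pi^2/c_1$. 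Everything else is bookkeeping built on this single estimate.

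It remains to evaluate at the distinguished points. At a node $t_m$ one has $\cos\phi(t_m)=0$, so $f(t_m)=0$ forces $A(t_m)\cos\varphi(t_m)=0$ and hence $\cos\varphi(t_m)=0$ (as $A>c_1$); since $|\beta|<3\pi\epsilon<\pi$ this pins the branch to $\varphi(t_m)=(m+1/2)\pi$, so $\phi$ and $\varphi$ share every node and each increases by exactly $\pi$ between consecutive nodes. Differentiating the defining identity at $t_m$, where both cosines vanish and both sines equal $(-1)^m$, gives $a(t_m)\phi'(t_m)=A(t_m)\varphi'(t_m)$, so the claim $\alpha(t_m)=0$ is equivalent to the coincidence of the two instantaneous frequencies at the shared nodes, $\phi'(t_m)=\varphi'(t_m)$; deducing this from the equal-increment property together with $|\phi''|\le\epsilon\phi'$ and $|\varphi''|\le\epsilon\varphi'$ is the delicate step, and I expect it to be the second main obstacle after the $R=O(\epsilon)$ estimate. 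At an extremum $s_m$ one has $\cos\phi(s_m)=(-1)^m$, so $|f(s_m)|=a(s_m)$ and the modulus identity reduces to $A(s_m)\cos\beta(s_m)=a(s_m)$, i.e. $\cos\beta(s_m)=a(s_m)/A(s_m)$. Because $|\beta(s_m)|<\pi$ this determines $\beta(s_m)$ up to sign and gives $\beta(s_m)\ge 0$ under the fixed phase convention, with $\beta(s_m)=0\iff\cos\beta(s_m)=1\iff a(s_m)=A(s_m)\iff\alpha(s_m)=0$, which is the asserted equivalence; combined with the global bounds and regularity secured above, this completes the plan.
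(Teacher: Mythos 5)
Your proposal has two genuine gaps, and the second one is fatal rather than merely incomplete. First, the entire theorem is deferred to the unproven assertion $R=a\sin\phi-A\sin\varphi=O(\epsilon)$ via a ``Bedrosian/Nuttall-type'' estimate relating the quadrature $a\sin\phi$ to the Hilbert transform of $f$. That estimate \emph{is} the theorem: nothing in the proposal proves it, and for this class it is genuinely delicate, since members of $\mathcal{A}^{c_1,c_2}_\epsilon$ are bounded and non-decaying (so $Hf$ exists only as a principal value, modulo constants), and the known quantitative Nuttall-type bounds are $L^2$-type statements that do not give a pointwise $O(\epsilon)$ bound with universal constants, let alone the specific constants $3\pi$ and $4\pi^2/c_1$. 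The paper needs none of this machinery: it first uses positivity of the amplitudes and strict monotonicity of the phases to force ``hinge'' conditions (the two phases agree at every $t_m$, the phase difference varies by less than $\pi$ on each $[t_m,t_{m+1}]$, and its derivative must change sign there because its increment over each such interval is zero), and then integrates the slow-variation inequalities (\ref{Aeps:cond:3}) for the difference functions, namely bounds of the form $\epsilon(2\phi'+\cdot)$ on the second derivative of the phase difference and the first derivative of the amplitude difference, over intervals of phase increment $\pi$. That elementary bookkeeping is exactly where $3\pi\epsilon$ and $4\pi^2\epsilon/c_1$ come from; no oscillatory-integral input is required.

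Second, two of your endpoint claims cannot be proved because, read as you read them, they are false, and the failure traces to a notational slip in the statement itself. You correctly reduce the claim ``$A(t_m)=a(t_m)$'' to the exact equality $\phi'(t_m)=\varphi'(t_m)$ and flag it as ``the delicate step''; it is not delicate, it fails. Given one representation, one can perturb $\varphi'$ near a node $t_m$ while keeping $\varphi(t_m)=\phi(t_m)$, the node-sharing, and all of the strict (open) inequalities defining $\mathcal{A}^{c_1,c_2}_\epsilon$, and then define $A=a\cos\phi/\cos\varphi$; L'H\^{o}pital gives $A(t_m)=a(t_m)\phi'(t_m)/\varphi'(t_m)\neq a(t_m)$, a valid second representation in which the amplitudes disagree at the node. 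Likewise, at $s_m$ the pointwise identity yields only $\cos\bigl(\varphi(s_m)-\phi(s_m)\bigr)=a(s_m)/A(s_m)$, which is even in the phase difference, so no ``phase convention'' can determine its sign; what does follow is $A(s_m)\geq a(s_m)$. The consistent reading --- the one the paper actually proves, and the only one compatible with the stated regularity $\alpha\in C^2(\RR)$, $\beta\in C^1(\RR)$ --- is that $\alpha$ denotes the \emph{phase} perturbation and $\beta$ the \emph{amplitude} perturbation (the two definitions in the statement are swapped): the phase difference vanishes at every $t_m$, and the amplitude difference is nonnegative at every $s_m$. So beyond the missing crux, your plan is organized around establishing two pointwise identities that do not hold under the hypotheses.
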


Theorem \ref{theorem:identifiability:single} consists of two conclusions. The first conclusion is that the perturbations $\alpha$ and $\beta$ must have some ``hinge points'' and so they are restricted. This property comes from the positivity condition of the instantaneous frequency and amplitude modulation functions. The second conclusion is that the absolute values of $\alpha$, $\alpha'$ and $\beta$ cannot be large. This property comes from the ``slowly varying'' conditions of the $\mathcal{A}^{c_1,c_2}_\epsilon$ functional class and the existence of the hinge points. With these properties, the definition of instantaneous frequency and amplitude modulation is rigorous in the sense that they are unique up to a negligible error when $\epsilon$ is small enough. Theorem \ref{theorem:identifiability:single}  has its own interest and further study on this topic is beyond the scope of this paper. Similarly, the identifiability issue exists for functions in the class $\mathcal{A}^{c_1,c_2}_{\epsilon,d}$, and
in the following theorem we state the identifiability theory for $\mathcal{A}^{c_1,c_2}_{\epsilon,d}$. From the theorem, we conclude that any multi-component periodic function in $\mathcal{A}^{c_1,c_2}_{\epsilon,d}$ is again identifiable up to a model bias of order $\epsilon$.
\begin{thm}[Identifiability of multiple-component seasonality]\label{theorem:identifiability:multiple}
Suppose $f(t)=\sum_{l=1}^Na_l(t)\cos\phi_l(t)\in \mathcal{A}^{c_1,c_2}_{\epsilon,d}$ can be represented in a different form which is also in $\mathcal{A}^{c_1,c_2}_{\epsilon,d}$, that is,
\begin{equation*}
f(t)=\sum_{l=1}^Na_l(t)\cos\phi_l(t)=\sum_{l=1}^MA_l(t)\cos\varphi_l(t)\in \mathcal{A}^{c_1,c_2}_{\epsilon,d},
\end{equation*}
then $M=N$, $|\phi_l(t)-\phi_l(t)|\leq E_I\epsilon$, $|\phi'_l(b)-\varphi'_l(b)|\leq E_I\epsilon$ and $|a_l(t)-A_l(t)|\leq E_I\epsilon$ for all $l=1,\ldots,N$, where $E_I>0$ is a finite universal constant depending on $c_1$, $c_2$ and $d$ defined in (\ref{proof:identifiability:multiple:error_constant}).
\end{thm}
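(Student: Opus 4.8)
The plan is to pass to the time--frequency domain via the continuous wavelet transform (CWT) and to exploit the fact that the multiplicative frequency separation in condition (\ref{Aepsd:cond:1}) forces the two candidate decompositions of $f$ to produce the same ``ridges.'' Fix a mother wavelet $\psi$ whose Fourier transform $\hat\psi$ is smooth, real, nonnegative and supported in a narrow band $[1-\Delta,1+\Delta]$ with $0<\Delta<d$, and write $W_f(a,b)=\int f(t)\,a^{-1/2}\overline{\psi((t-b)/a)}\,\ud t$. The first step is a localization estimate: using only the slowly varying conditions (\ref{Aeps:cond:3}), for each component $f_k=a_k\cos\phi_k$ one obtains
\begin{equation*}
W_{f_k}(a,b)=\tfrac12 a^{1/2}a_k(b)e^{2\pi i\phi_k(b)}\,\overline{\hat\psi\!\left(a\phi'_k(b)\right)}+O(\epsilon),
\end{equation*}
uniformly in $b$, so that $W_{f_k}$ concentrates, up to an $O(\epsilon)$ error, on the curve $a\phi'_k(b)\approx 1$. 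Because (\ref{Aepsd:cond:1}) is equivalent to $\phi'_k/\phi'_{k-1}\ge(1+d)/(1-d)$ at every $b$, these curves are separated in the scale variable by a factor bounded away from $1$; the choice $\Delta<d$ then guarantees that at a scale adapted to the $k$-th curve only the $k$-th component is seen. By linearity $W_f=\sum_k W_{f_k}$, and at such a scale the support restriction on $\hat\psi$ renders every $W_{f_j}$, $j\ne k$, of size $O(\epsilon)$, so that $W_f$ reduces there to $W_{f_k}$ up to $O(\epsilon)$.

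With this in hand the counting statement $M=N$ follows. Since $W_f$ depends only on $f$ and not on the representation, the localization estimate shows that, for each fixed $b$, $a\mapsto|W_f(a,b)|$ has exactly one sharply concentrated, $O(\epsilon)$-narrow bump per component, and the separation places the bumps in disjoint scale bands. As each representation contributes one bump per summand, the number of bumps equals both $N$ and $M$, whence $N=M$. Ordering the summands by increasing instantaneous frequency (well defined because the $\phi'_k$ are pointwise strictly increasing in $k$), the $l$-th bump is produced by the $l$-th summand of each representation; this matching is what lets us compare $(a_l,\phi_l)$ with $(A_l,\varphi_l)$ componentwise, and morally realizes Theorem \ref{theorem:identifiability:single} along the $l$-th ridge.

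It remains to read off the three bounds from that ridge. Fix $b$ and let $r_l(b)$ be the maximiser of $a\mapsto|W_f(a,b)|$ in the $l$-th band. Applying the localization estimate in each representation, the common peak location gives $r_l(b)\phi'_l(b)=r_l(b)\varphi'_l(b)+O(\epsilon)$, and dividing by $r_l(b)\ge 1/c_2+O(\epsilon)$ yields $|\phi'_l(b)-\varphi'_l(b)|\le E_I\epsilon$; the common modulus on the ridge gives $\tfrac12 a_l(b)=\tfrac12 A_l(b)+O(\epsilon)$, hence $|a_l(b)-A_l(b)|\le E_I\epsilon$; and, crucially, the \emph{argument} of the complex coefficient $W_f(r_l(b),b)$ returns the phase directly modulo $2\pi$, $2\pi\phi_l(b)\equiv 2\pi\varphi_l(b)+O(\epsilon)$, the integer ambiguity being removed by continuity in $b$ anchored at a common hinge point in the sense of Theorem \ref{theorem:identifiability:single}. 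This produces $|\phi_l(b)-\varphi_l(b)|\le E_I\epsilon$ \emph{without} integrating the frequency estimate, which is essential since integrating $|\phi'_l-\varphi'_l|$ would only give a bound growing in $t$. Tracking all constants through the localization estimate yields the explicit $E_I$ depending only on $c_1,c_2,d$.

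The hard part will be the localization estimate itself: establishing that $W_{f_k}$ concentrates on its ridge with a genuinely $O(\epsilon)$ (not merely $o(1)$) error, uniformly in $b$, and that inter-component contributions are negligible. This requires a careful first-order Taylor expansion of $a_k$ and $\phi_k$ across the effective support of $\psi$ at scale $a\approx 1/\phi'_k(b)$, controlled by (\ref{Aeps:cond:3}), together with a quantitative non-stationary-phase / integration-by-parts bound showing that component $j\ne k$ contributes only $O(\epsilon)$ at a scale adapted to component $k$; the admissible $\Delta$, and ultimately $E_I$, emerge from balancing the width of $\hat\psi$ against the separation factor $(1+d)/(1-d)$. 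A second, more combinatorial subtlety in the counting step is ruling out accidental cancellation or merging of bumps, which is precisely what the strict separation in (\ref{Aepsd:cond:1}) together with the lower bound $\inf_t a_k(t)>c_1$ prevents; this is the reason both hypotheses are imposed in Definition \ref{DefBClass}.
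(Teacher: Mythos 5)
Your overall strategy is the same as the paper's: both pass to the CWT with a band-limited wavelet ($\textup{supp}\,\widehat\psi\subset[1-\Delta,1+\Delta]$ with $\Delta$ tied to $d$), use the representation-independence of $W_f$ together with the localization estimate (the paper's Lemma \ref{lemma:identifiability:Wf_expansion}) to match the $N$ and $M$ ridges band by band and conclude $M=N$, and then compare amplitude, frequency and phase along each ridge. Your amplitude and phase arguments are essentially the paper's: the modulus comparison works because $\widehat\psi$ equals its maximum value to second order near the peak, and the phase is read off the argument of a complex number whose modulus is bounded below by roughly $c_1\sqrt{1-\Delta}/(2\sqrt{c_2})$, so an $O(\epsilon)$ perturbation moves the argument by $O(\epsilon)$ (the mod-$2\pi$ ambiguity is handled in the paper at the same level of brevity as in your sketch).

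There is, however, a genuine gap in your instantaneous-frequency bound. You extract $\phi'_l(b)$ from $r_l(b)$, the maximizer of $a\mapsto|W_f(a,b)|$ in the $l$-th band, and claim $r_l(b)\phi'_l(b)=r_l(b)\varphi'_l(b)+O(\epsilon)$. But the unperturbed profile $a\mapsto\frac{a_l(b)}{2}\sqrt{a}\,|\widehat\psi(a\phi'_l(b))|$ has a \emph{nondegenerate critical point} at its maximum: its derivative vanishes there and it is locally quadratic. An $O(\epsilon)$ perturbation in sup norm (which is all the localization estimate gives you --- you have no control on the $a$-derivative of the error term) moves the maximizer of such a profile by $O(\sqrt{\epsilon})$, not $O(\epsilon)$, and this is sharp. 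So the peak-location argument only yields $|\phi'_l(b)-\varphi'_l(b)|\le C\sqrt{\epsilon}$, which is weaker than the claimed $E_I\epsilon$. The paper avoids this by never differentiating or maximizing in $a$: from $|a_l(b)\widehat\psi(a\phi'_l(b))-A_l(b)\widehat\psi(a\varphi'_l(b))|\le 2E_m\epsilon/\sqrt{a}$ it evaluates at a scale $a_0$ where the mean value theorem applies with $\widehat\psi'$ bounded away from zero, so that the $O(\epsilon)$ discrepancy of the two $\widehat\psi$-\emph{values} converts linearly into an $O(\epsilon)$ discrepancy of their arguments $a_0\phi'_l(b)$ and $a_0\varphi'_l(b)$ (after first controlling $|a_l-A_l|$ at the peak scale, where the flatness of $\widehat\psi$ is a help rather than a hindrance). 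To repair your proof you should replace the peak-location step by such an off-peak value comparison, or by a first-derivative (reassignment-type) quantity in $b$ rather than the modulus maximum in $a$.
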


\section{Method and Theory}\label{method:theory}

We need a method to analyze observations generated from models (\ref{model:seasonality}) and (\ref{model:seasonality:discrete}) so as to extract information for the trend $T(t)$ and the dynamics of the seasonality, $A_k(t)$, $\phi_k(t)$, and $\phi_k'(t)$, $k=1,\ldots,K$. 
Time-frequency (TF) analysis \citep{flandrin:1999} is commonly applied to analyze the signal expressed in (\ref{section:model:new_model}). Reassignment approach  \citep{flandrin:1999,Chassande-MottinDaubechiesAuger:97,Chassande-MottinAugerFlandrin:03} is a technique in TF analysis aimed at giving a more accurate estimate of $\phi'_k(t)$ from the TF representation provided by, e.g., short time Fourier transform (STFT) or continuous wavelet transform (CWT). 
However, the estimation of $A_k(t)$, the reconstruction of each component $f_k(t)$  and the robustness to noise are not guaranteed in general. 

We consider a newly developed reassignment method referred to as the Synchrosqueezing transform (SST), which was introduced to study dynamical seasonality without coupling with trend or random errors \citep{daubechies_maes:1996,daubechies_lu_wu:2010}. In this section we show that theoretically SST can be used to accurately estimate the functions $\phi_k(t)$, $\phi'_k(t)$ and $A_k(t)$ when the trend is present, and its robustness to heteroscedastic, dependent random error processes, i.e. when the data are modeled by  (\ref{model:seasonality}) or (\ref{model:seasonality:discrete}). Before stating the SST algorithm, in the following subsection we introduce some notation first.

\subsection{Notation and Background Material}
Denote by $\mathcal{S}$ the Schwartz space and let $\mathcal{S}'$ be its dual (the tempered distribution space). When $g\in\mathcal{S}'$ and $h\in\mathcal{S}$, $g(h)$ means $g$ acting on $h$. Here, sometimes we use the notation $g(h):=\int gh\ud t$ which is consistent with the case when $g$ is an integrable function.
Given a function $h\in \mathcal{S}$, its Fourier transform is defined as $\widehat{h}(\xi):=\int_{-\infty}^\infty h(t)e^{-i2\pi\xi t}\ud t$. The Fourier transform of $g\in\mathcal{S}'$ exists in the distribution sense and is defined by $\widehat{g}(h):=g(\widehat{h})$. 

Take $\psi\in\mathcal{S}$. 
For $k\in \NN\cup\{0\}$, define the following abbreviations:
\begin{align*}
\psi_{a,b}^{(k)}(x):=\frac{1}{a^{k+1/2}}\psi^{(k)}\left(\frac{x-b}{a}\right),\,\,\psi_{a,b}(x):=\psi_{a,b}^{(0)}(x)\,\,\mbox{ and }\,\,\psi_{a}^{(k)}(x):=\psi_{a,0}^{(k)}(x),
\end{align*}
where $\psi^{(k)}$ is the $k$-th derivative of $\psi$, $a>0$ and $b\in\RR$. 
Recall that the  CWT \citep{daubechies:1992} of a given $f(t)\in \mathcal{S}'$ is defined by
\begin{equation}\label{cwt}
W_f(a,b)=\int_{-\infty}^\infty f(t)\overline{\psi_{a,b}(t)}\ud t,
\end{equation}
where $a>0$ and $b\in\RR$. Here we follow the convention in the wavelet literature that $\psi$ is called the mother wavelet, $a$ means scale and $b$ means time. To ease the notation, the moments of $\psi$ are denoted as $I^{(k)}_i=\int_\RR |x|^i|\psi^{(k)}(x)|\ud x$ for $k=0,1,\ldots$.

Let $W$ be the standard Brownian motion and $D$ be the differentiation operator in the general sense. Then $DW$ is the Gaussian white noise. Denote $W^{(n)}:=D^nW$, $n\geq 1$, be the $n$-th differentiation of the standard Brownian motion in the general sense. Recall that $DW$ is a special case of generalized random process (GRP) \citep{Gelfand:1964}. Fix a GRP $\Phi$. When $\psi\in\mathcal{S}$, $\Phi(\psi)$ is understood as a random variable modeling the measurement of $\Phi$ when it is characterized by the {\it measurement function} $\psi$. 

Next we recall the notion of the power spectrum of a stationary GRP $\Phi$. The correlation functional of $\Phi$, denoted as $B_\Phi$, is:
\begin{equation}
B_\Phi(\phi,\psi):=\mathbb{E}[\Phi(\phi)\overline{\Phi(\psi)}],
\end{equation}
where $\phi,\psi\in\mathcal{S}$ \citep{Gelfand:1964}. Then, there exists a functional $B_0$ so that
\begin{equation}
B_\Phi(\phi,\psi)=(B_0,\phi\star \psi^*),
\end{equation}
where $\star$ stands for convolution. Here, $B_0$ is a generalized function of one variable which is the Fourier transform of some positive tempered measure \cite[Equation 3 above Theorem 1 in Chapter III]{Gelfand:1964}. 
Moreover, by Theorem 1 in Chapter III of \cite{Gelfand:1964}, we have 
\begin{equation}
B_\Phi(\phi,\psi)=\int\widehat{\phi}(\xi)\overline{\widehat{\psi}(\xi)}\ud \eta(\xi),
\end{equation}
where $\eta$ is the unique positive tempered measure associated with $\Phi$ so that $\widehat{\eta}=B_0$. In general, we call $\ud \eta$ {\it the power spectrum} of the GRP $\Phi$. Thus, the variance of $\Phi(\psi_{a})$, where $\psi_a(t):=\frac{1}{\sqrt{a}}\psi\big(\frac{t}{a}\big)$, $a>0$, is simply:
\begin{align}
\text{Var}(\Phi(\psi_{a}))&\,=\mathbb{E}\Phi(\psi_{a})\overline{\Phi(\psi_{a})}=\int \widehat{\psi_{a}}(\xi)\overline{\widehat{\psi_{a}}(\xi)}\ud \eta(\xi)=a\|\widehat{\psi}(a\xi)\|_{L^2(\RR,\eta)}^2.\label{proof:thm:relationship_varX:1}
\end{align}
It is clear that the variance of $\Phi(\psi_{a})$ depends on both the scale $a$ and the power spectrum. Notice that in the special case where $\Phi=DW$, $\ud\eta(\xi)=\ud \xi$ and so the variance of $\Phi(\psi_a)$ does not depend on the scale $a$. 

\subsection{Synchrosqueezing transform approach}

In this subsection, we first briefly recall the main idea of reallocation methods and introduce the synchrosqueezing transform (SST) algorithm originally developed for reconstructing the seasonal components from a signal without contamination of noise. Then we introduce the SST to cope with the case when we have noisy observations from model (\ref{model:seasonality}) or (\ref{model:seasonality:discrete}). 

Take a TF representation, denoted as $R_f:(t,\xi)\in \RR^2\to \CC$, determined by $f(t)\in\mathcal{A}^{c_1,c_2}_{\epsilon,d}$ based on a TF analysis, for example, STFT or CWT. The reassignment methods ``sharpen'' $R_f(t,\xi)$ by ``re-allocating'' the value at $(t,\xi)$ to a different point $(t',\xi')$ according to some {\it reassignment rules} \citep{flandrin:1999}.  

The SST algorithm, a special case of the reassignment method tailored to analyze a clean function $f(t)\in \mathcal{A}^{c_1,c_2}_{\epsilon,d}$ without coupling with noise, is composed of three steps.  First, choose the mother wavelet $\psi\in\mathcal{S}$ so that  $\text{supp }\widehat{\psi}\subset[1-\Delta,1+\Delta]$, where $\Delta\ll 1$, and calculate $W_f(a,b)$, the CWT of $f(t)$ as given in (\ref{cwt}). Second, calculate the function $\omega_f(a,b)$ defined on $\RR^+\times\RR$, which plays the role of the reassignment rule: 
\begin{equation}\label{alogithm:sst:ressigment}
\omega_f(a,b):=\left\{\begin{array}{ll}
\frac{-i\partial_bW_f(a,b)}{2\pi W_f(a,b)} & \mbox{when}\quad |W_f(a,b)|\neq 0;\\
\infty&\mbox{when}\quad |W_f(a,b)|=0.
\end{array}\right.
\end{equation}
By its definition, $\omega_f(a,b)$ contains abundant information about the instantaneous frequency functions in $f$.  Indeed, when $f$ is a purely harmonic function, $\omega_f(a,b)$ takes on the value of the frequency of $f$ if it is finite. We refer to \cite{daubechies_lu_wu:2010} for the details.
Third, the SST of $f(t)$ is defined by re-assigning the TF representation $W_f(a,b)$ according to the reassignment rule $\omega_f(a,b)$:
\begin{equation}\label{alogithm:sst:formula}
S^{\Gamma}_f(t,\xi):=\lim_{\alpha\to 0}\hspace{-10pt}\int\limits_{\{(a,t):~|W_f(a,t)|\geq {\Gamma}\}}\hspace{-20pt}h_\alpha(|\omega_f(a,t)-\xi|)W_f(a,t)a^{-3/2}\ud a
\end{equation}
where $(t,\xi)\in\RR\times\RR^+$, $\alpha,\Gamma>0$, $h_\alpha(t):=\frac{1}{\alpha}h(\frac{t}{\alpha})$, $h\in L^1(\RR)$, and $h_\alpha\to \delta$ weakly when $\alpha\to 0$ with $\delta$ denoting the Dirac delta function. Thus, at each time point $t$, $S^{\Gamma}_f(t,\xi)$ collects all CWT coefficients with scales $a$ at which the CWT detects a seasonal component with frequency close to $\xi$. As we will see in Theorem \ref{section:theorem:stability}, according to the reassignment rule (\ref{alogithm:sst:ressigment}), $S^{\Gamma}_f(t,\xi)$ will only have dominant values around $\phi_k'(t)$ which allows us an accurate estimate of $\phi_k'(t)$; see Figure \ref{fig:simulation:clean} for a numerical illustration. We refer to Section \ref{implementation} for details of the construction  and implementation of $\widetilde\phi'_k(t)$. To reconstruct the $k$-th component $f_k(t)=A_k(t)\cos(2\pi\phi_k(t))$ in $f$, its amplitude modulation $A_k(t)$ and phase $\phi_k(t)$, we resort to the reconstruction formula of CWT and consider the following estimators: 
\begin{align}
& \widetilde{f}^{\Gamma,\CC}_{k}(t):=\mathcal{R}_\psi^{-1}\int_{\frac{1-\Delta}{\phi_k'(t)}}^{\frac{1+\Delta}{\phi_k'(t)}}  W_f(a,t){\boldsymbol{\chi}}_{|W_f(a,t)| >\Gamma}(a) a^{-3/2} \ud a,\,\,\,\widetilde{f}^{\Gamma}_{k}(t):=\mathfrak{Re} \widetilde{f}^{\Gamma,\CC}_{k}(t),\label{alogithm:sst:reconstruction}
\end{align}
 where $\mathcal{R}_\psi:=\int \frac{\widehat{\psi}(\zeta)}{\zeta}\ud \zeta$, $\boldsymbol{\chi}$ denotes the indicator function, and $\mathfrak{Re}$ means taking the real part,
\begin{equation*}
\widetilde{A}_k(t):=|\widetilde{f}^{\Gamma,\CC}_{k}(t)|
\end{equation*}
and an estimator for $\phi_k(t)$ can then be obtained by unwrapping the phase of the complex-valued signal $\frac{\widetilde{f}^{\Gamma,\CC}_{k}(t)}{\widetilde{A}_k(t)}$. We mention that the reconstruction formulae (\ref{alogithm:sst:reconstruction}) is slightly different from that in Estimate 3.9 in \cite{daubechies_lu_wu:2010}. These formula are actually equivalent, as is shown in  its proof in the paper. In practice we find that (\ref{alogithm:sst:reconstruction}) performs slightly better numerically. Moreover, it can be applied to other time-frequency analysis techniques which provide accurate instantaneous frequency estimation. Thus we suggest it as our reconstruction formulae.

Now, consider that we have discrete-time observations of $f\in \mathcal{A}^{c_1,c_2}_{\epsilon,d}$, that is, $\boldsymbol{f}:=\{f(n\tau)\}_{n\in\ZZ}$ and $\boldsymbol{f_n}=f(n\tau)$, where $\tau>0$ is the sampling interval. In this case we model the discrete-time observations as a delta chain, $f_\tau =\tau\sum_{i\in\ZZ}f(t)\delta_{n\tau}$, where $\delta_{n\tau}$ is the delta measure at $n\tau$, which is a distribution, and plug it into (\ref{cwt}). Since $\psi\in\mathcal{S}$, the CWT $W_{f_\tau}(a,b)$ is well-defined and is equal to $\tau \sum_{m\in\ZZ} f(m\tau)\frac{1}{a^{1/2}}\psi\big(\frac{m\tau -b}{a}\big)$. This is simply the discretization of (\ref{cwt}), so for $a>0$ and $n\in\ZZ$ denote
\begin{equation}\label{algorithm:sst:discrete:CWT}
\begin{split} 
&W_{\boldsymbol{f}}(a,n\tau):=\,\tau \sum_{m\in\ZZ} \boldsymbol{f_m}\frac{1}{a^{1/2}}\psi\big(\frac{m\tau -n\tau}{a}\big),\\
&\partial_bW_{\boldsymbol{f}}(a,n\tau):=\,\tau \sum_{m\in\ZZ} \boldsymbol{f_m}\frac{1}{a^{3/2}}\psi'\big(\frac{m\tau -n\tau}{a}\big).
\end{split}
\end{equation}
Similarly, we have the discretization of (\ref{alogithm:sst:ressigment}) and (\ref{alogithm:sst:formula}), which are denoted as  
\begin{equation} \label{alogithm:sst:discrete:ressigment}
\begin{split}
&\omega_{\boldsymbol{f}}(a,n\tau):=\left\{\begin{array}{ll}
\frac{-i\partial_bW_{\boldsymbol{f}}(a,n\tau)}{2\pi W_{\boldsymbol{f}}(a,n\tau)} & \mbox{when}\quad |W_{\boldsymbol{f}}(a,n\tau)|\neq 0;\\
\infty&\mbox{when}\quad |W_{\boldsymbol{f}}(a,n\tau)|=0.
\end{array}\right.\\
&S^{\Gamma}_{\boldsymbol{f}}(n\tau,\xi):=\lim_{\alpha\to 0}\hspace{-10pt}\int\limits_{\{(a,n\tau):~|W_{\boldsymbol{f}}(a,n\tau)|\geq {\Gamma}\}}\hspace{-20pt}h_\alpha(|\omega_{\boldsymbol{f}}(a,n\tau)-\xi|)W_{\boldsymbol{f}}(a,n\tau)a^{-3/2}\ud a,
\end{split}
\end{equation}
where $\Gamma>0$ and $n\in\ZZ$. Then the estimation of $f_k$, $A_k$ and $\phi_k$, $k=1,\ldots,K$, follows immediately, for example, for $n\in\ZZ$ we have  
\begin{align}
\widetilde{f}^{\Gamma,\CC}_{k,n}:=\mathcal{R}_\psi^{-1}\int_{\frac{1-\Delta}{\phi_k'(n\tau)}}^{\frac{1+\Delta}{\phi_k'(n\tau)}}  W_{\boldsymbol{f}}(a,n\tau){\boldsymbol{\chi}}_{|W_{\boldsymbol{f}}(a,n\tau)| >\Gamma} a^{-3/2} \ud a,\,\,\,\widetilde{f}^{\Gamma}_{k,n}:=\mathfrak{Re} \widetilde{f}^{\Gamma,\CC}_{k,n}.\label{alogithm:sst:discrete:reconstruction}
\end{align}
 The above discussions concern the cases when the observations are not contaminated with noise and do not contain trend. If we observe $Y$ satisfying model  (\ref{model:seasonality}), we simply replace $f$  in (\ref{alogithm:sst:ressigment}), (\ref{alogithm:sst:formula}) and (\ref{alogithm:sst:reconstruction}) by $Y$, and we consider the following the trend estimator:
\begin{equation*}
\widetilde{T}:=Y-\mathfrak{Re}\int_{\frac{1-\Delta}{c_2}}^{\frac{1+\Delta}{c_1}} W_Y(a,b)a^{-3/2}\ud a,
\end{equation*}   
which is a GRP in general. Suppose we have discrete-time observations $\boldsymbol{Y}=\{\boldsymbol{Y_n}\}_{n\in\ZZ}$ from model (\ref{model:seasonality:discrete}) so that $\boldsymbol{Y_n}=f(n\tau)+T(n\tau)+\sigma(n\tau)\Phi_n$, where $\tau>0$ is the sampling interval. Then we replace $\boldsymbol{f}$ in (\ref{algorithm:sst:discrete:CWT}), (\ref{alogithm:sst:discrete:ressigment}) and (\ref{alogithm:sst:discrete:reconstruction}) by $\boldsymbol{Y}$, and then reconstruct the trend at time $n\tau$, $n\in\ZZ$, by the following estimator:
\begin{equation*}
\widetilde{T}_n:=\boldsymbol{Y_n}-\mathfrak{Re}\int_{\frac{1-\Delta}{c_2}}^{\frac{1+\Delta}{c_1}} W_{\boldsymbol{Y}}(a,n\tau)a^{-3/2}\ud a.
\end{equation*}

\subsection{Theory}\label{theory}

In this subsection we state theoretical properties of the above SST approach and summarize its advantages over other methods, especially for our purpose, the seasonality and trend analysis.
Before stating the robustness theorems, we make the following assumptions and define some notation.

\begin{description}
\item[\textbf{Assumption (A1):}] Assume the mother wavelet $\psi\in\mathcal{S}$ is chosen such that $\widehat{\psi}\subset[1-\Delta,1+\Delta]$, where $\Delta<d/(1+d)$, and $\mathcal{R}_\psi=1$. Also assume that $T:\RR\to\RR$ is in $C^1(\RR)$ so that its Fourier transform exists in the distribution sense, and $|T(\psi_{a,b})|,\,|T'(\psi_{a,b})|\leq C_T\epsilon$ for all $b\in\RR$ and $a\in (0,\frac{1+\Delta}{c_1}]$, for some $C_T\geq0$.
\item[\textbf{Assumption (A2):}] Suppose the power spectrum $\ud \eta$ of the given GRP $\Phi$ satisfies $\int (1+|\xi|)^{-2l}\ud\eta<\infty$ for some $l>0$. Also assume $\sigma\in C^\infty$ so that $\|\sigma\|_{L^\infty}\ll 1$ and $\epsilon_\sigma:=\max_{\ell=1,\ldots,\max\{1,l\}}\{\|\sigma^{(\ell)}\|_{L^\infty}\}\ll 1$, and $\var\Phi(\psi)=1$. 
\item[\textbf{Notation (N1):}] Denote by $E_0$ the universal constant depending on the moments of $\psi$ and $\psi'$, $c_1,c_2$ and $d$ defined in (\ref{proof:definition_error_constant_E0})  in the Supplementary. Denote by $E_T$ the universal constant depending on the moments of $\psi$ and $\psi'$, $C_T,c_1,c_2$ and $d$ defined in  (\ref{proof:definition:ET}) in the Supplementary.  They are related to the model bias introduced by the model $\mathcal{A}^{c_1,c_2}_{\epsilon,d}$.
 Denote by $E_i$, $i=1,\ldots,6$, constants depending on the power spectrum of $\Phi$, $c_1,c_2,d$ and the zeros and first moments of $\psi^{(k)}$, $k=1,\ldots,l+1$. These constants are related to the error process and are specified in (\ref{proof:definition_error_constant_E1E2}), (\ref{proof:definition_error_constant_E3E4}) and (\ref{proof:definition_error_constant_E5E6}) in the proof of Theorem \ref{section:theorem:stability}.
\end{description}
 
We now state the robustness property of SST when the seasonality plus trend signal is contaminated by an ``almost'' stationary GRP. The proof is postponed to the Supplementary.

\begin{thm}\label{section:theorem:stability}
Suppose $Y(t)$ follows model (\ref{model:seasonality}) and assumption  (A1) and (A2) hold. 
Then, when $\epsilon$ is small enough, for each $b\in\mathbb{R}$ and $\gamma>1$ we have the following results.
\begin{enumerate}
\item[(i)] For each $a\in [\frac{1-\Delta}{c_2},\frac{1+\Delta}{c_1}]$, with probability higher than $1-\gamma^{-2}$, we have
\begin{align}
\Big|W_Y(a,b)&-\sum_{l=1}^KA_l(b)e^{i2\pi \phi_l(b)}\sqrt{a}\overline{\widehat{\psi}\left(a\phi'_l(b)\right)}\Big|\leq \gamma(E_1\sigma(b)+E_2\epsilon_\sigma)+E_0\epsilon;\nonumber
\end{align}
\item[(ii)] for each $a\in Z_k(b):=\big[\frac{1-\Delta}{\phi_k'(t)},\frac{1+\Delta}{\phi_k'(t)}\big]$ with $|W_{f}(a,b)|>\gamma(E_3\sigma(b)+E_4\epsilon_\sigma)+(C_T+1)\epsilon^{1/3}$, where $k=1,\ldots,K$, with probability higher than $1-\gamma^{-2}$, we have
\begin{align}
|\omega_Y(a,b)-\phi'_k(b)|\leq&\, \frac{\gamma(E_3\sigma(b)+E_4\epsilon_\sigma)+E_0\epsilon}{|W_f(a,b)|};\nonumber
\end{align}
\item[(iii)] with probability higher than $1-\gamma^{-2}$, for $k=1,\ldots,K$ we have
\begin{align}
&\big|\widetilde{f}^{E_1\sigma(b)+E_2\epsilon_\sigma,\CC}_{k}(b)-A_k(b)e^{2\pi i \phi_k(b)}\big| \leq \, \big[\gamma(E_5\sigma(b)+E_6\epsilon_\sigma)+E_0\epsilon\big]\Delta. \nonumber
\end{align}
\item[(iv)] the trend estimator satisfies
\[
|\EE\widetilde{T}(\varphi_{h,b})-T(b)|\leq 2E_T \|\varphi\|_{1}\epsilon
\]
when $h>0$ is small enough, where $\varphi\in\mathcal{S}$ and $\varphi_{h,b}(t):=\frac{1}{h}\varphi(\frac{b-t}{h})\to \delta_b$ in the distribution sense when $h\to 0$.
\end{enumerate}
\end{thm}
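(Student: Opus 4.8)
The plan is to exploit the linearity of the continuous wavelet transform (\ref{cwt}) to split the problem into a deterministic part (seasonality plus trend) and a stochastic part (the heteroscedastic error), to treat parts (i)--(iii) through one common variance estimate followed by Chebyshev's inequality, and to handle part (iv) purely in expectation. Writing $Y=f+T+\sigma\Phi$, linearity gives
\begin{equation*}
W_Y(a,b)=W_f(a,b)+W_T(a,b)+W_{\sigma\Phi}(a,b).
\end{equation*}
For the clean seasonality $f\in\mathcal{A}^{c_1,c_2}_{\epsilon,d}$ I would first establish the standard SST leading-order expansion
\begin{equation*}
\Big|W_f(a,b)-\sum_{l=1}^K A_l(b)e^{i2\pi\phi_l(b)}\sqrt{a}\,\overline{\widehat\psi\big(a\phi'_l(b)\big)}\Big|\le E_0\epsilon,
\end{equation*}
valid for $a$ in the admissible range, by Taylor-expanding $A_l$ and $\phi_l$ about $b$ and invoking the slowly-varying conditions (\ref{Aeps:cond:3}); the separation condition (\ref{Aepsd:cond:1}) together with $\Delta<d/(1+d)$ and $\textup{supp}\,\widehat\psi\subset[1-\Delta,1+\Delta]$ guarantees that on each band $Z_k(b)$ only the $k$-th term survives. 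The trend contributes $|W_T(a,b)|=|T(\psi_{a,b})|\le C_T\epsilon$ directly from Assumption (A1), so the full deterministic error is $O(\epsilon)$. Everything then reduces to controlling the random term $W_{\sigma\Phi}(a,b)$.

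The core of the argument, and the step I expect to be the main obstacle, is the variance analysis of the heteroscedastic noise, since earlier SST results treated only stationary (indeed white) noise and the modulation $\sigma$ destroys stationarity. Because $\sigma\in C^\infty\cap L^\infty$ has bounded derivatives and $\psi\in\mathcal{S}$, the product $\sigma\,\overline{\psi_{a,b}}\in\mathcal{S}$, so $W_{\sigma\Phi}(a,b)=\Phi\big(\sigma\,\overline{\psi_{a,b}}\big)$ is a well-defined, mean-zero random variable whose variance, by the power-spectrum representation, is
\begin{equation*}
\var\big(W_{\sigma\Phi}(a,b)\big)=\int\big|\widehat{\sigma\,\overline{\psi_{a,b}}}(\xi)\big|^2\,\ud\eta(\xi).
\end{equation*}
I would Taylor-expand $\sigma(t)=\sigma(b)+\sum_{\ell\ge1}\tfrac{\sigma^{(\ell)}(b)}{\ell!}(t-b)^\ell$, so that the leading term reproduces $\sigma(b)^2\,a\|\widehat\psi(a\cdot)\|^2_{L^2(\RR,\eta)}$ exactly as in (\ref{proof:thm:relationship_varX:1}), while each higher term carries a factor $\sigma^{(\ell)}(b)=O(\epsilon_\sigma)$ multiplied by a wavelet moment $\int|x|^i|\psi^{(k)}(x)|\,\ud x=I^{(k)}_i$. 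Convergence of these integrals against $\ud\eta$ is precisely where Assumption (A2), $\int(1+|\xi|)^{-2l}\ud\eta<\infty$, enters, which is why derivatives of $\psi$ up to order $l+1$ appear in the constants of Notation (N1). Using $(x+y)^2\le2x^2+2y^2$ yields a bound of the form $\var\big(W_{\sigma\Phi}(a,b)\big)\le(E_1\sigma(b)+E_2\epsilon_\sigma)^2$, and Chebyshev's inequality gives $|W_{\sigma\Phi}(a,b)|\le\gamma(E_1\sigma(b)+E_2\epsilon_\sigma)$ with probability at least $1-\gamma^{-2}$; combining with the two deterministic bounds proves (i).

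Part (ii) follows by writing, on the event of (i) together with the analogous event for $\partial_bW_{\sigma\Phi}(a,b)=\Phi\big(\sigma\,\overline{\partial_b\psi_{a,b}}\big)$ (which brings in $\psi'$ and hence the constants $E_3,E_4$),
\begin{equation*}
\omega_Y(a,b)-\phi'_k(b)=\frac{-i\,\partial_bW_Y(a,b)-2\pi\phi'_k(b)W_Y(a,b)}{2\pi W_Y(a,b)}.
\end{equation*}
Since the leading terms satisfy $-i\,\partial_b(\cdot)=2\pi\phi'_k(b)(\cdot)$ identically, the numerator is $O(E_0\epsilon)$ plus a mean-zero noise term, while the size condition $|W_f(a,b)|>\gamma(E_3\sigma(b)+E_4\epsilon_\sigma)+(C_T+1)\epsilon^{1/3}$ bounds the denominator $|W_Y|$ from below; the exponent $\tfrac13$ is the usual SST balancing choice. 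For (iii) I would substitute the expansion of (i) into (\ref{alogithm:sst:reconstruction}) and change variables $\zeta=a\phi'_k(b)$, $\ud a/a=\ud\zeta/\zeta$. The off-diagonal components vanish by the support and separation conditions, while the diagonal term becomes $A_k(b)e^{2\pi i\phi_k(b)}\int_{1-\Delta}^{1+\Delta}\overline{\widehat\psi(\zeta)}\,\zeta^{-1}\,\ud\zeta=A_k(b)e^{2\pi i\phi_k(b)}$ because $\mathcal{R}_\psi=1$. The remainder is the band integral of the error terms over a $\zeta$-interval of length $2\Delta$, which produces the overall factor $\Delta$; computing the variance of the band-integrated noise yields the fresh constants $E_5,E_6$, and Chebyshev again delivers probability $1-\gamma^{-2}$.

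Part (iv) is deterministic because we pass to the expectation first. As $\Phi$ is mean-zero, $\EE W_{\sigma\Phi}=0$, so
\begin{equation*}
\EE\widetilde T(t)=f(t)+T(t)-\mathfrak{Re}\int_{\frac{1-\Delta}{c_2}}^{\frac{1+\Delta}{c_1}}\big(W_f(a,t)+W_T(a,t)\big)a^{-3/2}\,\ud a.
\end{equation*}
Since every instantaneous frequency lies in $(c_1,c_2)$, the full band $[\tfrac{1-\Delta}{c_2},\tfrac{1+\Delta}{c_1}]$ covers all scales $\phi'_l(t)^{-1}$, so the same change of variables reconstructs $\mathfrak{Re}\int W_f\,a^{-3/2}\,\ud a=f(t)+O(\epsilon)$, while the trend integral is $O(\epsilon)$ by (A1); hence $\|\EE\widetilde T-T\|_{L^\infty}\le E_T\epsilon$. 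Finally I would estimate
\begin{equation*}
|\EE\widetilde T(\varphi_{h,b})-T(b)|\le\big|(\EE\widetilde T-T)(\varphi_{h,b})\big|+\big|T(\varphi_{h,b})-T(b)\big|\le E_T\epsilon\,\|\varphi\|_{1}+o(1),
\end{equation*}
where the first term uses $\|\varphi_{h,b}\|_{1}=\|\varphi\|_{1}$ and the second tends to $0$ as $h\to0$ because $\varphi_{h,b}\to\delta_b$ and $T$ is continuous; choosing $h$ small enough absorbs the $o(1)$ into a second multiple of $E_T\epsilon\|\varphi\|_{1}$, giving the stated bound $2E_T\|\varphi\|_{1}\epsilon$.
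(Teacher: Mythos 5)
Your overall architecture for (i) and (iv) matches the paper's: split $W_Y=W_f+W_T+\Phi_\sigma(\psi_{a,b})$ by linearity, bound the deterministic parts by $O(\epsilon)$ via the $\mathcal{A}^{c_1,c_2}_{\epsilon,d}$ expansion and Assumption (A1), control $\var(\Phi_\sigma(\psi_{a,b}))$ by comparing $\widehat{\sigma\psi_{a,b}}$ with $\sigma(b)\widehat{\psi_{a,b}}$ (the paper does this by integration by parts rather than a full Taylor series of $\sigma$, which is safer since only finitely many derivatives of $\sigma$ are assumed bounded, but the finite-order version of your expansion would serve), and finish with Chebyshev, respectively with a pure expectation computation for the trend. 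Those parts are essentially the paper's Steps 0--2 and 5.

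The genuine gap is in part (ii). You write the error as a fraction with denominator $2\pi W_Y(a,b)$ and assert that the hypothesis $|W_f(a,b)|>\gamma(E_3\sigma(b)+E_4\epsilon_\sigma)+(C_T+1)\epsilon^{1/3}$ ``bounds the denominator $|W_Y|$ from below.'' It does not: that hypothesis constrains the deterministic quantity $|W_f(a,b)|$, while $W_Y(a,b)=W_{f+T}(a,b)+\Phi_\sigma(\psi_{a,b})$ is random and, since only a second moment of the noise is assumed, takes values arbitrarily close to $0$ with positive probability. Consequently the ratio $\omega_Y(a,b)$ need not even have a finite mean, so you cannot apply Chebyshev to it directly; and if instead you intersect the two Chebyshev events for $\Phi_\sigma(\psi_{a,b})$ and $\Phi_\sigma(\psi'_{a,b})$ you only get probability $1-2\gamma^{-2}$, not the stated $1-\gamma^{-2}$. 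The paper resolves this with a dedicated ``delta method'' lemma (Lemma S.3): the noise in the denominator is truncated to $\zeta^\Omega:=\zeta\boldsymbol{\chi}_{\CC\setminus B_{|y_0|/4}(-y_0)}$, the mean and variance of the truncated ratio are computed, and the probability budget is split between the truncation event ($\tfrac{8}{9}\gamma^{-2}$) and the Chebyshev event for the ratio ($\tfrac{1}{9}\gamma^{-2}$) so that the product still exceeds $1-\gamma^{-2}$. Some device of this kind is unavoidable and is absent from your plan. A second, smaller omission of the same flavor occurs in (iii): the reconstruction integral carries the indicator $\boldsymbol{\chi}_{|W_Y(a,b)|>\Gamma}$, which is itself random; you integrate the bound from (i) as if the thresholded region were deterministic, whereas the paper must separately control the contribution of the set where $\boldsymbol{\chi}_{|W_Y|>\Gamma}$ and $\boldsymbol{\chi}_{|W_{f+T}|>\Gamma}$ disagree before computing the variance of the band-integrated noise.
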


We have some remarks about the theorem. 
\begin{enumerate}
\item From (ii) in Theorem \ref{section:theorem:stability}, it is clear that when we estimate the instantaneous frequency $\phi_k'(b)$, the larger the $|W_f(a,b)|$ is the smaller the estimation error is.
\item Notice that each of the error bounds in Theorem \ref{section:theorem:stability} consists of two terms. The first term is related to the error process $\sigma\Phi$ and its heteroskedasticity modeled by $\sigma(b)$, and the second term is related to the model bias $\epsilon$ introduced when we use $\mathcal{A}^{c_1,c_2}_{\epsilon}$/$\mathcal{A}^{c_1,c_2}_{\epsilon,d}$ to model the seasonality. In the first term, $\sigma(b)$  and $\sigma_\epsilon$  respectively characterize the noise level and the ``non-stationarity'' of the error process. Notice that when $\sigma(b)$ dominates $\epsilon$ and $\epsilon_\sigma$, the estimation error is of the same order as $\sigma(b)$, which is the standard deviation of the error process. In this case, if we have interest in recovering the error process, we may choose a smaller $\Delta$ so that the estimation error is smaller than $\sigma(b)$ and hence with high probability the realization of the error process at hand can be approximated accurately. We also comment that in the proof of Theorem  \ref{section:theorem:stability} the autocorrelation structure of the error process is not used; by taking this structure into consideration, we may achieve a better estimation scheme.
\item Notice that in result (iv) of Theorem \ref{section:theorem:stability} we do not give a probability bound statement for the trend estimator as we do for the seasonal components estimators, instead we only bound the pointwise estimation bias. Although the mother wavelet $\psi$ help us to ``measure'' a GRP when we estimate the seasonal components, in general the value of a GRP cannot be accessed at any point and have to ``measure'' it by a Schwartz function $\varphi$. Specifically, to access the value of $T$ at time $b$ we can take $h\to0$ in $T(\varphi_{b,h})$; however, as $h\to 0$, the GRP $\widetilde{T}$ blow up. Therefore, we have no access to the variance of $\widetilde{T}$. For example, consider the case $T=0$ and $\Phi$ is the Gaussian white noise and ask if we are able to confirm $T(b)=0$ given the GRP $T+\Phi$. By a direct calculation, $\EE(\Phi(\varphi_{b,h}))=0$ but $\textup{var}(\Phi(\varphi_{b,h}))=\int|\widehat{\varphi}(h\xi)|^2\ud\xi$, which blows up as $h\to0$.  We emphasize that in the discrete setup, which is the case in practice, this problem disappears and we are able to provide a probability bound statement, given in Theorem \ref{section:theorem:ARMAstability}, (iv).
\end{enumerate}

Note that Theorem \ref{section:theorem:stability} may not be always applicable. For example, although all discretized CARMA GRP are ARMA time series \citep{Brockwell:2010}, not every ARMA$(p,q)$ time series can be embedded into a CARMA$(p',q')$ GRP for some $p',q'\in \NN\cup\{0\}$ \citep{Brockwell:1995}. 
The following theorem states the robust property of the SST approach when the data come from model (\ref{model:seasonality:discrete}). First, we introduce the following additional assumptions and further notation.

\begin{description}
\item[\textbf{Assumption (A3):}] For the time series $\boldsymbol{Y}=\{Y_n\}_{n\in\ZZ}$ in model (\ref{model:seasonality:discrete}), we assume $A_k(t)\in C^2(\RR)$ and $\sup_{t\in\RR}|A_k''(t)|\leq \epsilon c_2$ for all $k=1,\ldots,K$. We also assume in addition to Assumption (A1) that $T\in C^2$ so that $|T''(\psi_{a,b})| \leq C_T\epsilon$ for all $b\in\RR$ and $a\in (0,\frac{1+\Delta}{c_1}]$.     Suppose the sampling interval $\tau$ satisfies $0<\tau\leq \frac{1-\Delta}{(1+\Delta)c_2}$. 
\item[\textbf{Assumption (A4):}] Assume $\textup{var}(\Phi_n)=1$ and $\sigma\in C^2$ so that $\|\sigma\|_{L^\infty}\ll 1$, $\epsilon_\sigma:=\max\{\|\sigma^{(1)}\|_{L^\infty},\|\sigma^{(2)}\|_{L^\infty} \}\ll 1$.

\item[\textbf{Notation (N3):}] Denote by $E_{\tau,0}$ the universal constant depending on $\tau$, the moments of $\psi$ and $\psi'$, $c_1$, $c_2$ and $d$. Denote by $E_{T,0}$ the universal constant  depending on $\tau$, the moments of $\psi$ and $\psi'$, $C_T, c_1$, $c_2$ and $d$.  These constants are related to the trend and the model bias introduced by the $\mathcal{A}^{c_1,c_2}_{\epsilon,d}$ class and are influenced by the sampling interval $\tau$.      Let $E_{\tau,i}$, $i=1,\ldots,6$, $E_{T,1}$ and $E_{T,2}$ be constants depending on $\tau$, the  given error process $\Phi$, $c_1,c_2,d$ and the zeros and first moments of $\psi$ and $\psi'$. 

\end{description}

\begin{thm}\label{section:theorem:ARMAstability}
Take a time series $\boldsymbol{Y}=\{Y_n\}_{n\in\ZZ}$ following model (\ref{model:seasonality:discrete}) and suppose assumptions (A1), (A3) and (A4) hold. 
Then, if $\epsilon$ is small enough, for each $n\in\mathbb{Z}$ and $\gamma>1$ we have the following results.
\begin{enumerate}
\item[(i)] For each $a\in [\frac{1-\Delta}{c_2},\frac{1+\Delta}{c_1}]$, with probability higher than $1-\gamma^{-2}$, we have
\begin{align}
\Big|W_{\boldsymbol{Y}}(a,n\tau)-\sum_{l=1}^KA_l(n\tau)e^{i2\pi \phi_l(n\tau)}\sqrt{a}\overline{\widehat{\psi}\left(a\phi'_l(n\tau)\right)}\Big|\leq\, \gamma(E_{\tau,1}\sigma(n\tau)+\tau^2E_{\tau,2}\epsilon_\sigma)+E_{\tau,0}\epsilon;\nonumber
\end{align}
\item[(ii)] for each $a\in Z_k(n\tau):=\big[\frac{1-\Delta}{\phi_k'(n\tau)},\frac{1+\Delta}{\phi_k'(n\tau)}\big]$ with $|W_{f}(a,n\tau)|>\gamma(E_{\tau,3}\sigma(n\tau)+\tau^2E_{\tau,4}\epsilon_\sigma)+(C_T+1)\epsilon^{1/3}$, where $k=1,\ldots,K$, with probability greater than $1-\gamma^{-2}$, we have
\begin{align}
|\omega_{\boldsymbol{Y}}(a,n\tau)-\phi'_k(n\tau)|\leq&\, \frac{\gamma(E_{\tau,3}\sigma(n\tau)+E_{\tau,4}\epsilon_\sigma)+E_{\tau,0}\epsilon}{|W_{\boldsymbol{f}}(a,n\tau)|};\nonumber
\end{align}
\item[(iii)] with probability higher than $1-\gamma^{-2}$, for $k=1,\ldots,K$ we have
\begin{align}
&\big|\widetilde{f}^{E_{\tau,1}\sigma(n\tau)+\tau^2E_{\tau,2}\epsilon_\sigma,\CC}_{k,n}-A_k(n\tau)e^{2\pi i \phi_k(n\tau)}\big| \leq \, \big[\gamma(E_{\tau,5}\sigma(n\tau)+E_{\tau,6}\epsilon_\sigma)+E_{\tau,0}\epsilon\big]\Delta. \nonumber
\end{align}
\item[(iv)] with probability higher than $1-\gamma^{-2}$, we have
\[
\big|\widetilde{T}_n-T(n\tau)\big|\leq \gamma (E_{T,1}\sigma(n\tau) +E_{T,2}\epsilon_\sigma )+E_{T,0}\epsilon. 
\]
\end{enumerate}
\end{thm}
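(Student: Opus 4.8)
The plan is to mirror the proof of Theorem~\ref{section:theorem:stability} for the continuous case, treating the delta-chain CWT in (\ref{algorithm:sst:discrete:CWT}) as a Riemann-sum approximation of the continuous CWT and controlling the resulting discretization error with the extra smoothness hypotheses in Assumption~(A3). Throughout I would use the linear decomposition
\begin{equation*}
W_{\boldsymbol{Y}}(a,n\tau)=W_{\boldsymbol{f}}(a,n\tau)+W_{\boldsymbol{T}}(a,n\tau)+W_{\boldsymbol{\sigma\Phi}}(a,n\tau),
\end{equation*}
and the analogous one for $\partial_bW_{\boldsymbol{Y}}$ (with $\psi'$ in place of $\psi$), and bound the three pieces separately: the signal term supplies the leading expression $\sum_lA_l(n\tau)e^{i2\pi\phi_l(n\tau)}\sqrt{a}\,\overline{\widehat\psi(a\phi'_l(n\tau))}$, the trend term is a deterministic $O(\epsilon)$ remainder, and the noise term is a mean-zero random variable whose second moment produces the probabilistic bound via Chebyshev's inequality.

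For the signal and trend pieces I would first pass from the discrete sum to the continuous CWT, writing the discretization error for $g\in\{f,T\}$ as $W_{\boldsymbol{g}}(a,n\tau)-W_g(a,n\tau)=\tau\sum_m g(m\tau)\overline{\psi_{a,n\tau}(m\tau)}-\int g(t)\overline{\psi_{a,n\tau}(t)}\,\ud t$ and estimating it by Poisson summation. Because $\widehat\psi$ is supported in $[1-\Delta,1+\Delta]$ the integrand's spectrum concentrates near the instantaneous frequencies, which are bounded by $c_2$, and the sampling condition $\tau\le\frac{1-\Delta}{(1+\Delta)c_2}$ of (A3) pushes all aliased replicas outside the relevant band, so the error is negligible. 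The residual caused by $A_k$ and $T$ being only approximately band-limited is controlled by $\sup|A_k''|\le\epsilon c_2$ and $|T''(\psi_{a,b})|\le C_T\epsilon$ and contributes a term of order $\epsilon$, with a $\tau$-dependent constant absorbed into $E_{\tau,0}\epsilon$. The continuous signal estimate and the trend bound then follow exactly as in Theorem~\ref{section:theorem:stability}(i) and Assumption~(A1).

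The main obstacle is the noise term $W_{\boldsymbol{\sigma\Phi}}(a,n\tau)=\tau\sum_m\sigma(m\tau)\Phi_m\,\overline{\psi_{a,n\tau}(m\tau)}$. Since $\EE\Phi_m=0$ it has mean zero, so I would bound its second moment, $\EE|W_{\boldsymbol{\sigma\Phi}}(a,n\tau)|^2=\tau^2\sum_{m,m'}\sigma(m\tau)\sigma(m'\tau)\,\EE[\Phi_m\overline{\Phi_{m'}}]\,\psi_{a,n\tau}(m\tau)\overline{\psi_{a,n\tau}(m'\tau)}$, evaluating it through the autocovariance sequence (equivalently, the spectral density) of the stationary series $\{\Phi_n\}$, in analogy with (\ref{proof:thm:relationship_varX:1}). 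The crux is to separate the ``stationary'' part from the ``non-stationarity'' correction: Taylor-expanding $\sigma(m\tau)$ to second order about $n\tau$ (legitimate since $\sigma\in C^2$ with $\|\sigma'\|_{L^\infty},\|\sigma''\|_{L^\infty}\le\epsilon_\sigma$ by (A4)) splits the double sum into a leading term proportional to $\sigma(n\tau)^2$ and remainders controlled by $\epsilon_\sigma$, the weights $|m\tau-n\tau|^j|\psi_{a,n\tau}(m\tau)|$ being summable because $\psi\in\mathcal{S}$ decays rapidly. Tracking the explicit powers of $\tau$ carried through these Riemann sums is precisely what produces the $\tau^2$ factors in $E_{\tau,2}$ and $E_{\tau,4}$ and pins down $E_{\tau,1},\ldots,E_{\tau,6}$; this bookkeeping, together with the summability of the autocovariances of $\{\Phi_n\}$ (automatic for ARMA), is the most delicate and calculation-heavy step. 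Chebyshev's inequality applied to (i) then yields the stated probability at least $1-\gamma^{-2}$.

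Parts (ii) and (iii) follow from (i) by the same ratio and reconstruction arguments as in Theorem~\ref{section:theorem:stability}. For (ii) I would bound $|\omega_{\boldsymbol{Y}}(a,n\tau)-\phi'_k(n\tau)|$ by dividing the numerator error (the analogous estimate for $\partial_bW_{\boldsymbol{Y}}$) by the lower bound on $|W_{\boldsymbol{f}}(a,n\tau)|$ guaranteed by the threshold hypothesis, the $\epsilon^{1/3}$ cut-off being the level at which the signal transform provably dominates the $O(\epsilon)$ error, as in \cite{daubechies_lu_wu:2010}. For (iii) I would substitute the leading expression from (i) into the band integral (\ref{alogithm:sst:discrete:reconstruction}), use $\mathcal{R}_\psi=1$ to recover $A_k(n\tau)e^{2\pi i\phi_k(n\tau)}$, and bound the residual by the error in (i) integrated over the band of width $O(\Delta/\phi'_k)$, giving the factor $\Delta$. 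Finally, for (iv) the integral over $[\frac{1-\Delta}{c_2},\frac{1+\Delta}{c_1}]$ reconstructs the full seasonal signal $f(n\tau)$ up to $O(\epsilon)$, so $\widetilde{T}_n-T(n\tau)$ reduces to $\sigma(n\tau)\Phi_n$ minus the seasonal-band part of the noise transform, plus deterministic $O(\epsilon)$ terms; crucially, and in contrast to the continuous case discussed in the remarks following Theorem~\ref{section:theorem:stability}, the pointwise noise $\sigma(n\tau)\Phi_n$ now has the finite variance $\sigma(n\tau)^2\,\var\Phi_n=\sigma(n\tau)^2$ by (A4), so the combined random part has a finite second moment and Chebyshev's inequality again delivers the genuine probability bound.
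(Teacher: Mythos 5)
Your proposal follows essentially the same route as the paper's proof: the same three-way decomposition of $W_{\boldsymbol{Y}}$, the same Poisson-summation treatment of the discretization (with the support of $\widehat\psi$ and the sampling condition killing the aliased replicas of the locally-harmonic approximation, and the second-derivative bounds of (A3) controlling the non-band-limited residual via integration by parts, which is where the $\tau^2$ factors arise), the same spectral-measure computation of $\var(\Psi_\sigma(a))$ with a Taylor expansion of $\sigma$ isolating the $\epsilon_\sigma$ correction, and the same Chebyshev/delta-method arguments for (i)--(iii). Your observation about part (iv) — that in the discrete setting the pointwise noise has finite variance so a genuine probability bound is available, unlike the continuous case — is exactly the point the paper makes in its Step 3.
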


Comments (a), (b) and (c) given immediately after Theorem \ref{section:theorem:stability} still hold for Theorem \ref{section:theorem:ARMAstability}. But we have more comments for Theorem \ref{section:theorem:ARMAstability} regarding the discrete-time case.
\begin{enumerate}
\item[(d)] The regularity conditions on $A_l(t)$  given in Assumption (A3) are added for the purpose of demonstrating the interaction between the model bias parameter $\epsilon$ and the discretization effect, 
as can be seen from (\ref{only_place_for_regularity_of_Al}) of the Supplementary.
\item[(e)] The condition on the sampling interval $\tau $ rings a bell of the Nyquist rate. Indeed, since locally the signal oscillates in a way close to harmonics, we expect to see that its spectrum is ``essentially supported'' on the frequency range $[c_1,c_2]$. This fact can be seen in the proof of the theorem. Thus, the sampling interval $\tau$ has to be shorter than $1/c_2$ in order to avoid the aliasing effect introduced by the discretization.
\end{enumerate}

With Theorem \ref{section:theorem:stability} and Theorem \ref{section:theorem:ARMAstability}, we summarize the main properties of SST that render it suitable for use in determining the seasonality.
\begin{itemize}
\item[(P1)] Fix a harmonic function $f(t)=\sum_{k=1}^KA_k\cos(2\pi\xi_k t)$, where $A_k>0$ and $\xi_k>0$. It is well known that its Fourier transform is $\widehat{f}(\xi)=\sum_{k=1}^KA_k(\delta_{\xi_k}+\delta_{-\xi_k})/2$, where $\delta$ denotes the Dirac delta function, which leads to the time-frequency representation, or time-varying spectrum\footnote{Note that we use $A_k$ instead of $A_k^2$ to simply the discussion.}, $R_f(t,\xi)=\frac{1}{2}\sum_{k=1}^KA_k\delta_{\xi_k}(\xi)$ when $\xi>0$. In this case, the time-varying spectrum does not depend on time indeed. Ideally, given a function $f(t)=\sum_{k=1}^KA_k(t)\cos(2\pi\phi_k(t))$ so that $A_k(t)>0$ and $\phi'_k(t)>0$ for $t\in\RR$, we would expect to have the ``time-varying spectrum'' $R_f(t,\xi)$ reading like $R_f(t,\xi)=\frac{1}{2}\sum_{k=1}^KA_k(t)\delta_{\phi'_k(t)}(\xi)$. This expectation can be fulfilled to some extent according to (ii) in Theorem \ref{section:theorem:stability} and Thereom \ref{section:theorem:ARMAstability} when $f\in\mathcal{A}^{c_1,c_2}_{\epsilon,d}$. Indeed, it tells us that the SST provides an approximation to this ``ideal spectrum'' when $f\in\mathcal{A}^{c_1,c_2}_{\epsilon,d}$ since the value of $|S_f^\gamma(t,\xi)|$ is dominant only if $(t,\xi)$ is close to $(t,\phi_k'(t))$. 
This property further allows an easy visualization of the instantaneous frequency $\phi'_k(t)$, if the seasonality exists. Similarly, the intensity of the dominant value reflects the value of the amplitude modulation $A_k(t)$.
\item[(P2)] SST is an invertible transformation in the sense that we can reconstruct each component of $f(t)$ accurately, as is shown in Theorem \ref{section:theorem:stability} and Theorem \ref{section:theorem:ARMAstability}. Once the existence of seasonality is confirmed by SST and the period of time within which the seasonality exists, this property allows us to recover the seasonal oscillation, for example, of the epidemic system so that we can determine when the incidence of the disease is highest.
\item[(P3)] It follows from Theorem \ref{section:theorem:stability} and Theorem \ref{section:theorem:ARMAstability} that the existence of the trend modeled in (\ref{model:seasonality}) and (\ref{model:seasonality:discrete}) do not interfere with the seasonality estimation. This property allows us to estimate the trend even when seasonality exists, which is important since in some situations the main focus is trend estimation and the seasonality is regarded as a nuisance parameter. Also note that a smooth function $T\in C^\infty\cap \mathcal{S}'$ so that its Fourier transform $\widehat{T}$ is compactly supported in $\big(-\frac{1-\Delta}{1+\Delta}c_1,\frac{1-\Delta}{1+\Delta}c_1\big)$ is a special case of what we consider in the theory. Indeed, for such a trend function we have by the Plancheral theorem $\int T(t)\psi_{a,b}(t)\ud t=\widehat{T}(\sqrt{a}\widehat{\psi}(a\xi)e^{i2\pi\xi b})=0$ for all $a\in (0,\frac{1+\Delta}{c_1}]$. 
\item[(P4)] Properties (P1)--(P3) are robust to the existence of the heteroscedastic, dependent errors in both the continuous- and discrete- models (\ref{model:seasonality}) and (\ref{model:seasonality:discrete}), as the requirements on the error process are mild. For example, $\Phi_n$ in  (\ref{model:seasonality:discrete}) can be taken as ARMA errors. Also, by Lemma \ref{lemma:powerspectrum} in the Supplementary, a stationary CARMA$(p,q)$ process, where $p,q\geq 0$, satisfies the conditions on its power spectrum given in Assumption (A1) for some $l\geq 0$, so Theorem \ref{section:theorem:stability} applies when $\Phi$ is taken as a stationary CARMA$(p,q)$ process. 
\item[(P5)] Since the estimation procedure is local in nature, it is insensitive to the length of the observed time series, and so it can answer partially Q4. 
\item[(P6)] The constants appearing in the estimation errors, for example those defined in (N1)--(N4), depend only on the higher order moments of the chosen mother wavelet $\psi$ but not on the profiles (or shape) of $\psi$. Thus, choice of the mother wavelet is not crucial to ensure properties (P1)--(P5). In this sense, we say that the method is adaptive. Indeed, one can even show that CWT is not essential in the whole algorithm in the sense that the variational approach is possible \citep{daubechies_lu_wu:2010}. Furthermore, the reconstruction formula (iii) in Theorem \ref{section:theorem:stability} (and in Theorem \ref{section:theorem:ARMAstability}) can be viewed as an {\it adaptive} bandpass filter which removes the energy of the noise out of the range of interest.  
\end{itemize}


\subsection{Numerical Implementation}\label{implementation}

Here we summarize how we numerically implement SST based on discretization. We refer the readers to \cite{brevdo_fuckar_thakur_wu:2012} for further details of implementing SST and its application to paleoclimatic data. 
Given a time series $\boldsymbol{Y}:=\{Y_n\}_{n=1}^N$ consisting of either a discretization of a process $Y$ satisfying model (\ref{model:seasonality}), with $\tau>0$ as the sampling interval, or observations from the discrete-time model (\ref{model:seasonality:discrete}). 
To prevent boundary effects, we pad $\boldsymbol{Y}$ on both sides (using, e.g., reflecting boundary conditions) so that its length is $N'=2^{L+1}$, where  $L$ is the minimal integer such that $N'>N$.  We use the same notation $\boldsymbol{Y}$ to denote the padded signal. Notice that although it works well in practice, doing so is not the optimal solution in coping with the boundary effect, but is only for our convenience. Denote the numerical implementation of CWT based on $\boldsymbol{Y}$ by an $N'\times n_a$ matrix $\widetilde{W}_{\boldsymbol{Y}}$ with the discretization interval $\Delta_a$ in $\log_2(a)$. To be more precise, we discretize the scale axis $a$ by $a_j=2^{j/n_v}\Delta t$, $j=1,\ldots, Ln_v$, where the ``voice number'' $n_v$ is a user-defined parameters that affects the number of scales we work with. In practice we choose $n_v=32$.
Also, denote the numerical implementation of SST based on $\boldsymbol{Y}$ by an $N'\times n_\xi$ matrix $\widetilde{S}^\Gamma_{\boldsymbol{Y}}$, where $n_\xi=\lfloor\frac{\frac{1}{2\tau}-\frac{1}{N'\tau}}{\Delta_\xi}\rfloor$ is the number of the discretization of the frequency domain $[\frac{1}{N'\tau},\frac{1}{2\tau}]$ by equally spaced intervals of length $\Delta_\xi=\frac{1}{N'\tau}$. In formula (\ref{alogithm:sst:formula}) and (\ref{alogithm:sst:reconstruction}), the number $\Gamma$ plays the role of a thresholding parameter. When the random error is Gaussian white noise, we may follow the suggestion provided in \cite{brevdo_fuckar_thakur_wu:2012} to choose $\Gamma$. 

With the implemented CWT and SST, we first estimate $\phi_k'$ by fitting a discretized curve $c^*\in Z_{n_\xi}^{N'}$, where $Z_{n_\xi}=\{1,\ldots,n_\xi\}$, to the dominant area of $\widetilde{S}^\Gamma_{\boldsymbol{Y}}$ by maximizing the following functional:
\begin{align}
c^*&=\max_{c\in Z_{n_\xi}^{N'}}\Big[ \sum_{m=1}^{N'} \log\left(\frac{|\widetilde{S}^\Gamma_{\boldsymbol{Y}}(c(m),m)|}{\sum_{i=1}^{n_\xi}\sum_{j=1}^{N'}|\widetilde{S}^\Gamma_{\boldsymbol{Y}}(j,i)|}\right)-\lambda\sum_{m=2}^{N'} |c(m)-c(m-1)|^2\Big], \label{algorithm:sst}
\end{align}
where the user-defined parameter $\lambda$ determines the ÒsmoothnessÓ of the resulting curve estimate. The main motivation of maximizing this functional is actually curve fitting. The first term is fitting a curve on the TF plane so that the SST over the curve is maximized.
However, the fitted $c^*$ might be wildly deviated if we do not impose any regularity condition on it, so we add the  penalty term to enforce the smoothness of $c^*$, that is, the larger $\lambda$ is, the smoother the curve $c^*$ is. 
Then for $n=1,\ldots,N'$ we can calculate the estimator of $\phi'_k(n\tau)$ by
\begin{equation}\label{algorithm:if}
\widetilde{\phi}_k'(n\tau):=\frac{c^*(n)}{N'\tau},
\end{equation}
and the estimators for the $k$-th seasonal component, its amplitude modulation and phase are respectively computed by: 
\begin{align}
&\widetilde{f}_k(n\tau):=\mathfrak{Re} \widetilde{f}^\CC_k(n\tau), \,\, \mbox{where } \,\widetilde{f}^\CC_k(n\tau):=\mathcal{R}_\psi^{-1}\frac{1}{\Delta_a}\sum_{i=\lfloor\frac{1-\Delta}{c^*(n)}\rfloor}^{\lceil\frac{1+\Delta}{c^*(n)}\rceil} \widetilde{W}_{\boldsymbol{Y}}(n,i){\boldsymbol{\chi}}_{|\widetilde{W}_{\boldsymbol{Y}}(n,i)|>\Gamma}2^{i\Delta_a},\nonumber
\end{align}
$\widetilde{A}_k(n\tau):=|\widetilde{f}^\CC_k(n\tau)|$ and $\widetilde{\phi}_k(n\tau)$ as the unwrapped  phase function of $\frac{\widetilde{f}^C_k(n\tau)}{|\widetilde{f}^\CC_k(n\tau)|}$.  
In addition, we estimate the trend at time $n\tau$, $n=1,\ldots,N'$, by
\begin{equation*}
\widetilde{T}(n\tau):=Y_n-\mathfrak{Re}\mathcal{R}_\psi^{-1} \frac{1}{\Delta_a}\sum_{i=\lfloor \frac{(1-\Delta)c_1}{(1+\Delta)\Delta_a}\rfloor}^{n_a} \widetilde{W}_{\boldsymbol{Y}}(n,i)2^{i\Delta_a}.
\end{equation*}
Here we remark that the above estimators may be noisy to some extent since they are pointwise in nature, and we can apply some smoothing techniques to the above preliminary estimates in order to obtain more stable estimators. For example, we may also consider another reconstruction formula, also equipped with the CWT \citep{daubechies_lu_wu:2010}:
\[
f(t)=C_\psi\int^\infty_{-\infty}\int_0^\infty W_f(a,b)a^{-5/2}\psi\Big(\frac{t-b}{a}\Big)\ud a\ud b,
\]
where $C_\psi$ is the constant for the reconstruction and the integration with respect to $b$ helps to smooth the reconstruction estimator in time. 
We will not get into these numerical details in this paper, however.

\section{Simulated Examples}\label{simulation}

To demonstrate the capability of SST to detect dynamical seasonality and other properties discussed in Section \ref{theory}, we tested it and compared it with the TBATS model on two simulation examples. We compared SST with TBATS in that, to the best of our knowledge,  TBATS is so far the algorithm closest to our purpose in seasonality analysis. Since TBATS is not designed for seasonality with time-varying periods, 
in the first simulation example the seasonality is composed of multiple pure trigonometric components (thus no dynamics exists in the seasonality). In the second simulation setting we consider the seasonality modeled by $\mathcal{A}^{c_1,c_2}_{\epsilon,d}$ and show the main difference between SST and TBATS. The code for implementation of SST is in the authors' homepage\footnote{\url{http://www.math.princeton.edu/~hauwu}}. We called the R forecast package to run TBATS\footnote{\url{http://robjhyndman.com/software/forecast/}}. We ran the simulation and data analysis on a macbook having $4$GB $1333$ MHz DDR3 ram, $1.7$ GHz Intel Core i5 CPUs.

\subsection{Simulation settings}

Define the following two functions modeling the seasonality:
\begin{align}
&s_{1,1}(t):=2.5\cos(2\pi t),\,\,s_{1,2}:=3\cos(2\pi^2 t)\nonumber\\
&s_1(t):=s_{1,1}(t)+s_{1,2}(t)\nonumber
\end{align}
and
\begin{align}
&A_1(t):=2+0.5(1+0.1\cos(t))\arctan(t-13),\,\,A_2(t):=3.5\boldsymbol{\chi}_{[0,7.5]}(t)+2\boldsymbol{\chi}_{(7.5,10]}(t)\nonumber\\
&\phi_1(t):=t+0.1\sin(t),\,\,\phi_2(t):=3.4t-0.02t^{2.3}\nonumber\\
&s_{2,1}(t):=A_1(t)\cos\big(2\pi\phi_1(t)\big),\,\,s_{2,2}:=A_2(t)\cos\big(2\pi\phi_2(t)\big)\nonumber\\
&s_2(t):=s_{2,1}(t)+s_{2,2}(t)\nonumber,
\end{align}
where $\boldsymbol{\chi}$ is the indicator function. 
Note that $s_1$ is composed of two harmonic functions and their frequencies are not integer multiple of each other, and $s_2$ models the seasonality with time-varying behavior. By definition, $s_2$ is composed of two IMFs with instantaneous frequencies $\phi_1'(t)=1+0.1\cos(t)$ and $\phi_2'(t)=3.4-0.046t^{1.3}$. 
 We considered the following two trend functions:
\begin{align}
&T_1(t):=8\Big(\frac{1}{1+(t/5)^2}+\exp(-t/10)\Big)\nonumber
&T_2(t):=2t+10\exp(-(t-4)^2/6).\nonumber
\end{align}
The clean $A_1(t)$, $A_2(t)$, $\phi_1'(t)$, $\phi'_2(t)$, $T_1(t)$, $T_2(t)$, $s_2(t)+T_1(t)$ and $s_2(t)+T_2(t)$ are shown in Figure \ref{fig:simulation:cleansig}.

\begin{figure}[ht]
\includegraphics[width=1\textwidth]{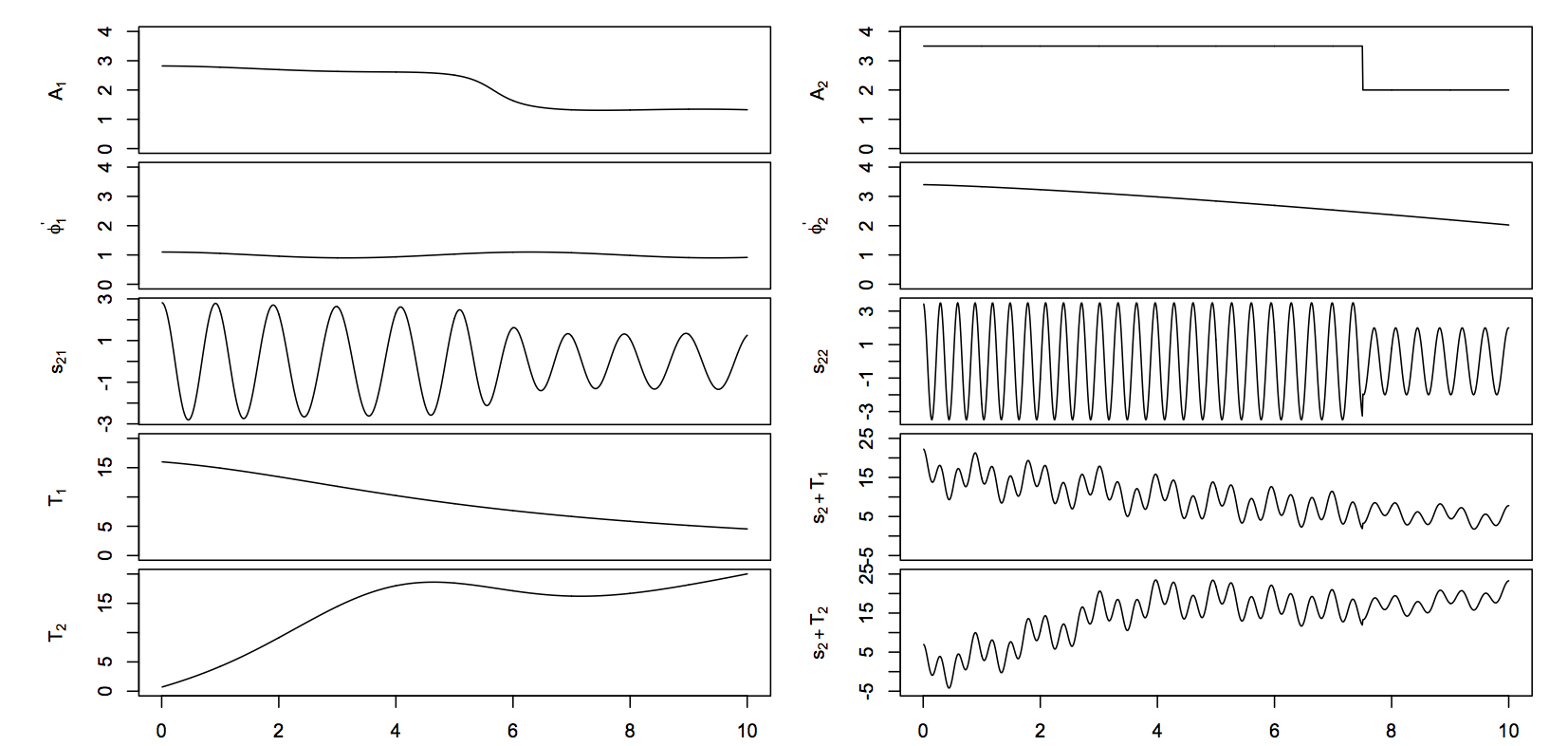}
\caption{{\it The clean signals in $s_2(t)+T_1(t)$ and $s_2(t)+T_2(t)$.} Left column, from top to bottom: $A_1(t)$, $\phi'_1(t)$, $s_{2,1}(t)$, $T_1(t)$ and $T_2(t)$; right column, from top to bottom: $A_2(t)$, $\phi'_2(t)$, $s_{2,2}(t)$, $s_2(t)+T_1(t)$ and $s_2(t)+T_2(t)$. Note that the instantaneous frequency and the amplitude modulation functions are not constant, which model the dynamics of the system.}
\label{fig:simulation:cleansig}
\end{figure}

We discretized the clean signals in the time period $[0,10]$ with the sampling interval $\tau=1/100$ so that we have $N=1000$ sampling points. The sampled time series of a given function $f$ defined on $\RR$ is denoted as $\boldsymbol{f}\in\RR^{N}$ so that the $l$-th entry of $\boldsymbol{f}$ is $f(l\tau)$ for all $l=1,\ldots, N$. Similarily, the sampled time series on $s_1$, $s_2$, $T_1$ and $T_2$ are denoted as $\boldsymbol{s_1}$, $\boldsymbol{s_2}$, $\boldsymbol{T_1}$ and $\boldsymbol{T_2}$, respectively. 

We consider the following three random processes to model the noise. The first one is 
\begin{align*}
&X_1(n):=2\sigma(n\tau)X_{\textup{ARMA1}}(n),
\end{align*}
where $\sigma(t)=1+0.1\cos(\pi t)$, and $X_{\textup{ARMA1}}$ is an ARMA(1,1) time series determined by the autoregression polynomial $a(z)=0.5z+1$ and the moving averaging polynomial $b(z)=0.4z+1$, with the innovation process taken as i.i.d. student $t_4$ random variables; the second one is
\begin{align*}
&X_2(n):=\sigma(n\tau)\big(4X_{\textup{ARMA1}}(n)\boldsymbol{\chi}_{n\in[1,N/2]}(n)+X_{\textup{ARMA2}}(n)\boldsymbol{\chi}_{n\in[N/2+1,N]}(n)\big),
\end{align*}
where $X_{\textup{ARMA2}}$ is an ARMA(1,1) time series determined by the autoregression polynomial $a(z)=-0.2z+1$ and the moving averaging polynomial $b(z)=0.51z+1$, with the innovation process taken as i.i.d. student $t_4$ random variables; 
the third one is
\begin{align*}
&X_3(n):=2X_{\textup{GARCH}}(n)
\end{align*}
where $X_{\textup{GARCH}}$ is a 
GARCH$(1,2)$ time series with ARCH coefficients $(1,0.2)$, GARCH coefficients $(0.2, 0.3)$ and N$(0,1)$ disturbances. 
The sampled time series on $X_i$ is denoted as $\boldsymbol{X_i}\in\RR^{N}$. Note that $X_1$ and $X_2$ are heteroscedastic and non-stationary. 
We then tested our algorithm on the following time series: 
\begin{align*}
\boldsymbol{Y_{j,k,\sigma_0}}:=\boldsymbol{s_2}+\boldsymbol{T_j}+\sigma_0\boldsymbol{X_k},
\end{align*}
where $j=1,2$, $k=1,2,3$ and $\sigma_0\geq0$.

\subsection{The SST tested on the clean signal $s_2+T_1$}
We started from examining the performance of SST when applied to the clean signal $\boldsymbol{s}_2+\boldsymbol{T}_1$. We took $\psi\in\mathcal{S}$ so that $\widehat{\psi}(\xi)=\exp\big(\frac{1}{((\xi-1)/0.3)^2-1}\big)$. The results are shown in Figure \ref{fig:simulation:clean}, 
from which we have the following findings. First, the instantaneous frequency functions of $s_{2,1}$ and $s_{2,2}$ can be seen clearly from the dominant curves in the SST representation depicted in the left panel. The time-varying amplitude modulation functions are also visually clear in the SST representation: the smaller the amplitude is, the lighter the intensity of the dominant curve is. The reconstruction of each component and the trend are shown in the right column. It can be seen that except for $s_{2,2}$ near the change-point $7.5$, the reconstruction is satisfactory. Note that this kind of ``sudden changes'' in the  signal $s_{2,2}$ is not theoretically analyzed nor numerically improved in the current paper.  

\begin{figure}[ht]
\includegraphics[width=1\textwidth]{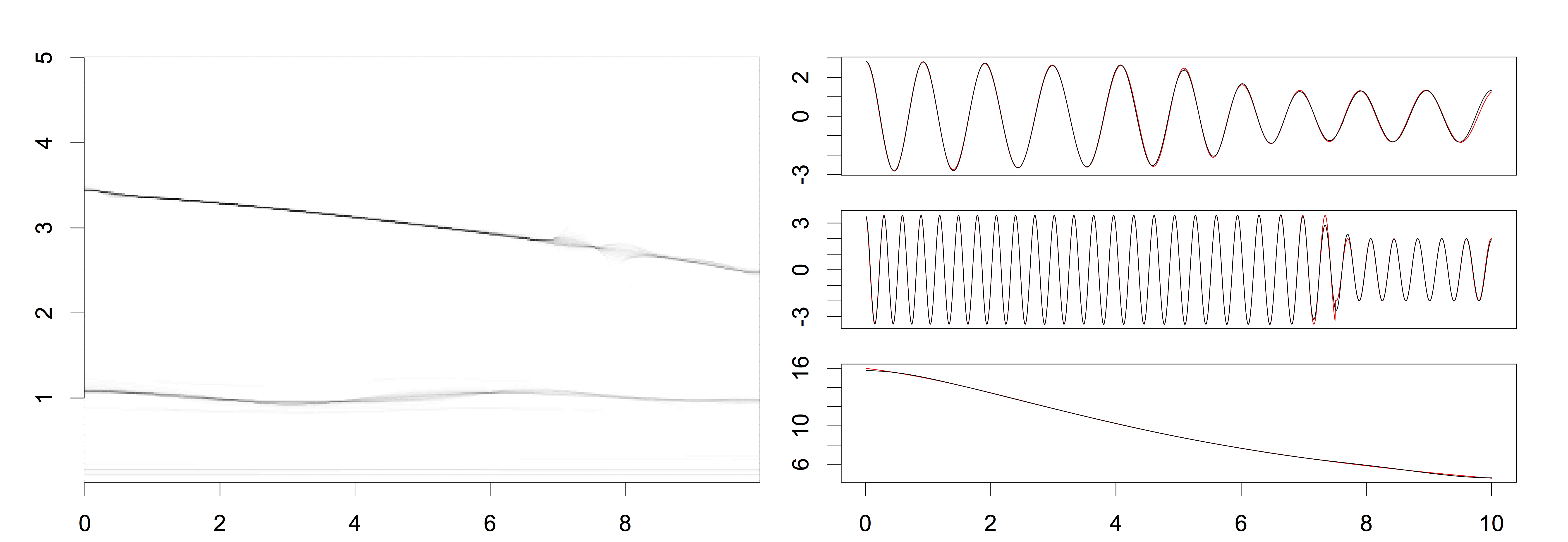}
\caption{{\it Synchrosqueezing Transform applied to the clean signal $\boldsymbol{s_2}+\boldsymbol{T_1}$.} Left: The SST results on the clean signal $\boldsymbol{s_2}+\boldsymbol{T_1}$, shown in Figure \ref{fig:simulation:cleansig}. The $x$-axis is time and the $y$-axis is frequency. Right, from top to bottom: the resulting $\widetilde{\boldsymbol{s_{2,1}}}$, $\widetilde{\boldsymbol{s_{2,2}}}$ and $\widetilde{\boldsymbol{T_1}}$ are shown as the black curves, and $\boldsymbol{s_{2,1}}$, $\boldsymbol{s_{2,2}}$ and $\boldsymbol{T_1}$ are respectively superimposed as the red curves. It is clear from the figure that the reconstruction is almost exact except for $\boldsymbol{s_{2,2}}$ at time $7.5$.}
\label{fig:simulation:clean}
\end{figure}

\subsection{Comparison of SST and TBATS on $s_1+T_1+X_1$}

We analyzed $200$ realizations of $\boldsymbol{Y_{0}}:=s_1+T_1+X_1$ using SST and TBATS. When we ran TBATS, we took the seasonal periods to be the true values $100$ and $100/\pi$.
We report the relative root average square estimation error (RRASE) to measure the estimation accuracy of the two different estimators. Denote by $\widetilde{\boldsymbol{s_{1,1}}}$, $\widetilde{\boldsymbol{s_{1,2}}}$ and $\widetilde{\boldsymbol{T_1}}$  generic estimators of $\boldsymbol{s_{1,1}}$, $\boldsymbol{s_{1,2}}$ and $\boldsymbol{T_1}$, respectively, and denote by $\widetilde{\boldsymbol{r}}:=\boldsymbol{Y_0}-\widetilde{\boldsymbol{s_{1,1}}}-\widetilde{\boldsymbol{s_{1,2}}}-\widetilde{\boldsymbol{T_1}}$ the residuals, which can be used to approximate the random errors $\boldsymbol{X_1}$. The RRASE, and its standard deviation, results given by SST and TBATS for $\boldsymbol{Y_0}$ are shown in Table \ref{table:simulation:Y0}. The computational time  (in seconds) of SST and TBATS, and its standard deviation, are reported as well.
In Figure \ref{fig:simulation:sst_recon0}, we demonstrate the results for the realization which yielded the median RRASE value among all the realizations. Note that the seasonal components in $\boldsymbol{Y_0}$ are purely harmonic and the true values of the seasonal periods were used when we ran TBATS, thus it estimated well the signals. On the other hand, SST resulted in smaller RRASE standard deviation, although it yielded larger RRASE (because it has to estimate the seasonal periods nonparametrically from the noisy data). Also, notice that the seasonal component with low frequency determined by TBATS contains artificial local extrema inside each oscillation. These artificial local extrema might lead to misinterpretation of the system dynamics and have to be taken into consideration when using TBATS for the purpose of system dynamics analysis.

\begin{figure}[ht]
\includegraphics[width=1\textwidth]{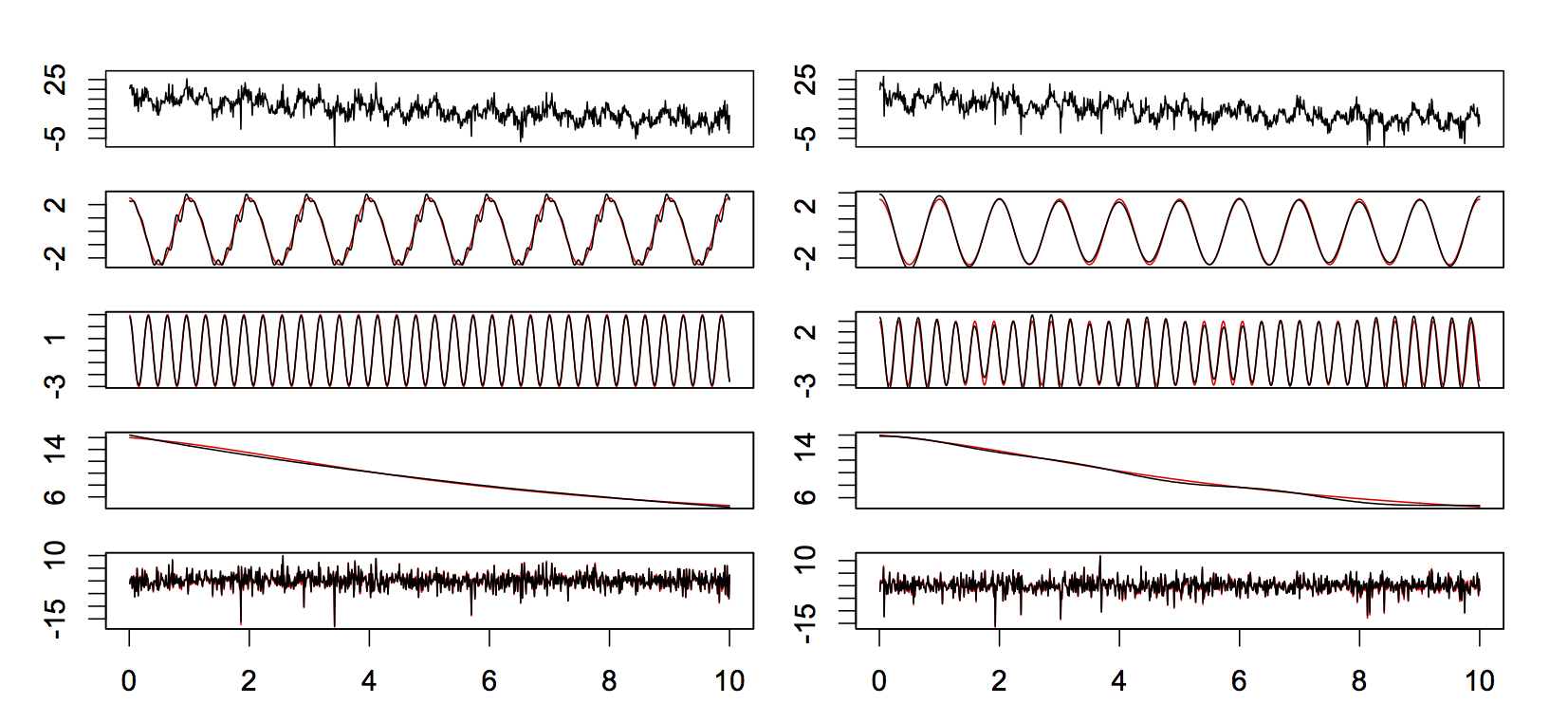}
\caption{{\it Performances of TBATS and SST on $\boldsymbol{Y_{0}}$.} Left (resp. right): The TBATS (resp. SST) results for the realization of $\boldsymbol{Y_0}$ with the median RRASE value.  
The first row shows the realization of $\boldsymbol{Y_0}$. The second to fifth rows show respectively $\widetilde{\boldsymbol{s_{1,1}}}$, $\widetilde{\boldsymbol{s_{1,2}}}$, $\widetilde{\boldsymbol{T_1}}$ and $\widetilde{\boldsymbol{r}}$ (black curves), with the $\boldsymbol{s_{1,1}}$, $\boldsymbol{s_{1,2}}$, $\boldsymbol{T_1}$ and $\boldsymbol{X_1}$ superimposed as red curves. }
\label{fig:simulation:sst_recon0}
\end{figure}

\begin{table}
\caption{\label{table:simulation:Y0}$\boldsymbol{Y_0}$: Two harmonic seasonal components and trend contaminated by nearly stationary noise.}
\centering
\fbox{%
{\small
\begin{tabular}{| c | c | c | c | c | c |}
\hline
\multicolumn{6}{|c|}{Results for $\boldsymbol{Y_0}$} \\ 
\cline{1-6}
  & $\widetilde{\boldsymbol{s_{1,1}}}$ & $\widetilde{\boldsymbol{s_{1,2}}}$ &  $\widetilde{\boldsymbol{T_1}}$ & $\widetilde{\boldsymbol{r}}$ & Time\\
\hline
SST & $0.13 \pm 0.031$ & $0.164 \pm 0.022$ &  $0.019 \pm 0.004$ & $0.162 \pm 0.017$ & $8.11 \pm 0.37$ \\  
TBATS & $0.105\pm 0.062$ &  $0.094\pm 0.057$ &  $0.025\pm 0.006$ & $0.141\pm0.034$ & $2.86\pm 0.96$ \\   
\hline
\end{tabular}
}
}
\end{table}


\subsection{The SST tested on signals with dynamics and heteroscedastic, dependent noise}
We analyzed $200$ realizations of $\boldsymbol{Y_{j,k,\sigma_0}}$, $j=1,2$, $k=2,3$ and $\sigma_0>0$, which have dynamical seasonal periods, using SST and TBATS. 
When we applied TBATS, based on the ground truth we set $2$ seasonal components with the period lengths ranging in $I_{1}=[100/1.05,100/0.95]$ and $I_{2}=[100/3.2,100/2.6]$ respectively. Notice that the chosen $I_{1}$ and $I_{2}$ respectively contain the ranges of $\phi_1'(t)$ and $\phi'_2(t)$. We divided each of $I_{1}$ and $I_{2}$ into $5$ equally spaced points, and determine the ``best'' seasonal periods based on the AIC values of the fitted TBATS models. The chosen optimal seasonal periods varied from time to time, among the 200 realizations.  
In this case, TBATS does not perform well and the obtained results are different from time to time. Indeed, since the signal does not satisfy its model assumptions, TBATS tends to smooth over sudden changes as those in  $\boldsymbol{Y_{j,k,\sigma_0}}$. 
Please see Figure \ref{fig:simulation:TBATS_recon} for results of TBATS on one realization of $\boldsymbol{Y_{1,2,1}}$ and $\boldsymbol{Y_{1,3,1}}$. 

\begin{figure}[ht]
\includegraphics[width=1\textwidth]{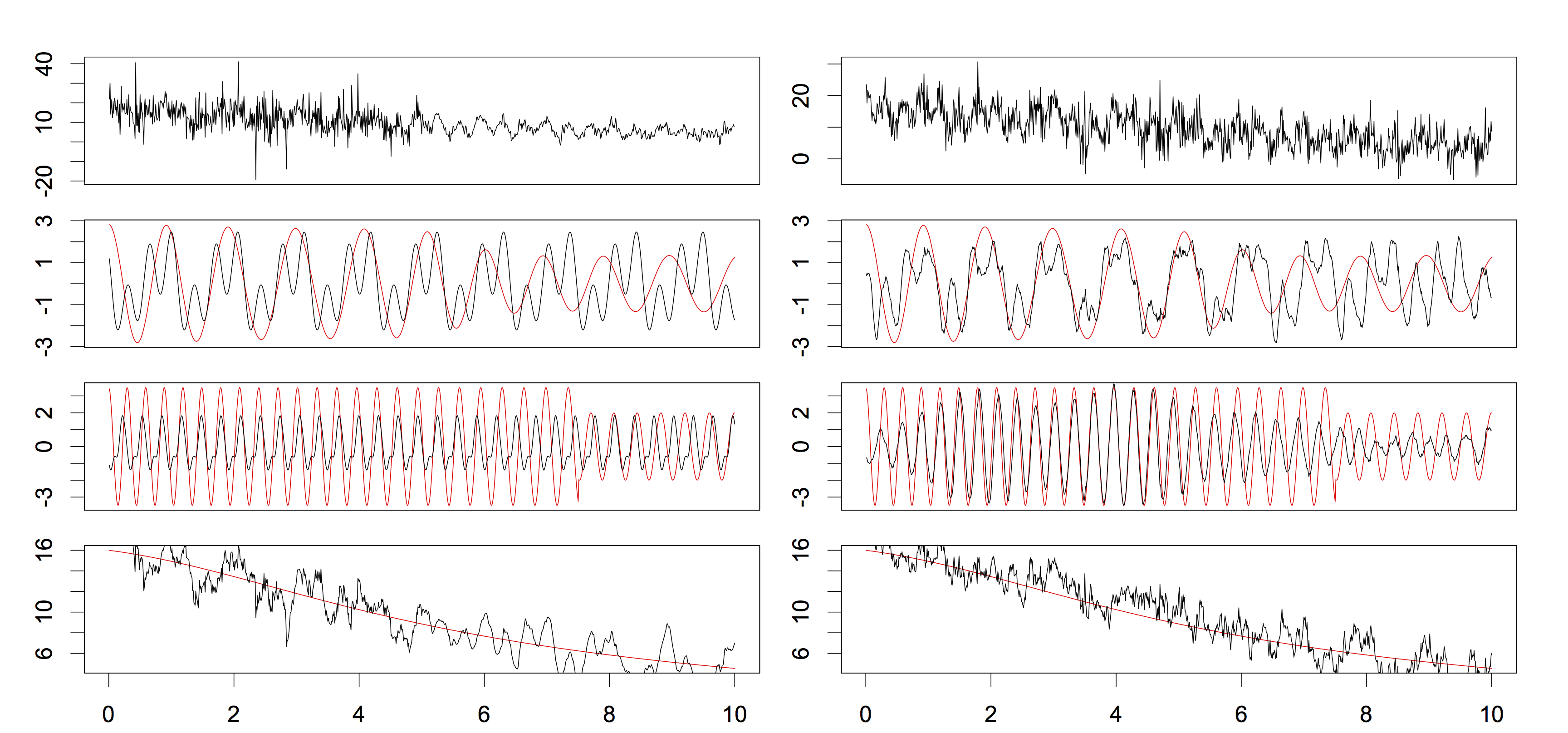}
\caption{{\it Performance of TBATS on $\boldsymbol{Y_{1,2,1}}$ and $\boldsymbol{Y_{1,3,1}}$.} Left (resp. right): The TBATS results of $\boldsymbol{Y_{1,2,1}}$ (resp. $\boldsymbol{Y_{1,3,1}}$). First row: the realizations of $\boldsymbol{Y_{1,2,1}}$ and $\boldsymbol{Y_{1,3,1}}$.  Second (resp. third) row: the $\boldsymbol{s_{2,1}}$ and $\widetilde{\boldsymbol{s_{2,1}}}$ (resp. $\boldsymbol{s_{2,2}}$ and $\widetilde{\boldsymbol{s}_{2,2}}$) are respectively represented by the red curve and the black curve. Bottom row: $\boldsymbol{T_1}$ (red) and $\widetilde{\boldsymbol{T_1}}$ (black). 
TBATS tends to fit oscillations even if they do not exist. }
\label{fig:simulation:TBATS_recon}
\end{figure}

The RRASE, and the standard deviation, of the results by SST  for $\boldsymbol{Y_{j,k,\sigma_0}}$ with two different trends ($j=1,2$), two different error types ($k=2,3$) and two different noise levels of $\sigma_0$, are shown in Table \ref{table:simulation:Y1}. The average computational time (in seconds) and the standard deviation are reported as well.
Among all the 200 realizations of $\boldsymbol{Y_{1,2,1}}$ (and $\boldsymbol{Y_{1,3,1}}$), we demonstrate the results for the realization which gave the median RRASE in Figure \ref{fig:simulation:sst_recon} (and Figure \ref{fig:simulation:sst_recon2}). 
From Table \ref{table:simulation:Y1}, we can conclude that the performance of $\widetilde{\boldsymbol{s}_{2,1}}$, $\widetilde{\boldsymbol{s}_{2,2}}$, and $\widetilde{\boldsymbol{T_j}}$ become better as the noise level $\sigma_0$ decreases, and the estimators are robust to different error types. Note that, when $\sigma_0$ decreases, the RRASE of the residual $\widetilde{\boldsymbol{r}}:=\boldsymbol{Y_{j,k,\sigma_0}}-\widetilde{\boldsymbol{s_{2,1}}}-\widetilde{\boldsymbol{s_{2,2}}}-\widetilde{\boldsymbol{T_j}}$, which approximates $\sigma_0 \boldsymbol{X_k}$, deteriorates to some extent due to the existence of the model bias $\epsilon$, as indicated by Theorem \ref{section:theorem:ARMAstability}. Indeed, the model bias is kept fixed; thus as RRASE measures the difference between $\widetilde{\boldsymbol{r}}$ and $\sigma_0 \boldsymbol{X_k}$ relative to $\sigma_0 \boldsymbol{X_k}$, it increases as $\sigma_0$ decreases.

\begin{table}
\caption{\label{table:simulation:Y1}$\boldsymbol{Y_{j,k,\sigma_0}}$, $j=1,2,k=2,3$: Performance of SST on two dynamic seasonal components and trend, contaminated by different kinds of error process with different noise levels.}
\centering
{\small
\begin{tabular}{| c | c | c | c | c | c |}
\hline
\cline{1-6}
$(j,k,\sigma_0)$  & $\widetilde{\boldsymbol{s}_{2,1}}$ & $\widetilde{\boldsymbol{s}_{2,2}}$ &  $\widetilde{\boldsymbol{T_j}}$ & $\widetilde{\boldsymbol{r}}$ & Time\\
\hline
$(1,2,0.5)$ & $0.144\pm 0.030$ & $0.133 \pm 0.020$ &  $ 0.018\pm 0.005$ & $ 0.201\pm 0.023$ & $ 9.6\pm 1.124$ \\  
\hline
$(2,2,0.5)$ & $0.151\pm0.031 $ & $ 0.133\pm 0.020$ &  $0.015 \pm 0.002$ & $0.208 \pm 0.022$ & $ 9.01\pm 0.83 $ \\  
\hline
$(1,2,1)$ & $0.263 \pm 0.062$ & $0.234 \pm 0.040$ &  $0.036 \pm 0.009$ & $0.185 \pm 0.023$ & $8.17 \pm 0.55$ \\  
\hline
$(2,2,1)$ & $0.266\pm 0.063$ & $ 0.234\pm 0.040$ &  $ 0.025\pm 0.006$ & $0.187 \pm 0.022$ & $ 9.16\pm 0.9$ \\  
\hline\hline
$(1,3,0.5)$ & $0.132\pm 0.024$ & $ 0.120\pm 0.016$ &  $ 0.016\pm 0.004$ & $ 0.205\pm 0.021$ & $ 9.61\pm1.11 $ \\  
\hline
$(2,3,0.5)$ & $0.140\pm 0.025$ & $ 0.120\pm 0.016$ &  $0.014 \pm 0.002$ & $ 0.214\pm 0.020$ & $ 9.57\pm 0.97$ \\  
\hline
$(1,3,1)$ & $ 0.239\pm 0.048$ & $0.209 \pm 0.032$ &  $0.030 \pm 0.008$ & $0.186 \pm 0.021$ & $8.82 \pm 0.33$ \\	
\hline
$(2,3,1)$ & $0.244\pm 0.049$ & $ 0.209\pm 0.032$ &  $ 0.022\pm 0.005$ & $ 0.189\pm 0.021$ & $ 8.81\pm 0.38$ \\  
\hline
\end{tabular}
}
\end{table}

\begin{figure}[ht]
\includegraphics[width=1\textwidth]{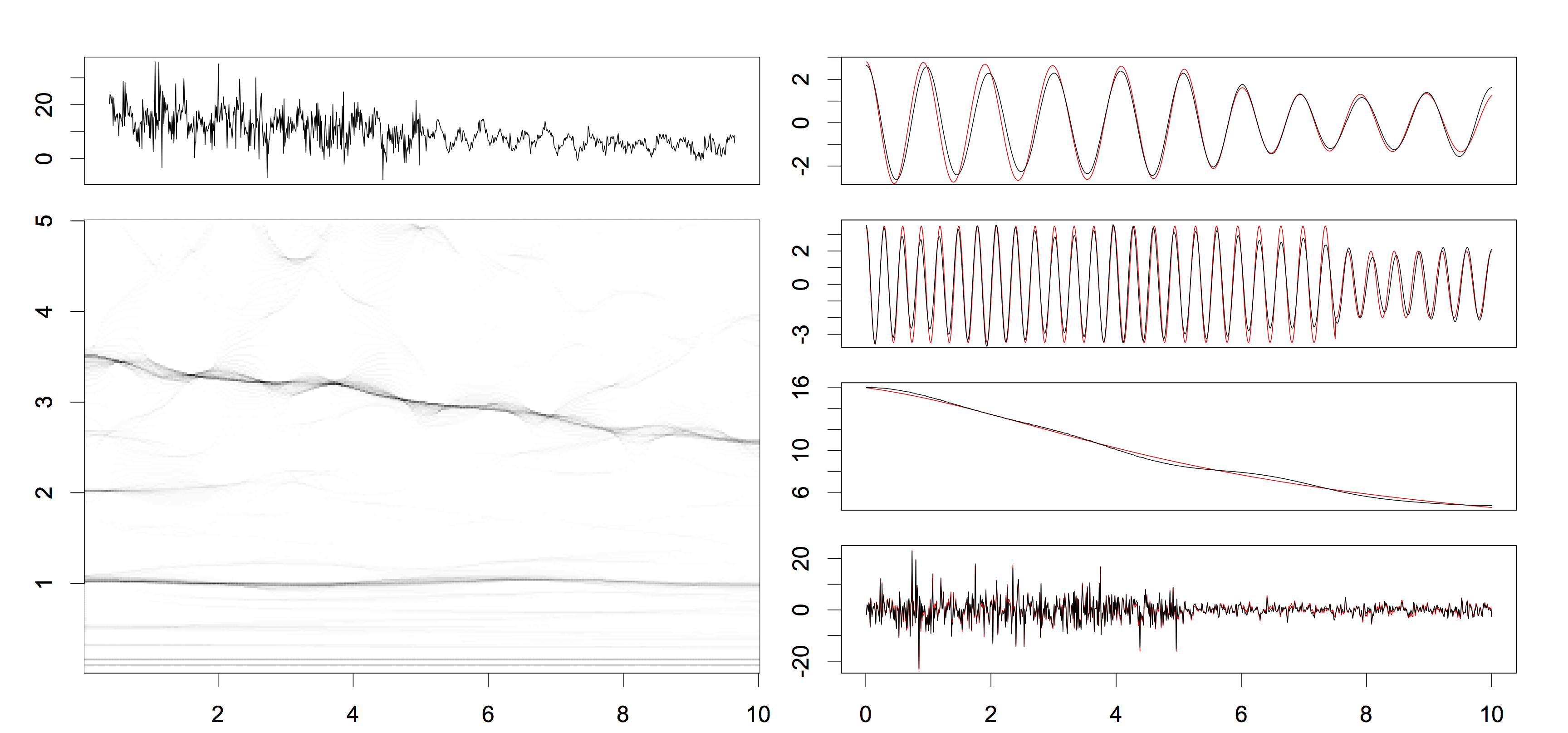}
\caption{{\it Performance of SST on $\boldsymbol{Y_{1,2,1}}$.}  Left: The realization of $\boldsymbol{Y_{1,2,1}}$ which yielded the median RRASE (upper) and the SST of that realization (lower). Visually we can see the two seasonal components, how their instantaneous frequency functions change, and the intensity of the second component changes at about $t=7.5$. Right: From top to bottom the black curves are $\widetilde{\boldsymbol{s_{2,1}}}$, $\widetilde{\boldsymbol{s_{2,2}}}$, $\widetilde{\boldsymbol{T_1}}$ and $\widetilde{\boldsymbol{X_2}}$, with the respective $\boldsymbol{s}_{2,1}$, $\boldsymbol{s}_{2,2}$, $\boldsymbol{T}_1$ and $\boldsymbol{X}_2$ superimposed as red curves.}
\label{fig:simulation:sst_recon}
\end{figure}

\begin{figure}[ht]
\includegraphics[width=1\textwidth]{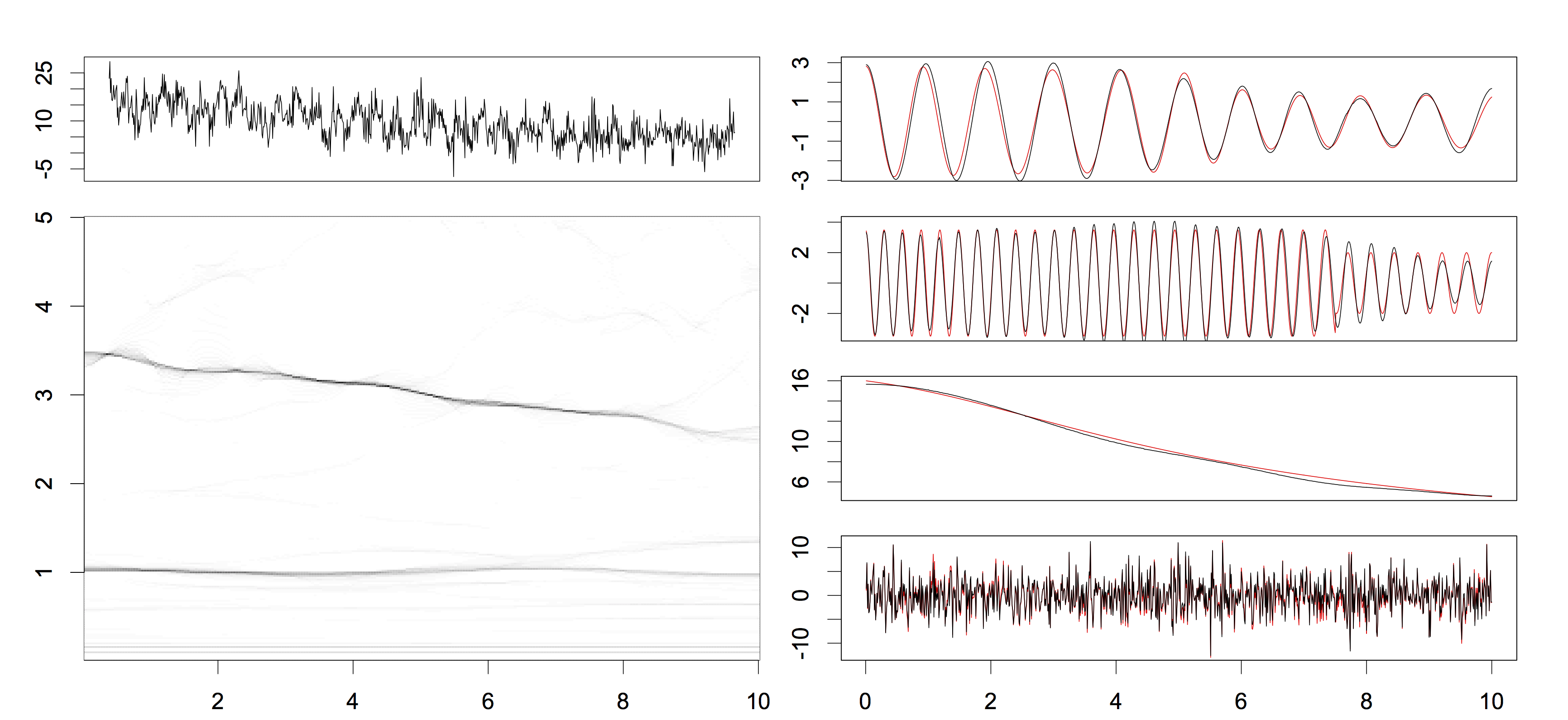}
\caption{{\it Performance of SST on $\boldsymbol{Y_{1,3,1}}$.} Left: The realization of $\boldsymbol{Y_{1,3,1}}$ which yielded the median RRASE (upper) and the SST of that realization (lower). Visually we can see the two seasonal components, how their instantaneous frequency functions change, and the second component changes at about $t=7.5$. Right, from top to bottom are $\widetilde{\boldsymbol{s_{2,1}}}$, $\widetilde{\boldsymbol{s_{2,2}}}$, $\widetilde{\boldsymbol{T_1}}$ and $\widetilde{\boldsymbol{X}}_{3}$ (black curves), with the $\boldsymbol{s}_{2,1}$, $\boldsymbol{s}_{2,2}$, $\boldsymbol{T}_1$ and $\boldsymbol{X}_{3}$ superimposed as red curves.}
\label{fig:simulation:sst_recon2}
\end{figure}

\subsection{Summary}

Notice that although SST does not outperform TBATS when tested on $\boldsymbol{Y_0}$, TBATS collapses while SST can provide reasonable results in analyzing $\boldsymbol{Y_{1,2,1}}$ and $\boldsymbol{Y_{1,3,1}}$. In the case of $\boldsymbol{Y_0}$, it should be noted that although TBATS can accurately estimate the periods of both of the seasonal components, the oscillation pattern tends to deviate from the cosine function, which is the ground truth. On the other hand, due to noise, as is expected from the results in Theorem \ref{section:theorem:ARMAstability}, the performance of SST in estimating the amplitude modulation is not as good as it is in estimating the instantaneous frequency; nonetheless the oscillation is not distorted. 
We should also point out that since $\boldsymbol{Y_0}$ satisfies the TBATS model assumptions, TBATS (a parametric model) outperforms SST (a nonparametric model) when analyzing $\boldsymbol{Y_0}$.
On the other hand, from Theorem \ref{theorem:identifiability:single} and Theorem \ref{theorem:identifiability:multiple}, the representation of a function composed of harmonic functions is not unique, and SST can only determine the seasonal components up to some degree of accuracy. Nevertheless, Table \ref{table:simulation:Y0} conveys a clear message that the SST still enjoys reasonable performance relative to the TBATS in this case. Also, we can see from Table \ref{table:simulation:Y1} that the performance of SST on the different dynamical signals, for example, $\boldsymbol{Y_{1,2,1}}$ and $\boldsymbol{Y_{1,3,1}}$, does not differ much. This  can be explained by Theorem \ref{section:theorem:ARMAstability}. On the other hand, when the seasonality is modeled by the $\mathcal{A}^{c_1,c_2}_{\epsilon,d}$ class and the noise is heteroscedastic and dependent, TBATS fails constantly since these kinds of signals and noise violate its parametric model assumptions. This explains the deteriorated performance of TBATS when tested on $\boldsymbol{Y_{1,2,1}}$ and $\boldsymbol{Y_{1,3,1}}$. Therefore, when we have a priori knowledge that the seasonal periods are not dynamical, TBATS is  preferred; however, in general we would suggest to try SST, at least to explore if the seasonal periods are dynamical or not. More simulation results of SST, including different noise types, sensitivity issue and the existence of local bursts, can be found in the Supplementary.

\section{Real Data Examples}\label{data}

\subsection{Seasonal Dynamics of Varicella and Herpes Zoster}\label{VHZ}
The varicella-zoster virus (VZV) causes two distinct diseases, varicella (chickenpox) and herpes zoster (HZ) i.e. shingles. Varicella occurs primarily in children and adolescents and features a seasonal pattern with the peak incidence happening in the winter \citep{Chan:2011}.  
In contrast, HZ occurs mostly in adults and elders who had varicella in their childhood so that VZV resides in their sensory ganglia. When the subject's immune function declines, the reactivation of the residential VZV may lead to HZ. The existence of seasonality in the incidence of HZ has been less studied and contradictory conclusions were reported \citep{Gallerani:2000, PF:2007}.

Beyond the existence of seasonality, the dynamics of the seasonality, for example, the relationship between the strength of seasonality and incidence rate of the varicella disease has been less quantified in the literature. In particular, the effect of the public vaccination program on the seasonal dynamics has been  less studied. 
As varicella is a highly contagious disease but can be effectively prevented, by 70\%--80\%, using varicella vaccines, see for example \cite{Marin:2008}, free varicella vaccination was made available to certain areas in Taiwan starting from 2003. A nationwide vaccination program was then launched in Taiwan in 2004. In the program  children aged $12$-$18$ months were encouraged to receive free vaccine against varicella and the vaccination rate was as high as 85\% in 2004 and then reached a plateau at 95\% afterward. It has been noted in  \cite{Chao:2012} that the public vaccination program was a considerable success and the incidence rate of varicella  dropped sharply by 70--80\% after 2004. For HZ, a vaccination program was promoted since 2008, and its effect on the public health is not yet clear. To investigate how the seasonal patterns of varicella and HZ were influenced by the vaccination, for example, whether the periods or amplitudes had changed or not after the vaccination, we carried out the following data analysis.

All out-patient visit records of an one-million representative cohort derived from the Taiwan's National Health Insurance Research Database (NHIRD) were analyzed. The Taiwan's NHIRD consists of de-identified and encrypted medical claims made by its 23 million inhabitants and is publicly available to medical researchers in Taiwan. This nationwide database provides accurate estimates of disease incidences because of the high coverage rate of 
Taiwan's National Health Insurance Program, which has been above 99\%  since 2000. Moreover, the Bureau of National Health Insurance (BNHI) of Taiwan performs regular cross-check and validation of the medical charts and claims to ensure the reliability of diagnosis coding, thus enabling NHIRD as a reliable resource for public health studies \citep{Chen:2011}. For research purposes, the one million (i.e. 4.3\%) representative inhabitants were randomly selected from Taiwan's 23 million inhabitants by the National Health Research Institute (NHRI) of Taiwan (\url{http://nhird.nhri.org.tw/en/index.htm}) so that the age- and gender- structures are identical to the whole population, and their full medical records were collected.

The weekly cumulative incidence rates of varicella and HZ were calculated using out-patient visit records of the one-million representative cohort dataset of NHIRD. Since varicella often occurs during pre-school and school age, all subjects aged below $10$ between 1 January 2000 and 31 December 2009 were enrolled. Every patient in this group with the first-time diagnosis of varicella, under the International Classification of Disease, 9th Version (ICD-9) code 052.x, was identified and included in the calculation of the incidence of varicella. The date of the out-patient visit was designated as each patient's incidence date of varicella. Those 
diagnosed with varicella before 1 January 2000 were excluded to ensure the validity of the incidence dates. We denote the induced weekly varicella incidence time series multiplied by $1000$ as $Y_{\textup{V}}[n]$, $n=1,\ldots,540$.
While varicella occurs mostly during childhood, HZ mainly occurs in elder patients. Subjects aged over 65 between 1 January  2000 and 31 December 2009 were enrolled and traced for their first-time diagnosis of HZ (ICD-9 code 053.x). The dates of incident of HZ were collected to calculate the incidence of HZ. Those 
diagnosed with HZ prior to 1 January 2000 were also excluded to ensure the validity of the incidence dates. We denote the induced weekly HZ incidence time series multiplied by $10000$ by $Y_{\textup{HZ}}[n]$, $n=1,\ldots,540$.

Viewing each of the incidence time series, $Y_{\textup{V}}[n]$ and $Y_{\textup{HZ}}[n]$, as the discretization of a single-component periodic function in the $\mathcal{A}^{c_1,c_2}_{\epsilon,d}$ function class contaminated by a heteroscedastic dependent error process, we analyzed them using SST and TBATS  and the results are shown in Figures \ref{varicella}, \ref{HZ} and \ref{TBATS:varicella}. The reconstructed seasonality and trend are denoted as $\widetilde{s}_{\textup{V}}[n]$ (or $\widetilde{s}_{\textup{HZ}}[n]$) and $\widetilde{T}_{\textup{V}}[n]$ (or $\widetilde{T}_{\textup{HZ}}[n]$) respectively.

\begin{figure}[ht]
\includegraphics[width=1\textwidth]{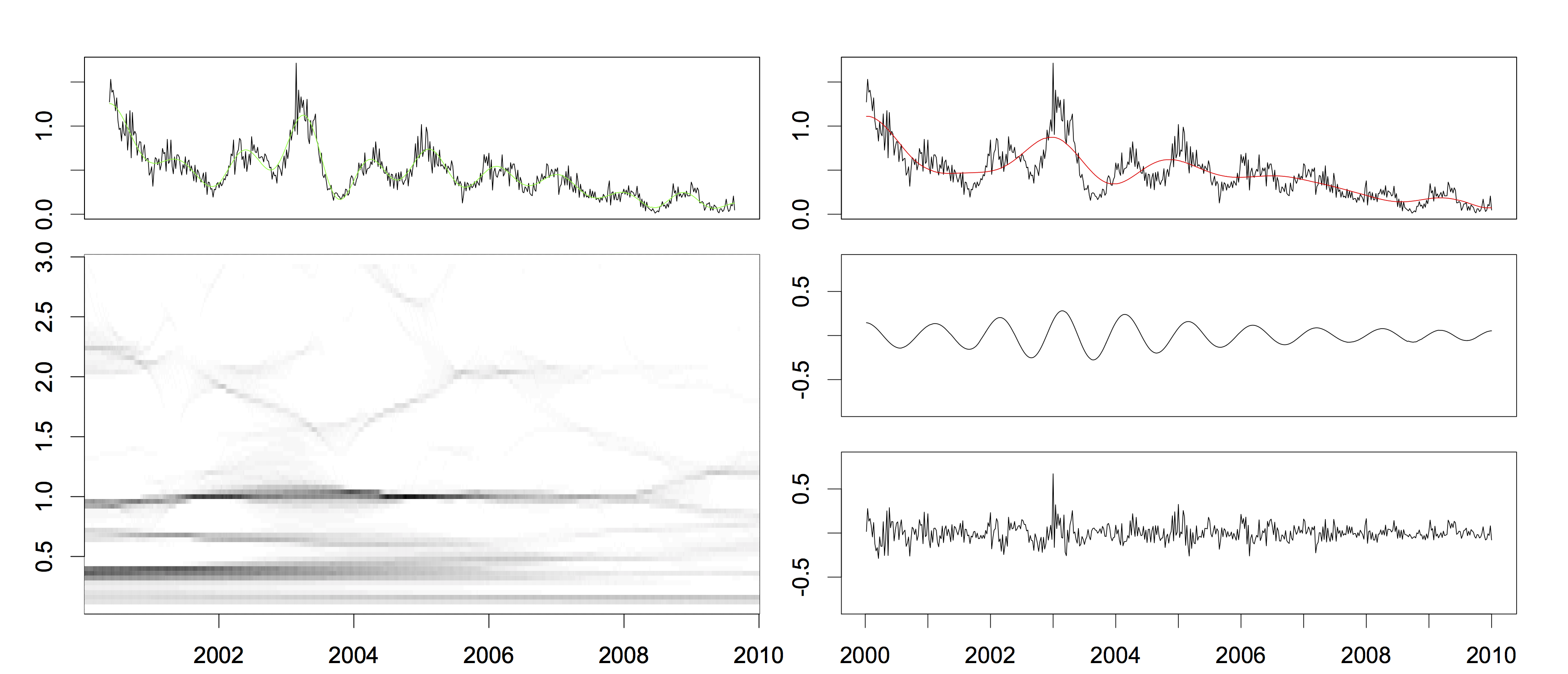}
\caption{{\it Varicella data analyzed using SST.} Upper left: The varicella incidence time series $Y_{\textup{V}}$ (black) is superimposed with $\widetilde{s}_{\textup{V}}+\widetilde{T}_{\textup{V}}$ (green). Lower left: The SST result for $Y_{\textup{V}}$, where the $y$-axis is the frequency and the intensity of the graph is the absolute value of the SST of $Y_{\textup{V}}$ given in (\ref{alogithm:sst:formula}). Right column: From top to bottom are the estimated trend $\widetilde{T}_{\textup{V}}$ (red) superimposed with $Y_{\textup{V}}$ (black), the estimated seasonality $\widetilde{s}_{\textup{V}}$, and  the residual term $Y_{\textup{V}}-\widetilde{s}_{\textup{V}}-\widetilde{T}_{\textup{V}}$. It is clear that the seasonality pattern between 2006 and 2008 is different from that in the other years. In both columns, the $x$-axes is indexed by year ranging from 2000 to 2010.}
\label{varicella}
\end{figure}

\begin{figure}[ht]
\includegraphics[width=1\textwidth]{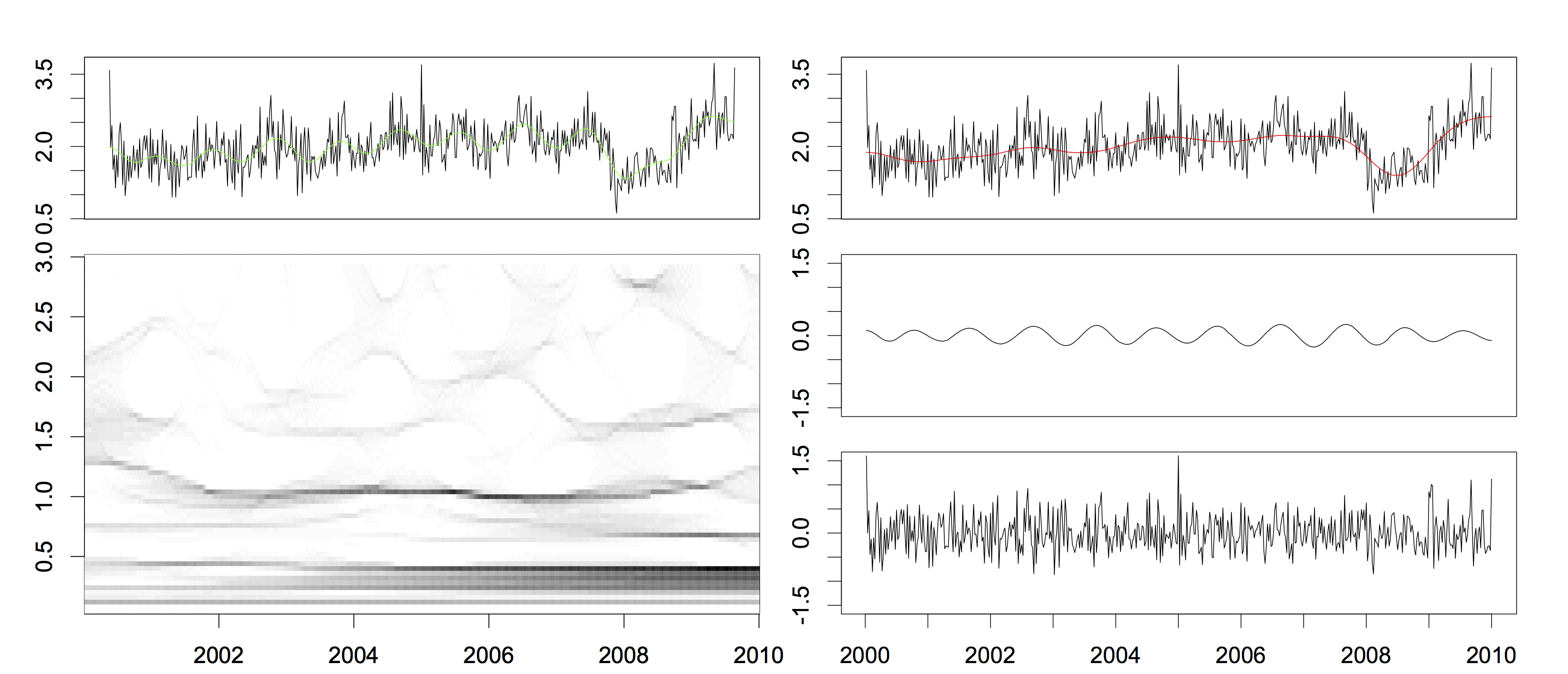}
\caption{{\it Herpes Zoster data analyzed using SST.} Upper left: The Herpes Zoster incidence time series $Y_{\textup{HZ}}$ (black) superimposed with $\widetilde{s}_{\textup{HZ}}+\widetilde{T}_{\textup{HZ}}$ (green). Lower left: The SST result for $Y_{\textup{HZ}}$, where the $y$-axis is the frequency and the intensity of the graph is the absolute value of the SST of $Y_{\textup{HZ}}$ given in (\ref{alogithm:sst:formula}). Right: From top to bottom are $Y_{\textup{HZ}}$ (black) superimposed with the estimated trend $\widetilde{T}_{\textup{HZ}}$ (red), the estimated seasonality $\widetilde{s}_{\textup{HZ}}$, and the residual term $Y_{\textup{HZ}}-\widetilde{s}_{\textup{HZ}}-\widetilde{T}_{\textup{HZ}}$. }
\label{HZ}
\end{figure}

Here, we summarize the findings from Figure \ref{varicella}. First, notice that the seasonality, the dominant curve on the time-frequency plane, is graphically visible based on the SST analysis, as is expected from the properties (P1)-(P4). Second,  we can tell from the estimate $\widetilde{s}_{\textup{V}}[n]$ the dynamics of the seasonality. Before the nationwide public vaccination program was launched in 2004, the seasonal behavior of varicella was stable and evident: it climbed gradually after September or October, reached the peak level in December and the next January, and then declined down to the base during June and July.  This finding is compatible with that of previous studies in Hong Kong and Denmark without public vaccination program \citep{Chan:2011, Metcalf:2009}. After the launch of public vaccination program in 2004, accordant with the increase in the vaccination rate, the winter peak shifted slightly toward spring between 2004 and 2008 while the period remained the same. This finding is consistent with the result of vaccination program in the United States \citep{Seward:2002}. More importantly, less oscillatory seasonality is observed after the launch of public immunization in 2004. This finding is important and less reported before.
The estimated trend of varicella incidence of is compatible with the finding in \cite{Chang:2011}, and we refer the readers to the paper for more discussion. The obvious drop in the trend starting from 2003 may be explained by the free varicella vaccination program in 2003, which was subsequently accelerated by the nationwide public vaccination program commenced in 2004.
Both the sharp decline during 2000-2001 and the increase during 2002 in the trend are less conclusive by this analysis; instead they may have been simply artifacts caused by the transition of the coding system. In Taiwan, the whole medical claim system had undergone a transition from a localized coding scheme (A-code) to the international standardized coding scheme (International Classification of Disease, ICD-9),  which was not completed until 2002. The gradual decrease in the trend starting from 2005 and the fact that the trend seems to level off starting from 2008 may be interpreted as the expected impact of the vaccination program. Clearly, the SST analysis showed its robustness to the coding bias problem in 2000-2002, when recovering the trend in 2003-2010.

Although it is not easy to tell directly from the HZ incidence time series, visually we can detect the existence of the seasonality from the time frequency representation of $Y_{\textup{HZ}}[n]$ provided by SST, as can be seen in Figure \ref{HZ}. The existence of the seasonality is supported by the fact that the occurrence of HZ in elders is a response to the T cell-mediated immunity which is weakened during winter, see for example \cite{Altizer:2006}. From the reconstructed seasonality $\widetilde{s}_{\textup{HZ}}[n]$ and the reconstructed trend, we see that while there was a 50\% increase in the HZ incidence rate among elders after the implementation of nationwide varicella vaccination program in 2004, its seasonality is dynamical -- the frequency was slightly higher before 2002 and slightly lower around 2007. (Notice that the boundary effect should contribute little to this finding since we symmetrically reflect the data on both sides separately.) The finding is rarely reported before but supported by the host immunity (human immunity) mechanism \citep{Dowell:2001}.  However, at this stage we cannot draw the conclusion and further study is required. Another  finding about the HZ incidence time series also coincides with that reported in \cite{Chao:2012}, that is, the incidence of the herpes zoster in elders increased after the implementation of the free varicella vaccination programme. Notice that around 2008 there was a significant drop in both $Y_{\textup{HZ}}[n]$ and the estimated trend, which might have come from a systematic change. However, we do not have enough evidence to explain this significant drop, and the underling mechanism deserves further investigation. Further study of the herd immunity, the interaction between varicella and herpes zoster and the host immunity mechanism is needed but out of scope of this paper.

To compare the findings based on the analyses of varicella and HZ incidences using SST with that using TBATS, we now look at the results obtained from TBATS, which are summarized in Figure \ref{TBATS:varicella}. As the seasonal period is expected to be $1$-year, we ran TBATS by taking the seasonal period to be $52$ weeks. For the varicella time series $Y_{\textup{V}}$, we can see that the estimated seasonality together with the estimated trend, i.e. $\widetilde{s}_{\textup{V}}+\widetilde{T}_{\textup{V}}$, follows $Y_{\textup{V}}$ well, while the estimated trend itself ($\widetilde{T}_{\textup{V}}$) does not. In particular, after 2005, the trend seems to oscillate in the opposite direction with respect to $Y_{\textup{V}}$. This can be explained by the fact that the unexplained part of the deterministic signal, that is, the dynamics in the seasonality, is absorbed by the trend estimate so as that TBATS can fit a nice seasonality to the time series $Y_{\textup{V}}$. In addition, from the public health viewpoint, it is somehow difficult to interpret the second peak in March in the seasonal component.  For the herpes zoster time series $Y_{\textup{HZ}}$, we see that while both the sum of the trend and seasonality estimates ($\widetilde{s}_{\textup{HZ}}+\widetilde{T}_{\textup{HZ}}$) and the trend estimate ($\widetilde{T}_{\textup{HZ}}$) follow $Y_{\textup{HZ}}$ closely, the trend estimate appears to oscillate to same extent. Again, this is inevitable for TBATS because any unexplained dynamic oscillation is absorbed by the trend estimate. To sum up, we find the results given by TBATS are harder to interpret.

\begin{figure}[ht]
\includegraphics[width=1\textwidth]{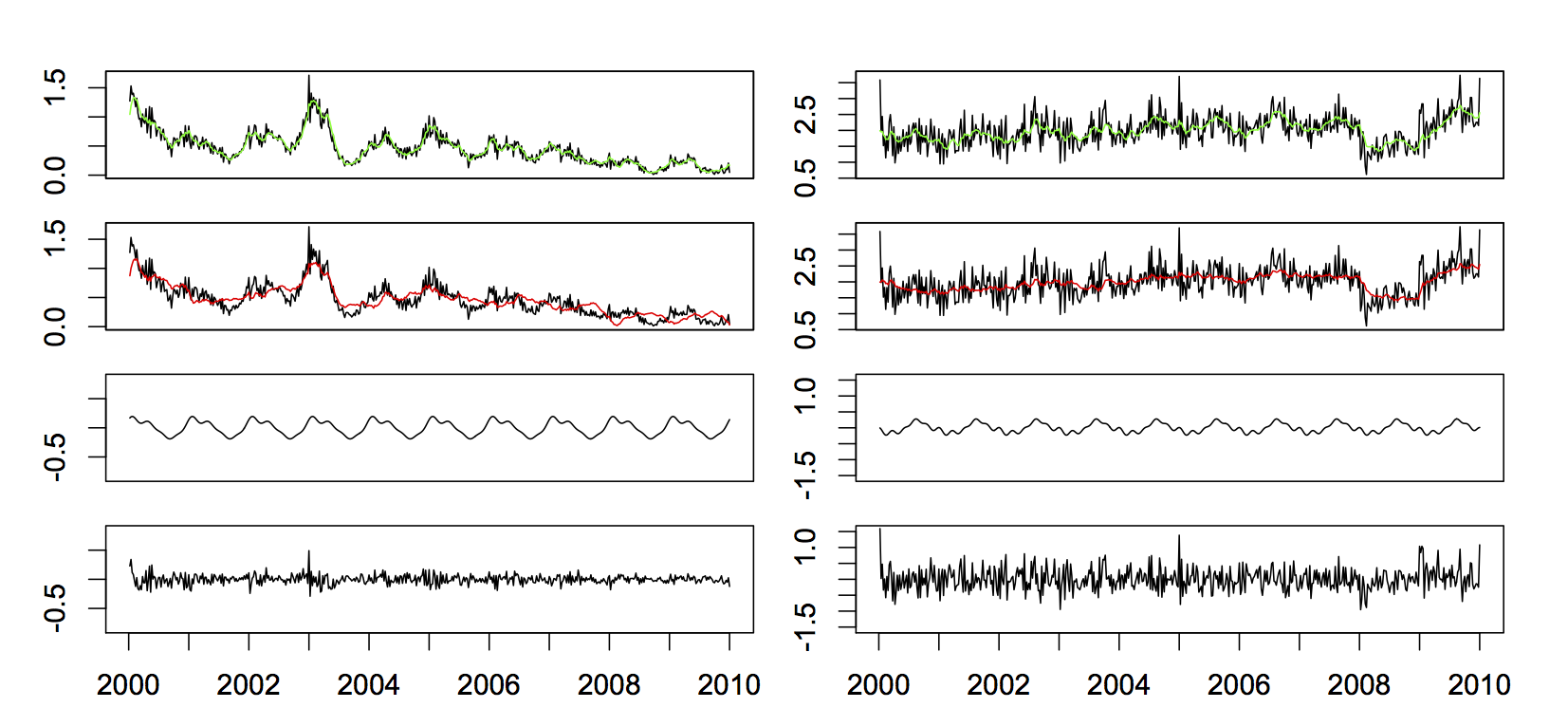}
\caption{{\it Varicella and Herpes Zoster data analyzed by TBATS.} Left: From top to bottom are $Y_{\textup{V}}$ (black) and $\widetilde{s}_{\textup{V}}+\widetilde{T}_{\textup{V}}$ (green); $Y_{\textup{V}}$ (black) and $\widetilde{T}_{\textup{V}}$ (red); $\widetilde{s}_{\textup{V}}$ (black); and  $Y_{\textup{V}}-\widetilde{s}_{\textup{V}}-\widetilde{T}_{\textup{V}}$ (black). Right: From top to bottom are $Y_{\textup{HZ}}$ (black) and $\widetilde{s}_{\textup{HZ}}+\widetilde{T}_{\textup{HZ}}$ (green); $Y_{\textup{HZ}}$ (black) and $\widetilde{T}_{\textup{HZ}}$ (red); $\widetilde{s}_{\textup{HZ}}$ (black); and $Y_{\textup{HZ}}-\widetilde{s}_{\textup{HZ}}-\widetilde{T}_{\textup{HZ}}$ (black). In both columns, the $x$-axes is indexed by year ranging from 2000 to 2010.}
\label{TBATS:varicella}
\end{figure}

\subsection{Seasonal Dynamics of Respiratory Signals and Sleep Stages}\label{sleep}

At first glance, it might be difficult to imagine the existence of seasonality with time-varying period. 
In this section we demonstrate an example from the medical field. Physiological signals contain abundant information, for example, to evaluate a person's health condition we can use information extracted from his/her electrocardiographic (ECG) signals, respiratory signals, blood pressure, and so on \citep{hrv,Benchetrit:2000,Wysocki2006,Lin_Hseu_Yien_Tsao:2011}. Many such signals are oscillatory, and the frequency (or equivalently period) is commonly used to quantify behavior of the oscillation. However, it has been well known that the frequency, as a global quantity,  cannot fully capture the oscillatory behavior of these signals \citep{hrv,Lin_Hseu_Yien_Tsao:2011,Wysocki2006,wu:2011}. In particular, the period between sequential heart beats (resp. respiratory cycles) varies in normal subjects, which is referred to as heart rate variability (HRV) \citep{hrv} (resp. respiratory rate variability (RRV)). HRV (resp. RRV) has been shown to be related to physiological dynamics -- the less variability, the more ill the subject is \citep{hrv,Wysocki2006}. Below, we provide an example of this kind showing that instantaneous frequency (IF) of respiratory signals provide meaningful information about the underlying physiological dynamics.

While sleep is a naturally recurring physiological dynamical state ubiquitous among mammals, its biological nature is so far only partially understood. Based on the physiological and neurological features, sleep is divided into five stages: rapid eye movement (REM) and stage 1 to stage 4 (or as a whole called non-rapid eye movement (NREM)). Sleep stages 1 and 2 are referred to as {\it shallow sleep} and stages 3 and 4 are referred to as {\it deep sleep}. In clinics, the sleep stages are determined by reading the recorded electroencephalography (EEG) based on the R\&K criteria \citep{RK}. Normally, sleep cycles among REM, shallow sleep and deep sleep, and each cycle takes about 90 -- 110 minutes. By analyzing the following data, we demonstrate that the IF of the recorded respiratory signal determined by SST contains sleep stage information.
Standard overnight Polysomnography (Alice 5, Respironics) were performed 
in the Sleep Center of the Chang Gung Memorial Hospital in Taoyuan, Taiwan, to document sleep parameters in four adult patients without sleep apnea diagnosis (age: $43.8\pm10.8$). The sleep length was $368\pm 35$ minutes.  The expert physicians determined the sleep stage by reading the recorded EEG based on the R\&K criteria. We take the determined sleep stage as the gold standard. In addition, the respiratory signal was recorded by the thermistor measuring the nasal airflow at the sampling rate $100$ Hz and down sampled to $5$ Hz in order to speed up the analysis. 

We ran SST on the respiratory signals and extracted the IF. A piece of typical respiratory signal and its SST are shown in Figure \ref{sleep_resp_sig} for reference. Notice that the oscillatory pattern of the respiratory signal is not a cosine function. \cite{wu:2011} referred to this kind of oscillatory pattern as the {\it shape function} and analyzed it by SST . Here we briefly summarize the result. We model the respiratory signal as $A(t)s(\phi(t))$, where $A(t)$ and $\phi(t)$ satisfy the same conditions as those functions in the $\mathcal{A}_{\epsilon}^{c_1,c_2}$ class, and $s\in C^{1,\alpha}$, $\alpha>1/2$, is a $1$-periodic function. Since by Taylor's series expansion $A(t)s(\phi(t))=A(t)\cos(2\pi\phi(t))+\sum_{l=2}^\infty A(t)\cos(2\pi l\phi(t))$, by viewing $\sum_{l=2}^\infty A(t)\cos(2\pi l\phi(t))$ as a component having much higher ``frequency'' than $A(t)\cos(2\pi\phi(t))$, 
SST can estimate $A(t)$ and $\phi'(t)$ from $A(t)s(\phi(t))$ as accurately as if the respiratory signal was $A(t)\cos(2\pi\phi(t))$. Thus, we can ignore the non-harmonic oscillatory pattern and estimate the IF $\phi'(t)$ using SST. We refer the readers to \cite{wu:2011} for more detailed discussion and the technical details.

The extracted IF from the whole night respiratory signal and the K\&R sleep stages of one subject are illustrated in Figure \ref{sleep_resp_sst}, from which we {\it visually} observe high correlation between sleep stage and variation of IF. To further study this correlation, we built up data indicating the IF behavior for each subject in the following way. We divided a respiratory signal into $30$-second sub-intervals $\{I_i\}$, according to the sub-intervals for which the R\&K sleep stage was determined. We evaluated the standard deviation of the estimated IF of the respiratory signal in each $I_i$ and denoted it as $s_i$. Then, for each subject, we grouped all the $s_i$'s according to the sleep stage into four groups --  awake, REM, shallow stage and deep stage. Then, for each subject we ran the F-test on the four groups, and on each pair of the four groups. The $p$-values are listed in Table \ref{sleep:p_value_table}, which are all well below $0.01$ except for the pair REM v.s. awake. Therefore, the oscillatory pattern of the respiratory signal, in particular the dynamical period, contains plentiful information about the sleep depth. It is not surprising that the F-test based on only IF of the respiratory signal cannot detect the difference between REM and awake stages, as distinguishing between them has been considered as a difficult problem \citep{Lo_Jordan_White:2007}. Yet, since sleep is a complicated physiological activity which involves the whole brain, the information obtained from the respiratory signal might compensate that obtained from EEG. Indeed, the information contained in EEG is mainly the activity of the cortex, while the involuntary respiratory neural control center is located in the subcortical area. Further study on the sleep cycle, for example, classification by taking both IF and AM of the respiratory signals into account, analyzing the multiple time series including ECG, EEG and the respiratory signal, and its clinical applications, is beyond the scope here; a more detailed study will be reported in a future paper.

\begin{figure}[ht]
\includegraphics[width=0.95\textwidth]{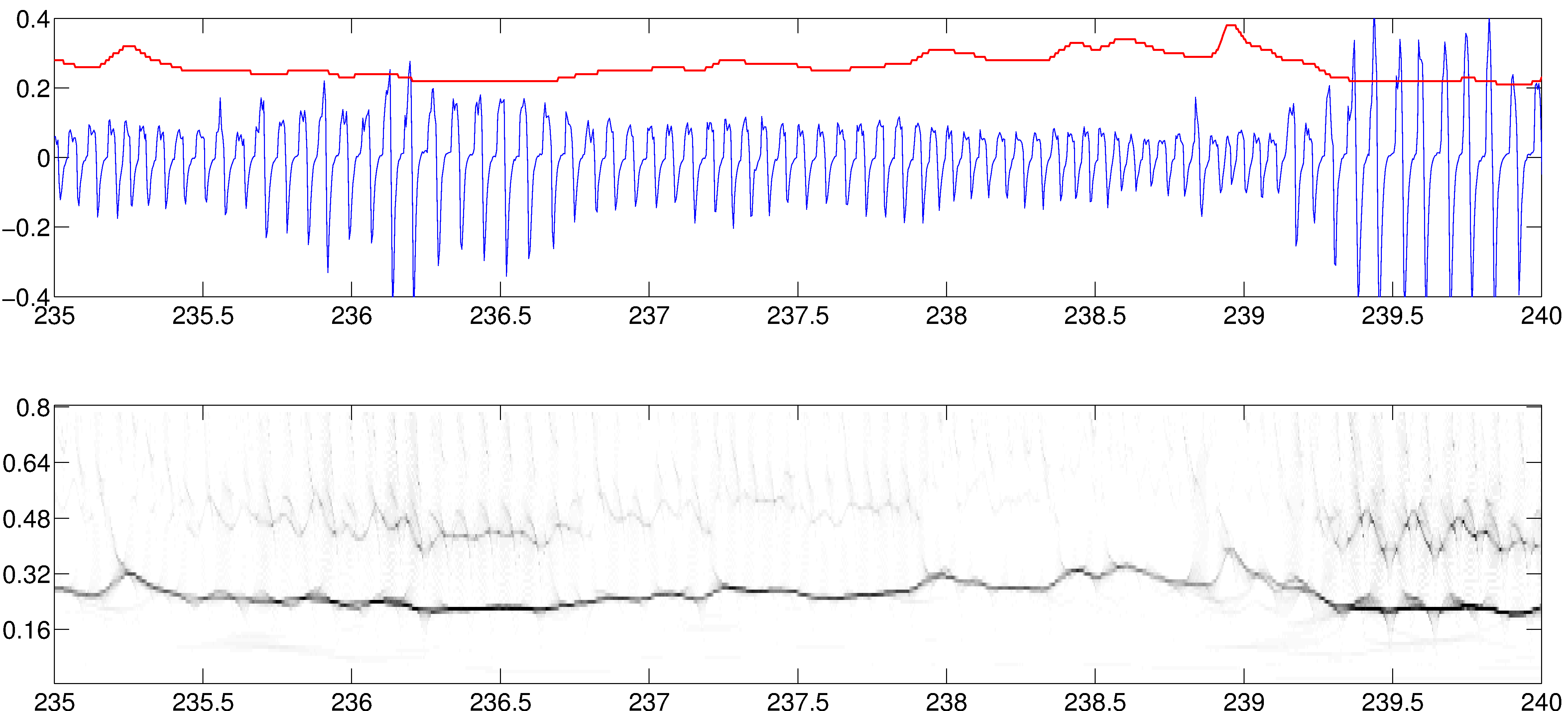}
\caption{Upper panel: The blue curve is a $5$-minute piece of the recorded respiratory signal of subject 1 during shallow sleep, and the superimposed red curve is the IF extracted from this piece using SST. 
Notice that the faster the breathing is, the higher the IF is. Lower panel: The SST of the 
respiratory signal shown in the upper panel. 
Note a dominant ``band'' in the time-frequency plane around 0.3Hz, which is actually a time-varying curve with the $y$-location indicating the IF. Notice that the amplitude modulation (AM) information is encoded on the intensity of the plot -- the deeper the breathing is, the darker the curve is. The unit of the x-axis is minute and that of the y-axis is Hz.}
\label{sleep_resp_sig}
\end{figure}

\begin{figure}[ht]
\includegraphics[width=0.95\textwidth]{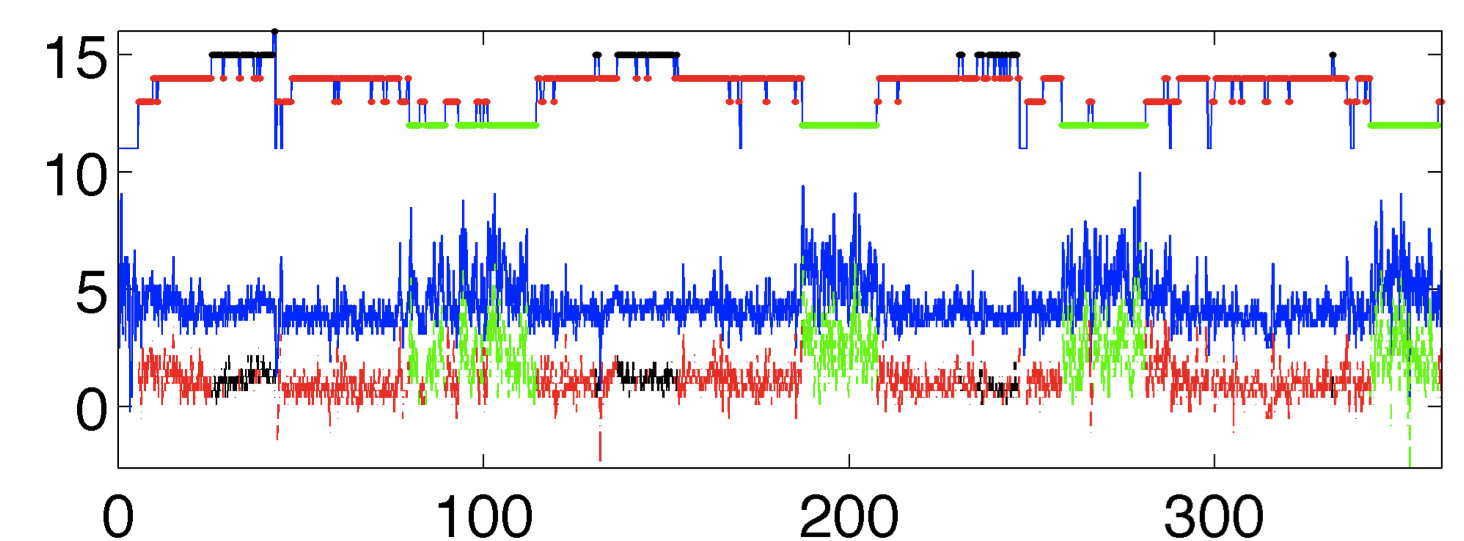}
\caption{In the upper half the K\&R sleep stage in each 30-second interval is indicated by color, with green, red, black and blue colors respectively standing for REM, shallow sleep, deep sleep and awake. The IF is shown as the blue curve in the lower half. The high correlation between the variation of IF and sleep stage is clear visually. To enhance the information encoded in IF, we superimpose on the IF the color attached to the sleep stage and shift it downwards. The differences between the four groups are tested by F-test and the results are shown in Table \ref{sleep:p_value_table}. The unit of x-axis is minute.}
\label{sleep_resp_sst}
\end{figure}

\begin{table}
\caption{\label{sleep:p_value_table} \small Results of the F-test on the difference between sleep stages in IF variation of respiratory signal.
The $p$-values for each of the four individual are shown. Here $8e-12$ means $8\times 10^{-12}$, for example.}
\centering
{\small
\begin{tabular}{| c | c | c | c | c |  }
\hline
\multirow{3}{*}{} & \multicolumn{4}{|c|}{subject} \\
\cline{2-5}
  & $1$ & $2$ & $3$ & $4$  \\
\hline
REM v.s. shallow & $8e-12$ & $0$ & $4.3e-12$ & $0$\\
REM v.s. deep & $0$ & $0$ & $0$ & $6.7e-16$ \\
deep v.s. shallow & $0$ & $1.2e-6$ & $1.4e-6$ & $5e-4$ \\
wake v.s. shallow & $0$ & $0$ & $0$ & $0$\\
wake v.s. deep & $0$ & $9e-15$ & $0$ & $0$ \\
wake v.s. REM & $0.81$ & $0.03$ & $0$ & $0.4$ \\
four groups & $0$ & $0$ & $0$ & $0$\\
\hline
\end{tabular}
}
\end{table}

\section{Discussion}\label{discussion}
We introduce a new nonparametric model and an adaptive time frequency analysis technique, referred to as Synchrosqueezing transform (SST), to analyze time series with dynamical seasonal behavior and smooth trend, which is contaminated by stationary time series modulated by slowly varying variability. Apart from the identifiability results for the nonparametric seasonality model, theoretical results are provided to justify and quantify the capability of SST to extract the seasonal component  and the trend from the noisy observations. In our numerical study, besides testing SST on simulated data and comparing it with the recently developed method TBATS, we applied SST to evaluate two real medical data. First, we studied the influence of the general application of the vaccine on the seasonal dynamics of two highly related diseases, varicella and herpes zoster. Second, we studied if the IF estimated from the respiratory signal provides information about the sleep cycle.

We list below a set of open problems to complete the discussion. 
\begin{itemize}
\item Although the influence of the heteroscedastic, dependent error process on SST is carefully analyzed, more studies on the statistical side are needed. For example, how to {\it adaptively} choose the optimal threshold $\Gamma$ in SST (\ref{alogithm:sst:formula}), and in which sense? Also, to choose the smoothing parameter $\lambda$ in (\ref{algorithm:sst}) automatically, we may explore ideas in the curve fitting literature, for example, cross-validation.
\item It is sometimes interesting to understand the structure of the error process; for example, this is important when we want to forecast future values. There exist several algorithms aiming at identifying CARMA (or ARMA) processes \citep{Brockwell_Davis:2002,Brockwell:2001}. In addition, when the noise is modeled as a CARMA or ARMA random process modulated by a slowly varying function $\sigma(t)$, we may apply to the residuals the algorithms discussed in the literature, for example \cite{Dahlhaus:1997}, \cite{Hallin:1978,Hallin:1980} and \cite{Rosen_Stoffer_Wood:2009}, to estimate the heteroscedastic variance $\sigma^2(t)$.  
However, although the error process can be efficiently separated from the observed time series by SST (indicated by Theorem \ref{section:theorem:stability} result (iii), Theorem \ref{section:theorem:ARMAstability} result (iii), Tables \ref{table:simulation:Y0} and \ref{table:simulation:Y1}, and Figures \ref{fig:simulation:sst_recon0}, \ref{fig:simulation:sst_recon} and \ref{fig:simulation:sst_recon2}), theoretical quantification of the influence of SST on these approaches remains unknown, and further investigation is needed. 
\item How to identify the existence of the seasonality, and how to decide the number of components if seasonality exists? 
Although we do not have an ultimate solution, a possible approach is the following. First, test the null hypothesis that the observed process is stationary, modulated by the heteroscedastic variance and the trend. If the null hypothesis is not rejected, stop and conclude that there is no seasonality in the signal. Otherwise, in the next step, we conduct the following forward procedure to determine the number of seasonal components $K$. Starting from $k=1$, we visually determine {\it k} components from the SST TF plane, reconstruct the seasonal components and the trend, and then test if the residuals (obtained by subtracting from the signal the reconstructed $k$ seasonal components and trend) is stationary, modulated by the heteroscedastic variance. If the stationarity null hypothesis is not rejected we stop and decide $K$ equals the current value of $k$; otherwise we increase the value of $k$ by $1$, and then find another seasonal component from the TF plane and repeat the above estimation and testing procedure. With this iterative approach, we are able to determine $K$. A relevant issue is to analyze behavior of SST on random error processes. We leave these important problems as a future work.
\item Forecasting is another important issue in time series analysis. By extrapolating the seasonal components and trend, and extrapolating the residuals (which approximates the random error term) using the Kalman filter, we can achieve short-term prediction based on the SST approach. Currently we do not have a theoretical result justifying this approach. Also, we do not have yet a way to extend this for long-term forecasting, as existing models such as TBATS can do.   
\item Since the proposed model for the seasonality is nonparametric in nature, the results from the SST algorithm may be used to check the validity of some specific sub-models such as constant seasonal periods. Further investigation in this direction is needed.
\item In the classical time series analysis, the trend is usually modeled as a stochastic component as in SARIMA and TBATS. In our models, the trend is a deterministic component instead of a stochastic one, and  the only stochastic part is the stationary or ``almost stationary'' random error process. In general, it is possible that a stochastic trend exists; in that case, how to distinguish it from the heteroscedastic, dependent error requires further study. 
\item Sometimes local bursts occur in real data. We include in the Supplementary a stimulation study; it indicates that the SST algorithm can distinguish local bursts from trend and the dynamical seasonality, and the SST reconstruction is not significantly affected. On the other hand, SST views local bursts as part of the error process; as a result, approximation of the error process by the residuals is affected. We will study how to distinguish between local bursts and the error process in the future.
\item The SST method can be used to study seasonal patterns with constant periods as well. When the period of a seasonal component is a known constant, we can skip the step to estimate the instantaneous frequency $\phi'_k(t)$ by replacing its estimate $\widetilde{\phi}'_k$ in (\ref{algorithm:if}) with its known value. The resulting estimator is expected to be more stable and would be useful in situations where the assumption holds.  
 \item Although a preliminary interpretation and survey of the medical signals in Section \ref{data} is provided, from the clinical viewpoint, further studies are needed to better understand the underlying dynamical mechanism. For example, the host/herd immunity and the interaction between varicella and herpes zoster are not yet clear, and we may combine the respiratory signal information with other biomedical signals to predict sleep stages more accurately. 
\end{itemize}

\section{Acknowledgements}
The study in Section \ref{VHZ} was based on data from the NHRID provided by the BNHI, Department of Health, and managed by NHRI in Taiwan. The interpretation and conclusions contained therein do not represent those
of the BNHI, the Department of Health, or the NHRI. Hau-Tieng Wu acknowledges support by AFOSR grant FA9550-09-1-0551, NSF grant CCF-0939370 and FRG grant DSM-1160319, the valuable discussion with Professor Ingrid Daubechies and Dr. Yu-Lun Lo and he thanks Dr. Yu-Lun Lo for providing the sleep data. Cheng's research is supported in part by the National Science Council grants NSC97-2118-002-001-MY3 and NSC101-2118-002-001-MY3, and the Mathematics Division, National Center for Theoretical Sciences (Taipei Office). The authors thank the anonymous reviewers, Professor Liudas Giraitis, Professor Peter Robinson and Professor William Dunsmuir for their valuable comments which significantly improve the presentation of the paper.

\bibliographystyle{Chicago}
\bibliography{seasonality_rev}

\begin{thebibliography}{}

\bibitem[\protect\citeauthoryear{Altizer, Dobson, Hosseini, Hudson, Pascual,
  and Rohani}{Altizer et~al.}{2006}]{Altizer:2006}
Altizer, S., A.~Dobson, P.~Hosseini, P.~Hudson, M.~Pascual, and P.~Rohani
  (2006).
\newblock Seasonality and the dynamics of infectious diseases.
\newblock {\em Ecology letters\/}~{\em 9\/}(4), 467--484.

\bibitem[\protect\citeauthoryear{Benchetrit}{Benchetrit}{2000}]{Benchetrit:2000}
Benchetrit, G. (2000).
\newblock Breathing pattern in humans: diversity and individuality.
\newblock {\em Respiration Physiology\/}~{\em 122\/}(2-3), 123 -- 129.

\bibitem[\protect\citeauthoryear{Bickel, Kleijn, and Rice}{Bickel
  et~al.}{2008}]{Bickel_Kleijn_Rice:2008}
Bickel, P., B.~Kleijn, and J.~Rice (2008).
\newblock {Event weighted tests for detecting periodicity in photon arrival
  times}.
\newblock {\em The Astrophysical Journal\/}~{\em 685}, 384--389.

\bibitem[\protect\citeauthoryear{Brockwell}{Brockwell}{1995}]{Brockwell:1995}
Brockwell, P.~J. (1995).
\newblock {A note on the embedding of discrete-time ARMA processes}.
\newblock {\em Journal of Time Series Analysis\/}~{\em 16\/}(5), 451--460.

\bibitem[\protect\citeauthoryear{Brockwell}{Brockwell}{2001}]{Brockwell:2001}
Brockwell, P.~J. (2001).
\newblock {Continuous-Time ARMA Processes}.
\newblock In {\em Handbook of Statistics}, Volume~19, pp.\  249--276. Elsevier.

\bibitem[\protect\citeauthoryear{Brockwell}{Brockwell}{2009}]{Brockwell:2009}
Brockwell, P.~J. (2009).
\newblock {L\'{e}vy-Driven Continuous-Time ARMA Processes}.
\newblock In {\em Handbook of Financial Time Series}, pp.\  457--486.
  Springer-Verlag.

\bibitem[\protect\citeauthoryear{Brockwell and Davis}{Brockwell and
  Davis}{2002}]{Brockwell_Davis:2002}
Brockwell, P.~J. and R.~A. Davis (2002).
\newblock {\em Introduction to Time Series and Forecasting}.
\newblock Springer.

\bibitem[\protect\citeauthoryear{Brockwell and Hannig}{Brockwell and
  Hannig}{2010}]{Brockwell:2010}
Brockwell, P.~J. and J.~Hannig (2010).
\newblock {CARMA(p,q) generalized random processes}.
\newblock {\em J. Stat. Plan. Infer.\/}~{\em 140\/}(12), 3613--3618.

\bibitem[\protect\citeauthoryear{Brockwell and Lindner}{Brockwell and
  Lindner}{2009}]{Brockwell_Lindner:2009}
Brockwell, P.~J. and A.~Lindner (2009).
\newblock {Existence and uniqueness of stationary L\'{e}vy-driven CARMA
  processes}.
\newblock {\em Stoch. Proc. Appl.\/}~{\em 119\/}(8), 2660--2681.

\bibitem[\protect\citeauthoryear{Brockwell and Lindner}{Brockwell and
  Lindner}{2010}]{Brockwell_Lindner:2010}
Brockwell, P.~J. and A.~Lindner (2010).
\newblock {Strictly stationary solutions of autoregressive moving average
  equations}.
\newblock {\em Biometrika\/}~{\em 97\/}(3), 765--772.

\bibitem[\protect\citeauthoryear{Chan, Tian, Kwan, Chan, and Leung}{Chan
  et~al.}{2011}]{Chan:2011}
Chan, J., L.~Tian, Y.~Kwan, W.~Chan, and C.~Leung (2011).
\newblock Hospitalizations for varicella in children and adolescents in a
  referral hospital in {H}ong {K}ong, 2004 to 2008: A time series study.
\newblock {\em BMC Public health\/}~{\em 11\/}(1), 366.

\bibitem[\protect\citeauthoryear{Chang, Huang, Chang, and Tsai}{Chang
  et~al.}{2011}]{Chang:2011}
Chang, L.-Y., L.-M. Huang, I.-S. Chang, and F.-Y. Tsai (2011).
\newblock Epidemiological characteristics of varicella from 2000 to 2008 and
  the impact of nationwide immunization in {T}aiwan.
\newblock {\em BMC Infectious disease\/}~{\em 11\/}(1), 352.

\bibitem[\protect\citeauthoryear{Chao, Chien, Yeh, Hsu, and Lian}{Chao
  et~al.}{2012}]{Chao:2012}
Chao, D.-Y., Y.-Z. Chien, Y.-P. Yeh, P.-S. Hsu, and I.-B. Lian (2012).
\newblock The incidence of varicella and herpes zoster in {T}aiwan during a
  period of increasing varicella vaccine coverage, 2000--2008.
\newblock {\em Epidemiology and infection\/}~{\em 140\/}(6), 1131--1140.

\bibitem[\protect\citeauthoryear{Chassande-Mottin, Auger, and
  Flandrin}{Chassande-Mottin et~al.}{2003}]{Chassande-MottinAugerFlandrin:03}
Chassande-Mottin, E., F.~Auger, and P.~Flandrin (2003).
\newblock Time-frequency/time-scale reassignment.
\newblock In {\em Wavelets and signal processing}, Appl. Numer. Harmon. Anal.,
  pp.\  233--267. Birkh\"auser.

\bibitem[\protect\citeauthoryear{Chassande-Mottin, Daubechies, Auger, and
  Flandrin}{Chassande-Mottin et~al.}{1997}]{Chassande-MottinDaubechiesAuger:97}
Chassande-Mottin, E., I.~Daubechies, F.~Auger, and P.~Flandrin (1997).
\newblock Differential reassignment.
\newblock {\em Signal Processing Letters, IEEE\/}~{\em 4\/}(10), 293--294.

\bibitem[\protect\citeauthoryear{Chen, Yeh, Wu, Haschler, Chen, and
  Wetter}{Chen et~al.}{2011}]{Chen:2011}
Chen, Y.-C., H.-Y. Yeh, J.-C. Wu, I.~Haschler, T.-J. Chen, and T.~Wetter
  (2011).
\newblock Taiwan's national health insurance research database: Administrative
  health care database as study object in bibliometrics.
\newblock {\em Scientometrics\/}~{\em 86\/}(1), 365--380.

\bibitem[\protect\citeauthoryear{Dahlhaus}{Dahlhaus}{1997}]{Dahlhaus:1997}
Dahlhaus, R. (1997).
\newblock {Fitting Time Series Models to Nonstationary Processes}.
\newblock {\em Ann. Stat.\/}~{\em 25\/}(1), 1--37.

\bibitem[\protect\citeauthoryear{Daubechies}{Daubechies}{1992}]{daubechies:1992}
Daubechies, I. (1992).
\newblock {\em Ten Lectures on Wavelets}.
\newblock Philadelphia: SIAM.

\bibitem[\protect\citeauthoryear{Daubechies, Lu, and Wu}{Daubechies
  et~al.}{2010}]{daubechies_lu_wu:2010}
Daubechies, I., J.~Lu, and H.-T. Wu (2010).
\newblock Synchrosqueezed wavelet transforms: An empirical mode
  decomposition-like tool.
\newblock {\em Appl. Comput. Harmon. Anal.\/}~{\em 30}, 243--261.

\bibitem[\protect\citeauthoryear{Daubechies and Maes}{Daubechies and
  Maes}{1996}]{daubechies_maes:1996}
Daubechies, I. and S.~Maes (1996).
\newblock A nonlinear squeezing of the continuous wavelet transform based on
  auditory nerve models.
\newblock In {\em Wavelets in Medicine and Biology}, pp.\  527--546. CRC-Press.

\bibitem[\protect\citeauthoryear{{De Livera}, Hyndman, and Snyder}{{De Livera}
  et~al.}{2011}]{DeLivera_Alysha_Hyndman_Snyder:2011}
{De Livera}, A.~M., R.~J. Hyndman, and R.~D. Snyder (2011).
\newblock {Forecasting Time Series With Complex Seasonal Patterns Using
  Exponential Smoothing}.
\newblock {\em J. Am. Stat. Assoc.\/}~{\em 106\/}(496), 1513--1527.

\bibitem[\protect\citeauthoryear{Dowell}{Dowell}{2001}]{Dowell:2001}
Dowell, S. (2001).
\newblock Seasonal variation in host susceptibility and cycles of certain
  infectious diseases.
\newblock {\em Emerging Infectious Diseases\/}~{\em 7\/}(3), 369--374.

\bibitem[\protect\citeauthoryear{Flandrin}{Flandrin}{1999}]{flandrin:1999}
Flandrin, P. (1999).
\newblock {\em Time-frequency/time-scale Analysis}.
\newblock Academic Press.

\bibitem[\protect\citeauthoryear{Gallerani and Manfredini}{Gallerani and
  Manfredini}{2000}]{Gallerani:2000}
Gallerani, M. and R.~Manfredini (2000).
\newblock Seasonal variation in herpes zoster infection.
\newblock {\em British journal of dermatology\/}~{\em 142\/}(3), 588--589.

\bibitem[\protect\citeauthoryear{Gel'fand and Vilenkin}{Gel'fand and
  Vilenkin}{1964}]{Gelfand:1964}
Gel'fand, I. and N.~Y. Vilenkin (1964).
\newblock {\em Generalized function theory Vol 4}.
\newblock Academic Press.

\bibitem[\protect\citeauthoryear{Genton and Hall}{Genton and
  Hall}{2007}]{Genton_Hall:2007}
Genton, M.~G. and P.~Hall (2007).
\newblock {Statistical inference for evolving periodic functions}.
\newblock {\em J. Roy. Stat. Soc. B\/}~{\em 69\/}(4), 643--657.

\bibitem[\protect\citeauthoryear{Golombek and Rosenstein}{Golombek and
  Rosenstein}{2010}]{golombek_rosenstein:2010}
Golombek, D. and R.~Rosenstein (2010).
\newblock Physiology of circadian entrainment.
\newblock {\em Physiol. Rev.\/}~{\em 90}, 1063--1102.

\bibitem[\protect\citeauthoryear{Hall, Reimann, and Rice}{Hall
  et~al.}{2000}]{Hall_Reimann_Rice:2000}
Hall, P., J.~Reimann, and J.~Rice (2000).
\newblock {Nonparametric estimation of a periodic function}.
\newblock {\em Biometrika\/}~{\em 87\/}(3), 545--557.

\bibitem[\protect\citeauthoryear{Hallin}{Hallin}{1978}]{Hallin:1978}
Hallin, M. (1978).
\newblock {Mixed autoregressive moving-average multivariate processes with time
  dependent coefficients}.
\newblock {\em J Multivariate Anal\/}~{\em 8}, 567--572.

\bibitem[\protect\citeauthoryear{Hallin}{Hallin}{1980}]{Hallin:1980}
Hallin, M. (1980).
\newblock {Invertibility and Generalized Invertibility of Time Series Models}.
\newblock {\em J. Roy. Stat. Soc. B\/}~{\em 42}, 210--212.

\bibitem[\protect\citeauthoryear{Ishikawa, Niwa, and Tanaka}{Ishikawa
  et~al.}{2012}]{Ishikawa:2012}
Ishikawa, K., M.~Niwa, and T.~Tanaka (2012).
\newblock Difference of intensity and disparity in impact of climate on several
  vascular diseases.
\newblock {\em Heart and Vessels\/}~{\em 27\/}(1), 1--9.

\bibitem[\protect\citeauthoryear{Lin, Jones, Liu, and Hwang}{Lin
  et~al.}{2011}]{Lin_Jones:2011}
Lin, S., R.~Jones, X.~Liu, and S.~Hwang (2011).
\newblock Impact of the return to school on childhood asthma burden in new york
  state.
\newblock {\em Int. J. Occup. Env. Heal.\/}~{\em 17\/}(1), 9--16.

\bibitem[\protect\citeauthoryear{Lin, Hseu, Yien, and Tsao}{Lin
  et~al.}{2011}]{Lin_Hseu_Yien_Tsao:2011}
Lin, Y.-T., S.-S. Hseu, H.-W. Yien, and J.~Tsao (2011).
\newblock {Analyzing autonomic activity in electrocardiography about general
  anesthesia by spectrogram with multitaper time-frequency reassignment}.
\newblock {\em Proc. IEEE-BMEI\/}~{\em 2}, 628--632.

\bibitem[\protect\citeauthoryear{Lo, Jordan, Malhotra, Wellman, Heinzer,
  Eikermann, Schory, Dover, and White}{Lo et~al.}{2007}]{Lo_Jordan_White:2007}
Lo, Y.-L., A.~Jordan, A.~Malhotra, A.~Wellman, R.~Heinzer, M.~Eikermann,
  K.~Schory, L.~Dover, and D.~White (2007).
\newblock Influence of wakefulness on pharyngeal airway muscle activity.
\newblock {\em Thorax\/}~{\em 62}, 798--804.

\bibitem[\protect\citeauthoryear{Malik and Camm}{Malik and Camm}{1995}]{hrv}
Malik, M. and A.~J. Camm (1995).
\newblock {\em Heart rate variability}.
\newblock Wiley-Blackwell.

\bibitem[\protect\citeauthoryear{Marin, Meissner, and Seward}{Marin
  et~al.}{2008}]{Marin:2008}
Marin, M., H.~Meissner, and J.~Seward (2008).
\newblock Varicella prevention in the united states: A review of successes and
  challenges.
\newblock {\em Pediatrics\/}~{\em 122\/}(3), 744--751.

\bibitem[\protect\citeauthoryear{Metcalf, Bjornstad, Grenfell, and
  Andreasen}{Metcalf et~al.}{2009}]{Metcalf:2009}
Metcalf, C., O.~Bjornstad, B.~Grenfell, and V.~Andreasen (2009).
\newblock Seasonality and comparative dynamics of six childhood infections in
  pre-vaccination copenhagen.
\newblock {\em Proceedings of the royal society B\/}~{\em 276\/}(1),
  4111--4118.

\bibitem[\protect\citeauthoryear{Nott and Dunsmuir}{Nott and
  Dunsmuir}{2002}]{Nott_Dunsmuir:2002}
Nott, D.~J. and W.~T. Dunsmuir (2002).
\newblock {Estimation of nonstationary spatial covariance structure}.
\newblock {\em Biometrika\/}~{\em 89}, 819--829.

\bibitem[\protect\citeauthoryear{Oh, Nychka, Brown, and Charbonneau}{Oh
  et~al.}{2004}]{Oh_Nychka_Brown_Charbonneau:2004}
Oh, H.-S., D.~Nychka, T.~Brown, and P.~Charbonneau (2004).
\newblock {Period analysis of variable stars by robust smoothing}.
\newblock {\em J. Roy. Stat. Soc. B\/}~{\em 53\/}(1), 15--30.

\bibitem[\protect\citeauthoryear{Park, Ahn, Hendry, and Jang}{Park
  et~al.}{2011}]{Park_Ahn_Hendry_Jang:2011}
Park, C., J.~Ahn, M.~Hendry, and W.~Jang (2011).
\newblock {Analysis of long period variable starts with nonparametric tests for
  trend detection}.
\newblock {\em J. Am. Stat. Assoc.\/}~{\em 106\/}(495), 832--845.

\bibitem[\protect\citeauthoryear{Perez-Farinos, Ordobas, Garcia-Fernandez,
  Garc'a-Comas, Canellas, Rodero, Gutierrez-Rodriguez, Garcia-Gutierrez, and
  R.}{Perez-Farinos et~al.}{2007}]{PF:2007}
Perez-Farinos, N., M.~Ordobas, C.~Garcia-Fernandez, L.~Garc'a-Comas,
  S.~Canellas, I.~Rodero, A.~Gutierrez-Rodriguez, J.~Garcia-Gutierrez, and
  R.~R. (2007).
\newblock Varicella and herpes zoster in madrid, based on the sentinel general
  practitioner network: 1997 -- 2004.
\newblock {\em BMC Infectious Diseases\/}~{\em 7\/}(1), 59.

\bibitem[\protect\citeauthoryear{Pollock}{Pollock}{2009}]{Pollock:2009}
Pollock, D. (2009).
\newblock Investigating economic trends and cycles.
\newblock In {\em Vol. 2 Applied Econometrics, T.C. Mills and K. Patterson
  (eds.)}, Palgrave Handbook of Econometrics. Palgrave Macmillan.

\bibitem[\protect\citeauthoryear{Priestley}{Priestley}{1965}]{Priestley:1965}
Priestley, M.~B. (1965).
\newblock {Evolutionary spectra and non-stationary processes}.
\newblock {\em J. Roy. Stat. Soc. B\/}~{\em 27\/}(2), 204--237.

\bibitem[\protect\citeauthoryear{Rechtschaffen and Kales}{Rechtschaffen and
  Kales}{1968}]{RK}
Rechtschaffen, A. and A.~Kales (1968).
\newblock {\em A Manual of Standardized Terminology, Techniques and Scoring
  System for Sleep Stages of Human Subjects}.
\newblock Washington: Public Health Service, US Government Printing Office.

\bibitem[\protect\citeauthoryear{Rosen, Stoffer, and Wood}{Rosen
  et~al.}{2009}]{Rosen_Stoffer_Wood:2009}
Rosen, O., D.~S. Stoffer, and S.~Wood (2009).
\newblock {Local spectral analysis via a Bayesian mixture of smoothing
  splines}.
\newblock {\em J. Am. Stat. Assoc.\/}~{\em 104\/}(485), 249--262.

\bibitem[\protect\citeauthoryear{Seward, Watson, Peterson, Mascola, Pelosi,
  Zhang, Maupin, Goldman, Tabony, Brodovicz, Jumaan, and Wharton}{Seward
  et~al.}{2002}]{Seward:2002}
Seward, J., B.~Watson, C.~Peterson, L.~Mascola, J.~Pelosi, J.~Zhang, T.~Maupin,
  G.~Goldman, L.~Tabony, K.~Brodovicz, A.~Jumaan, and M.~Wharton (2002).
\newblock Varicella disease after introduction of varicella vaccine in the
  united states, 1995-2000.
\newblock {\em JAMA\/}~{\em 287\/}(5), 606--611.

\bibitem[\protect\citeauthoryear{Stone, Olinky, and Huppert}{Stone
  et~al.}{2007}]{Stone:2007}
Stone, L., R.~Olinky, and A.~Huppert (2007).
\newblock Seasonal dynamics of recurrent epidemics.
\newblock {\em Nature\/}~{\em 446\/}(7135), 533--536.

\bibitem[\protect\citeauthoryear{Thakur, Brevdo, Fuckar, and Wu.}{Thakur
  et~al.}{2013}]{brevdo_fuckar_thakur_wu:2012}
Thakur, G., E.~Brevdo, N.~S. Fuckar, and H.-T. Wu. (2013).
\newblock The synchrosqueezing algorithm for time-varying spectral analysis:
  robustness properties and new paleoclimate applications.
\newblock {\em Signal Processing\/}~{\em 93}, 1079--1094.

\bibitem[\protect\citeauthoryear{Wang}{Wang}{2010}]{wang:2010}
Wang, X. (2010).
\newblock Neurophysiological and computational principles of cortical rhythms
  in cognition.
\newblock {\em Physiol. Rev.\/}~{\em 90}, 1195--1268.

\bibitem[\protect\citeauthoryear{Wu}{Wu}{2012}]{wu:2011}
Wu, H.-T. (2012).
\newblock Instantaneous frequency and wave shape functions {(I)}.
\newblock {\em Appl. Comput. Harmon. Anal.\/}.
\newblock In press.

\bibitem[\protect\citeauthoryear{Wysocki, Cracco, Teixeira, Mercat, Diehl,
  Lefort, Derenne, and Similowski}{Wysocki et~al.}{2006}]{Wysocki2006}
Wysocki, M., C.~Cracco, A.~Teixeira, A.~Mercat, J.~Diehl, Y.~Lefort,
  J.~Derenne, and T.~Similowski (2006).
\newblock Reduced breathing variability as a predictor of unsuccessful patient
  separation from mechanical ventilation.
\newblock {\em Crit. Care. Med.\/}~{\em 34}, 2076--2083.

\end{thebibliography}

\newpage

\centerline{\bf {\large Supplementary Materials for ``Nonparametric modeling and adaptive}}
\centerline{\bf {\large estimation for dynamical seasonality and trend with heteroscedastic }}
\centerline{\bf {\large and dependent errors''}}

\centerline{\large  by Yun-Chun Chen, Ming-Yen Cheng, and  Hau-Tieng Wu}

\setcounter{equation}{0}
\setcounter{section}{0}
\setcounter{table}{0}
\setcounter{figure}{0}
\setcounter{page}{1}
\renewcommand{\theequation}{S.\arabic{equation}}
\renewcommand{\thesection}{S.\arabic{section}}
\renewcommand{\thetheorem}{S.\arabic{theorem}}
\renewcommand{\thelemma}{S.\arabic{lemma}}

\section{Proof of Theorem \ref{theorem:identifiability:single}}

The proof contains two parts. The first part is showing the restrictions on the perturbations $\alpha$ and $\beta$ based on the positivity condition of the instantaneous frequency and amplitude modulation functions. The second part is to control the amplitude of $\alpha$ and $\beta$ and their derivatives by the ``slowly varying'' conditions of $\mathcal{A}^{c_1,c_2}_\epsilon$ functional class.

First, observe that if (\ref{observation:identifiability:lemma:1}) holds, then by the definition of $\mathcal{A}^{c_1,c_2}_\epsilon$, we know 
\begin{align}
&\alpha\in C^2(\RR),~ \beta\in C^1(\RR)\label{proof:2.1:Aeps:cond5},\\
&\inf_{t\in\RR}\phi'(t)>c_1,~\sup_{t\in\RR}\phi'(t)<c_2,\label{proof:2.1:Aeps:phicond1}\\
&\inf_{t\in\RR}a(t)>c_1,~\sup_{t\in\RR}a(t)<c_2,\label{proof:2.1:Aeps:acond1}\\
&|a'(t)|\leq \epsilon \phi'(t),~|\phi''(t)|\leq \epsilon \phi'(t),\label{proof:2.1:Aeps:boundcond1}
\end{align}
and 
\begin{align}
&\inf_{t\in\RR}[\phi'(t)+\alpha'(t)]>c_1,~\sup_{t\in\RR}[\phi'(t)+\alpha'(t)]<c_2,\label{proof:2.1:Aeps:phicond2}\\
&\inf_{t\in\RR}[a(t)+\beta(t)]>c_1,~\sup_{t\in\RR}[a(t)+\beta(t)]<c_2,\label{proof:2.1:Aeps:acond2}\\
&|a'(t)+\beta'(t)|\leq \epsilon (\phi'(t)+\alpha'(t)),~|\phi''(t)+\alpha''(t)|\leq \epsilon (\phi'(t)+\alpha'(t)).\label{proof:2.1:Aeps:boundcond2}
\end{align}
From (\ref{proof:2.1:Aeps:boundcond1}) and (\ref{proof:2.1:Aeps:boundcond2}) we have 
\begin{align}
|\beta'(t)|\leq \epsilon(2\phi'(t)+\alpha'(t))\mbox{ and }|\alpha''(t)|\leq \epsilon(2\phi'(t)+\alpha'(t)). \label{proof:2.1:Aeps:cond6}
\end{align}

Now, based on the conditions (\ref{proof:2.1:Aeps:cond5}), (\ref{proof:2.1:Aeps:acond1}), (\ref{proof:2.1:Aeps:phicond1}), (\ref{proof:2.1:Aeps:acond2}) and (\ref{proof:2.1:Aeps:phicond2}), we prove that $\alpha$ and $\beta$ must have some ``hinging points'' that restrict their behaviors. Notice that the definition of $t_m$ and $s_m$ depends on the monotonicity of $\phi(t)$, which is true by condition (\ref{proof:2.1:Aeps:phicond1}). Observe that, for any $n\in\ZZ$, when $t=t_n$, 
\begin{eqnarray}
&&(a(t_n)+\beta(t_n))\cos(\phi(t_n)+\alpha(t_n))\nonumber\\
&=&(a(t_n)+\beta(t_n))\cos[n\pi+\pi/2+\alpha(t_n)]\label{before_proof:identifiability:eq1}\\
&=&a(t_n)\cos(n\pi+\pi/2)=0,\nonumber
\end{eqnarray}
where the second equality comes from (\ref{observation:identifiability:lemma:1}). This leads to $\alpha(t_n)=k_n\pi$, $k_n\in\ZZ$, since $\phi(t_n)=(n+1/2)\pi$ and $a(t_n)+\beta(t_n)>0$ by (\ref{proof:2.1:Aeps:phicond2}). We show that $k_n$ are the same for all $n\in\ZZ$. First, suppose there exists $t_n$ so that $\alpha(t_n)=k\pi$ and $\alpha(t_{n+1})=(k+2l)\pi$, where $k,l\in \ZZ$ and $l>0$. Note that $k$ should be an even number, otherwise we have
$$
a(t_n)\cos(\phi(t_n))=(a(t_n)+\beta(t_n))\cos(\phi(t_n)+\alpha(t_n))=-(a(t_n)+\beta(t_n))\cos(\phi(t_n)),
$$
which is absurd due to (\ref{proof:2.1:Aeps:acond1}) and (\ref{proof:2.1:Aeps:acond2}). Then, it follows from (\ref{proof:2.1:Aeps:cond5}) that 
there exists $t'\in(t_n,t_{n+1})$ so that $\alpha(t')=(k+1)\pi$ and hence 
\[
a(t')\cos(\phi(t'))= (a(t')+\beta(t'))\cos(\phi(t')+\alpha(t'))=-(a(t')+\beta(t'))\cos(\phi(t')),
\]
which is absurd since $\cos(\phi(t'))\neq 0$, (\ref{proof:2.1:Aeps:acond1}) and (\ref{proof:2.1:Aeps:acond2}). Similar argument holds when $l<0$. Second, suppose there exists $t_n$ so that $\alpha(t_n)=k\pi$ and $\alpha(t_{n+1})=(k+2l-1)\pi$, where $k,l\in \ZZ$ and $l>0$. If $l>1$, the same argument holds. If $l=1$, it is absurd again since $a(t')>0$ and $a(t')+\beta(t')>0$ but the sign changes.


Thus we know $\alpha(t_n)=k\pi$ for all $n\in\ZZ$, for some fixed $k\in \ZZ$. We now show that $k=0$.
Suppose $\alpha(t_n)=k_0\pi$, where $k_0$ is a fixed even integer, for all $n\in\ZZ$, then we set $\alpha$ to be $\alpha-k_0\pi$ and the claim is done since they are equivalent after being composed with the cosine function. Suppose $k_0$ is a fixed odd integer, then since $\alpha\in C^2(\RR)$ and $\alpha(t_n)=\alpha(t_{n+1})=k_0\pi$, there exists $t'\in(t_n,t_{n+1})$ so that $\alpha(t')=k_0\pi$ and hence 
\begin{align*}
a(t')\cos(\phi(t'))&\,= (a(t')+\beta(t'))\cos(\phi(t')+\alpha(t'))=-(a(t')+\beta(t'))\cos(\phi(t')),
\end{align*}
which is again absurd since $\cos(\phi(t'))\neq 0$, (\ref{proof:2.1:Aeps:acond1}) and (\ref{proof:2.1:Aeps:acond2}). As a result, we get $\alpha(t_n)=0$ for all $n\in\ZZ$. Furthermore, by the fundamental theorem of calculus, we know
\begin{equation}\label{proof:2.1:alpha:changesign}
0=\alpha(t_{n+1})=\int^{t_{n+1}}_{t_n}\alpha'(u)\ud u,
\end{equation}
which implies that $\alpha'(t)$ changes sign inside $[t_n,\,t_{n+1}]$ for all $n\in\ZZ$. Also, due to the monotonicity of $\phi+\alpha$ (\ref{proof:2.1:Aeps:phicond2}), 
\begin{equation}\label{proof:2.1:alpha:bound}
|\alpha(t')-\alpha(t'')|<\pi
\end{equation} 
for any $t',t''\in[t_m,t_{m+1}]$ for all $m\in\ZZ$. Indeed, if $|\alpha(t')-\alpha(t'')|\geq \pi$, for some $t,t''\in[t_n,t_{n+1}]$, we contradict (\ref{proof:2.1:Aeps:phicond2}) since $(n+1/2)\pi<\phi(t')+\alpha(t')<(n+3/2)\pi$ and $\phi(t_{n+1})+\alpha(t_{n+1})=(n+3/2)\pi$. 

We next claim that $\beta(s_m)\geq 0$ for all $m\in\ZZ$. When $t=s_m$, we have
\begin{align}
(-1)^ma(s_m)=&\,a(s_m)\cos(m\pi)\label{before_proof:identifiability:eq2}\\
=&\,(a(s_m)+\beta(s_m))\cos[m\pi+\alpha(s_m)]\nonumber\\
=&\,(-1)^m(a(s_m)+\beta(s_m))\cos(\alpha(s_m))\nonumber,
\end{align}
where the second equality comes from (\ref{observation:identifiability:lemma:1}), which leads to $\beta(s_m)\geq0$ since $|\cos(\alpha(s_m))|\leq 1$. Notice that (\ref{before_proof:identifiability:eq2}) implies that $\alpha(s_m)=2k_m\pi$, where $k_m\in\ZZ$, if and only if $\beta(s_m)=0$. We now claim that for all $m\in \ZZ$, if $\beta(s_m)=0$, then $k_m=0$. Without loss of generality, assume $k_m>0$. Since $\alpha\in C^2(\RR)$, there exists $t'\in(t_{m-1},s_m)$ so that $\alpha(t')=\pi$ and hence 
\begin{align*}
a(t')\cos(\phi(t'))&\,= (a(t')+\beta(t'))\cos(\phi(t')+\alpha(t'))=-(a(t')+\beta(t'))\cos(\phi(t')),
\end{align*}
which is absurd since $\cos(\phi(t'))\neq 0$, (\ref{proof:2.1:Aeps:acond1}) and (\ref{proof:2.1:Aeps:acond2}). 
Next we claim that for all $m\in \ZZ$, if $\beta(s_m)>0$, then $|\alpha(s_m)|<\pi/2$. Indeed, since $0<\cos(\alpha(s_m))=\frac{a(s_m)}{a(s_m)+\beta(s_m)}<1$ by (\ref{before_proof:identifiability:eq2}), we know $\alpha(s_m)\in(-\pi/2,\pi/2)+2n_m\pi$, where $n_m\in\ZZ$. By the same argument as in the above, if $n_m>0$, there exists $t'\in(t_{m-1},s_m)$ so that $\alpha(t')=\pi$ and hence 
\begin{align*}
a(t')\cos(\phi(t'))&\,=\, (a(t')+\beta(t'))\cos(\phi(t')+\alpha(t'))=\,-(a(t')+\beta(t'))\cos(\phi(t')),
\end{align*}
which is absurd since $\cos(\phi(t'))\neq 0$, (\ref{proof:2.1:Aeps:acond1}) and (\ref{proof:2.1:Aeps:acond2}). We have thus complete the first part of the proof.

Before going to the second part of the proof, notice that by (\ref{before_proof:identifiability:eq2}), the behavior of $\frac{\beta(s_m)}{a(s_m)}$ can be further bounded by Taylor's expansion. Indeed, for each $m\in\ZZ$, since $|\alpha(s_m)|<\pi/2$, we have
\begin{equation}\label{observation:identifiability:cos1}
1-\frac{1}{2}\alpha(s_m)^2<\frac{1}{1+\frac{\beta(s_m)}{a(s_m)}}<1-\frac{1}{2}\alpha(s_m)^2+\frac{1}{24}\alpha(s_m)^4,
\end{equation}
which comes from Taylor's expansion of the cosine function around $0$.

Also notice that by the mean value theorem, for all $k\in\ZZ$, we have for some $t'_m\in[t_m,t_{m+1}]$
$$
\frac{\phi(t_{m+1})-\phi(t_m)}{t_{m+1}-t_m}=\frac{\pi}{t_{m+1}-t_m}=\phi'(t'_m), 
$$ 
which means that 
\begin{equation}\label{proof:identifiability:torder}
(t_{m+1}-t_m)\phi'(t'_m)=\pi.
\end{equation} 

To finish the second part of the proof, we have to consider the conditions (\ref{proof:2.1:Aeps:boundcond1}) and (\ref{proof:2.1:Aeps:boundcond2}) and show that $|\alpha'(t)|\leq 3\pi\epsilon$, $|\alpha(t)|\leq \frac{4\pi^2\epsilon}{\phi'(t)}$ and $|\beta(t)|< 3\pi \epsilon$ for all $t\in\RR$. Suppose there existed $t'\in(t_m,t_{m+1})$  for some $m\in\ZZ$ so that $|\alpha'(t')|>3\pi \epsilon$. Without loss of generality, we assume $\alpha'(t')>0$.  By the fundamental theorem of calculus and (\ref{proof:2.1:Aeps:cond6}), for any $t\in (t_m,t')$, we know 
\begin{align*}
|\alpha'(t')-\alpha'(t)|&\,\leq \int^{t'}_{t}|\alpha''(u)|\ud u\leq \epsilon\int^{t'}_{t}(2\phi'(u)+\alpha'(u))\ud u	\\
&\,= \epsilon[2\phi(t')-2\phi(t)+\alpha(t')-\alpha(t)]\leq 3\pi\epsilon,
\end{align*}
where the last inequality holds due to (\ref{proof:2.1:alpha:bound}) and the fact that $\phi(t')-\phi(t)\leq \phi(t_{m+1})-\phi(t_m)=\pi$.
Thus, $\alpha'(t)>0$ for all $t\in[t_m,t_{m+1}]$, 
which contradicts the fact that $\alpha'(t)$ must change sign inside $[t_m,t_{m+1}]$ as shown in (\ref{proof:2.1:alpha:changesign}).

Then we show that $|\alpha(t)|\leq \frac{4\pi^2\epsilon}{\phi'(t)}$ for all $t$. Recall that $\alpha(t_m)=0$ for all $m\in\ZZ$. If there exists $t'\in(t_m,t_{m+1})$ for some $m\in\ZZ$ so that $\alpha(t')> \frac{4\pi^2\epsilon}{\phi'(t')}$, by the mean value theorem and (\ref{proof:identifiability:torder}) there exists $t'', t'''\in[t_m,t']$ so that 
\begin{align*}
\alpha'(t'')&\,=\frac{\alpha(t')-\alpha(t_m)}{t'-t_m}=\frac{\alpha(t')}{t'-t_m}> \frac{4\pi^2\epsilon}{\phi'(t')}\frac{\phi'(t''')}{\pi}>4\pi\big[1-\frac{\epsilon}{\phi'(t')}\big]\epsilon,
\end{align*}
where the last inequality holds due to 
\begin{align*}
|\phi'(t')-\phi'(t''')|&\,\leq \int^{t'}_{t'''}|\phi''(u)|\ud u\leq \epsilon\int^{t'}_{t'''}\phi'(u)\ud u= \epsilon(\phi(t')-\phi(t'''))\leq \pi\epsilon.
\end{align*}
Since $c_1\gg \epsilon$, we know $4\pi\big[1-\frac{\epsilon}{\phi'(t')}\big]>3\pi$, which is a contradiction.

Suppose there exists $t'$ so that $\beta(t')> 3\pi \epsilon$. Take $m\in\ZZ$ so that $t'\in(s_m,s_{m+1})$. Without loss of generality, we assume $t'<t_m$. Take $t\in(s_m,t')$ and we have by the fundamental theorem of calculus
\begin{align*}
|\beta(t')-\beta(t)|&~\leq \int_{t}^{t'}|\beta'(u)|\ud u\leq \epsilon[2\phi(t')-2\phi(t)+\alpha(t')-\alpha(t)]\\
&~\leq \epsilon[2\phi(s_{m+1})-2\phi(s_m)+\alpha(t')-\alpha(t)]\leq 3\pi\epsilon,
\end{align*}
which leads to, in particular, $\beta(t_{m})>0$. 
By (\ref{observation:identifiability:cos1}), we know 
$$
|\alpha(t_{m})|>\sqrt{\frac{2\beta(t_{m})}{a(t_{m})+\beta(t_{m})}}>0,
$$ 
which contradicts to the fact that $\alpha(t_m)=0$. The proof is hence completed.

\section{Proof of Theorem \ref{theorem:identifiability:multiple}}
To prove the multiple-component version of the identifiability theory, we need the following lemma rewritten from Theorem 3.3 in \cite{daubechies_lu_wu:2010}, which is actually the noiseless version of Theorem \ref{section:theorem:stability} when there is no trend. Notice that, contrary to our setup, in \cite{daubechies_lu_wu:2010} the authors considered a broader class of functions called $\mathcal{A}_{\epsilon,d}$. 
\begin{lem}\label{lemma:identifiability:Wf_expansion}
Take $f(t)=\sum_{l=1}^KA_l(t)\cos [2\pi\phi_l(t)]\in \mathcal{A}^{c_1,c_2}_{\epsilon,d}$ and the mother wavelet $\psi$ the same as that in Theorem \ref{section:theorem:stability}. Then for $a\in[\frac{1-\Delta}{c_2},\frac{1+\Delta}{c_1}]$ we have
\begin{align}
\Big|W_f(a,b)&-\sum_{l=1}^KA_l(b)e^{i2\pi \phi_l(b)}\sqrt{a}\overline{\widehat{\psi}\left(a\phi'_l(b)\right)}\Big|\leq E_W\epsilon,\nonumber\\
\Big|-i\partial_bW_f(a,b)&-2\pi\sum_{l=1}^K \phi'_l(b)A_l(b)e^{i2\pi \phi_l(b)}\sqrt{a}\overline{\widehat{\psi}\left(a\phi'_l(b)\right)}\Big|\leq E_W'\epsilon\nonumber,
\end{align}
where $E_W$ and $E_W'$ are universal constants depending on the moments of $\psi$ and $\psi'$, $c_1$, $c_2$ and $d$. When $a\in Z_k(b):=[\frac{1-\Delta}{\phi_k'(b)},\frac{1+\Delta}{\phi_k'(b)}]$ and $|W_f(a,b)|\neq 0$, we have
\[
\left|\omega_f(a,b)-\phi'_k(b)\right|\leq \frac{E_\omega}{|W_f(a,b)|}\epsilon,
\]
where $E_\omega:=(2\pi)^{-1}E'_W+c_2E_W$.
Furthermore, when $\Gamma\geq 0$
\begin{align}
\Big|\int_{Z_k(b)} W_f(a,b)a^{-3/2}{\boldsymbol{\chi}}_{|W_f(a,b)|>\Gamma} \ud a-A_k(b)e^{i2\pi\phi_k(b)}\Big|\leq 2\sqrt{c_2}\big(E_W\epsilon+\Gamma \big)\Delta.\nonumber
\end{align}
\end{lem}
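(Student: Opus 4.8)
This statement is Theorem~3.3 of \cite{daubechies_lu_wu:2010} specialized to the class $\mathcal{A}^{c_1,c_2}_{\epsilon,d}$, and the plan is to reduce every assertion to a single ``frozen-coefficient'' computation whose error is controlled by the slowly varying conditions (\ref{Aeps:cond:3}) and the rapid decay of $\psi\in\mathcal{S}$. First I would rewrite the transform through the change of variables $u=(t-b)/a$, giving $W_f(a,b)=\sqrt{a}\int f(b+au)\overline{\psi(u)}\,\ud u$, and treat each summand $f_l(t)=A_l(t)\cos(2\pi\phi_l(t))$ in isolation. Expanding the cosine by Euler's formula and invoking $\mathrm{supp}\,\widehat{\psi}\subset[1-\Delta,1+\Delta]\subset(0,\infty)$, the negative-frequency exponential evaluates $\widehat\psi$ at a negative argument, outside its support, and hence drops out, so only the analytic part $A_l(b+au)e^{i2\pi\phi_l(b+au)}$ contributes to leading order.

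Next I would Taylor-expand about $b$: $A_l(b+au)=A_l(b)+au\,A_l'(\cdot)$ and $\phi_l(b+au)=\phi_l(b)+au\,\phi_l'(b)+\tfrac12(au)^2\phi_l''(\cdot)$. Freezing the coefficients at $b$ and using $\int e^{i2\pi a\phi_l'(b)u}\overline{\psi(u)}\,\ud u=\overline{\widehat\psi(a\phi_l'(b))}$ produces the principal term $A_l(b)e^{i2\pi\phi_l(b)}\sqrt{a}\,\overline{\widehat\psi(a\phi_l'(b))}$. The remainder is handled by $|e^{ix}-e^{iy}|\le|x-y|$ together with the bounds $|A_l'|\le\epsilon\phi_l'\le\epsilon c_2$ and $|\phi_l''|\le\epsilon\phi_l'\le\epsilon c_2$ from (\ref{Aeps:cond:3}); since $a$ ranges over the bounded band $[\tfrac{1-\Delta}{c_2},\tfrac{1+\Delta}{c_1}]$, each error term integrates against a finite moment $I_i^{(0)}=\int_\RR|u|^i|\psi(u)|\,\ud u$, and summing over $l=1,\dots,K$ assembles the constant $E_W$. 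The estimate for $-i\partial_bW_f$ is identical once one notes $\partial_b\psi_{a,b}=-a^{-3/2}\psi'(\tfrac{\cdot-b}{a})$, so that $\psi'$ and its moments $I_i^{(1)}$ replace $\psi$ and $I_i^{(0)}$, yielding the principal term $2\pi\phi_l'(b)A_l(b)e^{i2\pi\phi_l(b)}\sqrt{a}\,\overline{\widehat\psi(a\phi_l'(b))}$ and the constant $E_W'$.

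The remaining two inequalities follow as corollaries. For $a\in Z_k(b)$, the separation condition (\ref{Aepsd:cond:1}) combined with $\Delta<d/(1+d)$ from (A1) forces $a\phi_l'(b)$ outside $[1-\Delta,1+\Delta]$ for every $l\ne k$, so all principal terms except the $k$-th vanish; writing $\omega_f(a,b)-\phi_k'(b)=\frac{-i\partial_bW_f(a,b)-2\pi\phi_k'(b)W_f(a,b)}{2\pi W_f(a,b)}$, the two surviving principal terms cancel in the numerator, and bounding what remains by the first two estimates gives $|\omega_f-\phi_k'|\le(\tfrac{1}{2\pi}E_W'+c_2E_W)\epsilon/|W_f(a,b)|=E_\omega\epsilon/|W_f(a,b)|$. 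For the reconstruction bound I would integrate the first estimate against $a^{-3/2}\boldsymbol{\chi}_{|W_f(a,b)|>\Gamma}$ over $Z_k(b)$; the substitution $\zeta=a\phi_k'(b)$ and the normalization $\mathcal{R}_\psi=1$ give $\int_{Z_k(b)}\sqrt{a}\,\overline{\widehat\psi(a\phi_k'(b))}\,a^{-3/2}\,\ud a=1$, so the principal term reproduces $A_k(b)e^{i2\pi\phi_k(b)}$ exactly, while the $E_W\epsilon$ error and the mass removed by the threshold are each dominated using $\int_{Z_k(b)}a^{-3/2}\,\ud a\le 2\sqrt{c_2}\,\Delta$, delivering the bound $2\sqrt{c_2}(E_W\epsilon+\Gamma)\Delta$.

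The delicate part is the uniform control of the oscillatory-integral remainder: the quadratic Taylor error $\tfrac12(au)^2\phi_l''$ sits in the exponent, and although $|e^{ix}-e^{iy}|\le|x-y|$ linearizes it, one must verify that the resulting second-moment integrals against $|\psi|$ stay bounded uniformly over the admissible scale band, and, more importantly, that the interaction between distinct components is genuinely annihilated by the support and separation hypotheses rather than merely made small. Checking that the frozen-coefficient principal term is the only surviving $O(1)$ contribution, and tracking how the various moments of $\psi$ and $\psi'$ coalesce into the single constants $E_W$ and $E_W'$, is where the real bookkeeping lies.
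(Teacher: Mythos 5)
Your proposal is correct and follows essentially the route the paper takes, with one difference of presentation: the paper does not rederive the frozen-coefficient expansion of $W_f$ and $-i\partial_bW_f$ at all, but imports it wholesale from Estimate 3.5 of \cite{daubechies_lu_wu:2010} (your change of variables, Euler's formula plus the support of $\widehat{\psi}$ to kill the negative-frequency part, Taylor expansion of $A_l$ and $\phi_l$ about $b$, and $|e^{ix}-e^{iy}|\leq|x-y|$ with the conditions (\ref{Aeps:cond:3}) is precisely the argument behind that estimate), and then only performs the bookkeeping that turns the resulting bound $|C(a,b)|$ into the explicit constants $E_W$, $E_W'$. Your treatment of the two corollaries matches the paper exactly: the cancellation $\omega_f-\phi_k'=\bigl(-i\partial_bW_f-2\pi\phi_k'W_f\bigr)/(2\pi W_f)$ with the triangle inequality yields $E_\omega=(2\pi)^{-1}E_W'+c_2E_W$, and the reconstruction bound uses $\int_{Z_k(b)}\sqrt{a}\,\overline{\widehat{\psi}(a\phi_k'(b))}\,a^{-3/2}\,\ud a=\overline{\mathcal{R}_\psi}=1$ together with $\int_{Z_k(b)}a^{-3/2}\,\ud a\leq 2\sqrt{c_2}\Delta$ and a crude bound $\Gamma\int_{Z_k(b)}a^{-3/2}\,\ud a$ on the thresholded mass. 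The one piece of bookkeeping you gloss over that the paper makes explicit: for $E_W$ and $E_W'$ to be \emph{universal} constants depending only on the moments of $\psi$, $\psi'$ and on $c_1,c_2,d$ (and not on $K$), you must observe that the dyadic separation (\ref{Aepsd:cond:1}) together with $c_1<\phi_l'<c_2$ caps the number of components at $\lceil(\ln c_2-\ln c_1)/(\ln(1+d)-\ln(1-d))\rceil$; "summing over $l=1,\dots,K$" alone would leave a $K$-dependence in the constant. Also, strictly speaking the negative-frequency exponential is annihilated by $\mathrm{supp}\,\widehat{\psi}\subset[1-\Delta,1+\Delta]$ only after you have frozen the coefficients, not for the raw signal; the order of operations should be Taylor-freeze first, then discard the negative-frequency part of the frozen harmonic, with the discrepancy absorbed into the same $O(\epsilon)$ remainder. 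Neither point is a gap in the mathematics, only in the accounting.
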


\underline{Remark:} Note that there are two error terms in the reconstruction formula. The first one $2\sqrt{c_2}E_W\epsilon\Delta$ originates from the definition of $\mathcal{A}^{c_1,c_2}_{\epsilon,d}$, and the second term comes from the thresholding parameter $\Gamma$. Since $\widehat{\psi}$ is smooth and compactly supported, the second term might be improved but the improvement is limited. We thus choose the current simple bound to make the proof clear and clean. 
\begin{proof}
By Estimate 3.5 in \cite{daubechies_lu_wu:2010}, we know that 
\begin{align}
W_f(a,b)=\left\{\begin{array}{ll}
A_k(b)e^{i2\pi \phi_k(b)}\sqrt{a}\overline{\widehat{\psi}\left(a\phi'_k(b)\right)}+C(a,b)\epsilon &\mbox{when }a\in Z_k(b)\\
C(a,b)\epsilon & \mbox{otherwise },
\end{array}\right.\label{proof:Wf:for_trend}
\end{align}
where 
$C(a,b)\in\CC$ and 
\begin{equation}
|C(a,b)|\leq a^{3/2}\sum_{k=1}^K \Big\{\phi_k'(b)I_1+\frac{\epsilon c_2}{2} aI_2+\pi A_k(b) \left(aI_2|\phi'_k(b)|+\frac{\epsilon c_2}{3}a^{2}I_3\right)\Big\}.\nonumber
\end{equation}
In other words, we have
\begin{equation*}
W_f(a,b)=\sum_{k=1}^KA_k(b)e^{i2\pi \phi_k(b)}\sqrt{a}\overline{\widehat{\psi}\left(a\phi'_k(b)\right)}+
\epsilon C(a,b). 
\end{equation*}
Similarly, we have 
\begin{equation*}
-i\partial_bW_f(a,b)=2\pi\sum_{k=1}^K A_k(b)e^{i2\pi \phi_k(b)}\phi'_k(b)\sqrt{a}\overline{\widehat{\psi}\left(a\phi'_k(b)\right)}+\epsilon C'(a,b),
\end{equation*}
where 
\begin{equation}
|C'(a,b)|\leq a^{1/2}\sum_{k=1}^K\Big\{\phi_k'(b)I'_1+\frac{\epsilon c_2}{2}aI'_2+\pi A_k(b) \left(aI'_2|\phi'_k(b)|+\frac{\epsilon c_2}{3}a^{2}I'_3\right)\Big\}\nonumber.
\end{equation}
By the assumption of $\mathcal{A}^{c_1,c_2}_{\epsilon,d}$, we know that each function in $\mathcal{A}^{c_1,c_2}_{\epsilon,d}$ can have at most $\lceil\frac{\ln c_2-\ln c_1}{\ln(1+d)-\ln(1-d)}\rceil$ components. Thus, when $\frac{1-\Delta}{c_2}\leq a\leq \frac{1+\Delta}{c_1}$, $C(a,b)$ is bounded by 
\begin{align}\label{proof:definition:E_W}
E_W:=\frac{c_2}{c_1^{3/2}}\lceil\frac{\ln c_2-\ln c_1}{\ln(1+d)-\ln(1-d)}\rceil\Big\{I_1+\frac{I_2}{2c_1} +\pi \big(\frac{c_2}{c_1}I_2+\frac{c_2}{3c_1^2}I_3\big)\Big\}
\end{align}
and $C'(a,b)$ is bounded by 
\begin{align}\label{proof:definition:E_W'}
E_W':=\frac{c_2}{c_1^{1/2}}\lceil\frac{\ln c_2-\ln c_1}{\ln(1+d)-\ln(1-d)}\rceil\Big\{I'_1+\frac{I'_2}{2c_1}+\pi \big(\frac{c_2}{c_1}I'_2+\frac{c_2}{3c_1^2}I'_3\big)\Big\}.
\end{align}
The estimation of $\omega_f(a,b)$ is the same as that in Theorem 3.3 in \cite{daubechies_lu_wu:2010}, so we skip it. 
By the above expansion for $W_f(a,b)$ and a direct calculation, we have
\begin{align}
&\big|\int_{Z_k(b)} W_f(a,b)a^{-3/2}{\boldsymbol{\chi}}_{|W_f(a,b)|>\Gamma} \ud a-A_k(b)e^{i2\pi\phi_k(b)}\big|\label{proof:lemma:Wfapproximate:reconstruction}\\
\leq&\,\left|A_k(b)e^{i2\pi \phi_k(b)}\int_{Z_k(b)}\sqrt{a}\overline{\widehat{\psi}(a\phi'_k(b))}a^{-3/2}\ud a-A_k(b)e^{i2\pi\phi_k(b)}\right|\nonumber\\
&+\epsilon\int_{Z_k(b)}E_W a^{-3/2}\ud a+\int_{Z_k(b)}|W_f(a,b)|a^{-3/2}{\boldsymbol{\chi}}_{|W_f(a,b)|\leq \Gamma} \ud a\nonumber,
\end{align}
where the first term disappears since $\int_{Z_k(b)}\sqrt{a}\overline{\widehat{\psi}(a\phi'_k(b))}a^{-3/2}\ud a=1$ by assumption and the second term is bounded by $2c^{1/2}_2 E_W\epsilon\Delta$. We simply bound the third term by
$$
\int_{Z_k(b)}|W_f(a,b)|a^{-3/2}{\boldsymbol{\chi}}_{|W_f(a,b)|\leq \Gamma} \ud a\leq \Gamma\int_{Z_k(b)}a^{-3/2}\ud a\leq 2\sqrt{c_2}\Gamma \Delta
$$
As a result, (\ref{proof:lemma:Wfapproximate:reconstruction}) is bounded by $2\sqrt{c_2}\big(E_W\epsilon+\Gamma \big)\Delta$, as is claimed.
\end{proof}

Next we consider the discretized case. 
\begin{lem}\label{lemma:identifiability:Wf_expansion_discrete}
Take $\boldsymbol{f}=\{f(n\tau )\}_{n\in\ZZ}$, where $f(t)=\sum_{l=1}^KA_l(t)\cos [2\pi\phi_l(t)]\in \mathcal{A}^{c_1,c_2}_{\epsilon,d}$ and $\tau$, $0<\tau \leq \frac{1-\Delta}{1+\Delta}\frac{1}{c_2}$, is the sampling interval. Suppose further that $A_l\in C^2(\RR)$ and $\sup_{t\in\RR}|A''(t)|\leq \epsilon c_2$ for all $l=1,\ldots,K$. Take the mother wavelet the same as that in Theorem \ref{section:theorem:stability}. Then for $a\in[\frac{1-\Delta}{c_2},\frac{1+\Delta}{c_1}]$ we have
\begin{align}
\Big|W_{\boldsymbol{f}}(a,b)&-\sum_{l=1}^KA_l(b)e^{i2\pi \phi_l(b)}\sqrt{a}\overline{\widehat{\psi}\left(a\phi'_l(b)\right)}\Big|\leq (E_{\tau ,W}\tau ^2+E_W)\epsilon,\nonumber\\
\Big|-i\partial_bW_{\boldsymbol{f}}(a,b)&-2\pi\sum_{l=1}^K \phi'_l(b)A_l(b)e^{i2\pi \phi_l(b)}\sqrt{a}\overline{\widehat{\psi}\left(a\phi'_l(b)\right)}\Big|\leq (E_{\tau ,W}'\tau ^2+E_{W}')\epsilon\nonumber,
\end{align}
where $E_{\tau ,W}$ and $E_{\tau ,W}'$ are universal constants depending on the first three moments of $\psi$ and $\psi'$, $c_1$, $c_2$ and $d$. When $a\in Z_k(b):=[\frac{1-\Delta}{\phi_k'(b)},\frac{1+\Delta}{\phi_k'(b)}]$ and $|W_{\boldsymbol{f}}(a,b)|\neq 0$, we have
\[
\left|\omega_{\boldsymbol{f}}(a,b)-\phi'_k(b)\right|\leq \frac{E_{\tau ,\omega}}{|W_{\boldsymbol{f}}(a,b)|}\epsilon,
\]
where $E_{\tau ,\omega}:=E_\omega+(2\pi)^{-1}[E_{\tau ,W}'+c_2E_{\tau ,W}]\tau ^2$.
Furthermore, when $\Gamma\geq 0$, 
\begin{align}
\Big|\int_{Z_k(b)} W_{\boldsymbol{f}}(a,b)a^{-3/2}{\boldsymbol{\chi}}_{|W_{\boldsymbol{f}}(a,b)|>\Gamma} \ud a-A_k(b)e^{i2\pi\phi_k(b)}\Big|\leq 2\sqrt{c_2}\big(  \tau ^2E_{\tau ,W}\epsilon+E_W\epsilon+\Gamma \big)\Delta.\nonumber
\end{align}
\end{lem}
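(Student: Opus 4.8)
The plan is to treat $W_{\boldsymbol f}(a,b)$ as the $\tau$-spaced quadrature of the continuous transform $W_f(a,b)$ and to reduce everything to the already-proved continuous estimates of Lemma \ref{lemma:identifiability:Wf_expansion}. Writing $G_{a,b}(t):=f(t)\,a^{-1/2}\psi\big(\frac{t-b}{a}\big)$, so that $W_f(a,b)=\int G_{a,b}\,\ud t$ is the CWT integrand and $W_{\boldsymbol f}(a,b)=\tau\sum_{m}G_{a,b}(m\tau)$ is its Riemann sum, I would decompose
\begin{equation*}
W_{\boldsymbol f}(a,b)-\big[\text{main term}\big]=\big[W_{\boldsymbol f}(a,b)-W_f(a,b)\big]+\big[W_f(a,b)-\text{main term}\big].
\end{equation*}
The second bracket is at most $E_W\epsilon$ by Lemma \ref{lemma:identifiability:Wf_expansion}, and the identical decomposition applies to $-i\partial_b W_{\boldsymbol f}$ with bound $E_W'\epsilon$. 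Thus it suffices to bound the pure discretization error $W_{\boldsymbol f}(a,b)-W_f(a,b)$ (and its $\partial_b$ analogue) by $E_{\tau,W}\tau^2\epsilon$ (resp.\ $E_{\tau,W}'\tau^2\epsilon$); the two stated expansions then follow from the triangle inequality, with $E_{\tau,W}$ carrying the quadrature constants and $E_W$ inherited from the continuous lemma.

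First I would evaluate the quadrature error by the Poisson summation formula, $W_{\boldsymbol f}(a,b)-W_f(a,b)=\sum_{k\neq0}\widehat{G}_{a,b}(k/\tau)$, which simultaneously clarifies the role of the sampling hypothesis. The condition $0<\tau\leq\frac{1-\Delta}{(1+\Delta)c_2}$ forces $1/\tau$ to exceed the top of the band $\big[\frac{1-\Delta}{a},\frac{1+\Delta}{a}\big]\subset\big[\frac{(1-\Delta)c_1}{1+\Delta},\frac{(1+\Delta)c_2}{1-\Delta}\big]$ on which the wavelet, hence the slowly-varying part of $G_{a,b}$, is concentrated, so the aliasing nodes $k/\tau$ sample $\widehat{G}_{a,b}$ away from its main lobe. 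Under Assumption (A3) the signal is $C^2$ (each $A_l\in C^2$ and each $\phi_l\in C^2$), hence $G_{a,b}\in C^2$ and $|\widehat{G}_{a,b}(\xi)|\leq \|G_{a,b}''\|_{L^1}(2\pi\xi)^{-2}$; summing $\sum_{k\neq0}(2\pi k/\tau)^{-2}=\tau^2/12$ produces the factor $\tau^2$. The same computation with $\psi'$ in place of $\psi$ gives the $\partial_b$ bound and the constant $E_{\tau,W}'$ built from the first three moments of $\psi'$.

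The remaining two items are then formal. For the reassignment rule I would form $\omega_{\boldsymbol f}=\frac{-i\partial_b W_{\boldsymbol f}}{2\pi W_{\boldsymbol f}}$ and repeat the manipulation that yields the $\omega_f$ bound in Lemma \ref{lemma:identifiability:Wf_expansion} (itself borrowed from \cite{daubechies_lu_wu:2010}), substituting the two discrete expansions; collecting the extra $\tau^2$ contributions from numerator and denominator reproduces exactly $E_{\tau,\omega}=E_\omega+(2\pi)^{-1}(E_{\tau,W}'+c_2E_{\tau,W})\tau^2$. For the reconstruction I would insert the discrete expansion of $W_{\boldsymbol f}(a,b)$ into $\int_{Z_k(b)}W_{\boldsymbol f}(a,b)a^{-3/2}\boldsymbol{\chi}_{|W_{\boldsymbol f}|>\Gamma}\,\ud a$, use the normalization $\int_{Z_k(b)}\sqrt a\,\overline{\widehat\psi(a\phi_k'(b))}\,a^{-3/2}\ud a=1$, and bound both the model-bias and the thresholding contributions via $\int_{Z_k(b)}a^{-3/2}\ud a\leq 2\sqrt{c_2}\Delta$, exactly as in the continuous proof but now carrying the additional $\tau^2E_{\tau,W}\epsilon$ term.

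The hard part will be showing that the discretization error carries the factor $\epsilon$, and not merely $O(\tau^2)$. The crude estimate is useless here because $\|G_{a,b}''\|_{L^1}$ is $O(1)$: differentiating twice strikes both the carrier $e^{i2\pi\phi_l}$ (contributing $(\phi_l')^2\sim c_2^2$) and the wavelet (contributing curvature $\sim a^{-2}$), neither of which is small. To recover the $\epsilon$ one must exploit the bandpass matching: on $Z_k(b)$ the wavelet frequency $1/a$ is tuned to the carrier $\phi_k'(b)$, so after extracting this common oscillation the integrand is a slowly-varying envelope times a near-zero-frequency beat, and the portion of its spectrum surviving at the aliasing nodes $|\xi|\geq1/\tau$ is governed precisely by the non-stationarity bounds $|A_l'|\leq\epsilon\phi_l'$, $|A_l''|\leq\epsilon c_2$ and $|\phi_l''|\leq\epsilon\phi_l'$. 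Equivalently, the genuinely fast (doubled-frequency, negative-band) part of $G_{a,b}$ must be integrated by parts against the carrier and its aliases controlled through the sampling hypothesis. This coupling of the Nyquist-type condition with the model-bias parameter is the delicate step, and it is exactly why Assumption (A3) has to upgrade each $A_l$ to $C^2$ with $|A_l''|\leq\epsilon c_2$ (the only place this regularity enters).
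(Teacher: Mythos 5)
Your proposal is correct and follows essentially the same route as the paper: Poisson summation converts the discretization error into the aliased Fourier coefficients $\sum_{n\neq 0}\mathcal{F}\{f\overline{\psi_{a,b}}\}(n/\tau)$, the locally harmonic approximation $A_k(b)\cos(2\pi(\phi_k(b)-b\phi_k'(b))+2\pi\phi_k'(b)t)$ contributes exactly zero at each aliasing node because the bandlimited wavelet and the condition $\tau\leq\frac{1-\Delta}{(1+\Delta)c_2}$ push $a(\phi_k'(b)+n/\tau)$ outside $\mathrm{supp}\,\widehat{\psi}$, and the residual is integrated by parts twice against $e^{i2\pi nt/\tau}$ to produce $\tau^2/(4\pi^2n^2)$ times $O(\epsilon)$ quantities controlled by $|A_l'|\leq\epsilon\phi_l'$, $|\phi_l''|\leq\epsilon\phi_l'$ and the new $|A_l''|\leq\epsilon c_2$. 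You correctly identified the one genuinely delicate point — that the crude $\|G''\|_{L^1}(2\pi\xi)^{-2}$ bound loses the factor $\epsilon$ — and the fix you describe is exactly the paper's, including the observation that the $C^2$ upgrade of $A_l$ in Assumption (A3) enters only at this step.
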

We further denote the following universal constant to bound the error introduced by the $\mathcal{A}_{\epsilon,d}^{c_1,c_2}$ and the finite sampling rate shown in Lemma \ref{lemma:identifiability:Wf_expansion_discrete}:
\begin{align}
E_{\tau,0}:=\max\big\{E_{\tau,W},\,E_{\tau,W'},\,E_{\tau,\omega},\,2\sqrt{c_2}\tau^2E_{\tau,W}\big\},\label{proof:definition_error_constant_E0tau}
\end{align}
which is a universal constant depending on the moments of $\psi$ and $\psi'$, $c_1$, $c_2$ and $d$.

\underline{Remark:} The remark after Lemma \ref{lemma:identifiability:Wf_expansion} holds here. Moreover, notice that when the sampling interval $\tau$ is small enough, the result is essentially the same as that in Lemma \ref{lemma:identifiability:Wf_expansion} except the error introduced by the discretization.
\begin{proof}
By the Poisson formula, since $\psi\in\mathcal{S}$, for $a>0$ and $b\in\RR$ we have
\begin{align}
W_{\boldsymbol{f}}(a,b)=&\,\tau \sum_{j\in\ZZ}f(j\tau )\overline{\psi_{a,b}(j\tau)}=\,\sum_{n\in\ZZ}\mathcal{F}\left\{f(t)\overline{\psi_{a,b}(t)}\right\}\Big(\frac{n}{\tau }\Big),
\end{align}
where $\mathcal{F}$ means the Fourier transform. Note that $W_f(a,b)=\mathcal{F}\left\{f(t)\overline{\psi_{a,b}(t)}\right\}(0)$. Thus, the difference between $W_f(a,b)$ and $W_{\boldsymbol{f}}(a,b)$ is $\sum_{n\in\ZZ,n\neq 0}W^{(n/\tau )}_f(a,b)$, where 
\[
W^{(n/\tau )}_f(a,b):=\mathcal{F}\left\{f(t)\overline{\psi_{a,b}(t)}\right\}\Big(\frac{n}{\tau }\Big).
\]

Before proceeding, we evaluate the continuous wavelet transform of the harmonic function $A_k(b)\cos(2\pi(\phi_k(b)-b\phi_k'(b))+2\pi\phi_k'(b)t)$ modulated by $e^{i2\pi \frac{-nt}{\tau}}$, where $b\in\RR$ is fixed, $k\in\{1,\ldots,K\}$ and $n\in\ZZ\backslash\{0\}$. Note that this is a harmonic function. By a direct calculation, we have
\begin{align}
&\,\int_\RR A_k(b)\cos\Big(2\pi(\phi_k(b)-b\phi_k'(b))+2\pi\phi_k'(b)t\Big)\overline{\psi_{a,b}(t)e^{i2\pi \frac{nt}{\tau }}}\ud t\nonumber\\
=&\, A_k(b)\int_\RR \cos(2\pi(\phi_k(b)-b\phi_k'(b))+2\pi x)\frac{1}{\sqrt{a}}\overline{\psi\left(\frac{x-b\phi_k'(b)}{a\phi_k'(b)}\right)e^{i2\pi \frac{nx}{\phi'_k(b)\tau }}}\frac{1}{\phi_k'(b)}\ud x\nonumber\\
=&\, A_k(b)\int_\RR \cos(2\pi(\phi_k(b)-u))\frac{1}{\sqrt{a}}\overline{\psi\left(\frac{u}{a\phi_k'(b)}\right)e^{i2\pi\frac{n(u+b\phi'_k(b))}{\phi'_k(b)\tau }}}\frac{1}{\phi_k'(b)}\ud u\nonumber\\
=&\, \frac{A_k(b)}{2}e^{-i2\pi\big(\frac{nb}{\tau }-\phi_k(b)\big)}\int_\RR e^{-i2\pi u}\frac{1}{\sqrt{a}}\overline{\psi\left(\frac{u}{a\phi_k'(b)}\right)e^{i2\pi \frac{n}{\phi'_k(b)\tau }u}}\frac{1}{\phi_k'(b)}\ud u\nonumber\\
=&\, \frac{A_k(b)}{2}e^{i2\pi \big(\phi_k(b)-\frac{nb}{\tau }\big)}\sqrt{a}\overline{\widehat{\psi}\Big(a\big(\phi_k'(b)+\frac{n}{\tau }\big)\Big)}=0,\nonumber
\end{align}
where the last equality holds since $\tau \leq \frac{1-\Delta}{1+\Delta}\frac{1}{c_2}$ and $\frac{1-\Delta}{2\Delta}\frac{1}{c_2}>\frac{1-\Delta}{1+\Delta}\frac{1}{c_2}$.

We will bound $\sum_{n\in\ZZ,n\neq 0}W^{(n/\tau )}_f(a,b)$ step by step. The main step is approximating the non-harmonic function $A_k(t)\cos\big(2\pi\phi_k(t)\big)$ by the harmonic one $A_k(b)\cos\big(2\pi(\phi_k(b)-b\phi_k'(b))+2\pi\phi_k'(b)t\big)$.
We start from preparing some bounds which measure the error when we approximate $\cos(2\pi\phi_k(t))$ by $\cos(2\pi(\phi_k(b)-b\phi_k'(b))+2\pi\phi_k'(b)t))$ and $A_k(t)$ by $A_k(b)$. Clearly when $|\delta|\geq 0$, we have $|\cos(x+\delta)-\cos(x)|=\big| \int_{x}^{x+\delta}\sin(u)\ud u\big|\leq |\delta|$, which leads to the following bound when $j=0,1,2$:
\begin{align}
&\int_\RR \big|\big(\cos(2\pi\phi(t))-\cos(2\pi(\phi_k(b)-b\phi_k'(b))+2\pi\phi_k'(b)t))\big)\overline{\psi^{(j)}_{a,b}(t)}\big|\ud t\nonumber\\
=&\,\int_\RR \Big|\Big( \cos\big[ 2\pi\big(\phi_k(b)-b\phi_k'(b)+\phi_k'(b)t+\int^{t-b}_0\left[\phi_k'(b+u)-\phi_k'(b)\right]\ud u\big)\big]\nonumber\\
&\qquad-\cos(2\pi(\phi_k(b)-b\phi_k'(b))+2\pi\phi_k'(b)t)) \Big)\overline{\psi^{(j)}_{a,b}(t)}\Big|\ud t\nonumber\\
\leq&\,\int_\RR \Big|\int^{t-b}_0[\phi_k'(b+u)-\phi_k'(b)]\ud u\Big|\big|\overline{\psi^{(j)}_{a,b}(t)}\big|\ud t\nonumber\\
\leq&\, \epsilon\int_\RR\Big( \frac{1}{2}|t-b|^2|\phi'_k(b)|+\frac{\epsilon c_2}{6}|t-b|^3\Big)\big|\overline{\psi^{(j)}_{a,b}(t)}\big|\ud t\nonumber\\
\leq&\, \big[\frac{c_2}{2}a^{5/2-j}I^{(j)}_2+\frac{c_2}{6}a^{7/2-j}I^{(j)}_3\big]\epsilon\label{Clemma1:app1},
\end{align}
where in the last inequality we use the fact that $\epsilon\ll 1$ to simply the bound.
Similarly, by the Taylor expansion, $\epsilon\ll1$ and the assumptions of $\mathcal{A}^{c_1,c_2}_{\epsilon,d}$, we have the following bounds:
\begin{align}
&\int_\RR |A_k(b)-A_k(t)|\big|\overline{\psi_{a,b}(t)}\big|\ud t\nonumber\\
\leq&\,\epsilon\int_\RR |t-b|\Big(|\phi'_k(b)|+\frac{1}{2}\epsilon c_2|t-b|\Big)\big|\overline{\psi_{a,b}(t)}\big|\ud t\nonumber\\
\leq& \Big[c_2a^{3/2}I^{(0)}_1+\frac{1}{2}\epsilon c_2 a^{5/2}I^{(0)}_2\Big]\epsilon\label{Clemma1:app2}
\end{align}
and
\begin{align}
&\int_\RR |\phi'_k(b)-\phi'_k(t)|\big|\overline{\psi_{a,b}(t)}\big|\ud t\nonumber\\
\leq&\,\epsilon\int_\RR |t-b|\Big(|\phi'_k(b)|+\frac{1}{2}\epsilon c_2|t-b|\Big)\big|\overline{\psi_{a,b}(t)}\big|\ud t\nonumber\\
\leq&\, \Big[c_2a^{3/2}I^{(0)}_1+\frac{1}{2}\epsilon c_2 a^{5/2}I^{(0)}_2\Big]\epsilon\label{Clemma1:app3}.
\end{align}

With the above bounds, we first evaluate the difference between $W^{(n/\tau )}_{\cos(2\pi\phi_k(t))}(a,b)$ and $W^{(n/\tau )}_{\cos(2\pi(\phi_k(b)-b\phi_k'(b))+2\pi\phi_k'(b)t)}(a,b)$. Since $\psi_{a,b}\in\mathcal{S}$, by the integration by part and the regularity assumption for $\phi_k(t)$, we have
\begin{align}
&W^{(n/\tau )}_{\cos(2\pi\phi_k(t))}(a,b)-W^{(n/\tau )}_{\cos(2\pi(\phi_k(b)-b\phi_k'(b))+2\pi\phi_k'(b)t)}(a,b)\nonumber\\
=&\,\frac{\tau ^2}{4\pi^2 n^2}\int_\RR \Big\{-2\pi \phi''_k(t)\sin(2\pi\phi(t))\overline{\psi_{a,b}(t)}\nonumber\\
&\qquad-4\pi^2\big({\phi'_k(t)}^2\cos(2\pi\phi(t))-{\phi'_k(b)}^2\cos(2\pi(\phi_k(b)-b\phi_k'(b))+2\pi\phi_k'(b)t))\big)\overline{\psi_{a,b}(t)}\nonumber\\
&\qquad-4\pi\big(\phi'_k(t)\sin(2\pi\phi(t))-\phi'_k(b)\sin(2\pi(\phi_k(b)-b\phi_k'(b))+2\pi\phi_k'(b)t))\big)\overline{\psi^{(1)}_{a,b}(t)}\nonumber\\
&\qquad+\big(\cos(2\pi\phi(t))-\cos(2\pi(\phi_k(b)-b\phi_k'(b))+2\pi\phi_k'(b)t))\big)\overline{\psi^{(2)}_{a,b}(t)} \Big\}e^{i2\pi \frac{nt}{\tau }}\ud t\nonumber.
\end{align}
The first term in the last integral is simply bounded by $2\pi c_2I^{(0)}_0\epsilon$. By (\ref{Clemma1:app1}) with $j=2$, the fourth term in the integral is bounded by $c_2a^{1/2}\big[\frac{1}{2}I^{(2)}_2+\frac{1}{6}aI^{(2)}_3\big]\epsilon$.
Rewrite 
\begin{align*}
&{\phi'_k(t)}^2\cos(2\pi\phi(t))-{\phi'_k(b)}^2\cos(2\pi(\phi_k(b)-b\phi_k'(b))+2\pi\phi_k'(b)t))\\
=&\,({\phi'_k(t)}-{\phi'_k(b)})({\phi'_k(t)}+{\phi'_k(b)})\cos(2\pi\phi(t))\nonumber\\
&\,+{\phi'_k(b)}^2\big(\cos(2\pi\phi(t))-\cos(2\pi(\phi_k(b)-b\phi_k'(b))+2\pi\phi_k'(b)t))\big).
\end{align*}
By (\ref{Clemma1:app1}) with $j=1$ and (\ref{Clemma1:app3}), we bound the second term in the integral by 
\[
4\pi^2 c_2^2a^{3/2}\big[2I^{(1)}_1+ aI^{(1)}_2\epsilon+\frac{c_2}{2}aI^{(2)}_2+\frac{c_2}{6}a^{2}I^{(2)}_3\big]\epsilon.
\] 
Similarly, by rewriting 
\begin{align*}
&\phi'_k(t)\sin(2\pi\phi(t))-\phi'_k(b)\sin(2\pi(\phi_k(b)-b\phi_k'(b))+2\pi\phi_k'(b)t))\\
=&\,[\phi'_k(t)-\phi'_k(b)]\sin(2\pi\phi(t))\nonumber\\
&\,+\phi'_k(b)\big[\sin(2\pi\phi(t))-\sin(2\pi(\phi_k(b)-b\phi_k'(b))+2\pi\phi_k'(b)t))\big],
\end{align*} 
and by (\ref{Clemma1:app1}) with $j=0$ and (\ref{Clemma1:app3}), we bound the third term in the integral by \[
4\pi c_2^2 a^{3/2}\big[aI^{(0)}_1+\frac{1}{2} a^{2}I^{(0)}_2\epsilon+\frac{1}{2}I^{(1)}_2+\frac{1}{6}aI^{(1)}_3\big]\epsilon. 
\]
As a result, we bound the entire integral by $\epsilon E_{\tau ,W,\phi}$, where 
\begin{align*}
E_{\tau ,W,\phi}:=&2\pi c_2I^{(0)}_0+c_2a^{1/2}\big[\frac{1}{2}I^{(2)}_2+\frac{1}{6}aI^{(2)}_3\big]\\
&+4\pi^2 c_2^2a^{3/2}\big[2I^{(1)}_1+ aI^{(1)}_2\epsilon+\frac{c_2}{2}aI^{(2)}_2+\frac{c_2}{6}a^{2}I^{(2)}_3\big]\\
&+4\pi c_2^2 a^{3/2}\big[aI^{(0)}_1+\frac{1}{2} a^{2}I^{(0)}_2\epsilon+\frac{1}{2}I^{(1)}_2+\frac{1}{6}aI^{(1)}_3\big]<\infty
\end{align*}
is an universal constant depending only on the first three moments of $\psi$ and $\psi'$, $c_1,c_2$ and $d$ since $a\in \big[\frac{1-\Delta}{c_2},\frac{1+\Delta}{c_1}\big]$.

Again, the difference between $W^{(n/\tau )}_{A_k(t)\cos(2\pi\phi_k(t))}(a,b)$ and $W^{(n/\tau )}_{A_k(b)\cos(2\pi\phi_k(t))}(a,b)$ is evaluated by the integration by part:
\begin{align}
&W^{(n/\tau )}_{A_k(t)\cos(2\pi\phi_k(t))}(a,b)-W^{(n/\tau )}_{A_k(b)\cos(2\pi\phi_k(t))}(a,b)\nonumber\\
=&\, \frac{\tau ^2}{4\pi^2n^2}\int_\RR \Big\{A''_k(t)\cos(2\pi\phi_k(t))\overline{\psi_{a,b}(t)} -4\pi A_k'(t)\phi_k'(t)\sin 2\pi\phi_k(t) \overline{\psi_{a,b}(t)} \nonumber\\
&\qquad+ 2A'_k(t)\cos2\pi\phi_k(t)\overline{\psi^{(1)}_{a,b}(t)}  -2\pi (A_k(t)-A_k(b))\phi''_k(t)\sin 2\pi\phi_k(t)\overline{\psi_{a,b}(t)}  \nonumber\\
&\qquad-4\pi^2{\phi_k'(t)}^2(A_k(t)-A_k(b))\cos2\pi\phi_k(t)\overline{\psi_{a,b}(t)}\nonumber\\
&\qquad-4\pi (A_k(t)-A_k(b))\phi_k'(t)\sin2\pi\phi_k(t)\overline{\psi^{(1)}_{a,b}(t)} \nonumber\\
&\qquad+(A_k(t)-A_k(b))\cos(2\pi\phi_k(t))\overline{\psi^{(2)}_{a,b}(t)} \Big\}e^{-i2\pi \frac{nt}{\tau }}\ud t\label{only_place_for_regularity_of_Al}.
\end{align}
By applying (\ref{Clemma1:app2}) and the same arguments as before, the last integral is bounded by 
$\epsilon E_{\tau ,W,A}$, where 
\begin{align*}
E_{\tau ,W,A}:=&c_2\big[\sqrt{a}I_1^{(0)}+4\pi \sqrt{a}I^{(0)}_0 +2a^{-1/2}I^{(1)}_0\big]\\
&+c_2\big[2\pi \epsilon c_2 \sqrt{a}I_0^{(0)}+4\pi^2c_2^2\sqrt{a}I^{(0)}_0+4\pi c_2a^{-1/2}I^{(1)}_0+a^{-3/2}I^{(2)}_0\big]\\
&\quad\times\big(a^{3/2}I^{(0)}_1+\frac{\epsilon }{2}I^{(0)}_2\big)<\infty
\end{align*}
depending only on the first two moments of $\psi$ and $\psi'$, $c_1,c_2$ and $d$ since $a\in \big[\frac{1-\Delta}{c_2},\frac{1+\Delta}{c_1}\big]$. Note that the condition $|A''|\leq c_2\epsilon$ is used only for the first term. 

In conclusion, we get
\begin{align}
\Big|\sum_{n\in\ZZ,n\neq 0}W^{(n/\tau )}_{f}(a,b) \Big|
\leq\sum_{k=1}^K\sum_{n\neq 0}\frac{\tau ^2\epsilon}{4\pi^2 n^2}(E_{\tau ,W,\phi}+E_{\tau ,W,A})\leq E_{\tau ,W}\tau ^2\epsilon,\nonumber
\end{align}
where 
\begin{align*}
E_{\tau ,W}:=\frac{1}{24}(E_{\tau ,W,\phi}+E_{\tau ,W,A})\lceil\frac{\ln c_2-\ln c_1}{\ln(1+d)-\ln(1-d)}\rceil
\end{align*} 
is an universal constant depending only on the first three moments of $\psi$ and $\psi'$, $c_1,c_2$ and $d$. Note that we use the fact that $\sum_{n=1}^\infty n^{-2}=\frac{\pi^2}{6}$ and that each function in $\mathcal{A}^{c_1,c_2}_{\epsilon,d}$ can have at most $\lceil\frac{\ln c_2-\ln c_1}{\ln(1+d)-\ln(1-d)}\rceil$ components, that is, $K\leq \lceil\frac{\ln c_2-\ln c_1}{\ln(1+d)-\ln(1-d)}\rceil$. Eventually, we have 
\begin{align*}
\big|W_{\boldsymbol{f}}(a,b)-W_f(a,b)\big|\leq \Big|\sum_{n\neq 0}W^{(n/\tau )}_f(a,b)\Big|\leq E_{\tau ,W}\tau ^2\epsilon.
\end{align*}
By a similar argument, for which the detail is skipped, we have
\begin{align*}
\big|\partial_bW_{\boldsymbol{f}}(a,b)-\partial_bW_f(a,b)\big|\leq E_{\tau ,W}'\tau ^2\epsilon,
\end{align*}
where $E_{\tau ,W}'$ is an universal constant depending only on the first three moments of $\psi$ and $\psi'$, $c_1,c_2$ and $d$. Combining the above two results with Lemma \ref{lemma:identifiability:Wf_expansion}, we get the claim.

Therefore, when $a\in Z_k(b)$ and $|W_{\boldsymbol{f}}(a,b)|\neq 0$, we have
\begin{align}
&\omega_{\boldsymbol{f}}(a,b)-\phi'_k(b)\nonumber\\
=&\,\frac{-i\partial_bW_{\boldsymbol{f}}(a,b)-2\pi\phi'_k(b)A_k(b)e^{i2\pi\phi_k(b)}\sqrt{a}\widehat{\psi}(a\phi'_k(b))}{2\pi W_{\boldsymbol{f}}(a,b)}\nonumber\\
&\qquad+\frac{\big[A_k(b)e^{i2\pi\phi_k(b)}\sqrt{a}\widehat{\psi}(a\phi'_k(b))-W_{\boldsymbol{f}}(a,b)\big]\phi'_k(b)}{W_{\boldsymbol{f}}(a,b)}\nonumber\\
\leq&\,\frac{\big|\partial_bW_{\boldsymbol{f}}(a,b)-\partial_bW_{f}(a,b)\big|+\big|\partial_bW_{f}(a,b)-2\pi\phi'_k(b)A_k(b)e^{i2\pi\phi_k(b)}\sqrt{a}\widehat{\psi}(a\phi'_k(b))\big|}{2\pi |W_{\boldsymbol{f}}(a,b)|}\nonumber\\
&\qquad+\frac{\Big[\big|A_k(b)e^{i2\pi\phi_k(b)}\sqrt{a}\widehat{\psi}(a\phi'_k(b))-W_{f}(a,b)\big|+\big|W_{f}(a,b)-W_{\boldsymbol{f}}(a,b)\big|\Big]\phi'_k(b)}{|W_{\boldsymbol{f}}(a,b)|}\nonumber\\
\leq&\,\frac{[E_{W}'+E_{\tau ,W}'\tau ^2]+2\pi c_2[E_W+E_{\tau ,W}\tau ^2]}{2\pi|W_{\boldsymbol{f}}(a,b)|}\epsilon=\frac{E_{\tau ,\omega}}{|W_{\boldsymbol{f}}(a,b)|}\epsilon\nonumber,
\end{align}
where $E_{\tau,\omega}:=(2\pi)^{-1}[E_{W}'+E_{\tau ,W}'\tau ^2]+c_2[E_W+E_{\tau ,W}\tau ^2]$, as is claimed.

By the above expansion for $W_{\boldsymbol{f}}(a,b)$ and $W_f(a,b)$ and a direct calculation as that for (\ref{proof:lemma:Wfapproximate:reconstruction}), we have
\begin{align}
&\big|\int_{Z_k(b)} W_{\boldsymbol{f}}(a,b)a^{-3/2}{\boldsymbol{\chi}}_{|W_{\boldsymbol{f}}(a,b)|>\Gamma} \ud a-A_k(b)e^{i2\pi\phi_k(b)}\big|\label{proof:lemma2:Wfapproximate:reconstruction}\\
\leq&\,\left|A_k(b)e^{i2\pi \phi_k(b)}\int_{Z_k(b)}\sqrt{a}\overline{\widehat{\psi}(a\phi'_k(b))}a^{-3/2}\ud a-A_k(b)e^{i2\pi\phi_k(b)}\right|\nonumber\\
&+\epsilon\int_{Z_k(b)}[\tau ^2E_{\tau ,W}+E_W] a^{-3/2}\ud a+\int_{Z_k(b)}|W_{\boldsymbol{f}}(a,b)|a^{-3/2}{\boldsymbol{\chi}}_{|W_{\boldsymbol{f}}(a,b)|\leq \Gamma} \ud a\nonumber,
\end{align}
where the first term disappears since $\int_{Z_k(b)}\sqrt{a}\overline{\widehat{\psi}(a\phi'_k(b))}a^{-3/2}\ud a=1$ and the second term is bounded by $2\sqrt{c_2} [\tau ^2E_{\tau ,W}+E_W]\epsilon\Delta$. We simply bound the third term by
$$
\int_{Z_k(b)}|W_{\boldsymbol{f}}(a,b)|a^{-3/2}{\boldsymbol{\chi}}_{|W_{\boldsymbol{f}(a,b)}|\leq \Gamma} \ud a\leq \Gamma\int_{Z_k(b)}a^{-3/2}\ud a\leq 2\sqrt{c_2}\Gamma \Delta.
$$
As a result, we get the claim.

\end{proof}

\begin{proof}[Theorem \ref{theorem:identifiability:multiple}]
We start from proving that $N=M$. Choose $\psi$ to be a Schwartz function so that 
\begin{align}\label{special:psi}
\widehat{\psi}(\xi)=e\exp\left\{\frac{1}{\big(\frac{1-\xi}{\Delta}\big)^2-1} \right\}{\boldsymbol{\chi}}_{[1-\Delta,1+\Delta]}(\xi),
\end{align}
where ${\boldsymbol{\chi}}$ is the indicator function and $\Delta=d/2(1+d)$. It is well known that $\widehat{\psi}(\xi)$ is real, smooth, compactly supported, monotonically decays when $\xi\in[1,1+\Delta]$, monotonically increases when $\xi\in[1-\Delta,1]$ and has support $[1-\Delta,1+\Delta]$. Note that $\widehat{\psi}$ has only one maximal point at $\xi=1$ and $\widehat{\psi}(1)=1$. Fix $f\in\mathcal{A}^{c_1,c_2}_{\epsilon,d}$. Suppose $f$ has two representations, both are in $\mathcal{A}^{c_1,c_2}_{\epsilon,d}$:
\[
f(t)=\sum_{l=1}^Na_l(t)\cos [2\pi\phi_l(t)]=\sum_{l=1}^MA_l(t)\cos[2\pi\varphi_l(t)]\in \mathcal{A}^{c_1,c_2}_{\epsilon,d}.
\]
Denote $Z_l(b):=\big[\frac{1-\Delta}{\phi'_l(b)}\,,\frac{1+\Delta}{\phi'_l(b)} \big]$, $l=1,\ldots,N$, and $Y_k(b):=\big[\frac{1-\Delta}{\varphi'_k(b)}\,,\frac{1+\Delta}{\varphi'_k(b)} \big]$, $k=1,\ldots M$. Then by Lemma \ref{lemma:identifiability:Wf_expansion} we have
\begin{align*}
\big|W_f(a,b)-\sum_{l=1}^Na_l(b)e^{i2\pi \phi_l(b)}\sqrt{a}\widehat{\psi}\left(a\phi'_l(b)\right)\big|\leq
E_m\epsilon,
\end{align*}
\[
\big|W_f(a,b)-\sum_{l=1}^MA_l(b)e^{i2\pi \varphi_l(b)}\sqrt{a}\widehat{\psi}\left(a\varphi'_l(b)\right)\big|\leq E_m\epsilon,
\]
where 
$$
E_m\leq c\frac{c_2}{c_1^{3/2}}\lceil\frac{\ln c_2-\ln c_1}{\ln(1+d)-\ln(1-d)}\rceil\Big\{1+\frac{1}{2c_1} +\pi \big(\frac{c_2}{c_1}+\frac{c_2}{3c_1^2}\big)\Big\},
$$ 
and $0<c<\infty$ is the maximum of the first three moments of the chosen $\psi$ (\ref{special:psi}). 
In other words, $E_m$ is a universal constant whose bound depends only on $c_1$, $c_2$ and $d$. Note that in this proof we do not concern ourselves with the optimal $\psi$ and $c$ but simply choose the convenient one.
Denote 
$$
L(a,b):=\sum_{l=1}^Na_l(b)e^{i2\pi \phi_l(b)}\sqrt{a}\widehat{\psi}\left(a\phi'_l(b)\right),
$$ 
$$
R(a,b):=\sum_{l=1}^MA_l(b)e^{i2\pi \varphi_l(b)}\sqrt{a}\widehat{\psi}\left(a\varphi'_l(b)\right).
$$
Since the continuous wavelet transform of $f$ is unique, we have
\begin{equation}\label{proof:identifiability:multiple:1}
|L(a,b)-R(a,b)|\leq 2E_m\epsilon.
\end{equation}

By the profile of the chosen $\psi$, there exist $N$ subintervals, $I_l\subset Z_l$, $l=1,\ldots, N$, around $\frac{1}{\phi'_l(b)}$ 
so that on $I_l$, $|L(a,b)|>a_l(b)\sqrt{a}/2>\frac{\sqrt{1-\Delta}c_1}{2\sqrt{c_2}}$. Clearly $\frac{1}{\phi'_l(b)}$, $l=1,\ldots,N$, are the maximal points of $|L(a,b)|$, which are dyadically separated by (\ref{Aepsd:cond:1}). Similarly there exist $M$  subintervals, $J_l\subset Y_l$, $l=1,\ldots,M$, around 
$\frac{1}{\varphi'_l(b)}$ so that on $J_l$, $|R(a,b)|>A_l(b)\sqrt{a}/2>\frac{\sqrt{1-\Delta}c_1}{2\sqrt{c_2}}$. Also $\frac{1}{\varphi'_l(b)}$, $l=1,\ldots,M$, are the maximal points of $|R(a,b)|$, which are dyadically separated by (\ref{Aepsd:cond:1}). Since $E_m$ is a universal constant, when $\epsilon$ is small enough, 
the equality in (\ref{proof:identifiability:multiple:1}) cannot hold if $M\neq N$ or $Z_l(b)\cap Y_l(b)\neq\emptyset$ for any $l=1,\ldots,N$. 
Thus we have
\[
f(t)=\sum_{l=1}^Na_l(t)\cos [2\pi\phi_l(t)]=\sum_{l=1}^NA_l(t)\cos[2\pi\varphi_l(t)]\in \mathcal{A}^{c_1,c_2}_{\epsilon,d},
\]
and we obtain the first part of the proof.


Next, we show the second part of the proof, that is, $|a_1(b)-A_1(b)|\leq 2\sqrt{c_2}E_m\epsilon$, $|\phi'_1(b)-\varphi'_1(b)|\leq 10(1+d)c_2^{3/2}E_m\epsilon$ and $|\phi_1(b)-\varphi_1(b)|\leq 9\sqrt{c_2}E_m\epsilon$. Note that it is clear from (\ref{proof:identifiability:multiple:1}) that the set $I_l(b)\cap J_k(b)=\emptyset$ for all $l\neq k$ and $I_l(b)\cap J_l(b)\neq \emptyset$. By (\ref{proof:identifiability:multiple:1}) and the fact that $\widehat{\psi}$ is a real function, we have on $I_1(b)\cap J_1(b)$ that 
\begin{align}
\Big|a_1(b)\sqrt{a}\widehat{\psi}\left(a\phi'_1(b)\right)-A_1(b)e^{i2\pi(\varphi_1(b)-\phi_1(b))}\sqrt{a}\widehat{\psi}\left(a\varphi'_1(b)\right)\Big|\leq2E_m\epsilon,\label{proof:identifiability:multiple:q}
\end{align}
that is, $A_1(b)e^{i2\pi(\varphi_1(b)-\phi_1(b))}\sqrt{a}\widehat{\psi}\left(a\varphi'_1(b)\right)\in B_{2E_m\epsilon}(a_1(b)\sqrt{a}\widehat{\psi}\left(a\phi'_1(b)\right))$. Thus,
\begin{eqnarray}\label{proof:identifiability:multiple:3}
\Big|a_1(b)\widehat{\psi}\left(a\phi'_1(b)\right)-A_1(b)\widehat{\psi}\left(a\varphi'_1(b)\right)\Big|\leq\frac{2E_m}{\sqrt{a}}\epsilon.
\end{eqnarray}

Without loss of generality, assume $a_1(b)>A_1(b)$. When $a=1/\phi'_1(b)$, (\ref{proof:identifiability:multiple:3}) becomes
\begin{align}
&\Big|a_1(b)\widehat{\psi}\left(1\right)-A_1(b)\widehat{\psi}\left(\frac{\varphi'_1(b)}{\phi'_1(b)}\right)\Big|
=\Big|a_1(b)-A_1(b)\widehat{\psi}\left(\frac{\Phi'_1(b)}{\phi'_1(b)}\right)\Big|\leq2\sqrt{c_2}E_m\epsilon.\label{proof:identifiability:multiple:3aA}
\end{align}
Thus, if $a_1(b)-A_1(b)>2\sqrt{c_2}E_m\epsilon$, we get a contradiction. Indeed, $a_1(b)-A_1(b)>a_1(b)-A_1(b)\widehat{\psi}\left(\frac{\varphi'_1(b)}{\phi'_1(b)}\right)=\big|a_1(b)-A_1(b)\widehat{\psi}\left(\frac{\varphi'_1(b)}{\phi'_1(b)}\right)\big|$ since $1$ is the unique maximal point of $\widehat{\psi}$. 

To show the bound of $|\phi'_1(b)-\varphi'_1(b)|$, pick $a_0\in Z_1$ so that $\widehat{\psi}(a_0\phi_1'(b))-\widehat{\psi}(a_0\varphi'_1(b))=\frac{1}{2}(a_0\phi_1'(b)-a_0\varphi_1'(b))$ by the mean value theorem. Notice that $\|\widehat{\psi}'\|_{L^\infty}>1$ and $\widehat{\psi}\in C^\infty_c$. Then by the fact that $a_0\in \big[\frac{1-\Delta}{c_2},\frac{1+\Delta}{c_1}\big]$, (\ref{proof:identifiability:multiple:3}) becomes
\begin{align*}
-\frac{2\sqrt{c_2}E_m}{\sqrt{1-\Delta}}\epsilon\leq \frac{a_1(b)}{2}(a_0\phi_1'(b)-a_0\varphi_1'(b))+(a_1(b)-A_1(b))\widehat{\psi}(a_0\Phi'_1(b))\leq \frac{2\sqrt{c_2}E_m}{\sqrt{1-\Delta}}\epsilon.
\end{align*}
Since $a_1(b)>A_1(b)$ and $\widehat{\psi}>0$, by (\ref{proof:identifiability:multiple:3aA}) we have 
\begin{align*}
-\frac{2\sqrt{c_2}E_m}{\sqrt{1-\Delta}}\epsilon-2\sqrt{c_2}E_m\epsilon \leq \frac{a_1(b)a_0}{2}(\phi_1'(b)-\varphi_1'(b))\leq \frac{2\sqrt{c_2}E_m}{\sqrt{1-\Delta}}\epsilon+2\sqrt{c_2}E_m\epsilon,
\end{align*}
\begin{align*}
\Big|\frac{a_1(b)a_0}{2}(\phi_1'(b)-\varphi_1'(b))\Big|\leq 2\sqrt{c_2(1+d)}E_m\epsilon+2\sqrt{c_2}E_m\epsilon
\end{align*}
and hence
\begin{align*}
\Big|\phi_1'(b)-\varphi_1'(b)\Big|\leq \frac{2c^{3/2}_2}{c_1}(1+d)(\sqrt{1+d}+2)E_m\epsilon,
\end{align*}
where the last inequality holds since $\frac{1}{1-\Delta}<1+d$.

Moreover, (\ref{proof:identifiability:multiple:q}) and (\ref{proof:identifiability:multiple:3}) imply that
\begin{equation*}
|A_1(b)\hat{\psi}(a\varphi'_1(b))(1-e^{i2\pi(\varphi_1(b)-\phi_1(b))})|\leq 4\sqrt{c_2}E_m\epsilon.
\end{equation*} 
Thus, on $I_1(b)\cap J_1(b)$, we obtain
\begin{equation}\label{proof:identifiability:multiple:2-1}
|1-e^{i2\pi(\varphi_1(b)-\phi_1(b))}|\leq 8\sqrt{c_2}E_m\epsilon.
\end{equation} 
It is clear from (\ref{proof:identifiability:multiple:2-1}), $\phi_1,\varphi_1\in C^2$ and Taylor's expansion of the sine function that when $\epsilon$ is small enough, 
\begin{align*}
|\phi_1(b)-\varphi_1(b)|\leq 9\sqrt{c_2}E_m\epsilon.%
\end{align*}

Similar argument holds for all $k=2,\ldots, N$, and we conclude the theorem by denoting 
\begin{align}
E_I:=\max\big\{2\sqrt{c_2}E_m,\,2c_1^{-1}c^{3/2}_2(1+d)(\sqrt{1+d}+2)E_m,\, 9\sqrt{c_2}E_m\big\}.\label{proof:identifiability:multiple:error_constant}
\end{align}
\end{proof}


\section{Proof of Theorem \ref{section:theorem:stability} }
To prove Theorem \ref{section:theorem:stability}, we need the following technical lemma. It is aimed at adapting the delta method, for approximating the mean and variance of the ratio of two random variables,  to our setup. In particular, when we show the properties of $\omega_Y$, in part (ii) of Theorem \ref{section:theorem:stability} and part (ii) of Theorem \ref{section:theorem:ARMAstability}, we will encounter the problem discussed in this lemma. Note that we only assume existence of the variance but not the higher moments.
Also note that the conditions $\var(\zeta)\leq (9/32)^3$ and $|y_0|>\var(\zeta)^{1/3}$ are not crucial but chosen simply to simplify the final bound. 

\begin{lem}[Delta method]\label{proof:thm:lemma1:deltamethod}
Given two complex-valued random variables $\zeta'$ and $\zeta$ so that $\EE \zeta'=\EE \zeta=0$, $\var(\zeta)\leq (9/32)^3$ and $\var(\zeta')<\infty$.  
Fix two complex numbers $x_0$ and $y_0$ so that $|y_0|>\var(\zeta)^{1/3}$. Define $\zeta^\Omega:= \zeta{\boldsymbol{\chi}}_{\CC\backslash B_{|y_0|/4}(-y_0)}$, where $B_{|y_0|/4}(-y_0)$ is the ball of radius $|y_0|/4$ centered at $-y_0$ and $\boldsymbol{\chi}$ is the indicator function. Then we have the following relationship:
\begin{equation}\label{proof:thm:lemma1:deltamethod:exp}
\EE \left(\frac{x_0+\zeta'}{y_0+\zeta^\Omega}\right)=\frac{x_0}{y_0}+e_1,
\end{equation}
where $e_1\in\CC$ and 
\begin{align}
|e_1|\leq&\,\frac{1}{|y_0|^2}\Big(\Big|\frac{x_0}{y_0}\Big|21\var(\zeta)+4\sqrt{\var(\zeta)\var (\zeta')}\Big)\nonumber
\end{align}
and
\begin{align}\label{proof:thm:lemma1:deltamethod:var}
\Big|\var \left( \frac{x_0+\zeta'}{y_0+\zeta^\Omega} \right)\Big|\leq \frac{1}{|y_0|^2}\Big(252\Big|\frac{x_0}{y_0}\Big|^2\var(\zeta)+136\Big|\frac{x_0}{y_0}\Big|\sqrt{\var(\zeta)\var (\zeta')}+21\var (\zeta')\Big).
\end{align}
\end{lem}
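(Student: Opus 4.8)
The plan is to reduce everything to the single random variable
$$R:=\frac{x_0+\zeta'}{y_0+\zeta^\Omega}-\frac{x_0}{y_0}=\frac{y_0\zeta'-x_0\zeta^\Omega}{y_0(y_0+\zeta^\Omega)},$$
since $e_1=\EE R$ and, because subtracting a constant leaves the variance unchanged, $\var\big(\tfrac{x_0+\zeta'}{y_0+\zeta^\Omega}\big)=\var(R)\le\EE|R|^2$. The whole argument rests on one geometric observation produced by the truncation: by the definition of $\zeta^\Omega$, on the event $\{\zeta\in B_{|y_0|/4}(-y_0)\}$ we have $\zeta^\Omega=0$ so $|y_0+\zeta^\Omega|=|y_0|$, while on the complementary event $|y_0+\zeta^\Omega|=|\zeta-(-y_0)|\ge|y_0|/4$. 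Hence $|y_0+\zeta^\Omega|\ge|y_0|/4$ deterministically, so $\big|\tfrac{1}{y_0+\zeta^\Omega}\big|\le 4/|y_0|$ always. First I would record this bound, together with the elementary fact that truncation only decreases the second moment, i.e. $\EE|\zeta^\Omega|^2\le\EE|\zeta|^2=\var(\zeta)$.

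The second ingredient is the truncation bias. Since $\EE\zeta=0$, we have $\EE\zeta^\Omega=-\EE[\zeta\,\boldsymbol{\chi}_{B_{|y_0|/4}(-y_0)}]$, and on that ball $|\zeta|\ge\tfrac34|y_0|$, so Chebyshev gives $P(\zeta\in B_{|y_0|/4}(-y_0))\le 16\var(\zeta)/(9|y_0|^2)$ and Cauchy--Schwarz then yields $|\EE\zeta^\Omega|\le\sqrt{\var(\zeta)}\,\sqrt{P(\cdot)}\le 4\var(\zeta)/(3|y_0|)$; the hypotheses $\var(\zeta)\le(9/32)^3$ and $|y_0|>\var(\zeta)^{1/3}$ are precisely what make this probability strictly less than $1/2$ and let me tidy numerical constants at the end. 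For the mean I would expand $\tfrac{1}{y_0+\zeta^\Omega}=\tfrac1{y_0}-\tfrac{\zeta^\Omega}{y_0(y_0+\zeta^\Omega)}$ once inside $R$, take expectations, and use $\EE\zeta'=0$ to kill the leading $\zeta'/y_0$ term; what survives is $\EE\!\big[\zeta'\zeta^\Omega/(y_0(y_0+\zeta^\Omega))\big]$ and $x_0\,\EE\!\big[\zeta^\Omega/(y_0(y_0+\zeta^\Omega))\big]$. The crucial point is to \emph{never} push the Neumann expansion far enough to create a $(\zeta^\Omega)^3$: by retaining the bounded factor $1/(y_0+\zeta^\Omega)$ and using the identity $\zeta^\Omega/(y_0+\zeta^\Omega)=\zeta^\Omega/y_0-(\zeta^\Omega)^2/(y_0(y_0+\zeta^\Omega))$, every term is controlled using only $\EE\zeta^\Omega$, $\EE|\zeta^\Omega|^2$ and $\EE[|\zeta'||\zeta^\Omega|]\le\sqrt{\var(\zeta)\var(\zeta')}$, giving the stated bound on $|e_1|$.

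For the variance I would bound $\EE|R|^2$ directly. The denominator estimate gives $|R|^2\le 16\,|y_0\zeta'-x_0\zeta^\Omega|^2/|y_0|^4$, and then $\EE|y_0\zeta'-x_0\zeta^\Omega|^2=|y_0|^2\var(\zeta')-2\mathfrak{Re}\big(y_0\overline{x_0}\,\EE[\zeta'\overline{\zeta^\Omega}]\big)+|x_0|^2\EE|\zeta^\Omega|^2$, whose three pieces are bounded by $\var(\zeta')$, $\sqrt{\var(\zeta)\var(\zeta')}$ and $\var(\zeta)$ respectively, again via Cauchy--Schwarz and $\EE|\zeta^\Omega|^2\le\var(\zeta)$. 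Collecting powers of $|y_0|$ and rewriting in terms of $|x_0/y_0|$ gives a bound of exactly the advertised shape; the generous constants $252,136,21$ leave ample room to subtract $|e_1|^2$ and to absorb the looser estimates.

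The main obstacle, and the one place where genuine care is needed, is that the truncation destroys the centering of $\zeta$: one cannot simply assert $\EE\zeta^\Omega=0$, and a naive delta-method expansion would generate a third-order remainder of the form $\EE[\,\cdot\,(\zeta^\Omega)^3]$ which is \emph{not} controlled by the only moment hypotheses available ($\var(\zeta),\var(\zeta')<\infty$). Both difficulties are resolved by the same device: keep the factor $1/(y_0+\zeta^\Omega)$, which the truncation bounds uniformly by $4/|y_0|$, so that every expansion is stopped at second order, and estimate the residual bias $\EE\zeta^\Omega$ by the Chebyshev-plus-Cauchy--Schwarz tail bound above rather than pretending it vanishes.
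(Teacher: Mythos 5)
Your proposal is correct, and for the variance bound it is genuinely cleaner than the paper's argument. Your treatment of the mean is essentially the paper's: both decompose the ratio, use the identity $\frac{1}{1+z}=1-z+\frac{z^2}{1+z}$ to stop the expansion at second order (so that only $\EE\zeta^\Omega$, $\EE|\zeta^\Omega|^2$ and the Cauchy--Schwarz cross term are ever needed), and both control the truncation bias $\EE\zeta^\Omega$ through the Chebyshev bound $P(\zeta\in B_{|y_0|/4}(-y_0))\leq 16\var(\zeta)/(9|y_0|^2)$; your Cauchy--Schwarz estimate $|\EE\zeta^\Omega|\leq\sqrt{\var(\zeta)}\sqrt{P(\cdot)}\leq 4\var(\zeta)/(3|y_0|)$ is in fact slightly sharper than the paper's $40\var(\zeta)/(9|y_0|)$. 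Where you genuinely depart is the variance: by writing $R=\frac{y_0\zeta'-x_0\zeta^\Omega}{y_0(y_0+\zeta^\Omega)}$, invoking the deterministic bound $|y_0+\zeta^\Omega|\geq|y_0|/4$ (which you correctly justify by splitting on whether $\zeta$ lands in the excluded ball), and using $\var(R)\leq\EE|R|^2\leq 16\,\EE|y_0\zeta'-x_0\zeta^\Omega|^2/|y_0|^4$, you replace the paper's page-long term-by-term expansion of $\EE\big|\frac{1+\zeta'/x_0}{1+\zeta^\Omega/y_0}\big|^2-1$ (which requires two further algebraic identities and a separate absorption of $|e_1|^2$) with a three-line computation yielding constants $16$, $32$, $16$ in place of $252$, $136$, $21$. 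One minor point of reconciliation: the lemma's literal definition $\zeta^\Omega=\zeta\boldsymbol{\chi}_{\CC\backslash B_{|y_0|/4}(-y_0)}$, which you adopt, makes $\zeta^\Omega$ vanish on the excluded event, whereas the paper's own proof silently works with the law of $\zeta$ conditioned on $\zeta\notin B_{|y_0|/4}(-y_0)$ (it normalizes every integral by $\int_{\CC\backslash B}\ud F_\zeta$); your denominator bound $|y_0+\zeta^\Omega|\geq|y_0|/4$ and the moment inequalities $\EE|\zeta^\Omega|^2\leq\var(\zeta)$, $|\EE\zeta^\Omega|\leq 4\var(\zeta)/(3|y_0|)$ all hold under the literal reading, so nothing in your argument is affected, and your constants comfortably fit inside the stated ones.
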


\begin{proof}
We start from preparing some bounds for the proof. Denote $\ud F_{\zeta'}(y)$, $\ud F_{\zeta}(y)$, $\ud F_{\zeta^\Omega}(y)$ and $\ud F_{\zeta',\zeta^\Omega}(x,y)$ the measures associated with $\zeta'$, $\zeta$, $\zeta^\Omega$ and the vector $(\zeta',\zeta^\Omega)$ respectively. 

When $|y_0|>\var({\zeta})^{1/3}$, since the variance of $\zeta$ exists and $B_{|y_0|/4}(-y_0)\subset\CC\backslash B_{3|y_0|/4}(0)$, we have 
\begin{align}
\int_{B_{|y_0|/4}(-y_0)}\ud F_{\zeta}\leq \frac{16\var(\zeta)}{9|y_0|^2},\label{proof:thm:lemma1:deltamethod:taildecay}
\end{align}
which is bounded by $16\var(\zeta)^{1/3}/9\leq 1/2$ since we assume $\var(\zeta)\leq (9/32)^3$. By definition and the assumption that $\EE \zeta=0$, we have 
\begin{align}
\EE \zeta^\Omega:=\int y\ud F_{\zeta^\Omega}(y)=\frac{\int_{\CC\backslash B_{|y_0|/4}(-y_0)}y\ud F_{\zeta}(y)}{\int_{\CC\backslash B_{|y_0|/4}(-y_0)}\ud F_{\zeta}(y)}=\frac{\int_{B_{|y_0|/4}(-y_0)}y\ud F_{\zeta}(y)}{1-\int_{B_{|y_0|/4}(-y_0)}\ud F_{\zeta}(y)}.\nonumber
\end{align}
Since 
\begin{align}
\big|\int_{B_{|y_0|/4}(-y_0)}y\ud F_{\zeta}(y)\big|\leq \frac{5}{4}|y_0|\int_{B_{|y_0|/4}(-y_0)}\ud F_{\zeta}(y)\leq \frac{20\var(\zeta)}{9|y_0|}, \label{proof:delta_method:exp_of_Yball}
\end{align}
by (\ref{proof:thm:lemma1:deltamethod:taildecay}), we have 
\begin{align}\label{proof:delta_method:exp_of_YOmega}
|\EE \zeta^\Omega| \leq \frac{40\var(\zeta)}{9|y_0|}.
\end{align} 
Similarly, the second moment of $\zeta^\Omega$ is bounded by
\begin{align}
&\int |y|^2\ud F_{\zeta^\Omega}(y)=\,\frac{\int_{\CC\backslash B_{|y_0|/4}(-y_0)}|y|^2\ud F_{\zeta}(y)}{\int_{\CC\backslash B_{|y_0|/4}(-y_0)}\ud F_{\zeta}}
\leq 2\int_{\CC\backslash B_{|y_0|/4}(-y_0)}|y|^2\ud F_{\zeta}(y)\leq 2\var{\zeta}.\label{proof:delta_method:var_of_YOmega}
\end{align}
With the above preparation, 
now we come back to prove the Lemma. We have
\begin{align}
\EE\Big[ \frac{x_0+\zeta'}{y_0+\zeta^\Omega}\Big]-\frac{x_0}{y_0}
=\,\frac{x_0}{y_0}\Big\{\EE\Big[ \frac{1}{1+\frac{\zeta^\Omega}{y_0}}\Big]-1\Big\}+\frac{1}{y_0}\EE\Big[ \frac{\zeta'}{1+\frac{\zeta^\Omega}{y_0}} \Big].\label{proof:delta_method:error:bound}
\end{align}
The first term on the right hand side of (\ref{proof:delta_method:error:bound}) is bounded by
\begin{align}
&\left|\EE\Big[ \frac{1}{1+\frac{\zeta^\Omega}{y_0}}\Big]-1\right|=\left|\int \frac{1}{1+\frac{y}{y_0}}\ud F_{\zeta^\Omega}(y)-1\right|\nonumber\\
=&\,\left|\int \big(1-\frac{y}{y_0}\big)\ud F_{\zeta^\Omega}(y)-1+\frac{1}{y_0^2}\int \frac{y^2}{1+\frac{y}{y_0}}\ud F_{\zeta^\Omega}(y)\right|\nonumber\\
=&\,\left|\frac{\EE \zeta^\Omega}{y_0}+\frac{1}{y_0}\int\frac{y^2}{y_0+y}\ud F_{\zeta^\Omega}(y)\right|,
\end{align}
where the second equality comes from $\frac{1}{1+z}=1-z+\frac{z^2}{1+z}$ when $z\neq 1$. We simply bound the integral in the second term by the Holder's inequality:
$$
\Big|\int\frac{y^2}{y_0+y}\ud F_{\zeta^\Omega}(y)\Big|\leq 2 \Big|\int_{\CC\backslash B_{|y_0|/4}(-y_0)}\frac{y^2}{y_0+y}\ud F_{\zeta}(y)\Big|\leq \frac{8\var(\zeta)}{|y_0|}
$$
Thus the first term in (\ref{proof:delta_method:error:bound}) is bounded by
\begin{align}
\Big|\frac{x_0}{y_0}\Big|\left|\EE\Big[ \frac{1}{1+\frac{\zeta^\Omega}{y_0}} \Big]-1\right|\leq \Big|\frac{x_0}{y_0}\Big|\frac{13\var(\zeta)}{|y_0|^2}.\nonumber
\end{align}
By the same trick, the second term in (\ref{proof:delta_method:error:bound}) is bounded by:
\begin{align}
&\frac{1}{|y_0|}\left|\EE\Big[ \frac{\zeta'}{1+\frac{\zeta^\Omega}{y_0}} \Big]\right|=\frac{1}{|y_0|}\left|\int \frac{x}{1+\frac{y}{y_0}}\ud F_{\zeta',\zeta^\Omega}(x,y)\right|\nonumber\\
=&\,\frac{1}{|y_0|}\left|\int \big(x-\frac{xy}{y_0+y}\big)\ud F_{\zeta',\zeta^\Omega}(x,y)\right|\leq\frac{4\sqrt{\var(\zeta)\var (\zeta')}}{|y_0|^2},\nonumber
\end{align}
where the inequality holds due to $\int x\ud F_{\zeta',\zeta^\Omega}(x,y)=0$ and the Holder's inequality. 
Hence we conclude the expectation (\ref{proof:thm:lemma1:deltamethod:exp}).
Next we evaluate the variance (\ref{proof:thm:lemma1:deltamethod:var}). Expand 
\begin{align}
&\var\left[ \frac{x_0+\zeta'}{y_0+\zeta^\Omega} \right]=\,\EE\Big|\frac{x_0+\zeta'}{y_0+\zeta^\Omega}\Big|^2-\Big|\EE\Big(\frac{x_0+\zeta'}{y_0+\zeta^\Omega}\Big) \Big|^2\nonumber\\
=&\,\Big|\frac{x_0}{y_0}\Big|^2\left(\EE \Big|\frac{1+\frac{\zeta'}{x_0}}{1+\frac{\zeta}{y_0}}\Big|^2 -1\right)+2\mathfrak{Re} \Big[\overline{e_1}\frac{x_0}{y_0}\Big]+|e_1|^2\label{proof:delta_method:var:01},
\end{align}
where $e_1$ is defined in (\ref{proof:thm:lemma1:deltamethod:exp}).
We rewrite the expectation in (\ref{proof:delta_method:var:01}) as
\begin{align}
&\EE \Big|\frac{1+\frac{\zeta'}{x_0}}{1+\frac{\zeta^\Omega}{y_0}}\Big|^2-1=\,\Big[\EE \frac{1}{|1+\frac{\zeta^\Omega}{y_0}|^2} -1\Big]+\mathfrak{Re}\EE \frac{2\frac{\zeta'}{x_0}}{|1+\frac{\zeta^\Omega}{y_0}|^2} 
+\EE \frac{\big|\frac{\zeta'}{x_0}\big|^2}{|1+\frac{\zeta^\Omega}{y_0}|^2}\label{proof:delta_method:var:1}
\end{align}
and bound the right-hand side term by term. Rewrite the first term in (\ref{proof:delta_method:var:1}) as
\[
 \frac{1}{|1+\frac{\zeta^\Omega}{y_0}|^2}-1=\frac{-2\mathfrak{Re}\frac{\zeta^\Omega}{y_0}-\big|\frac{\zeta^\Omega}{y_0}\big|^2}{\big|1+\frac{\zeta^\Omega}{y_0}\big|^2}=-2\mathfrak{Re}\Big(\frac{\zeta^\Omega}{y_0}-\frac{\big|\frac{\zeta^\Omega}{y_0}\big|^2}{1+\overline{\frac{\zeta^\Omega}{y_0}}}-\frac{\big(\frac{\zeta^\Omega}{y_0}\big)^2}{\big|1+\frac{\zeta^\Omega}{y_0}\big|^2}\Big)-\frac{\big|\frac{\zeta^\Omega}{y_0}\big|^2}{\big|1+\frac{\zeta^\Omega}{y_0}\big|^2},
\]
where we use the equality $\frac{z}{|1+z|^2}=\frac{z}{1+\bar{z}}\big(1-\frac{z}{1+z}\big)=\frac{z}{1+\bar{z}}-\frac{z^2}{|1+z|^2}=z-\frac{|z|^2}{1+\bar{z}}-\frac{z^2}{|1+z|^2}$ when $z\neq -1$.
Thus, by (\ref{proof:delta_method:exp_of_Yball}) and (\ref{proof:delta_method:exp_of_YOmega}), the first term in (\ref{proof:delta_method:var:1}) is bounded by
\begin{align}
&\left|\EE\frac{1}{|1+\frac{\zeta}{y_0}|^2}-1\right|=\,\left|\int2\mathfrak{Re}\Big(\frac{y}{y_0}-\frac{\big|\frac{y}{y_0}\big|^2}{1+\overline{\frac{y}{y_0}}}-\frac{\big(\frac{y}{y_0}\big)^2}{\big|1+\frac{y}{y_0}\big|^2}\Big)+\frac{\big|\frac{y}{y_0}\big|^2}{\big|1+\frac{y}{y_0}\big|^2}\ud F_{\zeta^\Omega}\right|\nonumber\\
\leq&\, \frac{2}{|y_0|}\int |y|\ud F_{\zeta^\Omega}+\frac{2}{|y_0|}\int\frac{|y|^2}{|y_0+y|}\ud F_{\zeta^\Omega} +2\int \frac{|y|^2}{|y_0+y|^2}\ud F_{\zeta^\Omega}\leq\frac{85\var(\zeta)}{|y_0|^2}.\nonumber
\end{align}
The third term in (\ref{proof:delta_method:var:1}) is simply bounded by
\begin{align}
\frac{|y_0|^2}{|x_0|^2}\int \frac{|x|^2}{|y_0+y|^2}\ud F_{\zeta',\zeta^\Omega}(x,y)
\leq\,\frac{16\var (\zeta')}{|x_0|^2}\label{proof:delta_method:var:qq}.
\end{align}
Next, since
\[
 \frac{\zeta'}{|1+\frac{\zeta^\Omega}{y_0}|^2}=\frac{\zeta'}{1+\overline{\frac{\zeta^\Omega}{y_0}}}\Big(1-\frac{\frac{\zeta^\Omega}{y_0}}{1+\frac{\zeta^\Omega}{y_0}}\Big)=\zeta'-\frac{\zeta'\overline{\frac{\zeta^\Omega}{y_0}}}{1+\overline{\frac{\zeta^\Omega}{y_0}}}-\frac{\zeta'\frac{\zeta^\Omega}{y_0}}{\big|1+\frac{\zeta^\Omega}{y_0}\big|^2},
\]
by the Holder's inequality and (\ref{proof:delta_method:var:qq}), the second term in (\ref{proof:delta_method:var:1}) is bounded by
\begin{align}
&\left|\frac{2}{x_0}\EE\frac{\zeta'}{\big|1+\frac{\zeta^\Omega}{y_0}\big|^2}\right|=\left| \frac{2}{x_0}\int \Big(x-\frac{x\overline{y}}{y_0+\overline{y}}-\frac{y_0xy}{\big|y_0+y\big|^2}\Big) \ud F_{\zeta',\zeta^\Omega}(x,y)\right|\leq80\frac{\sqrt{\var(\zeta)\var (\zeta')}}{|x_0||y_0|}\nonumber,
\end{align}
where we use the fact that $\EE \zeta'=0$. 
As a result, (\ref{proof:delta_method:var:1}) is bounded by
\begin{align}
\left|\EE \Big|\frac{1+\frac{\zeta'}{x_0}}{1+\frac{\zeta^\Omega}{y_0}}\Big|^2-1\right|\leq \frac{85\var(\zeta)}{|y_0|^2}+80\frac{\sqrt{\var(\zeta)\var (\zeta')}}{|x_0||y_0|}+\frac{16\var (\zeta')}{|x_0|^2}.\nonumber
\end{align}
Hence with the bound of $e_1$ the variance (\ref{proof:thm:lemma1:deltamethod:var}) is bounded by
\begin{align}
\Big|\var \left( \frac{x_0+\zeta'}{y_0+\zeta^\Omega} \right)\Big|\leq&\,127\Big|\frac{x_0}{y_0}\Big|^2\frac{\var(\zeta)}{|y_0|^2}+88\Big|\frac{x_0}{y_0}\Big|\frac{\sqrt{\var(\zeta)\var (\zeta')}}{|y_0|^2}+16\frac{\var (\zeta')}{|y_0|^2}  +|e_1|^2\nonumber.
\end{align}
Since $|y_0|>\var(\zeta)^{1/3}$ and $\var(\zeta)^{1/3}\leq 9/32$ by assumption, we can roughly bound $|e_1|^2$ by
$$
\frac{1}{|y_0|^2}\Big(125\big|\frac{x_0}{y_0}\big|^2\var(\zeta)+48\big|\frac{x_0}{y_0}\big|\sqrt{\var(\zeta)\var(\zeta')}+5\var(\zeta')\Big)
$$
and hence conclude
\begin{align}
\Big|\var \left( \frac{x_0+\zeta'}{y_0+\zeta^\Omega} \right)\Big|\leq&\,252\Big|\frac{x_0}{y_0}\Big|^2\frac{\var(\zeta)}{|y_0|^2}+136\Big|\frac{x_0}{y_0}\Big|\frac{\sqrt{\var(\zeta)\var (\zeta')}}{|y_0|^2}+21\frac{\var (\zeta')}{|y_0|^2}.\nonumber
\end{align}
\end{proof}


\begin{proof}[Theorem \ref{section:theorem:stability}] 
We denote the GRP $\Phi_\sigma:=\sigma\Phi$ to simplify the notation. Fix a mother wavelet $\psi\in\mathcal{S}$. Since $\sigma\psi^{(l)}_{a,b}\in\mathcal{S}$ for $l=0,1$, the CWT of $Y$, $W_{Y}(a,b)$, and $-i\partial_{b}W_{Y}(a,b)$ are understood as random variables $W_{f}(a,b)+\Phi_\sigma(\psi_{a,b})$ and $-i\partial_{b}W_{f}(a,b)+\Phi_\sigma(-i\psi_{a,b}')$ respectively. 
Note that both $\Phi_\sigma(\psi_{a,b})$ and $\Phi_\sigma(\psi'_{a,b})$ are in general complex-valued. 
By assumption we know that for all $a>0$ and $b\in\RR$, $\EE(\Phi_\sigma(\psi_{a,b}))=\EE(\Phi_\sigma(\psi'_{a,b}))=0$, $\var(\Phi_\sigma(\psi_{a,b}))<\infty$ and $\var(\Phi_\sigma(\psi'_{a,b}))<\infty$. 
Notice that the quantities $\var(\Phi_\sigma(\psi_{a,b}))$, $\var(\Phi_\sigma(\psi'_{a,b}))$ are independent of the time $b$ when $\sigma$ is constant since $\Phi$ is stationary. When $\sigma$ is not constant, $\var(\Phi_\sigma(\psi_{a,b}))$ and $\var(\Phi_\sigma(\psi'_{a,b}))$ depend on $b$ and we have to handle this dependence. 
\newline\newline\underline{\textbf{Step 0: Handling the heteroscedastic term $\sigma(t)$.}}\newline

By definition 
\begin{align}
\var[\Phi_\sigma(\psi_{a,b})]\,=\,\int |\widehat{\sigma\psi_{a,b}}(\xi)|^2\ud\eta(\xi),\nonumber
\end{align} 
where $\ud \eta$ is the power spectrum of $\Phi$.
Fix $b\in\RR$. By the assumption of $\sigma$ and the integration by part, we have that 
\begin{align}
&E(\xi):=\widehat{\sigma\psi_{a,b}}(\xi)-\sigma(b)\widehat{\psi_{a,b}}(\xi)\nonumber\\
=&\,\int (\sigma(x)-\sigma(b))\psi_{a,b}(x)e^{-i2\pi\xi x}\ud x\leq \,\frac{\epsilon_\sigma C_l}{(1+|\xi|)^l},\nonumber
\end{align} 
where $C_l$ defined below is a universal constant depending on the moments of $\psi,\psi',\ldots,\psi^{(l)}$, $c_1$, $c_2$ and $d$. The last inequality comes from bounding the following term coming from the integration by part:
\begin{align}
(\sigma\psi_{a,b})^{(l)}(x)-\sigma(b)\psi_{a,b}^{(l)}(x)=&\,(\sigma(x)-\sigma(b))\psi_{a,b}^{(l)}(x)+\sum_{k=1}^l\left(\begin{array}{c}l\\ l-k\end{array}\right)\sigma^{(k)}(x)\psi_{a,b}^{(l-k)}(x)\nonumber,
\end{align}
which leads to
\begin{align}
&\,\int |(\sigma\psi_{a,b})^{(l)}(x)-\sigma(b)\psi_{a,b}^{(l)}(x)|\ud x\nonumber\\
\leq&\,\epsilon_\sigma\int \Big\{|x-b|\big|\psi_{a,b}^{(l)}(x)\big|+\sum_{k=1}^l\left(\begin{array}{c}l\\ l-k\end{array}\right)\big|\psi_{a,b}^{(l-k)}(x)\big|\Big\}\ud x\nonumber.
\end{align}
Thus $C_l$ is
\begin{align}
&\,\frac{1}{(2\pi)^l}\int \Big\{|x-b|\big|\psi_{a,b}^{(l)}(x)\big|+\sum_{k=1}^l\left(\begin{array}{c}l\\ l-k\end{array}\right)\big|\psi_{a,b}^{(l-k)}(x)\big|\Big\}\ud x\nonumber\\
\leq&\,\frac{1}{(2\pi)^l}\Big\{a^{-l+3/2}I_1^{(l)}+\sum_{k=1}^l\left(\begin{array}{c}l\\ l-k\end{array}\right) a^{k-l+1/2}I_0^{(l-k)} \Big\}=:C_l\label{proof:thm:stability:sigma_varying:Rbbound}.
\end{align} 
Note that we can bound $a$ by $(1+\Delta)/c_1<2/c_1$ when $l<3/2$ or $c_2/(1-\Delta)\leq (1+d)c_2$ when $l>3/2$, so $C_l$ is an universal constant depending on $l,c_1,c_2,d$ and the zeros and first moments of $\psi^{(k)}$, $k=1,\ldots,l$.
As a result, $E(\xi)\in C^\infty$ and $E(\xi)$ decays polynomially as fast as $|\xi|^{-l}$.

Thus, by a direct expansion we have
\begin{align}
&\var[\Phi_\sigma(\psi_{a,b})]-\sigma(b)^2\var[\Phi(\psi_{a,b})]\nonumber\\
=&\,\int \big[\sigma(b)\widehat{\psi_{a,b}}(\xi)+E(\xi)\big]\overline{\big[\sigma(b)\widehat{\psi_{a,b}}(\xi)+E(\xi)\big]}-\sigma(b)^2|\widehat{\psi_{a,b}}(\xi)|^2\ud \eta(\xi)\nonumber\\
=&\,2\sigma(b)\mathfrak{Re} \int \overline{E(\xi)}\widehat{\psi_{a,b}}(\xi)\ud \eta(\xi)+\int |E(\xi)|^2 \ud\eta(\xi).\nonumber
\end{align}
By Holder's inequality we thus obtain
\begin{align}
&\Big|\var[\Phi_\sigma(\psi_{a,b})]-\sigma(b)^2\var[\Phi(\psi_{a,b})]\Big|\nonumber\\
\leq&\,2\sigma(b) \epsilon_\sigma C_l\sqrt{C_\eta\int |\widehat{\psi_{a,b}}(\xi)|^2\ud \eta(\xi)}+\epsilon_\sigma^2C_l^2C_\eta\nonumber\\
\leq&\,2\sigma(b) \epsilon_\sigma C_l\sqrt{C_\eta}\sqrt{\var(\Phi(\psi_{a,b}))}+\epsilon_\sigma^2C_l^2C_\eta\nonumber,
\end{align} 
which leads to 
\begin{align}
\var[\Phi_\sigma(\psi_{a,b})]\leq \big(\sqrt{\var[\Phi(\psi_{a,b})]}\sigma(b)+C_l\sqrt{C_\eta}\epsilon_\sigma\big)^2\label{proof:thm:stability:sigma_varying:Rbbound2}.
\end{align}
Similarly we have
\begin{align}
\var[\Phi_\sigma(\psi'_{a,b})]\leq \big(\sqrt{\var[\Phi(\psi'_{a,b})]}\sigma(b)+C'_l\sqrt{C_\eta}\epsilon_\sigma\big)^2\label{proof:stability:Var_Phi_sigma2},
\end{align}
where $C'_l$ is an universal constant depending on $l,c_1,c_2,d$ and the zeros and first moments of $\psi^{(k)}$, $k=2,\ldots,l+1$.
From (\ref{proof:thm:stability:sigma_varying:Rbbound2}) and (\ref{proof:stability:Var_Phi_sigma2}) we see how $\var[\Phi_\sigma(\psi_{a,b})]$ and $\var[\Phi_\sigma(\psi'_{a,b})]$ depend on $\sigma(b)$ and $\epsilon_\sigma$. 

In this proof, we focus on tracing the interaction between the noise level $\sigma(b)$, the non-stationarity level $\epsilon_\sigma$ and the model bias $\epsilon$. We thus simply bound the influence of $a$ inside $\var[\Phi(\psi_{a,b})]$ and $\var[\Phi(\psi'_{a,b})]$ by the following. Define $\theta_0:=\max_{\big[\frac{1-\Delta}{c_2},\frac{1+\Delta}{c_1}\big]}\frac{\|\hat{\psi}(a\xi)\|_{L^2(\RR,\eta)}^2}{\|\hat{\psi}(\xi)\|_{L^2(\RR,\eta)}^2}$. 
By using $|\widehat{\psi_{a,b}}(\xi)|=|\sqrt{a}\widehat{\psi}(a\xi)|$, we bound 
\[
\var[\Phi(\psi_{a,b})]=a\|\hat{\psi}(a\xi)\|^2_{L^2(\RR,\eta)}\leq a\theta_0\leq \frac{1+2d}{c_1}\theta_0,
\]
since $\Delta\leq d/(1+d)$.
By the assumption $\textup{supp}\widehat{\psi}\subset[1-\Delta,1+\Delta]$ we have
\begin{align}
&\,\var(\Phi(\psi'_{a,b}))=\|\xi\widehat{\psi_{a,b}}(\xi)\|_{L^2(\RR,\eta)}=\int  a|\xi^2\widehat{\psi}(a\xi)|^2\ud\eta(\xi)\nonumber\\
\leq&\, (1+d)^2c_2^2\int a|\widehat{\psi}(a\xi)|^2\ud\eta(\xi)= (1+d)^2c_2^2\var(\Phi(\psi_{a,b}))\leq \frac{(1+d)^2c_2^2(1+2d)}{c_1}\theta_0\nonumber,
\end{align}
where the first inequality holds since we focus on $a\in \big[ \frac{1-\Delta}{c_2}, \frac{1+\Delta}{c_1}\big]$. With the simplified bounds for $\var[\Phi(\psi_{a,b})]$ and $\var[\Phi(\psi'_{a,b})]$, we further simplify the bounds of $\var[\Phi_\sigma(\psi_{a,b})]$ and $\var[\Phi_\sigma(\psi'_{a,b})]$ by
\begin{align}
\max\big\{\var(\Phi_\sigma(\psi_{a,b})),\,\var(\Phi_\sigma(\psi'_{a,b}))\big\}\leq (E_1\sigma(b)+E_2\epsilon_\sigma)^2,\label{proof:stability:Var_Phi_sigma3}
\end{align}
where 
\begin{align}
&E_1:=\max\big\{\sqrt{(1+2d)c_1^{-1}\theta_0},\, \sqrt{(1+d)^2c_2^2c_1^{-1}(1+2d)\theta_0}\big\}\nonumber\\ 
&E_2:=\max\big\{ C_l\sqrt{C_\eta},\,C'_l\sqrt{C_\eta}\big\}.\label{proof:definition_error_constant_E1E2}
\end{align} 
Here $E_1$ and $E_2$ are constants depending on the power spectrum of $\Phi$, $c_1,c_2,d$ and the zeros and first moments of $\psi^{(k)}$, $k=1,\ldots,l+1$. 

We comment that in the special case where $\Phi$ is the Gaussian white noise and $\sigma$ is constant, we can highly simply the representation of $\var(\Phi_\sigma(\psi_{a,b}))$ and $\var(\Phi_\sigma(\psi'_{a,b}))$ since $\ud\eta(\xi)=\ud\xi$ and $\|\widehat{\psi}(a\xi)\|_{L^2(\RR,\eta)}^2=a^{-1}$. See \cite{brevdo_fuckar_thakur_wu:2012} for example.  
\newline\newline\underline{\textbf{Step 1: Handling the trend term $T(t)$.}}\newline

By Assumption (A1), we have
\begin{align}\label{proof:WT:step1}
W_T(a,b)=T(\psi_{a,b}) 
\end{align}
bounded by $C_T\epsilon$ when $a\in\big(0,\frac{1+\Delta}{c_1}\big]$. Since $T$ is a real-valued function and
$\widehat{\psi}$ is compactly supported on $[1-\Delta,1+\Delta]$ so that the support of $\widehat{\psi}(a\xi)$ is always away from $\big(0,\frac{1-\Delta}{1+\Delta}c_1\big)$ when $a\in\big(0,\frac{1+\Delta}{c_1}\big]$, as a distribution in general $\widehat{T}$ is ``essentially'' supported in $\big(-\frac{1-\Delta}{1+\Delta}c_1,\frac{1-\Delta}{1+\Delta}c_1\big)$ for all time $b\in \RR$. Similarly, we have by the integration by part that
\begin{align}\label{proof:WT:step2}
\partial_bW_T(a,b)=\int T(t)\partial_b\psi_{a,b}(t)\ud t=\int \partial_tT(t)\psi_{a,b}(t)\ud t, 
\end{align}
which by Assumption (A1) is again bounded by $C_T\epsilon$ when $a\in\big(0,\frac{1+\Delta}{c_1}\big]$.
Thus, the existence of the trend $T$ does not play a significant role in the following analysis since we focus ourselves in the region $a\in\big[\frac{1-\Delta}{c_2},\frac{1+\Delta}{c_1}\big]$. 
\newline\newline\underline{\textbf{Step 2: Approximating the random variable $W_Y(a,b)$.}} \newline

Fix $b\in\RR$.
For $\gamma>1$, by the Chebychev's inequality we know that 
\begin{equation}\label{proof:stability:cYa_concentration}
\textup{Pr}\left\{|W_Y(a,b)-\EE W_Y(a,b)|>\gamma(E_1\sigma(b)+E_2\epsilon_\sigma) \right\}\leq \frac{\var(\Phi_\sigma(\psi_{a,b}))}{\gamma^2(E_1\sigma(b)+E_2\epsilon_\sigma)^2}\leq \gamma^{-2}.
\end{equation}
By Lemma \ref{lemma:identifiability:Wf_expansion}, we have 
\begin{align}
\Big|W_f(a,b)&-\sum_{l=1}^KA_l(b)e^{i2\pi \phi_l(b)}\sqrt{a}\overline{\widehat{\psi}\left(a\phi'_l(b)\right)}\Big|\leq E_{W}\epsilon\nonumber 
\end{align}

With (\ref{proof:stability:cYa_concentration}) and (\ref{proof:WT:step1}) we conclude that with probability higher than $1-\gamma^{-2}$, 
\begin{align}
\Big|W_Y(a,b)&-\sum_{l=1}^KA_l(b)e^{i2\pi \phi_l(b)}\sqrt{a}\overline{\widehat{\psi}\left(a\phi'_l(b)\right)}\Big|\leq \gamma(E_1\sigma(b)+E_2\epsilon_\sigma)+E_0\epsilon, \nonumber
\end{align}
where 
\begin{align}
E_0:=\max\big\{E_W+C_T,\,E_W'+C_T,\, E_\omega +C_T(1+2c_2),\,2\sqrt{c_2}(E_W+4C_T)\big\}.\label{proof:definition_error_constant_E0}
\end{align}
Here, $E_0$ is a universal constant depending on the moments of $\psi$ and $\psi'$, $c_1$, $c_2$ and $d$.
It bounds the model bias introduced by the $\mathcal{A}_{\epsilon,d}^{c_1,c_2}$ shown in Lemma \ref{lemma:identifiability:Wf_expansion} and that introduced by the trend $T$.
\newline\newline\underline{\textbf{Step 3: Approximating the random variable $\omega_Y(a,b)$.}}\newline

In the following proof we denote 
$$
\zeta_a:=\Phi_\sigma(\psi_{a,b})\,\,\mbox{ and }\,\,\zeta_a':=\Phi_\sigma(\psi'_{a,b}).
$$ 
To evaluate $\omega_Y(a,b)$ when $a\in Z_k(b)$, we apply Lemma \ref{proof:thm:lemma1:deltamethod}, where we take $x_0=\partial_bW_{f+T}(a,b)$, $y_0=W_{f+T}(a,b)$ 
and $\zeta_a^\Omega:=\zeta_a{\boldsymbol{\chi}}_{\CC\backslash B_{|W_{f+T}(a,b)|/4}(-W_{f+T}(a,b))}$. Notice that by (\ref{proof:thm:lemma1:deltamethod:taildecay}), when $|W_{f+T}(a,b)|>\gamma\sqrt{2}(E_1\sigma(b)+E_2\epsilon_\sigma)$, we have  
\begin{align}
\int_{B_{|W_{f+T}(a,b)|/4}(-W_{f+T}(a,b))} \ud F_{\zeta_a}(y)\leq \frac{16\var({\zeta_a})}{9|W_{f+T}(a,b)|^2}\leq\frac{8}{9}\gamma^{-2}.\label{proof:thmCARMA:conditional}
\end{align}
Denote $\omega_{f+T}(a,b):=\frac{-i\partial_bW_{f+T}(a,b)}{2\pi W_{f+T}(a,b)}$.
By Lemma \ref{proof:thm:lemma1:deltamethod}  
we have  
\begin{align}
\Big|-i\EE \Big(\frac{\partial_bW_{f+T}(a,b)+{\zeta_a}'}{W_{f+T}(a,b)+{\zeta_a^\Omega}}\Big)-\omega_{f+T}(a,b)\Big|\leq&\, \frac{12|\omega_{f+T}(a,b)|\var(\zeta_a)+4\sqrt{\var(\zeta)\var (\zeta_a')}}{|W_{f+T}(a,b)|^2}\nonumber
\end{align}
and 
\begin{align}
\var  \Big(\frac{\partial_bW_{f+T}(a,b)+{\zeta_a}'}{W_{f+T}(a,b)+{\zeta_a^\Omega}}\Big) \leq&\, \frac{\big(16|\omega_{f+T}(a,b)|\sqrt{\var(\zeta_a)}+5\sqrt{\var (\zeta_a')}\big)^2}{|W_{f+T}(a,b)|^2}.\nonumber
\end{align}
Under Assumption (A1), we evaluate $\omega_{f+T}$ by  
\begin{align}
&\,|\omega_{f+T}(a,b)-\omega_f(a,b)|\nonumber\\
=&\,\left|\frac{\partial_bW_{f+T}(a,b)W_f(a,b)-\partial_bW_{f}(a,b)W_{f+T}(a,b)}{2\pi W_{f+T}(a,b)W_f(a,b)} \right|\nonumber\\
=&\,\left|\frac{\partial_bW_{T}(a,b)W_f(a,b)-\partial_bW_{f}(a,b)W_T(a,b) }{2\pi W_{f+T}(a,b)W_f(a,b)} \right|\nonumber\\
\leq&\,\left( \frac{ 1 }{2\pi |W_{f+T}(a,b)| } + \frac{ |\omega_{f}(a,b)| }{|W_{f+T}(a,b)| }  \right)C_T\epsilon.\nonumber
\end{align}
To bound $\omega_f(a,b)$, notice that by Lemma \ref{lemma:identifiability:Wf_expansion} we have
$$
\left|\omega_f(a,b)\right|\leq \phi'_k(b)+\frac{E_\omega}{|W_f(a,b)|}\epsilon,
$$
which is bounded by $2\phi'_k(b)\leq 2c_2$ when $|W_{f}(a,b)|>\epsilon^{1/3}$ and $\epsilon$ is small enough. Hence we obtain the bound
\begin{align}
 |\omega_{f+T}(a,b)-\omega_f(a,b)|\leq \frac{C_T(1+2c_2) }{ |W_{f+T}(a,b)|}\epsilon \label{proof:continuous:omega_fT_omega_f}
\end{align}
and hence 
\begin{align}
 |\omega_{f+T}(a,b)|\leq\, \frac{C_T(1+2c_2) }{ |W_{f+T}(a,b)|}\epsilon+2c_2\leq 3c_2, \label{proof:continuous:omega_fT_omega_f2}
\end{align}
where the second inequality holds when $|W_{f+T}(a,b)|>\gamma\sqrt{2}(E_1\sigma(b)+E_2\epsilon_\sigma)+ \epsilon^{1/3}$  and $\epsilon$ is small enough.

Combine the bound of $\omega_{f+T}(a,b)$ (\ref{proof:continuous:omega_fT_omega_f2}) and (\ref{proof:stability:Var_Phi_sigma3}) and we get
\begin{align}
\Big|-i\EE \Big(\frac{\partial_bW_{f+T}(a,b)+{\zeta_a}'}{W_{f+T}(a,b)+{\zeta_a^\Omega}}\Big)-\omega_{f+T}(a,b)\Big|\leq\,\frac{(36c_2+4)(E_1\sigma(b)+E_2\epsilon_\sigma)^2}{|W_{f+T}(a,b)|^2}\label{proof:thm:omega:expectation}
\end{align}
and 
\begin{align}
\var  \Big(\frac{\partial_bW_{f+T}(a,b)+{\zeta_a}'}{W_{f+T}(a,b)+{\zeta_a^\Omega}}\Big) \leq\, \frac{(48c_2+5)^2(E_1\sigma(b)+E_2\epsilon_\sigma)^2}{|W_{f+T}(a,b)|^2}.\label{proof:thm:omega:variance}
\end{align}

Thus, by the Chebyshev's inequality and (\ref{proof:thm:omega:variance}), when $|W_f(a,b)|>\gamma\sqrt{2}(E_1\sigma(b)+E_2\epsilon_\sigma)+(C_T+1)\epsilon^{1/3}$ we have
\begin{align}
 \textup{Pr}\Big\{\Big| \frac{\partial_bW_{f+T}(a,b)+{\zeta_a}'}{W_{f+T}(a,b)+{\zeta_a^\Omega}}-&\EE \Big(\frac{\partial_bW_{f+T}(a,b)+{\zeta_a}'}{W_{f+T}(a,b)+{\zeta_a^\Omega}}\Big)\Big|>\nonumber\\&\frac{\gamma(144c_2+15)(E_1\sigma(b)+E_2\epsilon_\sigma)}{|W_{f+T}(a,b)|}\Big\}
 \leq\,\frac{1}{9}\gamma^{-2}.\label{proof:thm:omega:expectation:probability1}
\end{align}
As a result, with (\ref{proof:thmCARMA:conditional}), (\ref{proof:thm:omega:expectation}) and (\ref{proof:thm:omega:expectation:probability1}), we know that when $|W_f(a,b)|>\gamma\sqrt{2}(E_1\sigma(b)+E_2\epsilon_\sigma) + \epsilon^{1/3}$,
$$
|\omega_Y(a,b)-\omega_{f+T}(a,b)|\leq \frac{\gamma(144c_2+15)(E_1\sigma(b)+E_2\epsilon_\sigma)}{|W_{f+T}(a,b)|}+\frac{(36c_2+4)(E_1\sigma(b)+E_2\epsilon_\sigma)^2}{|W_{f+T}(a,b)|^2}
$$ 
with probability higher than 
\begin{align}
\big(1-\frac{1}{9}\gamma^{-2}\big)\big(1-\frac{8}{9}\gamma^{-2}\big)\geq \,1-\gamma^{-2}.\label{proof:stability:omega:probability}
\end{align}

To simplify the final representation, we define 
\begin{align}
&E_3:=\max\{(144c_2+15),\,\sqrt{36c_2+4}\}2E_1\nonumber\\
&E_4:=\max\{(144c_2+15),\,\sqrt{36c_2+4}\}2E_2\label{proof:definition_error_constant_E3E4}.
\end{align}
Note that $|W_f(a,b)|>\gamma(E_3\sigma(b)+E_4\epsilon_\sigma) +(C_T+1)\epsilon^{1/3}$ implies $|W_{f+T}(a,b)|>\gamma(E_1\sigma(b)+E_2\epsilon_\sigma) + \epsilon^{1/3}$ since $|W_{f+T}(a,b)|>|W_f(a,b)|-|W_T(a,b)|$. In conclusion, when $(a,b)\in Z_k(b)$ and $|W_f(a,b)|>\gamma(E_3\sigma(b)+E_4\epsilon_\sigma) +(C_T+1)\epsilon^{1/3}$, by Lemma \ref{lemma:identifiability:Wf_expansion} and (\ref{proof:continuous:omega_fT_omega_f}), with probability greater than $1-\gamma^{-2}$, we have
\begin{align}
&|\omega_Y(a,b)-\phi'_k(b)|\nonumber\\
\leq& |\omega_Y(a,b)-\omega_{f+T}(a,b)|+|\omega_{f+T}(a,b)-\omega_{f}(a,b)|+|\omega_f(a,b)-\phi'_k(b)|\nonumber\\
\leq&\, \frac{(E_3\sigma(b)+E_4\epsilon_\sigma)^2}{4|W_f(a,b)|^2}+\frac{\gamma(E_3\sigma(b)+E_4\epsilon_\sigma)}{2|W_f(a,b)|}+ \frac{C_T(1+2c_2) \epsilon}{ |W_{f}(a,b)|}+\frac{E_\omega\epsilon}{|W_f(a,b)|}\nonumber\\
\leq&\, \frac{\gamma(E_3\sigma(b)+E_4\epsilon_\sigma)+[C_T(1+2c_2)+E_\omega] \epsilon}{|W_f(a,b)|}\leq \frac{\gamma(E_3\sigma(b)+E_4\epsilon_\sigma)+E_0\epsilon}{|W_f(a,b)|}\nonumber,
\end{align}
where the second inequality comes from $|W_{f+T}(a,b)|\geq |W_f(a,b)|$, the third inequality holds since $\frac{(E_3\sigma(b)+E_4\epsilon_\sigma)}{2|W_f(a,b)|}<1$ and $\gamma>1$ leads to $\frac{(E_3\sigma(b)+E_4\epsilon_\sigma)^2}{4|W_f(a,b)|^2}<\frac{\gamma(E_3\sigma(b)+E_4\epsilon_\sigma)}{2|W_f(a,b)|}$, and the last inequality comes from the definition (\ref{proof:definition_error_constant_E0}). We thus conclude the second part of the Theorem.
\newline\newline\underline{\textbf{Step 4: Reconstruction of each seasonal component.}} \newline

Fix $b\in\RR$. Denote $Q:=E_1\sigma(b)+E_2\epsilon_\sigma$. Recall the reconstruction formula:
\begin{align}
\widetilde{f}^{Q,\CC}_k(b)=\int_{Z_k(b)} W_Y(a,b){\boldsymbol{\chi}}_{|W_Y(a,b)| >Q} a^{-3/2} \ud a,\label{proof:stability:reconstruction_rv}
\end{align}
where ${\boldsymbol{\chi}}$ is the indicator function. Note that $\widetilde{f}^{Q,\CC}_k(b)$ is a complex-valued random variable. To simplify the notation, we denote $\mathbf{1}_{Y,y,\alpha}(a):={\boldsymbol{\chi}}_{|W_Y(a,b)+y| >\alpha Q}$, where $y\in\RR$ and $\alpha>0$. We use $\mathbf{1}_{Y,y,\alpha}$ when there is no confusion. Recall that we bound $\var(\zeta_a)\leq Q^2$ for all $a$ in (\ref{proof:stability:Var_Phi_sigma3}).

We start from preparing a bound. When $|W_{f+T}(a,b)|\geq\frac{3}{2}Q$ and $r\leq Q$, since the variance of $\Psi_\sigma(\psi_{a,b})$ exists and $B_{r}(-W_{f+T}(a,b))\in\CC\backslash B_{\frac{1}{2}Q}(0)$, we have 
\begin{align}
&\int_{B_{r}(-W_{f+T}(a,b))}\ud F_{\zeta_a}\leq \int_{\CC\backslash B_{\frac{1}{2}Q}(0)}\ud F_{\zeta_a}
\leq \frac{\var({\zeta_a})}{(|W_{f+T}(a,b)|-Q)^2}\leq \frac{9\var({\zeta_a})}{|W_{f+T}(a,b)|^2},\label{proof:thm2:taildecay}
\end{align}
where we apply the Chebychev inequality and the fact that $|W_{f+T}(a,b)|-Q\geq \frac{|W_{f+T}(a,b)|}{3}$. 

Note that for a fixed $a>0$ and $b\in\RR$ we have
\begin{align}
&\EE W_Y(a,b)\mathbf{1}_{Y,0,1}-W_{f+T}(a,b)\mathbf{1}_{{f+T},0,1}\nonumber\\
=&\,W_{f+T}(a,b)\int \big(\mathbf{1}_{{f+T},y,1}-\mathbf{1}_{{f+T},0,1}\big)\ud F_{\zeta_a}(y)+\,\int y\mathbf{1}_{{f+T},y,1}\ud F_{\zeta_a}(y).\label{proof:stability:reconstruction:exp1}
\end{align}
To bound the first term of (\ref{proof:stability:reconstruction:exp1}), we consider two different cases for $W_{f+T}(a,b)$. 
When $|W_{f+T}(a,b)|>\frac{3}{2}Q$, $\mathbf{1}_{{f+T},y,1}-\mathbf{1}_{{f+T},0,1}\neq 0$ only when $y\in B_{Q}(-W_{f+T}(a,b))$, that is, 
\[
\int \big(\mathbf{1}_{{f+T},y,1}-\mathbf{1}_{{f+T},0,1}\big)\ud F_{\zeta_a}(y)\leq\int_{B_{Q}(-W_{f+T}(a,b))}\ud F_{\zeta_a}(y),
\]
which, due to (\ref{proof:thm2:taildecay}), is bounded by
$\frac{9\var({\zeta_a})}{|W_{f+T}(a,b)|^2}$. Thus, the first term of (\ref{proof:stability:reconstruction:exp1}) is bounded by
$\frac{9\var({\zeta_a})}{|W_{f+T}(a,b)|}$;
since $\EE{\zeta_a}=0$, we have
\begin{align}
\int y\mathbf{1}_{{f+T},y,1}\ud F_{\zeta_a}(y)=&\,\int_{\CC\backslash B_{Q}(-W_{f+T}(a,b))} y\ud F_{\zeta_a}(y)\nonumber\\
=&\,\int_{B_{Q}(-W_{f+T}(a,b))} y\ud F_{\zeta_a}(y),\nonumber
\end{align}
where the right hand side can be bounded by $(|W_{f+T}(a,b)|+Q)\frac{9\var({\zeta_a})}{|W_{f+T}(a,b)|^2}\leq \frac{18\var({\zeta_a})}{|W_{f+T}(a,b)|}$. 
On the other hand, when $|W_{f+T}(a,b)|\leq\frac{3}{2}Q$, we simply bound the first term in (\ref{proof:stability:reconstruction:exp1}) by $|W_{f+T}(a,b)|$ and the second term is bounded by
$$
\int_{B_{Q}(-W_{f+T}(a,b))}|y|\ud F_{\zeta_a}(y)\leq \int_{B_{\frac{5}{2}Q}(0)}|y|\ud F_{\zeta_a}(y)\leq \frac{5}{2}Q,
$$
where the last inequality comes from a simple bound. 
As a result, the first term in (\ref{proof:stability:reconstruction:exp1}) is bounded by
\begin{align}
&\Big|W_{f+T}(a,b)\int \big(\mathbf{1}_{{f+T},y,1}-\mathbf{1}_{{f+T},0,1}\big)\ud F_{\zeta_a}(y)\Big|\label{proof:stability:reconstruction_chi_diff}\\
\leq&\, \frac{9\var({\zeta_a})}{|W_{f+T}(a,b)|}\mathbf{1}_{{f+T},0,3/2}+ |W_{f+T}(a,b)|(1-\mathbf{1}_{{f+T},0,3/2})\nonumber
\end{align}
and the second term is bounded by
\begin{align}
&\Big|\int y\mathbf{1}_{{f+T},y,1}\ud F_{\zeta_a}(y)\Big|\leq\, \frac{18\var({\zeta_a})}{|W_{f+T}(a,b)|}\mathbf{1}_{{f+T},0,3/2}+\frac{5}{2}Q(1-\mathbf{1}_{{f+T},0,3/2})\label{proof:stability:reconstruction_ychi}.
\end{align}
By (\ref{proof:stability:reconstruction_chi_diff}) and (\ref{proof:stability:reconstruction_ychi}) we have the following bound for (\ref{proof:stability:reconstruction:exp1}):
\begin{align}
&\big|\EE W_Y(a,b)\mathbf{1}_{Y,0,1}-W_{f+T}(a,b)\mathbf{1}_{{f+T},0,1}\big|\nonumber\\
\leq& \,\frac{27\var({\zeta_a})}{|W_{f+T}(a,b)|}\mathbf{1}_{{f+T},0,3/2}+4Q(1-\mathbf{1}_{{f+T},0,3/2})\leq 18Q\nonumber.
 \end{align}
Here, in order to obtain a cleaner expression of the proof, we simply bound $\frac{\var({\zeta_a})}{|W_{f+T}(a,b)|}$ by $2Q/3$ by taking into account the facts that $\var(\zeta_a)\leq Q^2$ and $|W_{f+T}(a,b)|>3Q/2$.
By exchanging the expectation and integration in (\ref{proof:stability:reconstruction_rv}) we have
\begin{align}
&\Big|\EE \widetilde{f}_k(b)- \int_{Z_k(b)} W_{f+T}(a,b)\mathbf{1}_{{f+T},0,1}a^{-3/2} \ud a\Big|\nonumber\\
=&\, \Big|\int_{Z_k(b)} \big[\EE W_Y(a,b)\mathbf{1}_{Y,0,1}-W_{f+T}(a,b)\mathbf{1}_{{f+T},0,1}\big]a^{-3/2}\ud a\Big|\nonumber\\
\leq&\, 18Q\int_{Z_k(b)}a^{-3/2}\ud a=\, 36\sqrt{c_2}\Delta Q\label{proof:stability:reconstruction_expectation_error},
\end{align}
where we use $c_1\leq\phi'_k(b)\leq c_2$.

Next we evaluate the variance of $\widetilde{f}_k(b)$. 
By definition, $\var \widetilde{f}_k(b)$ becomes
\begin{align}
&\int_{Z_k(b)}\int_{Z_k(b)}  \EE\big(W_Y(a,b)\mathbf{1}_{Y,0,1}(a)-\EE W_Y(a,b)\mathbf{1}_{Y,0,1}(a)\big) \nonumber\\
&\times \overline{\big(W_Y(a',b)\mathbf{1}_{Y,0,1}(a')-\EE W_Y(a',b)\mathbf{1}_{Y,0,1}(a')\big)} (aa')^{-3/2} \ud a\ud a'\label{proof:stability:recon:variance}.
\end{align}
By a direct expansion, the expectation inside the double integral of (\ref{proof:stability:recon:variance}) becomes  
\begin{align}
&\int W_f(a,b)\big(\mathbf{1}_{{f+T},y,1}(a)-\mathbf{1}_{{f+T},0,1}(a)\big)\nonumber\\
&\quad\times\overline{W_{f+T}(a',b)}\big(\mathbf{1}_{{f+T},y',1}(a')-\mathbf{1}_{{f+T},0,1}(a')\big)\ud F_{Y_a,Y_{a'}}(y,y')\label{proof:stability:reconstruction_variance1}\\
+&\,\int W_{f+T}(a,b)\big(\mathbf{1}_{{f+T},y,1}(a)-\mathbf{1}_{{f+T},0,1}(a)\big) \overline{y'}\mathbf{1}_{{f+T},y',1}(a')\ud F_{Y_a,Y_{a'}}(y,y')\label{proof:stability:reconstruction_variance2}\\
+&\,\int \overline{W_{f+T}(a',b)}\big(\mathbf{1}_{{f+T},y',1}(a')-\mathbf{1}_{{f+T},0,1}(a')\big) y\mathbf{1}_{{f+T},y,1}(a)\ud F_{Y_a,Y_{a'}}(y,y')\label{proof:stability:reconstruction_variance3}\\
+&\,\int y\overline{y'}\mathbf{1}_{{f+T},y,1}(a)\mathbf{1}_{{f+T},y',1}(a')\ud F_{Y_a,Y_{a'}}(y,y')\label{proof:stability:reconstruction_variance4}\\
-&\,\Big|\EE \widetilde{f}_k(b)- \int_{Z_k(b)} W_{f+T}(a,b)a^{-3/2}\mathbf{1}_{{f+T},0,1}(a) \ud a\Big|^2.\nonumber
\end{align}
We simply bound (\ref{proof:stability:reconstruction_variance1}), (\ref{proof:stability:reconstruction_variance2}), (\ref{proof:stability:reconstruction_variance3}) and (\ref{proof:stability:reconstruction_variance4}) by the Holder's inequality. We prepare two simple bounds. When $|W_{f+T}(a,b)|>\frac{3}{2}Q$, $\mathbf{1}_{{f+T},y,1}-\mathbf{1}_{{f+T},0,1}\neq 0$ only when $y\in B_{Q}(-W_{f+T}(a,b))$, that is, 
\[
\int \big(\mathbf{1}_{{f+T},y,1}-\mathbf{1}_{{f+T},0,1}\big)^2\ud F_{\zeta_a}(y)\leq\int_{B_{Q}(-W_{f+T}(a,b))}\ud F_{\zeta_a}(y),
\]
which again, due to (\ref{proof:thm2:taildecay}), is bounded by $\frac{9\var({\zeta_a})}{|W_{f+T}(a,b)|^2}$; when $|W_{f+T}(a,b)|\leq\frac{3}{2}Q$, we simply bound $\big(\mathbf{1}_{{f+T},y,1}-\mathbf{1}_{{f+T},0,1}\big)^2$ by $1$. Therefore we obtain
\begin{align}
&\Big(\int |W_{f+T}(a,b)|^2\big|\mathbf{1}_{{f+T},y,1}(a)-\mathbf{1}_{{f+T},0,1}(a)\big|^2\ud F_{\zeta_a}\Big)^{1/2}\nonumber\\
\leq&\,3\sqrt{\var({\zeta_a})}\mathbf{1}_{{f+T},0,3/2}(a)+|W_{f+T}(a,b)|(1-\mathbf{1}_{{f+T},0,3/2}(a))\leq 3Q,\label{proof:stability:reconstruction_chi_diff2}
\end{align}
where the first inequality holds since the support of $\mathbf{1}_{{f+T},0,3/2}(a)$ is disjoint from that of $1-\mathbf{1}_{{f+T},0,3/2}(a)$.
Next we bound
\begin{align}
&\int \mathbf{1}_{{f+T},y,1}(a) |y|^2\ud F_{\zeta_a}(y)\leq \var(\zeta_a)\label{proof:stability:reconstruction_ychi2}
\end{align}
simply by $\mathbf{1}_{{f+T},y,1}(a)\leq1$.
By the Holder's inequality and (\ref{proof:stability:reconstruction_chi_diff2}), (\ref{proof:stability:reconstruction_variance1}) is bounded by
\begin{align}
&\,\Big(3\sqrt{\var({\zeta_a})}\mathbf{1}_{{f+T},0,3/2}(a)+|W_{f+T}(a,b)|(1-\mathbf{1}_{{f+T},0,3/2}(a))\Big)\nonumber\\
&\quad\times\Big(3\sqrt{\var({\zeta_{a'}})}\mathbf{1}_{{f+T},0,3/2}(a')+|W_{f+T}(a',b)|(1-\mathbf{1}_{{f+T},0,3/2}(a'))\Big)\leq 9Q^2\nonumber
\end{align}
Similarly, by (\ref{proof:stability:reconstruction_chi_diff2}) and (\ref{proof:stability:reconstruction_ychi2}), (\ref{proof:stability:reconstruction_variance2}) and (\ref{proof:stability:reconstruction_variance3}) are together bounded by
\begin{align}
&\,\Big(3\sqrt{\var({\zeta_a})}\mathbf{1}_{{f+T},0,3/2}(a)+|W_{f+T}(a,b)|(1-\mathbf{1}_{{f+T},0,3/2}(a))\Big)\sqrt{\var(\zeta_a)}\nonumber\\
+&\,\Big(3\sqrt{\var(\zeta_{a'})}\mathbf{1}_{{f+T},0,3/2}(a)+|W_{f+T}(a,b)|(1-\mathbf{1}_{{f+T},0,3/2}(a))\Big)\sqrt{\var(\zeta_{a'})}\leq 6Q^2\nonumber
\end{align}
We simply bound (\ref{proof:stability:reconstruction_variance4}) by $\sqrt{\var(\zeta_a)\var(\zeta_{a'})}\leq Q^2$ by the Holder's inequality and bound the last term by (\ref{proof:stability:reconstruction_expectation_error}).
Putting the above bounds together, the expectation inside the double integral of (\ref{proof:stability:recon:variance}) is bounded by 
$(16+324c_1^{-2}\Delta^2)Q^2$.
Hence, the bound of $\var \widetilde{f}_k(b)$ in (\ref{proof:stability:recon:variance}) becomes:
\begin{align}
\var \widetilde{f}_k(b)\leq(16+36^2c_2\Delta^2)Q^2\Big[ \int_{Z_k(b)}  a^{-3/2}\ud a\Big]^2\leq\,16(1+81c_2\Delta^2)c_2\Delta^2Q^2.\label{proof:stability:reconstruction_variance_error}
\end{align}

As a result, by (\ref{proof:stability:reconstruction_variance_error}) and the Chebychev's inequality, we know that 
\[
\textup{Pr}\left\{\big|\widetilde{f}_k(b)-\EE\widetilde{f}_k(b)\big|>4\gamma\sqrt{1+81c_2\Delta^2}\sqrt{c_2}\Delta Q\right\}\leq\,\gamma^{-2}.\nonumber
\]
Together with (\ref{proof:stability:reconstruction_expectation_error}) this yields that,   
with probability greater than $1-\gamma^{-2}$,
\begin{align}\label{proof:continuous:reconstruction:tildef:f+T}
\big|\widetilde{f}_k(b)-\int_{Z_k(b)} W_{f+T}(a,b)\mathbf{1}_{{f+T},0,1}a^{-3/2} \ud a\big|\leq\big[4\gamma\sqrt{1+81c_2\Delta^2}+36\big]\sqrt{c_2}\Delta Q. 
\end{align}
Here we have the following simple bound
\begin{align}
&\Big|\int_{Z_k(b)} W_{f+T}(a,b)\mathbf{1}_{f+T,0,1}a^{-3/2} \ud a-\int_{Z_k(b)} W_{f }(a,b)\mathbf{1}_{f,0,1}a^{-3/2} \ud a\Big|\nonumber\\
=&\Big|\int_{Z_k(b)} W_{f}(a,b)(\mathbf{1}_{f+T,0,1}-\mathbf{1}_{f,0,1})a^{-3/2} \ud a+\int_{Z_k(b)} W_{T}(a,b)\mathbf{1}_{f+T,0,1}a^{-3/2} \ud a\Big|\nonumber\\
\leq&\int_{Z_k(b)}|W_f(a,b)|{\boldsymbol{\chi}}_{|W_f(a,b) | \leq  Q+C_T\epsilon}a^{-3/2}\ud a + C_T\epsilon\int_{Z_k(b)} a^{-3/2} \ud a\nonumber\\
\leq&4\sqrt{c_2} \Delta(Q+2C_T\epsilon),\nonumber
\end{align}
which when combined with Lemma \ref{lemma:identifiability:Wf_expansion} leads to 
\begin{align}\label{proof:continuous:reconstruction:f+T:f}
&\Big|\int_{Z_k(b)} W_{f+T}(a,b)\mathbf{1}_{f+T,0,1}a^{-3/2} \ud a-A_k(b)e^{2\pi i \phi_k(b)}\Big|\nonumber\\
\leq&6\sqrt{c_2} Q\Delta +2\sqrt{c_2}(E_W+4C_T)\Delta\epsilon.
\end{align}
With (\ref{proof:continuous:reconstruction:tildef:f+T}) and (\ref{proof:continuous:reconstruction:f+T:f}) we conclude that with probability higher than $1-\gamma^{-2}$, 
\begin{align}
\big|\widetilde{f}_k(b)-A_k(b)e^{2\pi i \phi_k(b)}\big|\leq&\, \big[4\gamma\sqrt{1+81c_2\Delta^2}+42\big]\sqrt{c_2}\Delta Q+2\sqrt{c_2}(E_W+4C_T)\Delta\epsilon.\nonumber\\
\leq&\, \gamma\big[4\sqrt{1+81c_2\Delta^2}+42\big]\sqrt{c_2}\Delta Q+2\sqrt{c_2}(E_W+4C_T)\Delta\epsilon \nonumber 
\end{align}
since $\gamma>1$. 
To simplify the expression, we define 
\begin{align}
E_5:=(4\sqrt{1+81c_2d^2}+42)\sqrt{c_2}E_1,\quad E_6:=(4\sqrt{1+81c_2d^2}+42)\sqrt{c_2}E_2,\label{proof:definition_error_constant_E5E6}
\end{align}
which together with (\ref{proof:definition_error_constant_E0}) lead to 
\begin{align}
\big|\widetilde{f}_k(b)-A(b)e^{2\pi i \phi_k(b)}\big|\leq\,\big[\gamma(E_5\sigma(b)+E_6\epsilon_\sigma)+E_0\epsilon\big]\Delta\nonumber.
\end{align}
Note that $(4\sqrt{1+81c_2\Delta^2}+42)\sqrt{c_2}E_1\leq E_5$ and $(4\sqrt{1+81c_2\Delta^2}+42)\sqrt{c_2}E_2\leq E_6$ since $\Delta<d$. The proof of the (iii) is thus finished.
\newline\newline\underline{\textbf{Step 5: Reconstruction of the trend.}} \newline

Recall the following trend estimator, which is a generalized random process in general:
\[
\widetilde{T}=Y-\mathfrak{Re}\int_{\frac{1-\Delta}{c_2}}^{\frac{1+\Delta}{c_1}} W_Y(a,b)a^{-3/2}\ud a.
\]
To simplify the notation, denote 
\[
R(b):=\mathfrak{Re}\int_{\frac{1-\Delta}{c_2}}^{\frac{1+\Delta}{c_1}} W_Y(a,b)a^{-3/2}\ud a.
\]
Intuitively, $\mathfrak{Re}\int_{\frac{1-\Delta}{c_2}}^{\frac{1+\Delta}{c_1}} W_Y(a,b)a^{-3/2}\ud a$ approximates the seasonal components, which cancels the seasonal components in $Y$. Thus $\widetilde{T}$ approximates $T$ in the general distribution sense. To prove this, take $\varphi\in\mathcal{S}$ and define $\varphi_{h,x}(t):=\frac{1}{h}\varphi(\frac{x-t}{h})$, where $h>0$ and $x\in\RR$. Assume $\varphi_{h,x}\to \delta_x$ in the distribution sense when $h\to 0$, where $\delta_x$ is the delta measure at $x\in\RR$. We evaluate the expectation of $\widetilde{T}$ directly:
\begin{align*}
\EE\widetilde{T}(\varphi_{h,x})&=\EE Y(\varphi_{h,x})-\EE R(\varphi_{h,x})\\
&=\int (T+f)\varphi_{h,x}\ud b-\mathfrak{Re}\int \int_{\frac{1-\Delta}{c_2}}^{\frac{1+\Delta}{c_1}} \EE W_Y(a,b)a^{-3/2}\ud a \varphi_{h,x}(b)\ud b\\
&=T(\varphi_{h,x})+\int \left[f(b)-\mathfrak{Re}\int_{\frac{1-\Delta}{c_2}}^{\frac{1+\Delta}{c_1}} W_{f+T}(a,b)a^{-3/2}\ud a\right] \varphi_{h,x}(b)\ud b 
\end{align*}
where we use the facts that $T\in\mathcal{S}'$ and $\EE W_Y(a,b)=W_{T+f}(a,b)$. 
We bound $\EE\widetilde{T}(\varphi_{h,x})-T(\varphi_{h,x})$ by the following. By Lemma  \ref{lemma:identifiability:Wf_expansion}  we have 
\begin{align}
&\left|\int \Big[f(b)-\mathfrak{Re}\int_{\frac{1-\Delta}{c_2}}^{\frac{1+\Delta}{c_1}} W_{f+T}(a,b)a^{-3/2}\ud a\Big] \varphi_{h,x}(b)\ud b\right|\nonumber\\
\leq &\epsilon  \int \int_{\frac{1-\Delta}{c_2}}^{\frac{1+\Delta}{c_1}} (E_W+C_T)a^{-3/2} \ud a |\varphi_{h,x}(b)|\ud b \leq  E_T\epsilon  \|\varphi\|_{1},\label{proof:definition:ET}
\end{align}
where $E_T$ is a constant depending on $C_T, c_1,c_2,d$ and $I_{k}$, $k=1,2,3$, and the first inequality comes from (\ref{proof:definition:E_W}) and the fact that
\begin{align*}
&\mathfrak{Re}\int_{\frac{1-\Delta}{c_2}}^{\frac{1+\Delta}{c_1}} W_f(a,b)a^{-3/2}\ud a\\
=&\mathfrak{Re}\int_{\frac{1-\Delta}{c_2}}^{\frac{1+\Delta}{c_1}} \sum_{l=1}^K A_l(b)e^{i2\pi\phi_l(b)}\sqrt{a}\overline{ \widehat{\psi}(a\phi_l'(b))})a^{-3/2}\ud a+\epsilon\int_{\frac{1-\Delta}{c_2}}^{\frac{1+\Delta}{c_1}}C(a,b)a^{-3/2}\ud a\\
=&f(b)+\epsilon\int_{\frac{1-\Delta}{c_2}}^{\frac{1+\Delta}{c_1}}C(a,b)a^{-3/2} \ud a.
 \end{align*}
Hence, we have
 \[
|\EE\widetilde{T}(\varphi_{h,x})-T(\varphi_{h,x})|\leq E_T\|\varphi\|_{1}\epsilon.
\]
Note that since $\varphi_{h,x}$ is a approximate identity and $T\in C^\infty\cap \mathcal{S}'$, as $h\to0$, we have $T(\varphi_{h,x})\to T(x)$. As result, since $E_T\|\varphi\|_{1}$ is independent of $h$, when $h$ is small enough, we obtain
 \[
|\EE\widetilde{T}(\varphi_{h,x})-T(x)|\leq 2E_T \|\varphi\|_{1}\epsilon .
\] 

\end{proof}


\section{Proof of Theorem \ref{section:theorem:ARMAstability}}

The proof is essentially the same as that of Theorem \ref{section:theorem:stability} once we get the following approximations specific to the discretization.
\newline\newline\underline{\textbf{Step 1: Approximating the discretized signal}}\newline
 
By the Poisson formula, given $\boldsymbol{T}:=\{T(n\tau )\}_{n\in\ZZ}$, where $\tau >0$, we have
\begin{align}
W_{\boldsymbol{T}}(a,b)=&\,\tau \sum_{j\in\ZZ}T(j\tau )\overline{\psi_{a,b}(j\tau)}=\,\sum_{n\in\ZZ}\mathcal{F}\left\{T(t)\overline{\psi_{a,b}(t)}\right\}\Big(\frac{n}{\tau }\Big)
\end{align}
where $\mathcal{F}$ means Fourier transform. Note that $W_{T}(a,b)=\mathcal{F}\left\{T(t)\overline{\psi_{a,b}(t)}\right\}(0)=0$ when $a\in \big[ \frac{1-\Delta}{c_2}, \frac{1+\Delta}{c_1}\big]$. Thus, in order to analyze the error introduced by the finite sampling, we focus on analyzing $\sum_{n\in\ZZ,n\neq 0}W^{(n/\tau )}_{T}(a,b)$, where we denote $W^{(n/\tau )}_{T}(a,b):=\mathcal{F}\left\{T(t)\overline{\psi_{a,b}(t)}\right\}\big(\frac{n}{\tau }\big)$.
By the extra regularity assumption (A3) on $T$, we control \\$\sum_{n\in\ZZ,n\neq 0}W^{(n/\tau )}_{T}(a,b)$ by the same arguments as that in the proof of Lemma \ref{lemma:identifiability:Wf_expansion_discrete} and get
\begin{align}\label{proof:discrete:CWT_of_T}
|W_{\boldsymbol{T}}(a,b)|=\big|W_{T}(a,b)+\sum_{n\in\ZZ,n\neq 0}W^{(n/\tau )}_{T}(a,b)\big| \leq  C_{\tau,T}\epsilon.
\end{align}
when $a\in \big(0, \frac{1+\Delta}{c_1}\big]$, where $C_{\tau,T}$ depends on the first three moments of $\psi$ and $\psi'$, $C_T, c_1,c_2$ and $d$. 
In other words, when the sampling rate is high enough, the trend component is not significant and we can focus on the seasonal component analysis. In the case that the sampling rate is not that high, the aliasing effect will show up and extra error term will come into play.
\newline\newline\underline{\textbf{Step 2: Estimate $W_{\boldsymbol{Y}}(a,b)$, $\omega_{\boldsymbol{Y}}(a,b)$ and the reconstruction}}\newline

We need the following approximations. 
First, the CWT of $\boldsymbol{Y}$ becomes
\begin{align*}
W_{\boldsymbol{Y}}(a,b)&:=\,\tau \sum_{n\in\ZZ} Y_n\psi_{a,b}(n\tau)\\
&=\,W_{\boldsymbol{f}+\boldsymbol{T}}(a,b)+\tau \sum_{n\in\ZZ}\sigma(n\tau) \Phi_n\psi_a(n\tau -b).
\end{align*}
Similarly, we have
\begin{align*}
\partial_bW_{\boldsymbol{Y}}(a,b)&:=\,\tau \sum_{n\in\ZZ} Y_n\frac{1}{a^{3/2}}\psi'\big(\frac{n\tau -b}{a}\big)\\
&=\,\partial_bW_{\boldsymbol{f}+\boldsymbol{T}}(a,b)+\tau \sum_{n\in\ZZ}\sigma(n\tau) \Phi_n\psi^{(1)}_a(n\tau -b).
\end{align*}
Denote 
\[
\Psi_\sigma(a):= \tau \sum_{n\in\ZZ}\sigma(n\tau)\Phi_n\psi_a(n\tau -b),\,\,\,\Psi(a):= \tau \sum_{n\in\ZZ}\Phi_n\psi_a(n\tau -b)
\] 
and
\[
\Psi'_\sigma(a):= \tau \sum_{n\in\ZZ}\sigma(n\tau)\Phi_n\psi^{(1)}_a(n\tau -b),\,\,\,\Psi'(a):= \tau \sum_{n\in\ZZ}\Phi_n\psi^{(1)}_a(n\tau -b).
\] 
Note that $\Psi_\sigma(a)$ and $\Psi'_\sigma(a)$ are both complex-valued random variables. Clearly $\EE \Psi_\sigma(a)=0$ and $\EE \Psi'_\sigma(a)=0$. To evaluate variances of $\Psi_\sigma(a)$ and $\Psi'_\sigma(a)$, recall that the covariance function $\gamma(n):=\EE X_n\overline{X_0}$ is a positive definite sequence. Thus, since $\psi\in\mathcal{S}$ and $X$ is stationary, we are allowed to exchange $\EE$ and $\sum_{n\in\ZZ}$ and have 
\begin{align}
\var (\Psi_\sigma(a))&:=\,\tau ^2\EE\sum_{n\in\ZZ}\sigma(n\tau)X_n\psi_a(n\tau -b)\overline{\sum_{m\in\ZZ}\sigma(m\tau)X_m\psi_a(m\tau -b)}\nonumber\\
&=\,\tau ^2\sum_{n,m\in\ZZ} \EE X_n\overline{X_m}\sigma(n\tau)\psi_a(n\tau -b)\overline{\sigma(m\tau)\psi_a(m\tau -b)}\nonumber\\
&=\,\tau ^2\sum_{n,m\in\ZZ} \gamma(n-m)\sigma(n\tau)\psi_a(n\tau -b)\overline{\sigma(m\tau)\psi_a(m\tau -b)}\nonumber\\
&=\,\int_{0}^{2\pi} \Big|\tau \sum_{n\in\ZZ}\sigma(n\tau)\psi_a(n\tau -b)e^{-i n\xi}\Big|^2\ud \mu(\xi)\label{proof:discrete:poisson:in:integration},
\end{align}
where the last equality comes from the Bochner theorem. Similarly we have
\begin{align}
\var (\Psi(a))=\int_{0}^{2\pi} \Big|\tau \sum_{n\in\ZZ}\psi_a(n\tau -b)e^{-i n\xi}\Big|^2\ud \mu(\xi)\nonumber.
\end{align}
 Since $\psi\in\mathcal{S}$, by the Poisson formula, the summation in (\ref{proof:discrete:poisson:in:integration}) becomes
$$
\tau \sum_{n\in\ZZ}\sigma(n\tau)\psi_a(n\tau -b)e^{-in\xi}=\sum_{n\in\ZZ}\mathcal{F}\big\{\sigma(\cdot)\psi_a(\cdot-b)\big\}\Big(\frac{\xi}{2\pi \tau}+\frac{n}{\tau }\Big),
$$
where $\mathcal{F}$ means Fourier transform. By the same arguments as in the proof of Lemma \ref{lemma:identifiability:Wf_expansion_discrete} and by the assumption (A4) on $\sigma$, we have
$$
\Big|\sum_{n\in\ZZ}\mathcal{F}\big\{\sigma(\cdot)\psi_a(\cdot-b)\big\}\Big(\frac{\xi}{2\pi \tau}+\frac{n}{\tau }\Big)-\sigma(b)\sum_{n\in\ZZ}\mathcal{F}\big\{\psi_a(\cdot-b)\big\}\Big(\frac{\xi}{2\pi \tau}+\frac{n}{\tau }\Big)\Big|\leq \tau^2e_{\tau,0}\epsilon_\sigma,
$$
and hence
$$
 \var (\Psi_\sigma(a))\leq (\sigma(b)\sqrt{\var (\Psi(a))}+\tau^2e_{\tau,1}\epsilon_\sigma)^2,
$$
where $e_{\tau,0}$ and $e_{\tau,1}$  are constants depending on $\Phi$, $c_1,c_2,d$ and the first two moments of $\psi$ and $\psi'$.
To simplify the bound, we take the following parameters into account. Since $0<\tau <\frac{1-\Delta}{(1+\Delta)c_2}$, $a\in\big[\frac{1-\Delta}{c_2},\frac{1+\Delta}{c_1}\big]$ and $\widehat{\psi_a}(\xi)=\sqrt{a}\widehat{\psi}(a\xi)$, we have $\frac{a}{\tau }>1+\Delta$ and  
\begin{align}
\var (\Psi(a))=&\,\int_0^{2\pi} \Big|\sum_{n\in\ZZ}\widehat{\psi_a}\Big(\frac{\xi}{2\pi \tau }+\frac{n}{\tau }\Big)\Big|^2\ud \mu(\xi)\nonumber\\
=&\,\int_{0}^{2\pi} \Big|\widehat{\psi_a}\Big(\frac{\xi}{2\pi \tau }\Big)\Big|^2\ud \mu(\xi)=a\int_{0}^{2\pi} \Big|\widehat{\psi}\Big(\frac{a\xi}{2\pi \tau }\Big)\Big|^2\ud \mu(\xi)
\leq \frac{2}{c_1}\theta_\tau,\nonumber
\end{align}
where we use the fact that 
$$
\int_{0}^{2\pi} \big|\widehat{\psi}(\frac{a\xi}{2\pi \tau })\big|^2\ud \mu(\xi)\leq \max_{a\in\big[\frac{1-\Delta}{c_2},\frac{1+\Delta}{c_1}\big]}\int_{0}^{2\pi} \big|\widehat{\psi}(\frac{a\xi}{2\pi \tau })\big|^2\ud \mu(\xi)=:\theta_\tau.
$$ 
Similarly, we have
\begin{align}
&\var (\Psi'(a))=\int_{0}^{2\pi} \Big|\widehat{\psi_a^{(1)}}\Big(\frac{\xi}{2\pi \tau }\Big)\Big|^2\ud \mu(\xi)=\frac{a}{\tau ^2}\int_{\frac{2\pi \tau (1-\Delta)}{a}}^{\frac{2\pi \tau (1+\Delta)}{a}} \Big|\xi\widehat{\psi}\big(\frac{a\xi}{2\pi \tau }\big)\Big|^2\ud \mu(\xi)\nonumber\\
\leq&\,\frac{a}{\tau ^2}\Big|\frac{2\pi \tau (1+\Delta)}{a}\Big|^2\int_{0}^{2\pi} \Big|\widehat{\psi}\big(\frac{a\xi}{2\pi \tau }\big)\Big|^2\ud \mu(\xi)\leq \frac{16\pi^2}{a}\theta_\tau\leq 16\pi^2c_2(1+d)\theta_\tau.\nonumber
\end{align}
We thus bound $\var(\Psi_\sigma(a))$ and $\var(\Psi'_\sigma(a))$ simultaneously by 
\begin{align}
\max\big\{\var(\Psi_\sigma(a)),\var(\Psi'_\sigma(a))\big\}\leq (E_{\tau,1}\sigma(b)+\tau^2E_{\tau,2}\epsilon_\sigma)^2,\label{proof:notationE1E2:discrete}
\end{align} 
where 
$E_{\tau,1}$ and $E_{\tau,2}$ are constants depending on $\Phi$, $c_1,c_2,d$ and the zeros and first moments of $\psi$ and $\psi'$. 

With the above estimations and Lemma \ref{lemma:identifiability:Wf_expansion_discrete}, we can finish the proof of (i), (ii) and (iii) by following the same lines as in the proof of Theorem \ref{section:theorem:stability}. 
\newline\newline\underline{\textbf{Step 3: Reconstruction of the trend.}} \newline

Recall that we reconstruct the trend at time $n\tau$, $n\in\ZZ$, by the following estimator:
\[
\widetilde{T}_n:=\boldsymbol{Y_n}-\mathfrak{Re}\int_{\frac{1-\Delta}{c_2}}^{\frac{1+\Delta}{c_1}} W_{\boldsymbol{Y}}(a,n\tau)a^{-3/2}\ud a.
\]
To simplify the notation, denote 
\[
R_n:=\mathfrak{Re}\int_{\frac{1-\Delta}{c_2}}^{\frac{1+\Delta}{c_1}} W_{\boldsymbol{Y}}(a,n\tau)a^{-3/2}\ud a,
\]
which is a random variable.
We now show $\widetilde{T}_n$ approximates $T(n\tau)$. First, the expectation of $\widetilde{T}_n$ satisfies:
\begin{align*}
\EE\widetilde{T}_n 
 =T(n\tau)+ f(n\tau)-\mathfrak{Re}\int_{\frac{1-\Delta}{c_2}}^{\frac{1+\Delta}{c_1}} W_{\boldsymbol{f}+\boldsymbol{T}}(a,n\tau)a^{-3/2}\ud a.   
\end{align*}
 By Lemma \ref{lemma:identifiability:Wf_expansion_discrete} and (\ref{proof:discrete:CWT_of_T}) we have
\begin{align*}
&\left|\int_{\frac{1-\Delta}{c_2}}^{\frac{1+\Delta}{c_1}} W_{\boldsymbol{f}+\boldsymbol{T}}(a,n\tau)a^{-3/2}\ud a-\int_{\frac{1-\Delta}{c_2}}^{\frac{1+\Delta}{c_1}} \sum_{l=1}^K A_l(n\tau)e^{i2\pi\phi_l(n\tau)}\sqrt{a}\overline{ \widehat{\psi}(a\phi_l'(n\tau))})a^{-3/2}\ud a\right|\\
\leq\,& \epsilon\int_{\frac{1-\Delta}{c_2}}^{\frac{1+\Delta}{c_1}} (E_{\tau,W}\tau^2+E_W+C_{\tau,T})a^{-3/2}\ud a \\
\leq\,& 2(E_{\tau,W}\tau^2+E_W+C_{\tau,T})\left( \sqrt{\frac{ c_2}{1-\Delta}}-\sqrt{\frac{ c_1}{1+\Delta}}\right) \epsilon.
 \end{align*}
By a direct calculation, the following equality holds  
 $$
 f(n\tau)=\int_{\frac{1-\Delta}{c_2}}^{\frac{1+\Delta}{c_1}} \sum_{l=1}^K A_l(n\tau)e^{i2\pi\phi_l(n\tau)}\sqrt{a}\overline{ \widehat{\psi}(a\phi_l'(n\tau))})a^{-3/2}\ud a,
 $$ 
which leads to the bound  
 \begin{align*}
&\left|  f(n\tau)-\mathfrak{Re}\int_{\frac{1-\Delta}{c_2}}^{\frac{1+\Delta}{c_1}} W_{\boldsymbol{f}+\boldsymbol{T}}(a,n\tau)a^{-3/2}\ud a \right|\leq E_{T,0}\epsilon  
\end{align*}
where $E_{T,0}:=2(E_{\tau,W}\tau^2+E_W+C_{\tau,T})\left( \sqrt{\frac{ c_2}{1-\Delta}}-\sqrt{\frac{ c_1}{1+\Delta}}\right) $ is a constant depending on $C_T,c_1,c_2,d$ and $I_{k}$, $k=1,2,3$. 
As a result, the expectation of the trend estimator is deviated from the truth by
\begin{align}\label{proof:discrete:trend:expectation}
|\EE\widetilde{T}_n-T(n\tau)|\leq E_{T,0}\epsilon .
\end{align}

By the assumption that $\textup{var}(\Phi_n)=1$, the variance of $\widetilde{T}_n$ is directly evaluated by:
\begin{align*}
\textup{var}\widetilde{T}_n&=\EE|\widetilde{T}_n|^2-|\EE\widetilde{T}_n|^2=\EE|\boldsymbol{Y_n} -R_n|^2-|\EE(\boldsymbol{Y_n}-R_n)|^2\\
&=\sigma(n\tau)^2\textup{var}(\Phi_n)+\textup{var}(R(b))-2\mathfrak{Re}\EE[\sigma(n\tau)\Phi_n\overline{(R_n-\EE R_n)}]\\
&\leq(\sigma(n\tau)+\sqrt{\textup{var}(R_n)})^2,
\end{align*}
where we use the Holder's inequality. By a direct calculation with (\ref{proof:stability:Var_Phi_sigma3}), the following bound can be achieved:
\begin{align*}
\text{var}(R_n)\leq 2\left( \sqrt{\frac{ c_2}{1-\Delta}}-\sqrt{\frac{ c_1}{1+\Delta}}\right) (E_{\tau,1}\sigma(n\tau)+E_{\tau,2}\epsilon_\sigma)^2,
\end{align*} 
which comes from (\ref{proof:notationE1E2:discrete}) and the following fact:
\begin{align*}
R_n-\EE R_n&= t\int_{\frac{1-\Delta}{c_2}}^{\frac{1+\Delta}{c_1}}\big[ W_{\boldsymbol{Y}}(a,n\tau)- W_{\boldsymbol{f}+\boldsymbol{T}}(a,n\tau) \big] a^{-3/2}\ud a =  \int_{\frac{1-\Delta}{c_2}}^{\frac{1+\Delta}{c_1}} \Psi_\sigma(a)a^{-3/2}\ud a  .
\end{align*}
As a result, we obtain
\begin{align*}
\textup{var}\widetilde{T}_n &\leq(\sigma(n\tau)+\sqrt{2}\sqrt{ \sqrt{\frac{ c_2}{1-\Delta}}-\sqrt{\frac{ c_1}{1+\Delta}} }  (E_{\tau,1}\sigma(n\tau)+E_{\tau,2}\epsilon_\sigma)  )^2\\
&\leq (E_{T,1}\sigma(n\tau) +E_{T,2}\epsilon_\sigma )^2,
\end{align*}
where $E_{T,1}=1+\sqrt{2}\sqrt{ \sqrt{\frac{ c_2}{1-\Delta}}-\sqrt{\frac{ c_1}{1+\Delta}} }E_{\tau,1} $ and $E_{T,2}= \sqrt{2}\sqrt{ \sqrt{\frac{ c_2}{1-\Delta}}-\sqrt{\frac{ c_1}{1+\Delta}} }E_{\tau,2} $  are constants depending on $\Phi$, $c_1,c_2,d$ and the zeros and first moments of $\psi$ and $\psi'$.

Now we put everything together. By the Chebychev's inequality, we know that 
\[
\textup{Pr}\left\{ \big|\widetilde{T}_n-\EE\widetilde{T}_n\big|> \gamma (E_{T,1}\sigma(n\tau) +E_{T,2}\epsilon_\sigma ) \right\} \leq\,\gamma^{-2}.\nonumber
\]
Combining this with (\ref{proof:discrete:trend:expectation}), we conclude that 
with probability greater than $1-\gamma^{-2}$: 
\[
\big|\widetilde{T}_n-T(n\tau)\big|\leq \gamma (E_{T,1}\sigma(n\tau) +E_{T,2}\epsilon_\sigma )+E_{T,0}\epsilon. 
\]


\section{Discrete- and continuous-time Autoregressive Moving Average processes}

\subsection{Discrete-time Autoregressive Moving Average Processes}\label{section:ARMA}
Take a time series $X_t$. Denote $B$ as the lag operator, that is, $BX_n=X_{n-1}$. Denote $\omega_t$ as an white noise, i.e., an independent random variable with mean $0$ and variance $\sigma^2$. Note that $\omega_t$ might have fat tail or the variance of $\omega_t$ might not be finite. An Autoregressive and Moving Average Process of order $(p,q)$, denoted as ARMA(p,q), where $p,q\in\NN\cup\{0\}$, is defined as a complex-valued strictly stationary solution $\Phi_n$, $n\in\ZZ$, of the difference equation:
\begin{equation}\label{section:model:ARMA}
\phi(B)\Phi_t=\theta(B)\omega_t,
\end{equation}
where $\phi(\cdot)$ and $\theta(\cdot)$ are polynomials defined as
$$
\phi(z)=1-\phi_1z-\ldots-\phi_pz^p,\quad  \theta(z)=1-\theta_1z-\ldots-\theta_qz^q
$$  
with $\phi_i\in\CC$, $\theta_j\in\CC$ for $i=1,\ldots,p$ and $j=1,\ldots,q$. In \cite{Brockwell_Lindner:2010} the following result was provided: suppose $\omega_n$ is a nondeterministic independent white noise sequence. Then (\ref{section:model:ARMA}) admits a strictly stationary solution $\Phi_n$ if and only if (i) all singularities of $\theta(z)/\phi(z)$ on the unit circle are removable and $\EE\log^+|\omega_1|<\infty$, or (ii) all singularities of $\theta(z)/\phi(z)$ are removable. When $\sigma^2<\infty$, the power spectral density $\ud\mu(\xi)$ of the strictly stationary solution $\Phi_n$ satisfies 
$$
\ud\mu(\xi)=\frac{\sigma^2|\theta(e^{-i\xi})|^2}{2\pi|\phi(e^{-i\xi})|^2}\ud \xi.
$$

\subsection{Continuous-time Autoregressive Moving Average Processes}\label{section:CARMA}

In this subsection we recall the basic definition of the continuous-time autoregressive and moving average (CARMA) process. CARMA of order $(p,q)$, denoted as $\text{CARMA}(p,q)$, has been studied in \cite{Brockwell:2001} and \cite{Brockwell:2010}. 
Consider the following stochastic differential equation
\begin{align}\label{background:CARMA:equation}
a(D)\Phi_t=\sigma b(D)DW_t,
\end{align}
where $a(z)=z^p+a_1z^{p-1}+\ldots+a_p$ is the autoregression polynomial which is nonzero on the imaginary axis, $b(z)=b_0+b_1z+\ldots+b_qz^q$ is the moving average polynomial, $\sigma>0$, and $DW$ is the Gaussian white noise. 
In \cite{Brockwell:2001}, the case $p>q$ was considered and the solution is a stationary random process in the ordinary sense. The strictly stationary solution is unique if all singularities of the meromorphic function $b(z)/a(z)$ on the imaginary axis are removable \citep{Brockwell:2001,Brockwell:2010}. Also the necessary and sufficient condition for the existence of the CARMA$(p,0)$ process is that $a(z)$ has no zeros on the imaginary axis. 

In \cite{Brockwell:2010}, the constraint $p>q$ was removed by taking the generalized random process (GRP) \citep{Gelfand:1964} into consideration\footnote{Note that the standard Guassian white noise is $\text{CARMA}(0,0)$ GRP with $a(z)=1$ and $b(z)=1$, which is a special $p=q$ case.}. Indeed, with the assumption on $a(z)$ of order $p\geq0$ and $b(z)$ of order $q\geq0$, where $p$ may be less than or equal to $q$, the $\text{CARMA}(p,q)$ GRP $\Phi$ is defined as:
\begin{equation}\label{definition:CARMA:Y}
\Phi:=\left\{\begin{array}{ll}
\sum_{j=0}^qb_jX^{(j)}&\mbox{if }p>0\\
\sum_{j=0}^qb_jW^{(j+1)}&\mbox{if }p=0
\end{array}\right.,
\end{equation}
where
\begin{equation}\label{definition:CARMA:Xj}
X^{(j)}:=\ve_1^TA^jW^{(1)}\star \vg+\sum^{j-1}_{k=p-1} W^{(j-k)}\ve_1^TA^k\ve_p,
\end{equation}
$\ve_i$ is the unit $p$-vector with the $i$-th entry 1,
$$
A:= \left[
\begin{array}{ccccc}
0 & 1 & 0 & \ldots & 0 \\
0 & 0 & 1 & \ldots & 0 \\
\vdots & \vdots & \vdots &  & \vdots \\
0 & 0 & 0 & \ldots & 1 \\
-a_p & -a_{p-1} & -a_{p-2} & \ldots & -a_1
\end{array}
\right],
$$
$$
\vg=\int_{\gamma_l}\frac{e^{zt}[1~z~\ldots~z^{p-1}]^T}{a(z)}\ud z{\boldsymbol{\chi}}_{[0,\infty)}(t)-\int_{\gamma_r}\frac{e^{zt}[1~z~\ldots~z^{p-1}]^T}{a(z)}\ud z{\boldsymbol{\chi}}_{(-\infty,0]}(t),
$$ 
and $\gamma_l$ and $\gamma_r$ are close curves in $\CC$ enclosing the zeroes of $a(\cdot)$ with strictly negative and strictly positive real parts, respectively. We call $A$ the {\it companion matrix} of the autoregression polynomial $a(z)$. $\vg$ can be understood as the kernel of the CARMA process, which is clear when $p>q$. Notice that when $p>q$, $Y$ is causal when the real part of the eigenvalues of $A$ are all negative, which can be seen from (\ref{definition:CARMA:Y}) \citep{Brockwell:2009}. Also notice that when $p>q$, the summation term in (\ref{definition:CARMA:Xj}) does not exist; when $p=q$, the summation term is reduced to $W^{(1)}$. The algebraic forms of the residue of $e^{zt}[1~z~\ldots~z^{p-1}]^T/a(z)$ at the zeros $\lambda$ of $a(z)$ are
\begin{align}
\vg(t)=&\,\sum_{\lambda:\mathfrak{Re}\lambda<0}\sum_{k=1}^{\mu(\lambda)-1}\frac{ D^{\mu(\lambda)-1}_z\big[(z-\lambda)^{\mu(\lambda)}[1~z~\ldots~z^{p-1}]^Te^{zt}a^{-1}(z)\big]\Big|_{z=\lambda}}{2\pi i(\mu(\lambda)-1)!}\mathbf{1}_{[0,\infty)}(t)\nonumber\\
&\,-\sum_{\lambda:\mathfrak{Re}\lambda>0}\sum_{k=1}^{\mu(\lambda)-1}\frac{ D^{\mu(\lambda)-1}_z\big[(z-\lambda)^{\mu(\lambda)}[1~z~\ldots~z^{p-1}]^Te^{zt}a^{-1}(z)\big]\Big|_{z=\lambda}}{2\pi i(\mu(\lambda)-1)!}\mathbf{1}_{(-\infty,0]}(t)\nonumber\\
=&\,\sum_{\lambda:\mathfrak{Re}\lambda<0}\sum_{k=1}^{\mu(\lambda)-1}\boldsymbol{\alpha}_{\lambda k}t^ke^{\lambda t}{\boldsymbol{\chi}}_{(0,\infty)}(t)-\sum_{\lambda:\mathfrak{Re}\lambda>0}\sum_{k=1}^{\mu(\lambda)-1}\boldsymbol{\beta}_{\lambda k}t^ke^{\lambda t}{\boldsymbol{\chi}}_{(-\infty,0)}(t),\label{definition:CARMA:vg:expansion}
\end{align}
where $\mu(\lambda)$ is the multiplicity of the zero $\lambda$, $\boldsymbol{\alpha}_{\lambda k}, \boldsymbol{\beta}_{\lambda k}\in\CC^p$. 
Note that the entries of $\vg(t)$ are bounded and decay exponentially. 
Moreover, there exist vectors $\boldsymbol{\ell}(0)$ and $\boldsymbol{r}(0)$ \cite[Proposition 3.2]{Brockwell_Lindner:2009} so that
\begin{equation}\label{definition:CARMA:g:bound}
\vg=e^{At}\boldsymbol{\ell}(0){\boldsymbol{\chi}}_{[0,\infty)}(t)-e^{At}\boldsymbol{r}(0){\boldsymbol{\chi}}_{(-\infty,0]}(t).
\end{equation}

The relationship between the $\text{CARMA}(p,q)$ GRP defined in (\ref{background:CARMA:equation}) and discrete-time ARMA process is discussed in \cite{Brockwell:2010} and we briefly state it here. Define $\phi(z):=a(\delta^{-1}(1-z))$ and $\theta(z):=b(\delta^{-1}(1-z))$, where $\delta>0$ is small enough so that none of the zeros of $\phi(z)$ lies on the unit circle. Let $(\Phi_n)_{n\in\ZZ}$ be the stationary solution to the following ARMA equation:
\[
\phi(B)\Phi_n=\theta(B)\delta^{-1/2}Z_n,\,\,n\in\ZZ,
\]
where $Z_n$ is i.i.d. Gaussian with mean 0 and variance 1. Define a new GRP $\Phi^\delta_t:=\Phi_{[t/\delta]}$. It is shown in \cite{Brockwell:2010} that $\Phi^\delta_t$ converges to $\Phi_t$ in the sense of finite dimensional distribution. In this sense, we can approximate a CARMA(p,q) GRP by a ARMA time series in the distribution sense. 
  
We mention that the innovation process $W_t$ can be generalized to the Levi process, as is discussed in \cite{Brockwell_Lindner:2009}. To simply the discussion, we focus ourselves on the Brownian motion case. Also the CARMA can be generalized to the fractal CARMA process by introducing the fractal integral part. Again, to simplify the discussion, we focus ourselves on the CARMA case. We mention that the proof for Theorem \ref{section:theorem:stability} and \ref{section:theorem:ARMAstability} for the L\'{e}vy-driven CARMA process can be carried out in the same way when $p>q$. 

Below we show the power spectrum of a given CARMA$(p,q)$ GRP so that we can see how it depends on the scaling of the measurement function.

\begin{lem}\label{lemma:powerspectrum}
Fix a CARMA(p,q) GRP $\Phi$, where $p,q\in \NN\cup\{0\}$, defined in (\ref{background:CARMA:equation}). For $\psi\in\mathcal{S}$, we have
\begin{align}
\var \Phi(\psi_{a})=\int \Big|\sum_{\lambda}\frac{c_\lambda}{i2\pi\xi/a+\lambda}+d_{p,q}(a,\xi)\Big|^2|\widehat{\psi}(\xi)|^2\ud \xi,\nonumber
\end{align} 
where $\lambda$ are the roots of $a(z)$,
\[
d_{p,q}(a,\xi):=\sum_{i=p}^q\sum_{k=p-1}^{i-1}\frac{b_i\ve_1^TA^k\ve_p (2\pi i \xi)^{i-k-1}}{a^{i-k-1}},
\]
and $d_{0,0}(a,\xi)=1$. 
In particular, the power spectrum of $\Phi$ is
\begin{align}
\ud\eta(\xi) = \Big|\sum_{\lambda}\frac{c_\lambda}{i2\pi\xi+\lambda}+d_{p,q}(1,\xi)\Big|^2\ud \xi.\nonumber
\end{align}
\end{lem}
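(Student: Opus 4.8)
The plan is to realize the stationary $\text{CARMA}(p,q)$ GRP $\Phi$ as the image of the Gaussian white noise $W^{(1)}=DW$ under a linear time-invariant filter, to identify the associated transfer function $m(\xi)$, and then to read off $\var\Phi(\psi_a)$ from the Plancherel identity together with the fact (noted after (\ref{proof:thm:relationship_varX:1})) that $DW$ has flat power spectrum $\ud\eta(\xi)=\ud\xi$. First I would record the white-noise isometry: for any $h\in\mathcal S$ one has $\EE|DW(h)|^2=\int|\widehat h(\xi)|^2\ud\xi$. Thus the whole computation reduces to writing $\Phi(\psi)$ in the form $DW(H_\psi)$ for a single function $H_\psi$ and computing $\widehat{H_\psi}$, since then $\var\Phi(\psi)=\int|\widehat{H_\psi}|^2\ud\xi$.

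Next I would unwind the definitions (\ref{definition:CARMA:Y})--(\ref{definition:CARMA:Xj}). Using the convolution--reflection rule for GRPs, $\bigl(W^{(1)}\star\vg\bigr)(\psi)=DW(\vg^{-}\star\psi)$ with $\vg^-(t)=\vg(-t)$, and the distributional identity $W^{(m)}(\psi)=(-1)^m W(\psi^{(m)})$, each summand of $\sum_{j=0}^q b_j X^{(j)}$ becomes $DW$ applied to an explicit function. Collecting terms gives $\Phi(\psi)=DW(H_\psi)$ with $\widehat{H_\psi}(\xi)=\overline{m(\xi)}\,\widehat\psi(\xi)$, where the multiplier $m$ splits according to the two terms of (\ref{definition:CARMA:Xj}). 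The convolution term $\sum_j b_j\,\ve_1^TA^j(W^{(1)}\star\vg)$ contributes the proper rational part: using the residue expansion (\ref{definition:CARMA:vg:expansion}) (equivalently, $\widehat{\vg}(\xi)=(i2\pi\xi I-A)^{-1}\ve_p$ up to the stationarity bookkeeping in (\ref{definition:CARMA:g:bound})), its Fourier transform is a proper rational function with poles at the roots $\lambda$ of $a$, and partial fractions yield $\sum_\lambda c_\lambda/(i2\pi\xi+\lambda)$. The derivative term $\sum_{k=p-1}^{j-1}W^{(j-k)}\ve_1^TA^k\ve_p$ contributes the polynomial part: since $W^{(m)}$ corresponds to the Fourier multiplier $(2\pi i\xi)^{m-1}$ relative to $DW$, summing against $b_i$ reproduces exactly $d_{p,q}(1,\xi)=\sum_{i=p}^q\sum_{k=p-1}^{i-1}b_i\,\ve_1^TA^k\ve_p(2\pi i\xi)^{i-k-1}$, which is empty (hence $0$) when $p>q$ and reduces to $1$ in the $\text{CARMA}(0,0)$ case. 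Hence $m(\xi)=\sum_\lambda c_\lambda/(i2\pi\xi+\lambda)+d_{p,q}(1,\xi)$, and Plancherel gives $\var\Phi(\psi)=\int|m(\xi)|^2|\widehat\psi(\xi)|^2\ud\xi$, i.e. $\ud\eta=|m|^2\ud\xi$, proving the ``in particular'' claim.

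Finally, the general-$a$ statement follows by scaling: $\widehat{\psi_a}(\xi)=\sqrt a\,\widehat\psi(a\xi)$, so $\var\Phi(\psi_a)=\int|m(\xi)|^2 a|\widehat\psi(a\xi)|^2\ud\xi$, and the substitution $\zeta=a\xi$ turns $m(\zeta/a)$ into $\sum_\lambda c_\lambda/(i2\pi\zeta/a+\lambda)+d_{p,q}(a,\zeta)$, using $(2\pi i\zeta/a)^{i-k-1}=(2\pi i\zeta)^{i-k-1}/a^{i-k-1}$, which is precisely the claimed integrand after relabeling $\zeta$ as $\xi$. The hard part will be the careful computation of the transfer function in the generalized ($p\le q$) regime: tracking the distributional derivatives and convolution reflections without sign errors, verifying that the resolvent expansion really produces the proper part $\sum_\lambda c_\lambda/(i2\pi\xi+\lambda)$ (with the repeated-root contributions of (\ref{definition:CARMA:vg:expansion}) absorbed into the $c_\lambda$, and the roots taken with the sign convention implicit in the $+\lambda$), and checking that the two pieces assemble into $b(2\pi i\xi)/a(2\pi i\xi)$ as expected; by comparison the isometry, Plancherel and scaling steps are routine.
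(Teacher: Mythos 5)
Your proposal is correct and follows essentially the same route as the paper's proof: both unwind the definition (\ref{definition:CARMA:Y})--(\ref{definition:CARMA:Xj}), apply the white-noise isometry together with Plancherel, obtain the rational part $\sum_\lambda c_\lambda/(i2\pi\xi+\lambda)$ from the Fourier transform of the kernel $\vg$ via its residue expansion (\ref{definition:CARMA:vg:expansion}), read the polynomial part $d_{p,q}$ off the $W^{(j-k)}$ terms, and finish with the scaling substitution $\zeta=a\xi$. The only difference is organizational: you assemble a single transfer function $m(\xi)$ and square once at the end, whereas the paper squares first and evaluates the three resulting terms (\ref{proof:varY:1})--(\ref{proof:varY:3}) separately, so your version makes the cross term automatic at the cost of exactly the sign bookkeeping you already flag as the delicate point.
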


\underline{Remark:} Note that when $q<p$, $\ud\eta$ decays as fast as $|\xi|^{-1}$, so $C_\eta:=\int (1+|\xi|)^{-2l}\ud\eta$ is finite when $l\geq 1$. When $q\geq p$, $d_{p,q}$ is a polynomial of degree $q-p$. In this case, $C_\eta:=\int (1+|\xi|)^{-2l}\ud\eta$ is finite when $2l\geq q-p+2$. In conclusion, CARMA(p,q) process fits our Theorem.

\begin{proof}[Lemma \ref{lemma:powerspectrum}]
Take $\psi\in\mathcal{S}$. By (\ref{definition:CARMA:Y}), we have 
\begin{align}
&\Phi(\psi_{a,b})\,=\,\sum_{j=0}^q b_j\ve_1^T\big[A^jW^{(1)}\star \vg +\sum_{k=p-1}^{j-1}A^k\ve_p W^{(j-k)}\big](\psi_{a,b}) \nonumber\\
=&\,\sum_{j=0}^qb_j\ve_1^TA^j\int(\widetilde{\vg}\star\psi_{a,b})(t)\ud W(t)+\sum_{j=p}^q \sum_{k=p-1}^{j-1}(-1)^{j-k}b_j\ve_1^TA^k\ve_p W(\psi_{a,b}^{(j-k)}),\nonumber
\end{align}
where $\widetilde{\vg}(t)=\vg(-t)$ and $W(\psi_{a,b}^{(j-k)})$ is a complex-valued Gaussian random variable by the assumption. 
By a direct calculation we know 
\begin{align}
\var\Phi(\psi_{a,b})\,=&\,\EE\Big|\sum_{j=0}^qb_j\ve_1^TA^j\int(\widetilde{\vg}\star\psi_{a,b})(t)\ud W(t)\Big|^2\label{proof:varY:1}\\
&\,+\EE\Big| \sum_{j=p}^q\sum_{k=p-1}^{j-1}(-1)^{j-k}b_j\ve_1^TA^k\ve_p W(\psi_{a,b}^{(j-k)})\Big|^2\label{proof:varY:2}\\
&\,+2\mathfrak{Re}\EE\Big(\sum_{j=0}^q b_j\ve_1^TA^j\int(\widetilde{\vg}\star\psi_{a,b})(t)\ud W(t)\Big)\nonumber\\
&\,\qquad \times\Big(\sum_{j=p}^q \sum_{k=p-1}^{j-1}(-1)^{j-k}\overline{b_j\ve_1^TA^k\ve_p W(\psi_{a,b}^{(j-k)})}\Big).\label{proof:varY:3}
\end{align} 
Note that when $p>q$, (\ref{proof:varY:2}) and (\ref{proof:varY:3}) disappear. Recall the fact that 
\begin{equation}\label{proof:thm:EWf1Wg1}
\EE W(f^{(1)})\overline{W(g^{(1)})}=\int f\bar{g}\ud t
\end{equation}
when $f,g\in\mathcal{S}$. By (\ref{proof:thm:EWf1Wg1}), (\ref{proof:varY:1}) becomes
\begin{align}
&\,\EE\Big|\sum_{j=0}^qb_j\ve_1^TA^j\int(\widetilde{\vg}\star\psi_{a,b})(t)\ud W(t)\Big|^2\nonumber\\
=&\,\sum_{j,k=0}^q b_j\overline{b_k}\int \ve_1^TA^j(\widetilde{\vg}\star\psi_{a,b})(t)\overline{\ve_1^TA^k(\widetilde{\vg}\star\psi_{a,b})(t)}\ud t\nonumber\\
=&\,\sum_{j,k=0}^q b_j\overline{b_k}\tr \Big\{(A^j)^T\ve_1\ve_1^T\overline{A^k}\int (\widetilde{\vg}\star\psi_{a,b})(t)\overline{(\widetilde{\vg}\star\psi_{a,b})(t)}^T\ud t\Big\},\label{proof:thm:Var_term1:1}
\end{align}
where in the last equality we use the fact that $v^Twu^Tw=\tr (vu^Tww^T)$ when $v,w,u\in\CC^p$. Since each entry of $\vg$ is bounded, $C^\infty$ except at $0$ and exponentially decay, each entry of $\widetilde{\vg}\star \phi$ is a Schwartz function.

Thus, by the Plancheral theorem the integral in (\ref{proof:thm:Var_term1:1}) becomes
\begin{align}
&\,\int (\widetilde{\vg}\star\psi_{a,b})(t)\overline{(\widetilde{\vg}\star\psi_{a,b})(t)}^T\ud t\,=\,\int \widehat{\widetilde{\vg}\star\psi_{a,b}}(\xi)\overline{\widehat{\widetilde{\vg}\star\psi_{a,b}}(\xi)}^T\ud \xi\nonumber\\
=&\,\int \widehat{\widetilde{\vg}}(\xi)\widehat{\psi_{a,b}}(\xi)\overline{\widehat{\widetilde{\vg}}(\xi)^T\widehat{\psi_{a,b}}(\xi)}\ud \xi\,=\,a\int \widehat{\vg}(-\xi)\overline{\widehat{\vg}(-\xi)^T}|\widehat{\psi}(a\xi)|^2\ud \xi\label{proof:thm:Var_term1:2},
\end{align}
where the last equality holds due to $\widehat{\widetilde{\vg}}(\xi)=\widehat{\vg}(-\xi)$ and $\widehat{\psi_{a,b}}(\xi)=\sqrt{a}\widehat{\psi}(a\xi)e^{-i2\pi b\xi}$. By plugging (\ref{proof:thm:Var_term1:2}) into (\ref{proof:thm:Var_term1:1}), $\EE\Big|\sum_{j=0}^qb_j\ve_1^TA^j\int(\widetilde{\vg}\star\psi_{a,b})(t)\ud W(t)\Big|^2$ becomes
\begin{align}
&\,\sum_{j,k=0}^q b_j\overline{b_k}\tr \Big\{(A^j)^T\ve_1\ve_1^T\overline{A^k}\int \widehat{\vg}(-\xi)\overline{\widehat{\vg}(-\xi)^T}a|\widehat{\psi}(a\xi)|^2\ud \xi\Big\}\nonumber\\
=&\,\sum_{j,k=0}^q b_j\overline{b_k} \int \ve_1^TA^j\widehat{\vg}(-\xi)\overline{\ve_1^TA^k\widehat{\vg}(-\xi)}a|\widehat{\psi}(a\xi)|^2\ud \xi,\label{proof:thm:Var_term1:3}
\end{align}
By a direct calculation, we know when $\text{sign}(\mathfrak{Re}\lambda)<0$, the Fourier transform of $e^{\lambda t}\mathbf{1}_{[0,\infty)}$ is $\frac{1}{i2\pi\xi-\lambda}$; when $\text{sign}(\mathfrak{Re}\lambda)>0$, the Fourier transform of $e^{\lambda t}\mathbf{1}_{(-\infty,0]}$ is $\frac{1}{\lambda-i2\pi\xi}$. Thus by the assumption that the eigenvalues of $A$ are all simple, we know that
\begin{align}
\widehat{\vg}(-\xi)\,=&\,-\sum_{\lambda}\boldsymbol{\gamma}_{\lambda}\frac{1}{i2\pi\xi+\lambda}
\label{proof:thm:Var_term1:4},
\end{align}
where $\boldsymbol{\gamma}_\lambda=\boldsymbol{\alpha}_\lambda$ when $\mathfrak{Re}\lambda<0$ and $\boldsymbol{\gamma}_\lambda=\boldsymbol{\beta}_\lambda$ when $\mathfrak{Re}\lambda>0$.
Plugging (\ref{proof:thm:Var_term1:4}) into (\ref{proof:thm:Var_term1:3}) leads to
\begin{align}
&\EE\Big|\sum_{j=0}^qb_j\ve_1^TA^j\int(\widetilde{\vg}\star\psi_{a,b})(t)\ud W(t)\Big|^2\nonumber\\
=&\,a\sum_{\lambda,\mu}\Big[\sum_{j,k=0}^qb_j\ve_1^TA^j\boldsymbol{\gamma}_\lambda\overline{b_j\ve_1^TA^k\boldsymbol{\gamma}_\mu}\Big]\int \frac{|\widehat{\psi}(a\xi)|^2}{(\lambda+i2\pi\xi)(\bar{\mu}-i2\pi\xi)}\ud \xi\nonumber\\
=:&\,\int \sum_{\lambda}\frac{c_{\lambda}}{\lambda+i2\pi\xi/a}\overline{\sum_{\mu}\frac{c_{\mu}}{\mu+i2\pi\xi/a}}|\widehat{\psi}(\xi)|^2\ud \xi,\label{proof:thm:Var_term1:5}
\end{align}
where $c_{\lambda}:=\sum_{j=0}^qb_j\ve_1^TA^j\boldsymbol{\gamma}_\lambda\in\CC$. 
Next, (\ref{proof:varY:2}) becomes
\begin{align}
&\,\EE\Big| \sum_{i=p}^{q}\sum_{k=p-1}^{j-1}b_i\ve_1^TA^k\ve_p (-1)^{i-k}W(\psi_{a,b}^{(i-k)})\Big|^2\nonumber\\
=&\,\sum_{i,j=p}^q\sum_{k=p-1}^{i-1}\sum_{l=p-1}^{j-1}\Big[ b_i\ve_1^T A^k\ve_p\overline{b_j\ve_1^T A^l\ve_p}(-1)^{i+j-k-l}\EE W(\psi_{a,b}^{(i-k)})\overline{W(\psi_{a,b}^{(j-l)})}\Big]\nonumber\\
=&\,\sum_{i,j=p}^q\sum_{k=p-1}^{i-1}\sum_{l=p-1}^{j-1}(-1)^{i+j-k-l}\Big[ b_i\ve_1^T A^k\ve_p\overline{b_j\ve_1^T A^l\ve_p}\int\psi_{a,b}^{(i-k-1)}(t)\overline{\psi_{a,b}^{(j-l-1)}(t)}\ud t\Big],\label{proof:thm:Var_term2:0}
\end{align}
where due to $\widehat{\psi^{(k)}_{a,b}}(\xi)=(i2\pi \xi)^k\sqrt{a}\widehat{\psi}(a\xi)e^{-i2\pi b\xi}$ the integral in the last equality becomes
\begin{align}
(-1)^{j-l-1}\frac{(2\pi i)^{i+j-k-l-2}}{a^{i+j-k-l-2}}\int |\widehat{\psi}(\xi)|^2\xi^{i+j-k-l-2}\ud \xi\nonumber
\end{align}
and thus (\ref{proof:varY:2}) becomes
\begin{align}
\int |d_{p,q}(a,\xi)|^2|\widehat{\psi}(\xi)|^2\ud \xi.\label{proof:thm:Var_term2:1}
\end{align}

Similarly, by (\ref{proof:thm:EWf1Wg1}), (\ref{proof:varY:3}) becomes
\begin{align}
&\,\EE\Big[\sum_{j=0}^qb_j\ve_1^TA^j\int(\widetilde{\vg}\star\psi_{a,b})(t)\ud W(t)\Big]\Big[ \sum_{l=p}^q\sum_{k=p-1}^{l-1}(-1)^{l-k}\overline{b_l\ve_1^TA^k\ve_p W(\psi_{a,b}^{(l-k)})}\Big]\nonumber\\
=&\,\sum_{j=0}^q\sum_{l=p}^q\sum_{k=p-1}^{l-1}(-1)^{l-k}\overline{b_l\ve_1^TA^k\ve_p}b_j\ve_1^TA^j\EE \Big[\big(\int\widetilde{\vg}\star\psi_{a,b}(t)\ud W(t)\big)\overline{W(\psi_{a,b}^{(l-k)})}\Big]\nonumber\\
=&\,\sum_{j=0}^q\sum_{l=p}^q\sum_{k=p-1}^{l-1}(-1)^{l-k-1}\overline{b_l\ve_1^TA^k\ve_p}b_j\ve_1^TA^j \int\widetilde{\vg}\star\psi_{a,b}(t)\overline{\psi_{a,b}^{(l-k-1)}(t)}\ud t.\label{proof:thm:Var_term3:1}
\end{align}
By the same arguments as those for (\ref{proof:thm:Var_term1:1}), the integral term in (\ref{proof:thm:Var_term3:1}) becomes 
\begin{align}
&\,\int\widetilde{\vg}\star\psi_{a,b}(t)\overline{\psi_{a,b}^{(l-k-1)}(t)}\ud t\,=\,\int \widehat{\widetilde{\vg}}(\xi)\widehat{\psi_{a,b}}(\xi)\overline{(2\pi i\xi)^{l-k-1}\widehat{\psi_{a,b}}(\xi)}\ud \xi\nonumber\\
=&\,a(-2\pi i)^{l-k-1}\int \xi^{l-k-1}\widehat{\vg}(-\xi)|\widehat{\psi}(a\xi)|^2\ud \xi\nonumber\\
=&\,-\frac{(-2\pi i)^{l-k-1}}{a^{l-k-1}}\sum_{\lambda}\boldsymbol{\gamma}_{\lambda}\int \frac{\xi^{l-k-1}}{i2\pi\xi/a+\lambda}|\widehat{\psi}(\xi)|^2\ud \xi\nonumber
\end{align}
and (\ref{proof:thm:Var_term3:1}) becomes 
\begin{align}
&-\sum_{j=0}^q\sum_{l=p}^q\sum_{k=p-1}^{l-1}\frac{(2\pi i)^{l-k-1}}{a^{l-k-1}}\overline{b_l\ve_1^TA^k\ve_p}b_j\ve_1^TA^j \sum_{\lambda}\boldsymbol{\gamma}_{\lambda}\int \frac{\xi^{l-k-1}}{i2\pi\xi/a+\lambda}|\widehat{\psi}(\xi)|^2\ud \xi\nonumber\\
=&\,-\int\sum_{l=p}^q\sum_{k=p-1}^{l-1}\frac{(2\pi i\xi)^{l-k-1}}{a^{l-k-1}}\overline{b_l\ve_1^TA^k\ve_p}\sum_{\lambda}\sum_{j=0}^q b_j\ve_1^TA^j \boldsymbol{\gamma}_{\lambda} \frac{1}{i2\pi\xi/a+\lambda}|\widehat{\psi}(\xi)|^2\ud \xi\nonumber\\
=&\,-\int \overline{d_{p,q}(a,\xi)}\sum_{\lambda}\frac{c_\lambda}{i2\pi\xi/a+\lambda}|\widehat{\psi}(\xi)|^2\ud \xi.\label{proof:thm:Var_term3:3}
\end{align}
With (\ref{proof:thm:Var_term1:5}), (\ref{proof:thm:Var_term2:1}) and (\ref{proof:thm:Var_term3:3}), $\var\Phi(\psi_{a,b})$ becomes
\begin{align}
\var\Phi(\psi_{a,b})=\int \Big|\sum_{\lambda}\frac{c_\lambda}{i2\pi\xi/a+\lambda}+d_{p,q}(a,\xi)\Big|^2|\widehat{\psi}(\xi)|^2\ud \xi
\end{align} 
Thus, by polarization, we know that the unique positive tempered measure associated with $\Phi$ is
\begin{align}
\ud\eta(\xi) = \Big|\sum_{\lambda}\frac{c_\lambda}{i2\pi\xi+\lambda}+d_{p,q}(1,\xi)\Big|^2\ud \xi.
\end{align}
Notice that $d_{p,q}(a,\xi)=0$ when $p>q$, and by definition we have
\begin{align*}
\sum_{\lambda}\frac{c_\lambda}{i2\pi\xi+\lambda}=\sum_{j=0}^qb_j\ve_1^TA^j\sum_{\lambda}\frac{\boldsymbol{\gamma}_\lambda}{i2\pi\xi+\lambda}=\sum_{j=0}^qb_j\ve_1^TA^j\widehat{\vg}(-\xi)=b^T\widehat{\widetilde{\vg}}(\xi).
\end{align*}
As a result we have
$$
\ud \eta(\xi)=\Big|\sum_{\lambda}\frac{c_\lambda}{i2\pi\xi+\lambda}\Big|^2\ud\xi=\Big|\frac{b(i\xi)}{a(i\xi)}\Big|^2\ud\xi,
$$
which coincides with the result in \cite{Brockwell:2001}.

\end{proof}

\section{More simulation reports}
In this section we provide more simulation results for reference.

\subsection{The SST tested on signals with different error types}

We considered two more random error processes in addition to $X_1,X_2$ and $X_3$ studied in Section \ref{simulation}. Let $X_{\textup{ARMA3}}$ (resp. $X_{\textup{ARMA4}}$) be an ARMA(1,1) time series determined by the autoregression polynomial $a(z)=0.5z+1$ (resp. $a(z)=-0.2z+1$) and the moving averaging polynomial $b(z)=0.4z+1$ (resp. $b(z)=0.51z+1$), with the innovations taken as i.i.d. N$(0,1)$ random variables. Note that the only difference between $X_{\textup{ARMA3}}$ (resp. $X_{\textup{ARMA4}}$) and $X_{\textup{ARMA1}}$ (resp.  $X_{\textup{ARMA2}}$) is the innovation process. Define 
\begin{align*}
&X_4(n):=\sigma(n\tau)\big(4X_{\textup{ARMA3}}(n)\boldsymbol{\chi}_{n\in[1,N/2]}(n)+X_{\textup{ARMA4}}(n)\boldsymbol{\chi}_{n\in[N/2+1,N]}(n)\big),\\
&X_5(n):=\sigma(n\tau)\big(4X_{\textup{ARMA1}}(n)\boldsymbol{\chi}_{n\in[1,N/2]}(n)+X_{\textup{ARMA4}}(n)\boldsymbol{\chi}_{n\in[N/2+1,N]}(n)\big),
\end{align*}
where $\sigma(\cdot)$ is given in Section \ref{simulation} and $\boldsymbol{\chi}$ denotes the indicator function. Here $X_5$ differs from $X_4$ in the first half and differs from $X_2$ in the second half. 

The RRASE, and the standard deviation, of the results by SST for $\boldsymbol{Y_{j,k,\sigma_0}}:=\boldsymbol{s}_2+\boldsymbol{T}_j+\sigma_0\boldsymbol{X}_k$ with different combinations of $j=1,2$, $k=4,5$ and $\sigma_0=0.5,1, \sqrt{2}$, are shown in Table \ref{table:simulation:Y4Y5}. The average computational time (in seconds) and the standard deviation are reported as well. Note that the performance of SST on $\boldsymbol{Y_{j,2,1}}$ is in general worse than on $\boldsymbol{Y_{j,4,1}}$ but better than on $\boldsymbol{Y_{j,5,1}}$. This can be explained as below. 
The standard deviations of the student $t_4$ and N$(0,1)$ distributions are $\sqrt{2}$ and $1$ respectively, and by Theorem \ref{section:theorem:ARMAstability} the larger the standard deviation is, the worse the estimator is. Furthermore, from Table \ref{table:simulation:Y4Y5} we see that the performance of SST on $\boldsymbol{Y_{j,2,1}}$ is similar to that on $\boldsymbol{Y_{j,4,\sqrt{2}}}$, which again reflects the theoretical result.

\begin{table}
\caption{\label{table:simulation:Y4Y5}$\boldsymbol{Y_{j,k,\sigma_0}}$, $j=1,2, k=4,5$: Two dynamic seasonal components and trend, contaminated by different kinds of heteroscedastic, dependent noise.}
\centering
{\small
\begin{tabular}{| c | c | c | c | c | c |}
\hline
\cline{1-6}
$(j,k,\sigma_0)$  & $\widetilde{\boldsymbol{s_{2,1}}}$ & $\widetilde{\boldsymbol{s_{2,2}}}$ &  $\widetilde{\boldsymbol{T_j}}$ & $\widetilde{\boldsymbol{r}}$ & Time\\
\hline
$(1,4,0.5)$ & $0.111\pm 0.021$ & $0.103 \pm 0.013$ &  $ 0.013\pm 0.004$ & $ 0.212\pm 0.022$ & $ 8.91\pm 1.15$ \\  
\hline
$(2,4,0.5)$ & $ 0.120\pm 0.023$ & $ 0.103\pm 0.013$ &  $ 0.012\pm 0.002$ & $ 0.225\pm 0.021$ & $ 8.91\pm 0.92$ \\	
\hline
$(1,4,1)$ & $0.197 \pm 0.044$ & $0.175 \pm 0.027$ &  $0.025 \pm 0.007$ & $0.189 \pm 0.023$ & $8.89 \pm 1.11$ \\  
\hline
$(2,4,1)$ & $ 0.203\pm 0.046$ & $ 0.175\pm 0.027$ &  $ 0.019\pm 0.004$ & $ 0.193\pm 0.023$ & $ 8.94\pm1.12 $ \\	
\hline
$(1,4,\sqrt{2})$ & $0.268 \pm 0.064$ & $0.234 \pm 0.040$ &  $0.035 \pm 0.011$ & $0.182 \pm 0.024$ & $8.61 \pm 1.47$ \\  
\hline
$(2,4,\sqrt{2})$ & $ 0.272\pm 0.065$ & $ 0.234\pm 0.040$ &  $ 0.025\pm 0.007$ & $ 0.184\pm 0.023$ & $ 8.84\pm1.11 $ \\	
\hline\hline
$(1,5,0.5)$ & $0.137\pm0.030 $ & $ 0.127\pm 0.020$ &  $0.017 \pm 0.005$ & $0.196 \pm 0.023$ & $ 8.96\pm 1.13 $ \\  
\hline
$(2,5,0.5)$ & $ 0.145\pm 0.031$ & $ 0.127\pm 0.020$ &  $ 0.014\pm 0.002$ & $ 0.203\pm 0.022$ & $9.03 \pm 0.93$ \\	
\hline
$(1,5,1)$ & $0.249\pm 0.062$ & $ 0.223\pm 0.040$ &  $ 0.033\pm 0.009$ & $0.179 \pm 0.023$ & $ 9.16\pm 1.01$ \\  
\hline
$(2,5,1)$ & $ 0.253\pm 0.063$ & $ 0.223\pm 0.040$ &  $ 0.024\pm 0.006$ & $ 0.181\pm 0.023$ & $ 8.99\pm 1.08$ \\	
\hline
$(1,5,\sqrt{2})$ & $0.341 \pm 0.091$ & $0.298 \pm 0.055$ &  $0.047 \pm 0.013$ & $0.173 \pm 0.023$ & $8.88 \pm 1.00$ \\  
\hline
$(2,5,\sqrt{2})$ & $ 0.345\pm 0.094$ & $ 0.298\pm 0.056$ &  $ 0.033\pm 0.008$ & $ 0.174\pm 0.023$ & $ 8.70\pm1.09 $ \\	
\hline
\end{tabular}
}
\end{table}

\subsection{The sensitivity of SST tested on signals with dynamics and heteroscedastic, dependent noise}

Next we demonstrate the sensitivity issue of SST. We ran SST on 200 realizations of $\boldsymbol{Y^I_{j,k,\sigma_0}}:=\boldsymbol{Y_{j,k,\sigma_0}}|_{[0,9]}$, $\boldsymbol{Y_{j,k,\sigma_0}}$ restricted to the sub-interval $[0,9]$, to estimate the components restricted to $[0,9]$, denoted as $\boldsymbol{s^I_{2,1}}$, $\boldsymbol{s^I_{2,2}}$, $\boldsymbol{T^I_j}$ and $\boldsymbol{X^I_k}$. The RRASE of the resulting estimators, denoted as $\widetilde{\boldsymbol{s^I_{2,1}}}$, $\widetilde{\boldsymbol{s^I_{2,2}}}$, $\widetilde{\boldsymbol{T_j^I}}$ and the residual $\widetilde{\boldsymbol{r^I}}$, are reported in Table \ref{table:simulation:sensitivity}. In addition, we report in Table \ref{table:simulation:sensitivity} the difference between $\widetilde{\boldsymbol{s^I_{2,1}}}$ (resp. $\widetilde{\boldsymbol{s^I_{2,2}}}$, $\widetilde{\boldsymbol{T_j}^I}$ and $\widetilde{\boldsymbol{X_k}^I}$) and $\widetilde{\boldsymbol{s_{2,1}}}|_{[0,9]}$, i.e. $\widetilde{\boldsymbol{s_{2,1}}}$ restricted to $[0,9]$ (resp. $\widetilde{\boldsymbol{s_{2,2}}}|_{[0,9]}$, $\widetilde{\boldsymbol{T_j}}|_{[0,9]}$ and $\widetilde{\boldsymbol{X_k}}|_{[0,9]}$) measured by the relative root average square difference (RRASD), that is
\[
\text{RRASD}:=\frac{\sqrt{\big\|\widetilde{\boldsymbol{s^I_{2,1}}}-\widetilde{\boldsymbol{s_{2,1}}}|_{[0,9]}\big\|_2^2}}{\sqrt{\big\|\widetilde{\boldsymbol{s_{2,1}}}|_{[0,9]}\big\|_2^2}}.
\]
Among all the 200 realizations, in Figure \ref{fig:simulation:sst_recon_9sec} we demonstrate the results for the realization of $\boldsymbol{Y^I_{1,2,1}}$ which gave the median RRASD.

\begin{table}
\caption{\label{table:simulation:sensitivity} $\boldsymbol{Y^I_{j,k,\sigma_0}}$: 
Sensitivity of SST tested on two dynamic seasonal components and trend, contaminated by different kinds of noise. 
In the parentheses are the RRASD and its standard deviation. 
}
\centering
{\small
\begin{tabular}{| c | c | c | c | c | c |}
\hline
\cline{1-6}
 $(j,k,\sigma_0)$ & $\widetilde{\boldsymbol{s^I_{2,1}}}$ & $\widetilde{\boldsymbol{s^I_{2,2}}}$ &  $\widetilde{\boldsymbol{T^I_j}}$ & $\widetilde{\boldsymbol{r^I}}$ & Time\\
\hline
$(1,2,0.5)$  & $ 0.150\pm 0.032$ & $0.132 \pm 0.021$ &  $0.018 \pm 0.005$ & $0.198 \pm 0.023$ & $8.37 \pm 0.95$ \\	
                   & ($ 0.087\pm 0.009$) & ($0.017 \pm 0.006$) &  ($0.005 \pm 0.001$) & ($0.040 \pm 0.006$) &  \\
\hline
$(2,2,0.5)$  & $ 0.169 \pm 0.035 $ & $ 0.132 \pm 0.021$ &  $ 0.011 \pm 0.001$ & $ 0.242 \pm 0.022 $ & $ 8.47 \pm 0.84 $ \\	
                   & ($ 0.102 \pm 0.009 $) & ($ 0.017 \pm 0.006 $) &  ($ 0.545 \pm 0.001 $) & ($ 0.145 \pm 0.013 $) &  \\
\hline
$(1,2,1)$  & $ 0.272\pm 0.066$ & $0.233 \pm 0.041$ &  $0.036 \pm 0.010$ & $0.182 \pm 0.023$ & $8.5 \pm 1.15$ \\	
                & ($ 0.097\pm 0.024$) & ($0.027 \pm 0.012$) &  ($0.007 \pm 0.003$) & ($0.032 \pm 0.008$) &  \\
\hline
$(2,2,1)$  & $ 0.289 \pm 0.077 $ & $ 0.233 \pm 0.041$ &  $ 0.015 \pm 0.003$ & $ 0.197 \pm 0.023 $ & $ 8.68 \pm 0.77 $ \\	
                   & ($ 0.120 \pm 0.068 $) & ($ 0.027 \pm 0.011 $) &  ($ 0.545 \pm 0.002 $) & ($ 0.080 \pm 0.016 $) &  \\
\hline\hline
$(1,3,0.5)$  & $ 0.135 \pm 0.025 $ & $ 0.115 \pm 0.016$ &  $ 0.015 \pm 0.004$ & $ 0.204 \pm 0.022 $ & $  8.34\pm 1.1 $ \\	
                   & ($ 0.090 \pm 0.011 $) & ($ 0.021 \pm 0.009 $) &  ($ 0.006 \pm 0.002 $) & ($ 0.053 \pm 0.010 $) &  \\
\hline
$(2,3,0.5)$  & $  0.156\pm 0.027 $ & $ 0.116 \pm 0.016$ &  $ 0.010 \pm 0.001$ & $ 0.263 \pm 0.018 $ & $  8.54\pm 1.08 $ \\	
                   & ($ 0.104 \pm 0.012 $) & ($ 0.021 \pm 0.008 $) &  ($ 0.545 \pm 0.001 $) & ($ 0.174 \pm 0.017 $) &  \\
\hline
$(1,3,1)$  & $ 0.242 \pm 0.054 $ & $ 0.201 \pm 0.033$ &  $ 0.029 \pm 0.008$ & $ 0.187 \pm 0.022 $ & $ 8.92 \pm 0.99 $ \\	
                   & ($ 0.106 \pm 0.029 $) & ($ 0.035 \pm 0.016 $) &  ($ 0.008 \pm 0.003 $) & ($ 0.043 \pm 0.012 $) &  \\
\hline
$(2,3,1)$  & $ 0.256 \pm 0.057 $ & $ 0.202 \pm 0.033$ &  $ 0.013 \pm 0.002$ & $ 0.205 \pm 0.021 $ & $ 8.92 \pm 0.92 $ \\	
                   & ($ 0.119 \pm 0.039 $) & ($ 0.035 \pm 0.016 $) &  ($ 0.545 \pm 0.002 $) & ($ 0.096 \pm 0.016 $) &  \\
\hline
\end{tabular}
}
\end{table}

\begin{figure}[ht]
\includegraphics[width=1\textwidth]{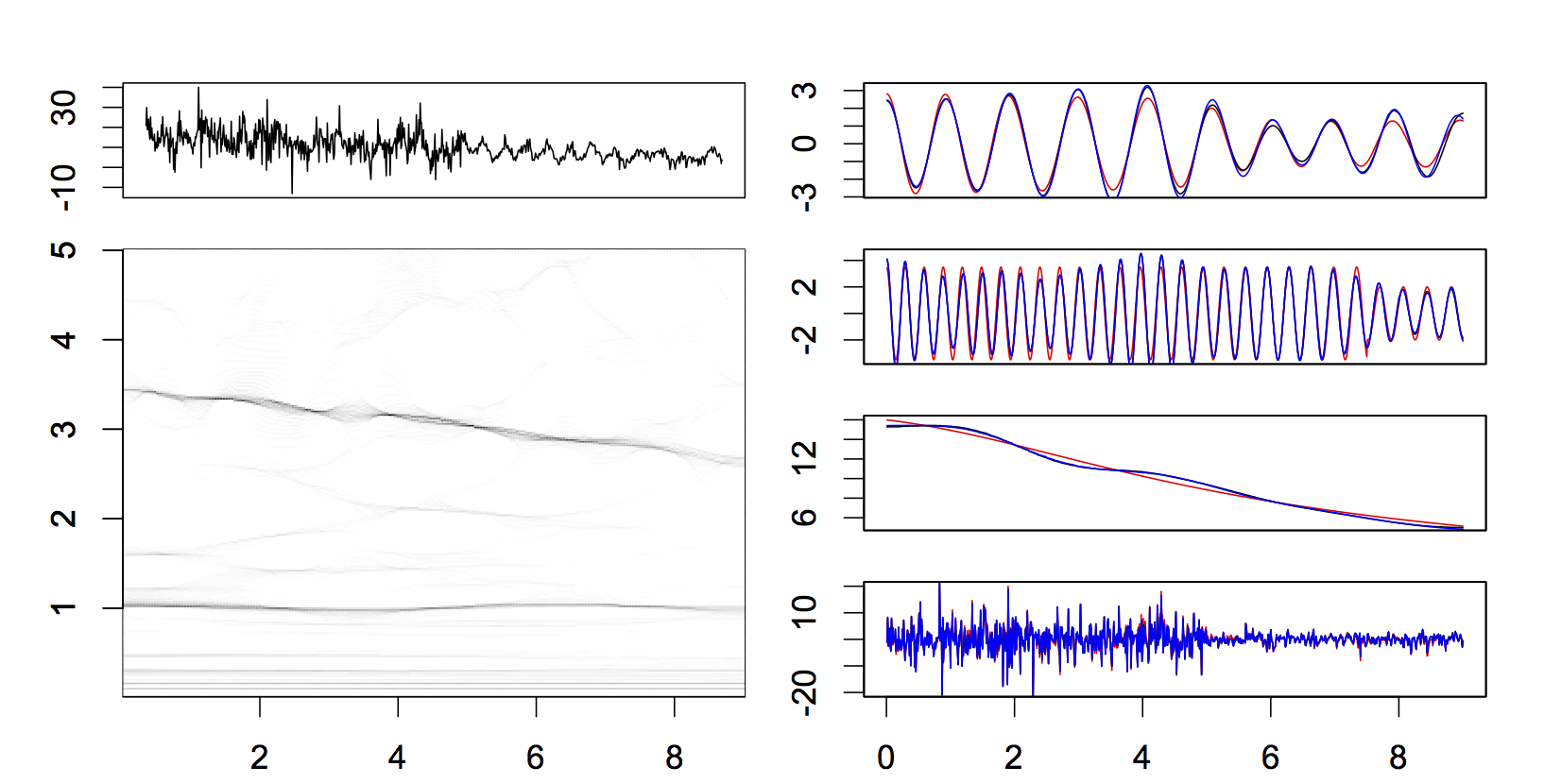}
\caption{{\it Sensitivity of SST tested on $\boldsymbol{Y^I_{1,2,1}}$.}  Left column: The realization of $\boldsymbol{Y^I_{1,2,1}}$ which yielded the median RRASD (upper panel) and SST of that realization (lower panel). Visually we can see that the SST is close to that in Figure \ref{fig:simulation:sst_recon}. Right column: From top to bottom the black curves are $\widetilde{\boldsymbol{s^I_{2,1}}}$, $\widetilde{\boldsymbol{s^I_{2,2}}}$, $\widetilde{\boldsymbol{T^I_1}}$ and $\widetilde{\boldsymbol{X^I_2}}$, with the respective $\boldsymbol{s_{2,1}^I}$, $\boldsymbol{s_{2,2}^I}$, $\boldsymbol{T^I_1}$ and $\boldsymbol{X^I_2}$ superimposed as red curves and $\widetilde{\boldsymbol{s_{2,1}}}|_{[0,9]}$, $\widetilde{\boldsymbol{s_{2,2}}}|_{[0,9]}$, $\widetilde{\boldsymbol{T_1}}|_{[0,9]}$ and $\widetilde{\boldsymbol{X_2}}|_{[0,9]}$ as blue curves. Note that the differences between black curves and blue curves are not significant.}
\label{fig:simulation:sst_recon_9sec}
\end{figure}

According to the simulation results, we see that the estimates obtained from the observations in the time interval $0$ to $9$ seconds do not differ much from those obtained from the observations in the interval $0$ to $10$ seconds. In particular, from Figure \ref{fig:simulation:sst_recon_9sec} visually we can see that the SST based on $\boldsymbol{Y^I_{1,2,1}}$ is close to that in Figure \ref{fig:simulation:sst_recon} when restricted to the time interval $[0,9]$. Also, visually the respective reconstructed components are close to each other. Notice that this visual closeness quantified and summarized in Table \ref{table:simulation:sensitivity} is much smaller than the estimation error.

\subsection{The SST tested on signals with local bursts}

Since local bursts are common in real data, although we do not provide a formal analysis, we empirically show the robustness of SST to them by considering the following deterministic delta peaks modeling local bursts: 
\[
\boldsymbol{O}(n)=18\delta_{4/\tau,n}-20\delta_{7/\tau,n},
\]
where $\delta$ is the Kronecker delta.
Then we analyzed $\boldsymbol{Y_{j,k,\sigma_0}}+\boldsymbol{O}$ and
the RRASE, and the standard deviation, of the results by SST are shown in Table \ref{table:simulation:outlier}. The average computation time (in seconds) and the standard deviation are reported as well.
Among all the 200 realizations of $\boldsymbol{Y_{1,2,1}}+\boldsymbol{O}$ and $\boldsymbol{Y_{1,3,1}}+\boldsymbol{O}$, we demonstrate the results for those gave the median RRASE in Figure \ref{fig:simulation:sst_outlier} and Figure \ref{fig:simulation:sst_outlier2}. Visually we can see that the SST in Figure \ref{fig:simulation:sst_outlier} (reps. Figure \ref{fig:simulation:sst_outlier2})  is similar to that in Figure \ref{fig:simulation:sst_recon} (reps. Figure \ref{fig:simulation:sst_recon2}), that is, the local burst does not influence much on the time frequency representation. Also, from these figures, we see that the reconstructions of the seasonal components and the trend are not significantly changed by the local burst, while the reconstruction of the error process is influenced. Indeed, the local burst is viewed as part of the noise process by SST.

Note that the local bursts are classified by SST as the noise in the final decomposition, and hence the error process estimation is deteriorated. To the best of our knowledge, unless a priori knowledge is available, how to distinguish them from the error process solely based on a single observed time series is by far an open problem. We mention that the local bursts analysis we convey here only covers a small portion of the whole field. We will report a further analysis when local bursts exist as an a priori knowledge in a future paper.

\begin{table}
\caption{\label{table:simulation:outlier}$\boldsymbol{Y_{j,k,\sigma_0}}+\boldsymbol{O}$: Two dynamic seasonal components and trend, contaminated by different kinds of heteroscedastic, dependent noise and local bursts.}
\centering
{\small
\begin{tabular}{| c | c | c | c | c | c |}
\hline
\cline{1-6}
$(j,k,\sigma_0)$  & $\widetilde{\boldsymbol{s}_{2,1}}$ & $\widetilde{\boldsymbol{s}_{2,2}}$ &  $\widetilde{\boldsymbol{T_j}}$ & $\widetilde{\boldsymbol{r}}$ & Time\\
\hline
$(1,2,0.5)$ & $0.159\pm 0.031$ & $0.143 \pm 0.020$ &  $ 0.019\pm 0.005$ & $ 0.447\pm 0.027$ & $ 9.6\pm 1.124$ \\  
\hline
$(2,2,0.5)$ & $0.165\pm0.033 $ & $ 0.143\pm 0.020$ &  $0.016 \pm 0.003$ & $0.450 \pm 0.028$ & $ 9.01\pm 0.83 $ \\  
\hline
$(1,2,1)$ & $0.272 \pm 0.062$ & $0.239 \pm 0.039$ &  $0.036 \pm 0.009$ & $0.272 \pm 0.020$ & $8.17 \pm 0.55$ \\  
\hline
$(2,2,1)$ & $0.276\pm 0.064$ & $ 0.239\pm 0.039$ &  $ 0.026\pm 0.006$ & $0.274 \pm 0.020$ & $ 9.16\pm 0.9$ \\  
\hline\hline
$(1,3,0.5)$ & $0.149\pm 0.026$ & $ 0.132\pm 0.017$ &  $ 0.017\pm 0.004$ & $ 0.499\pm 0.020$ & $ 9.61\pm1.11 $ \\  
\hline
$(2,3,0.5)$ & $0.157\pm 0.027$ & $ 0.132\pm 0.017$ &  $0.014 \pm 0.002$ & $ 0.503\pm 0.020$ & $ 9.57\pm 0.97$ \\  
\hline
$(1,3,1)$ & $ 0.250\pm 0.050$ & $0.216 \pm 0.033$ &  $0.031 \pm 0.008$ & $0.294 \pm 0.016$ & $8.82 \pm 0.33$ \\	
\hline
$(2,3,1)$ & $0.255\pm 0.052$ & $ 0.217\pm 0.033$ &  $ 0.022\pm 0.005$ & $ 0.296\pm 0.016$ & $ 8.81\pm 0.38$ \\  
\hline
\end{tabular}
}
\end{table}

\begin{figure}[ht]
\includegraphics[width=1\textwidth]{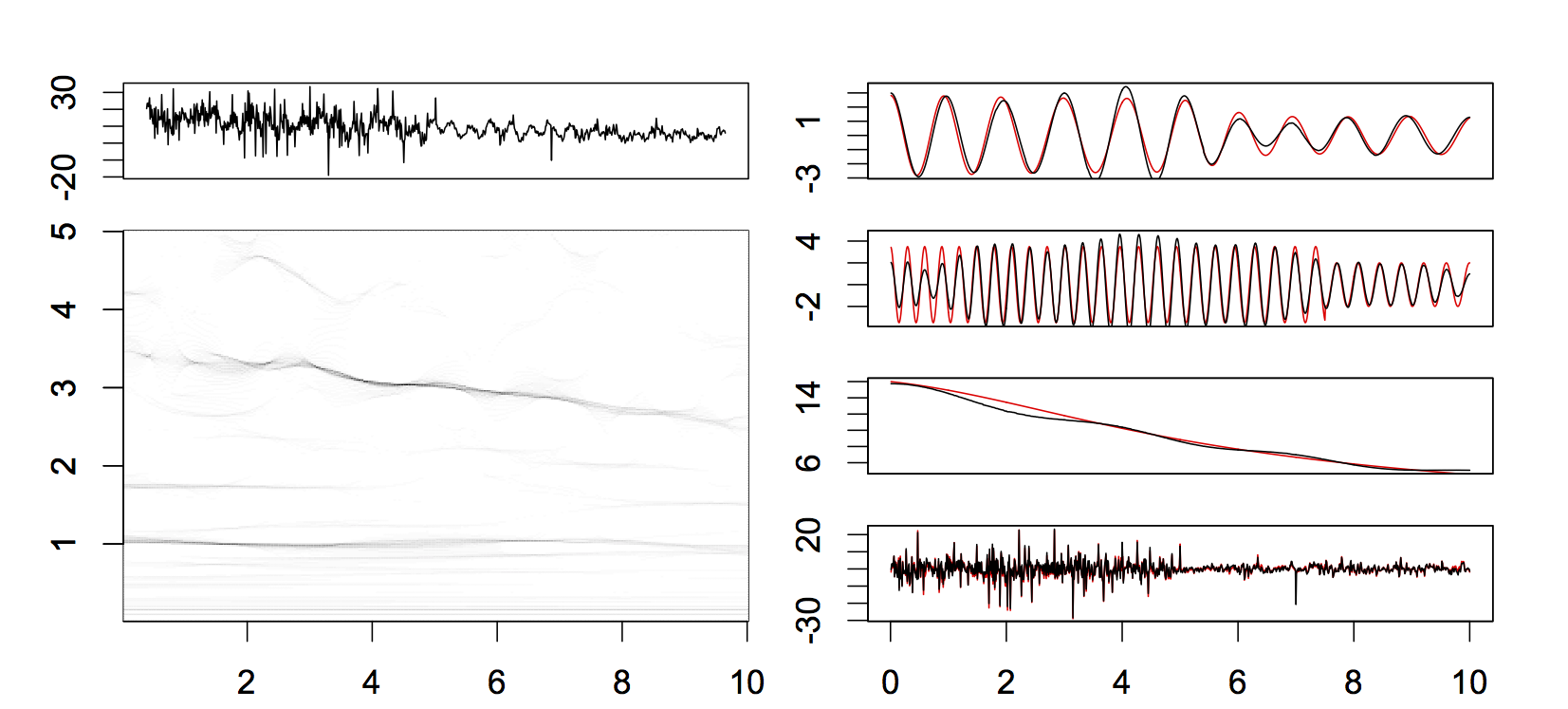}
\caption{{\it SST results of $\boldsymbol{Y_{1,2,1}}+\boldsymbol{O}$.}  Left column: The realization of $\boldsymbol{Y_{1,2,1}}+\boldsymbol{O}$ which yielded the median RRASD (upper panel) and the SST of that realization (lower panel). 
Right column: From top to bottom the black curves are $\widetilde{\boldsymbol{s_{2,1}}}$, $\widetilde{\boldsymbol{s_{2,2}}}$, $\widetilde{\boldsymbol{T_1}}$ and $\widetilde{\boldsymbol{r}}$, with the respective $\boldsymbol{s_{2,1}}$, $\boldsymbol{s_{2,2}}$, $\boldsymbol{T_1}$ and $\boldsymbol{X_2}$ superimposed as red curves. }
\label{fig:simulation:sst_outlier}
\end{figure}

\begin{figure}[ht]
\includegraphics[width=1\textwidth]{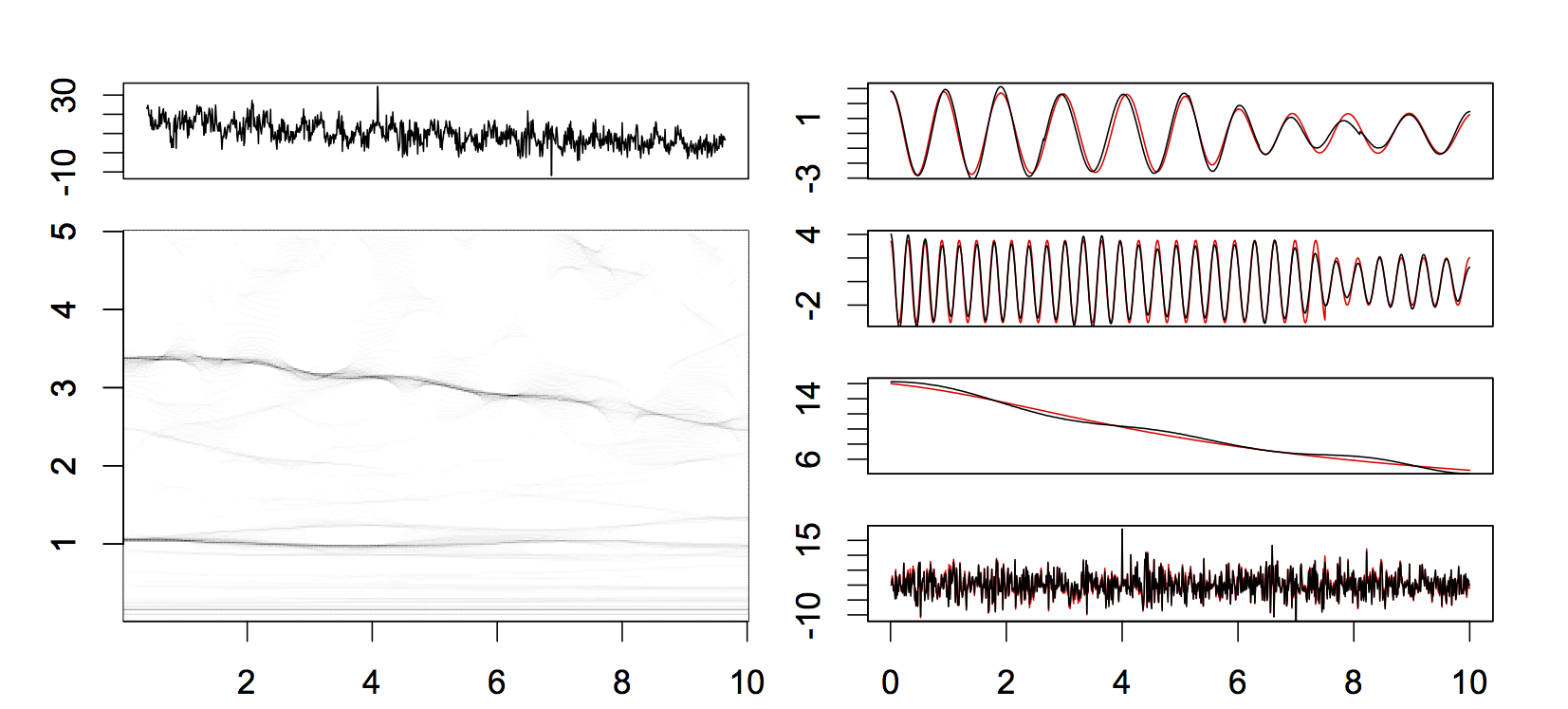}
\caption{{\it SST results of $\boldsymbol{Y_{1,3,1}}+\boldsymbol{O}$.}  Left column: The realization of $\boldsymbol{Y_{1,3,1}}+\boldsymbol{O}$ which yielded the median RRASD (upper panel) and the SST of that realization (lower panel). 
Right column: From top to bottom the black curves are $\widetilde{\boldsymbol{s_{2,1}}}$, $\widetilde{\boldsymbol{s_{2,2}}}$, $\widetilde{\boldsymbol{T_1}}$ and $\widetilde{\boldsymbol{r}}$, with the respective $\boldsymbol{s_{2,1}}$, $\boldsymbol{s_{2,2}}$, $\boldsymbol{T_1}$ and $\boldsymbol{X_3}$ superimposed as red curves. 
}
\label{fig:simulation:sst_outlier2}
\end{figure}

\end{document}